\DeclarePairedDelimiter\ceil{\lceil}{\rceil}
\newtheorem{theorem}{Theorem}[section]
\newtheorem{corollary}{Corollary}[section]
\newtheorem{proposition}{Proposition}[section]
\newtheorem{lemma}{Lemma}[section]
\newtheorem{remark}{Remark}[section]
\newtheorem{Assumption}{Assumption}[section]
\newtheorem{definition}{Definition}[section]
\newtheorem{example}{Example}[section]
\newcommand*{\bigchi}{\mbox{\Large$\chi$}}
\newcommand{\Second}{\textup{I}\!\textup{I}}
\numberwithin{equation}{section}
\title[KDE with general kernel on manifold]{Strong uniform consistency with rates for kernel density estimators with general kernels on manifolds}
\author{Hau-Tieng~Wu}
\address{Hau-Tieng Wu\\
Departments of Mathematics and Department of Statistical Science\\
Duke University, Durham, NC, USA; Mathematics Division, National Center for Theoretical Sciences, Taipei, Taiwan.}
\email{hauwu@math.duke.edu}
\author{Nan~WU}
\address{Nan Wu\\
Department of Mathematics\\
Duke University}
\email{nan.wu@duke.edu}
\begin{document}

\keywords{Kernel density estimation, Manifold learning, Convergence rate, Integrability}
\subjclass[2010]{60F15, 62G07}

\maketitle

\begin{abstract}
{{When analyzing modern machine learning algorithms, we may need to handle kernel density estimation (KDE) with intricate kernels that are not designed by the user and might even be irregular and asymmetric. To handle this emerging challenge,} we provide a strong uniform consistency result with the $L^\infty$ convergence rate for KDE on Riemannian manifolds with Riemann integrable kernels (in the ambient Euclidean space). We also provide an {$L^1$} consistency result for kernel density estimation on Riemannian manifolds with Lebesgue integrable kernels. 
The {isotropic} kernels considered in this paper are different from the kernels in the Vapnik-Chervonenkis class that are frequently considered in statistics society. We illustrate the difference when we apply them to estimate {the} probability density function. 
{Moreover, we elaborate the delicate difference when the kernel is designed on the intrinsic manifold and on the ambient Euclidian space, both might be encountered in practice.} At last, we prove the necessary and sufficient condition for an {isotropic} kernel to be Riemann integrable on a submanifold in the Euclidean space.}
\end{abstract}

\section{Introduction}

Motivated by analyzing modern machine learning algorithms, we provide a strong uniform consistency result with the convergence rate for kernel density estimation (KDE) with general kernels on Riemannian manifolds.
Our main results can be summarized as follows. Let $M$ be a smooth $d$ dimensional compact manifold without boundary isometrically embedded in $\mathbb{R}^p$ through $\iota$. Let $x_1, \cdots, x_n$ be a sequence of points independently and identically (i.i.d) sampled from $M$ following the probability density function $P$ on $M$. 
\begin{enumerate}
\item
Suppose $K(t):\mathbb{R}_{\geq 0} \rightarrow \mathbb{R}$ (not necessary non-negative) is a bounded {\em Riemann integrable} kernel function with a proper decay rate.  Suppose $\int_{ \mathbb{R}^d} K(\|\textbf{v}\|_{\mathbb{R}^d})d\textbf{v}=1$. Consider the kernel density estimator at $x\in M$ as
\begin{align}\label{Definition Kn isotropic}
K_n(x)=\frac{1}{n \epsilon_n^d}\sum_{i=1}^n K\left(\frac{\|\iota(x_i)-\iota(x)\|_{\mathbb{R}^p}}{\epsilon_n}\right)\,.
\end{align}
Under a suitable relationship between $\epsilon_n$ and $n$, we show that $\sup_{x \in M}|K_n(x)-\mathbb{E} K_n(x)| \rightarrow 0$ a.s. and we will provide the convergence rate. If we further assume that the probability density function $P$ is H\"older continuous on $M$, then we have $\sup_{x \in M}|\mathbb{E} K_n(x)-P(x)| \rightarrow 0$ with a convergence rate.  {See Chapter \ref{Section:KDE2}. This setup is a direct generalization of most traditional setups by considering the manifold and possibly irregular function as an isotropic kernel.}

\item{
Suppose $K(\textbf{v}):\mathbb{R}^d \rightarrow \mathbb{R}$ (not necessary non-negative) is a bounded {\em Riemann integrable} kernel function with compact support.  Suppose $\int_{ \mathbb{R}^d} K(\textbf{v})d\textbf{v}=1$. For any $x$, we have a {diffeomorphism (or chart)} $\Phi_x$ of $\iota(M)$ such that $\Phi_x(\textbf{v}): B^{\mathbb{R}^d}_r(0) \rightarrow \iota(M)$ with $\Phi_x(0)=\iota(x)$, where $B^{\mathbb{R}^d}_r(0)$ is the open Euclidean ball of radius $r$ centered at $0$. Let $U_x(0)$ be the volume form of $\iota(M)$ in the chart $\Phi_x$ at $\textbf{v}=0$. For $x \in M$, we define the kernel density estimator at $x$ to be
\begin{align}
K_{n}(x)=\frac{1}{n \epsilon_n^d U_x(0)}\sum_{i=1}^n K_{\epsilon_n}(x,x_i)\,,
\end{align}
where  $K_{\epsilon_n}(x,x_i)=K(\frac{\Phi^{-1}_x(\iota(x_i))}{\epsilon_n})$ if $\iota(x_i) \in \Phi_x(B^{\mathbb{R}^d}_r(0))$ and $K_{\epsilon_n}(x,x_i)=0$ otherwise. We assume that for all $x$, we have a uniform control of the bi-Lipschitz constant of $\Phi_x$ and the $C^1$ norm of the volume form of $\iota(M)$ in the chart $\Phi_x$ at $\textbf{v}=0$. We further assume that the probability density function $P$ is H\"older continuous on $M$. Under a suitable relationship between $\epsilon_n$ and $n$, we show that $\sup_{x \in M}|\mathbb{E} K_n(x)-P(x)| \rightarrow 0$ with a convergence rate.  
}{See Chapter \ref{Section:KDE3}. This setup is less considered in the traditional setup, but an emerging one inspired by analyzing modern manifold learning algorithms. Here, the kernel $K_{\epsilon}(x,y)$ is in general not translational invariant and irregular.}

\item Suppose $K_{\epsilon}: \iota(M) \times \iota(M) \rightarrow \mathbb{R}_{\geq 0}$ is a sequence of bounded {\em Lebesgue integrable} kernel with compact support indexed by $\epsilon>0$.  Moreover, $\int_{M} K_{\epsilon}(\iota(x),\iota(y))dV(y)=1$, where $dV$ is the volume form on $M$. Let
\begin{align}
K_{n,\epsilon}(x)=\frac{1}{n}\sum_{i=1}^n K_{\epsilon}(\iota(x),\iota(x_i))\,.
\end{align}
Under a suitable relationship between $\epsilon$ and $n$, we show that $K_{n,\epsilon}(x) \rightarrow P(x)$ a.s. in the $L^1$ sense under different conditions on $P$. {See Chapter \ref{Section:KDE4}. This is the most general setup, where the kernel is in general not translational invariant and only Lebesgue integrable, and only $L^1$ convergence is shown.}
\end{enumerate}

KDE on manifolds shows up{, sometimes obviously and sometimes occultly, in manifold learning algorithms. Take the} nonlinear dimension reduction (NDR) {as an example}. 
%
The purpose of an NDR algorithm is to reconstruct the dataset on a manifold of a high dimensional space in some low dimensional space, while {preserving} the geometric or topological structure of the dataset.  However, the {non-uniform} distribution of the dataset on the manifold is one of the major obstacles to the exploration of the geometric or topological structure of the dataset. Hence, {it is intuitive that} KDE plays an essential role to eliminate the impact of the {non-uniform} distribution of the dataset when one develops an NDR algorithm. 

{There are two main ways KDE might appear in a manifold learning algorithm.
The first one comes from a direct design aiming to eliminating the impact of non-uniform distribution.}
For instance, the $\alpha=1$ renormalization in the diffusion map \cite{coifman2006diffusion} is the step to estimate the density of the dataset based on the kernel chosen by the user. {For KDE of this kind of appearance, since the manifold is usually unknown, the kernel is usually designed in the ambient Euclidean space, and we need to handle the intricate interaction between the intrinsic and extrinsic geometries when we analyze its behavior. Usually, researchers consider isotopic (that is, the kernel is defined as $K(x,y):=f(\|x-y\|)$ for a ``proper'' function $f$ defined on $\mathbb{R}_{\geq 0}$ for all $x,y\in \iota(M)$), or sometimes anisotropic (that is, the kernel is defined as $K(x,y):=k(x-y)$ for a ``proper'' function $k$ defined on $\mathbb{R}^p$ for all $x,y\in \iota(M)$), kernels that are symmetric with some nice properties, but rarely consider consider non-translational invariant (that is, the kernel $K$ is defined on $\mathbb{R}^p\times \mathbb{R}^p$ and cannot be defined on $x-y$ for all $x,y\in \mathbb{R}^p$) kernels with undesired properties like irregularity. We will discuss the isotropic case in Section \ref{Section:KDE2}.}

{The second one is not obvious from directly reading the algorithms, and the involvement of KDE was probably not the focus when designing the algorithm. However, it appears naturally when we analyze these algorithms under the manifold setup. The associated kernel of this kind is not designed by the user but appears intrinsically when the algorithm is applied, is usually defined on the manifold, and is probably not translational invariant, dependent on $n$ and other quantities, and irregular}. The widely applied locally linear embedding (LLE) algorithm \cite{Roweis_Saul:2003} is {a good} example. It is shown in \cite[Corollary 3.1]{wu2018think} that under the manifold setup, the LLE kernel, {and hence KDE,} is adaptive to the geometry of the dataset and depends on the regularization chosen in the barycentric coordinate evaluation. Therefore, the kernel is {non-translational invariant} and irregular. {Moreover,} the kernel's behavior near the boundary of a manifold is dramatically different from that away from the boundary \cite{wu2018locally}. As shown in \cite{wu2018think} and \cite{wu2018locally}, KDE is a crucial step towards the asymptotic analysis of LLE. {Getting a deeper theoretical justification of LLE relies on studying the spectral behavior of the integral operator corresponding to the LLE kernel, where a $L^\infty$ sense convergence of KDE with a general kernel is involved. {We will discuss this case in Sections \ref{Section:KDE3} and \ref{Section:KDE4}.}

Therefore, although the kernels considered in this work might seem not practical from the traditional perspective, it emerges naturally when we analyze the modern machine learning algorithms, which evolves fast in the past decades. While nowadays LLE might be considered a ``traditional'' NDR algorithm, its non-obvious involvement of KDE and intricate kernel structure suggests the possibility that we might encounter more challenging kernels when we analyze ``modern'' machine learning algorithms.}

Our results could be viewed as a generalization of various existing results.
The traditional KDE on the Euclidean space was first considered by M. Rosenblatt \cite{rosenblatt1956remarks} and E. Parzen \cite{parzen1962estimation}.  Let $P$ be a probability density function on $\mathbb{R}^d$. Let $x_1, \cdots, x_n$ be a sequence of i.i.d samples from $\mathbb{R}^d$ based on $P$. Let $\epsilon_n$ be a sequence of numbers such that $\epsilon:=\epsilon_n \rightarrow 0$ and $n\epsilon^d_n \rightarrow \infty$ as $n \rightarrow \infty$. 
For $x \in \mathbb{R}^d$, the kernel density estimator at $x$ is defined as
\begin{align}
K_{n}(x):=\frac{1}{n \epsilon^d}\sum_{i=1}^n K\left(\frac{x_i-x}{\epsilon}\right)\,, 
\end{align}
where $K(x) \geq 0$ is a bounded real valued function on $\mathbb{R}^d$. After \cite{rosenblatt1956remarks,parzen1962estimation}, a lot of results under various conditions were proposed. We summarize those results that are directly related to our work.

\begin{enumerate}
\item In \cite{devroye1979l1}, the authors consider the KDE on the Euclidean space. It is shown that if $P$ is bounded and $K$ is non-negative and its Lebesgue integration is $1$, then $K_n(x) \rightarrow P(x)$ in $L_1(\mathbb{R}^d)$ a.s. when $n \rightarrow \infty$. It is also shown that if there is no condition on $P$ and $K$ further satisfies certain decay rate, then $K_n(x) \rightarrow P(x)$ in $L_1(\mathbb{R}^d)$ a.s. when $n \rightarrow \infty$.
In \cite{devroye1980strong}, it is shown that if $P$ is uniformly continuous and $K$ is non-negative with certain decay rate and Riemann integration $1$, then $K_n(x) \rightarrow P(x)$ for all $x$, a.s. when $n \rightarrow \infty$. {Density estimation from the Bayesian approach could be found in, for example, \cite{wong2010optional, ma2017adaptive} and many others. }

\item In \cite{gine2002rates} and \cite{einmahl2005uniform}, the authors consider the KDE in the Euclidean space. The authors provide the  {$L^\infty$ convergence rate} from $K_n(x)$ to $\mathbb{E}K_n(x)$ for bounded $P$ and $K$ in the VC class. Compared with the convergence of $K_n(x)$ to $\mathbb{E}K_n(x)$, the regularity of $P$ is necessary for the convergence rate from $\mathbb{E}K_n(x)$ to $P(x)$. The result is applied to the density clustering problem in \cite{rinaldo2010generalized}. The proof of the convergence of $K_n(x)$ to $\mathbb{E}K_n(x)$ relies on the {{\em Rademacher process}}. In this process, it is crucial that the $\epsilon$-covering number of the kernel in the $L^2$ norm is of order $\frac{1}{\epsilon^v}$ for some $v>0$ so that the log of the covering number is an integrable function of $\epsilon$. We mention that the VC class is {\em in some sense} stronger than Riemann integrable in that there exists a bounded  kernel with compact support which is continuous except at one point but not in the VC class. We will discuss this difference more extensively below.

\item There have been several KDE results on closed Riemannian manifold (i.e., compact {\em without} boundary).  In \cite{pelletier2005kernel}, the author works on a closed Riemannian manifold and assumes that $P$ is $C^2$ and $K$ is a non-negative Lebesgue integrable with compact support. Through the Taylor expansion of $P$, the author provides the convergence rate of $\mathbb{E}_P\|K_n-P\|_{L^2(M)}$. In this work, the author uses the intrinsic information, the {\em geodesic distance}, rather than the extrinsic information, the ambient Euclidean distance, to construct the kernel. However, in practice, the geodesic distance information is usually not accessible to researchers. Similarly, the authors in \cite{kim2013geometric} also construct a kernel density estimator by using the intrinsic information of the manifold. Through the exponential map, they pull back the sample points on the manifold to the tangent space and apply KDE for the tangent space (Euclidean space). When the kernel function is regular and the probability density function is $C^4$, they provide an $L^2$ convergence rate result. In contrast, the authors in \cite{ozakin2009submanifold} only use the ambient Euclidean distance to construct a density estimator on the manifold and prove an $L^2$ convergence rate for the {non-negative} $C^1$ kernel functions and $C^2$ probability density function.   In \cite{berenfeld2019density}, the authors study the $L^p$ convergence rate for smooth kernel functions with compact support. The convergence rate  is estimated in terms of the H\"older's exponent of the probability density function and the regularity of the manifold (the H\"older's exponents of the derivatives of the exponential map). In the case when the manifold is compact {\em with} boundary, under a similar assumption as in \cite{pelletier2005kernel}, the authors in \cite{berry2017density} prove an $L^2$ convergence result.   In \cite{chen2019generalized}, the author studies the singular measures on the manifold. The author proves that the KDE is not uniformly consistent but pointwise consistent after rescaling.

\item We mention that the asymptotic convergence of KDE in the {\em pointwise} sense under the manifold setup is widely considered {\em implicitly} in several asymptotic analyses of manifold learning algorithms. In addition to those in the LLE analysis mentioned above \cite{wu2018think,wu2018locally},  see, for example, \cite{belkin2007convergence} and many others. To our knowledge, in most work, since the KDE is not the focus, usually the probability density function and the kernel, if needed, are assumed to behave nicely.  
\end{enumerate}

To our knowledge, our work is the first detailed analysis of the $L^\infty$ convergence rate of KDE with such general conditions on the kernel and the probability density function under the manifold setup. A related but different result under the different kernel assumption is shown in \cite{rinaldo2010generalized}. In that paper, when the dataset is sampled from the manifold, the authors directly generalize [7, Corollary 2.2] to estimate the {\em mollified density function} {that is defined in the ambient space}, where the kernel is in the VC class. The uniform convergence in \cite{rinaldo2010generalized} is in the ambient Euclidean space but not intrinsic to the manifold. As a result, the convergence in \cite{rinaldo2010generalized} might suffer when the ambient space dimension is high. Moreover, it is not clear how to directly ``deconvolve'' the estimated mollified density function to recover the density function on the manifold. Thus, the uniform convergence result under the manifold setup shown in \cite{rinaldo2010generalized} is not comparable with ours. On the other hand, we found that the technical challenge to produce the convergence rate of the KDE on manifolds with kernels classified by their integrability is different from that with kernels in the VC class, and the developed analysis scheme could be useful in other scenarios.

Through the whole paper, we consider the following manifold model. Let $M$ be a smooth $d$ dimensional compact manifold without boundary isometrically embedded in $\mathbb{R}^p$ through $\iota$. Let $P$ be a probability density function on $M$. Let $x_1, \cdots, x_n$ be i.i.d sampled from $M$ based on $P$. We will discuss the KDE with {three} types of kernels: isotropic Riemann integrable kernels, {non-translational invariant Riemann integrable kernels,} and Lebesgue integrable kernels. 
We impose different conditions on the probability density function $P$ and study the convergence behavior of the kernel density estimator.
We use $\textbf{u}=(u_1, \cdots, u_p)$ to denote a vector in $\mathbb{R}^p$ and $\textbf{v}=(v_1, \cdots , v_d)$ to denote a vector in $\mathbb{R}^d$.

\section{Kernel density estimation with isotropic Riemann integrable kernels}\label{Section:KDE2}

We start from stating our assumptions on the kernel.

\begin{Assumption}\label{assumptions on kernel 1}
Suppose $K(t):\mathbb{R}_{\geq 0} \rightarrow \mathbb{R}$ is the kernel function satisfying the following conditions:
\begin{enumerate}[(i)]
\item
$K$ is a bounded function on $\mathbb{R}_{\geq 0}$; that is, $\sup_{t \in \mathbb{R}_{\geq 0}} |K(t)| = K_{\sup}$ for some $K_{\sup}>0$.
\item
$K$ is Riemann integrable on any compact subset $[0,a]$, where $a>0$.
\item
There exists a $\rho>0$, such that if $t \geq \rho$, then $|K(t)| \leq \frac{1}{t^\alpha}$, where $\alpha>d$.
\item
$\int_{ \mathbb{R}^d} K(\|\textbf{v}\|_{\mathbb{R}^d})d\textbf{v}=1$.
\end{enumerate}
\end{Assumption} 

We have a few comments about the above assumptions. First, we do not need the kernel to be non-negative. 
Second, since $K(\|\textbf{u}\|_{\mathbb{R}^p})$ can be regraded as an {\em isotropic} kernel function on the ambient space $\mathbb{R}^p$, we still call the kernel an isotropic one for the density estimation on $M$ despite the fact that it {might not be} isotropic with respect to the intrinsic geometry of the manifold. {The relationship between the intrinsic and extrinsic geometries will be further elaborated in Section \ref{Relationship between intrinsic and extrinsic}.}
Third, note that if $K(t):\mathbb{R}_{\geq 0} \rightarrow \mathbb{R}$ is a bounded function that is Riemann integrable on the compact subsets  $[0,a]$ for any $a$ and $f: A \subset \mathbb{R}^q \rightarrow \mathbb{R}_{\geq 0}$ is a continuous function on a closed rectangle $A$, then it is in general not true that $K \circ f$ is  Riemann integrable on $A$. However, if $f$ is the Euclidean distance function; that is, $f(x)=\|x\|$, we know that $K(\|\textbf{v}\|_{\mathbb{R}^q})$ is Riemann integrable on any closed rectangle $A$ in $\mathbb{R}^q$.
Hence, $\int_{ \mathbb{R}^d} K(\|\textbf{v}\|_{\mathbb{R}^d})d\textbf{v}=1$ in the assumptions can be either understood as an improper Riemann integral or as a Lebesgue integral.

Let $\epsilon_n$ be a sequence of numbers such that $\epsilon_n \rightarrow 0$, as $n \rightarrow \infty$. For $x \in M$, consider the kernel density estimator at $x$ defined in \eqref{Definition Kn isotropic}.
Note that
\begin{align}
\mathbb{E}K_n(x)=\int_{M} \frac{1}{\epsilon_n^d}K\left(\frac{\|\iota(y)-\iota(x)\|_{\mathbb{R}^p}}{\epsilon_n}\right) P(y) dV(y),
\end{align}
where $dV$ is the volume form on $M$. 
We need to emphasize that the integration in the above equation should be understood as {\em Lebesgue integral} on $M$ even though we assume that $K(t)$ is Riemann integrable. In fact, we will see in the later section that under the manifold setup we consider here, for an arbitrary Riemann integrable kernel $K(t)$, $K\left(\frac{\|\iota(y)-\iota(x)\|_{\mathbb{R}^p}}{\epsilon_n}\right)$ {\em may not} be a Riemann integrable function on $M$. 

{In this section,} we expect to show that when $n \rightarrow \infty$, 
\begin{align}
K_n(x) \rightarrow P(x)
\end{align}
a.s. for {\em all} $x$ on $M$ under some convergence rate; that is, $L^\infty$ convergence.
We split this result in two steps, including the variance analysis and the bias analysis. In the bias analysis, under the regularity assumption of $P$, we control $\|\mathbb{E}K_n(x)-P(x)\|_{L^\infty(M)}$ in the deterministic way.
In the variance analysis, we control the variance of $\|\mathbb{E}K_n(x)-K_n(x)\|_{L^\infty(M)}$ in the probabilistic sense, where the regularity assumption of $P$ is not needed. 

\subsection{\textnormal{\textbf{Variance analysis.}}}

We introduce the {\em partition number} for the kernel $K(t)$ satisfying Assumption \ref{assumptions on kernel 1}. 
The partition number plays an essential role in calculating the convergence rate. 

\begin{definition}(Partition number)\label{Definition:Partition number}
Consider the kernel function $K(t)$ satisfying Assumption \ref{assumptions on kernel 1}. Let $D_{lip}$ be a constant that depends only on the $d$-dimensional smooth and compact manifold without boundary embedded in $\mathbb{R}^p$. See \eqref{map J_x} for a precise definition. For any $\gamma>0$, the partition number $N(\gamma)$ is the smallest integer so that the following condition is satisfied. If we partition $[-D_{lip}\gamma^{-\frac{1}{\alpha}}, D_{lip}\gamma^{-\frac{1}{\alpha}}]^d$ uniformly into $N(\gamma)$  cubes $\{\mathcal{Q}_i\}_{i=1}^{N(\gamma)}$, where each $\mathcal{Q}_i$ is of the form $[v_1,v_1+a] \times \cdots \times [v_d,v_d+a]$ for $a>0$, we have 
\begin{align}\label{equation def partition number}
\sum_{i =1}^{N(\gamma)}[M_i (K)-m_i (K)]Vol(\mathcal{Q}_i) <\gamma^2.
\end{align} 
Here, $M_i(K)$ and $m_i(K)$ are  the supremum and infimum of  $K(\|\textbf{v}\|_{\mathbb{R}^d})$ respectively over the cube $\mathcal{Q}_i$. 
\end{definition}

Recall the above-mentioned fact that $K(\|\textbf{v}\|_{\mathbb{R}^d})$ is Riemann integrable over $[-D_{lip}\gamma^{-\frac{1}{\alpha}}, D_{lip}\gamma^{-\frac{1}{\alpha}}]^d$. Hence, the existence of $N(\gamma)$ follows from the Riemann integrability.


\begin{theorem} \label{EK_n-K_n} 
Under Assumptions \ref{assumptions on kernel 1}, we further assume $\|P\|_\infty = P_{Max}$ for $P_{Max}>0$. Suppose $0<\gamma<\min\{\rho^{-\alpha},1\}$ and $\epsilon_n \rightarrow 0$ as $n \rightarrow \infty$. When $\epsilon_n \leq \mathcal{D}_3 \gamma^{\frac{1}{\alpha-d}}$, we have 
\begin{align}\label{Bound variance most general isotropic one}
P\{\|\mathbb{E}K_n-K_n\|_\infty \leq \mathcal{D}_1 \gamma^{1-\frac{d}{\alpha}}\} \geq 1-8(2n)^{2p^2+2p}\exp\left\{-\mathcal{D}_2 n \epsilon_n^{d+\frac{d^2}{\alpha}}\frac{\gamma^{2-\frac{d}{\alpha}}}{N(\gamma)^2}\right\}\,,
\end{align}
where $\mathcal{D}_1$ and $\mathcal{D}_2$ depend on $p$, $d$, $\alpha$, $P_{Max}$, $K_{sup}$, and the second fundamental form of $\iota(M)$, and $\mathcal{D}_3$ depends on $p$, $d$, $\alpha$, $P_{Max}$ and the second fundamental form of $\iota(M)$. 
\end{theorem}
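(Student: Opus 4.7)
My plan is a truncation-plus-step-function-approximation-plus-Bernstein strategy, the twist being that since $K$ is only Riemann integrable (not Lipschitz), the partition number of Definition \ref{Definition:Partition number} replaces the usual modulus of smoothness, and the supremum over $x\in M$ is handled by a Vapnik--Chervonenkis bound on a class of \emph{sets} in $\mathbb{R}^p$ rather than a bracketing of \emph{functions}.

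First I truncate $K$ outside the ball of rescaled radius $R=\gamma^{-1/\alpha}$, where Assumption \ref{assumptions on kernel 1}(iii) gives $|K|\le\gamma$. Integrating the $t^{-\alpha}$ tail against $P\le P_{\max}$ on $M$ bounds the truncation error in both $K_n(x)$ and $\mathbb{E}K_n(x)$ deterministically by a constant multiple of $\gamma^{1-d/\alpha}$, which is exactly the target scale. Next, for each $x\in M$ I work in a bi-Lipschitz chart $J_x:\mathbb{R}^d\to\iota(M)$ whose constant $D_{\mathrm{lip}}$ depends only on the second fundamental form of $\iota(M)$; pulling the truncated kernel into the cube $[-D_{\mathrm{lip}}R,D_{\mathrm{lip}}R]^d$ and invoking Definition \ref{Definition:Partition number} sandwiches $K$ between the upper and lower Darboux step functions $\overline{K}=\sum_j M_j(K)\mathbf{1}_{\mathcal{Q}_j}$ and $\underline{K}=\sum_j m_j(K)\mathbf{1}_{\mathcal{Q}_j}$, whose gap has integral less than $\gamma^2$ by \eqref{equation def partition number}. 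After unscaling by $\epsilon_n^d$ and using the chart Jacobian together with $\|P\|_\infty\le P_{\max}$, the deterministic discrepancy between $K_n$ (resp.\ $\mathbb{E}K_n$) and its step-function analog is at most $CP_{\max}\gamma^2/\epsilon_n^d$; the hypothesis $\epsilon_n\le \mathcal{D}_3\gamma^{1/(\alpha-d)}$ is exactly the balance that pushes this below $\gamma^{1-d/\alpha}$ and so fixes $\mathcal{D}_3$.

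The stochastic step controls the fluctuation of the step-function estimator uniformly in $x$. For fixed $j$ and $x$ the relevant statistic is $\frac{1}{n}\sum_i\mathbf{1}_{A_{x,j}}(\iota(x_i))$, where $A_{x,j}:=J_x(\epsilon_n\mathcal{Q}_j)\subset\mathbb{R}^p$ has $P$-measure of order $P_{\max}\epsilon_n^d|\mathcal{Q}_j|$; Bernstein yields a per-set deviation of order $\sqrt{\epsilon_n^d|\mathcal{Q}_j|\log(\cdot)/n}$. Weighting by $|M_j(K)-m_j(K)|\le K_{\sup}$, applying Cauchy--Schwarz against the Darboux gap in \eqref{equation def partition number}, and dividing by $\epsilon_n^d$ from the definition of $K_n$ reproduces the exponent $n\epsilon_n^{d+d^2/\alpha}\gamma^{2-d/\alpha}/N(\gamma)^2$ of \eqref{Bound variance most general isotropic one}. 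To upgrade from a fixed $x$ to $\sup_{x\in M}$, I apply the VC inequality to the family of sets $\{A_{x,j}:x\in M,\,1\le j\le N(\gamma)\}$ in $\mathbb{R}^p$; each $A_{x,j}$ is cut out by a bounded number of quadratic inequalities coming from the chart $J_x$, so its shatter coefficient on $n$ sample points is polynomial in $n$, yielding the prefactor $8(2n)^{2p^2+2p}$.

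The genuine difficulty is transferring regularity from the kernel onto these underlying sets. Because Riemann integrability is only a statement about oscillation \emph{on average}, the proof must pay for $N(\gamma)$ twice --- once in the deterministic Darboux gap and again as $N(\gamma)^2$ in the Bernstein exponent via the union bound and Cauchy--Schwarz --- and the three resulting error sources (tail truncation, Darboux gap, Bernstein deviation) have to be balanced to the common scale $\gamma^{1-d/\alpha}$, which is what dictates the precise form of the constraint on $\epsilon_n$. The other delicate ingredient is verifying that the chart-image cubes $A_{x,j}$ really form a polynomial VC class uniformly in $x$; this rests on the uniform bi-Lipschitz control of $J_x$ provided by compactness of $M$ and the second fundamental form, so that each $A_{x,j}$ is defined by $O(p)$ quadratic inequalities in $\mathbb{R}^p$ with coefficients varying smoothly in $x$.
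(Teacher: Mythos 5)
Your high-level plan --- truncate the tail, approximate the truncated kernel by a step function adapted to a Riemann partition, and control the resulting fluctuations uniformly over $x$ via a VC bound on a class of sets in $\mathbb{R}^p$ --- matches the spirit of the paper's proof. But your opening step, the claimed \emph{deterministic} truncation of the tail, is wrong. You assert that integrating the $t^{-\alpha}$ tail against $P\le P_{\max}$ bounds the truncation error of \emph{both} $K_n(x)$ and $\mathbb{E}K_n(x)$ by $O(\gamma^{1-d/\alpha})$. This works for $\mathbb{E}K_n(x)$, which is an integral, but $K_n(x)$ is an empirical average and its tail contribution $\tfrac{1}{n\epsilon_n^d}\sum_{i:\,\|\iota(x_i)-\iota(x)\|/\epsilon_n>R}K(\cdot)$ is random. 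The only deterministic bound available from $|K|\le\gamma$ on that region is $\gamma/\epsilon_n^d$, and this is \emph{not} $O(\gamma^{1-d/\alpha})$ in general, because the hypothesis only gives an upper bound $\epsilon_n\le\mathcal{D}_3\gamma^{1/(\alpha-d)}$ --- $\epsilon_n$ may be arbitrarily small relative to $\gamma$, so $\gamma/\epsilon_n^d$ blows up. The paper's proof handles the tail with a second, separate stochastic argument (the bound of the tail part, Lemma \ref{bound of the tail part}): it dominates $|K|$ by the envelope $\tilde T(\mathbf{u})=(\max_i|u_i|)^{-\alpha}$, bounds the difference of tails by twice the mean plus a deviation, discretizes $\tilde T$ as a step function on nested cubes $Q_{\eta_i}$ with $\eta_i=[(1-i/q)\gamma]^{-1/\alpha}$, and controls the deviation by the same rectangle-VC inequality used for the local part. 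That is a substantial piece of the argument missing from your sketch, and it is exactly what disappears in the compactly supported case (Corollary \ref{compact support variance}), which is why that corollary has the cleaner rate.

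A secondary gap concerns the class of sets. You define $A_{x,j}=J_x(\epsilon_n\mathcal{Q}_j)\subset\mathbb{R}^p$ as images of cubes under a bi-Lipschitz chart into $\iota(M)$, and assert they are cut out by a bounded number of quadratic inequalities. The chart is not exactly quadratic (it is smooth with higher-order remainder) and its coefficients vary with $x$ over all of $M$, so the claim that this family has polynomially bounded shatter coefficients is not self-evident and would require a separate argument. The paper avoids this entirely: by combining the axis-aligned projection $I_x:\mathbb{R}^p\to\mathbb{R}^d$ (whose preimages of $d$-dimensional cubes are literal $p$-dimensional rectangles --- cylinders) with the auxiliary stretch $J_x:\mathbb{R}^d\to\mathbb{R}^d$ that matches the ambient distance while staying in $\mathbb{R}^d$, the level sets of the step-function approximant $K^*_{\epsilon_n}$ in Lemma \ref{approximation to K} are intersections of honest rectangles in $\mathbb{R}^p$ with $\iota(M)$, for which the growth function $(2n)^{2p^2+2p}$ is classical. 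The stochastic control is then a single uniform deviation bound $\sup_A|\mu_n(A)-\mu(A)|\le\delta$ over this rectangle class (Lemma \ref{rectangle covering lemma}), multiplied by $N(\gamma)K_{\sup}$ from the crude sup-norm bound on the step-function coefficients, with the choice $\delta\sim\epsilon_n^d\gamma^{1-d/\alpha}/N(\gamma)$ producing the $N(\gamma)^2$ in the exponent --- not the Bernstein-per-set/union-bound/Cauchy--Schwarz route you propose, and it is not clear that your variant reproduces the same exponent.
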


The definitions of $\mathcal{D}_1$, $\mathcal{D}_2$ and $\mathcal{D}_3$ can be found in the proof, which is postponed to Appendix \ref{Appendix A}. The proof involves the methods in \cite{devroye1980strong} developed for the convergence analysis in the Euclidean space setup{, while we need to handle the interaction between the intrinsic and extrinsic geometry}. {We comment that the dependence on the ambient space dimension, that is, $(2n)^{2p^2+2p}$ in the right hand side of \eqref{Bound variance most general isotropic one}, comes from using rectangles in the ambient space to construct a collection of regular sets on the manifold. }

\begin{remark} To appreciate the challenge of generalizing this result to Lebesgue integrable kernels, for simplicity, we assume that $M$ is a rectangle $A \subset \mathbb{R}^d$. Since the function $K(\|\cdot\|_{\mathbb{R}^d})$ is Riemann integrable on $A$, we can approximate $K(\|\cdot\|_{\mathbb{R}^d})$ by a step function $K^*(\cdot)$ uniformly, except on a bad set $A_{\epsilon}$ which can be covered by {\em finitely} many rectangles with the total volume less than $\epsilon>0$. Hence, instead of controlling $|\mathbb{E}K_n(x)-K_n(x)|$ for an arbitrary kernel function, we study the variance of the $0-1$ kernel over rectangles. In contrast, if the function $K(\|\cdot\|_{\mathbb{R}^d})$ is Lebesgue measurable on $A$, we can still approximate $K(\|\cdot\|_{\mathbb{R}^d})$ by a step function $K^*(\cdot)$ uniformly, except on a bad set $B_{\epsilon}$. Here, $B_{\epsilon}$ can be covered by {\em countably}, but may not finitely, many rectangles with the total volume less than $\epsilon$. Therefore, to control the variance $|\mathbb{E}K_n(x)-K_n(x)|$, we have to deal with a bad set $B_{\epsilon}$ associated with each $x$. The major difficulty in generalizing the argument from Riemann integrable kernels to Lebesgue integrable kernels is that if $A_{\epsilon}$ cannot be covered by finite rectangles, then the variance over $B_{\epsilon}$ can not be controlled uniformly for all $x$.  The reader may refer to the last two steps in \eqref{Riemann approx step 3} in the Appendix for details.
\end{remark}

In the following corollary, we state a special case of Theorem \ref{EK_n-K_n} when the kernel $K(t)$ has a compact support. We see that the convergence rate is improved from the general case.

\begin{corollary}\label{compact support variance}
Under Assumption \ref{assumptions on kernel 1}, we further assume that $K(t)$ has a compact support on $[0, \rho]$ and $\|P\|_\infty = P_{Max}$ for $P_{Max}>0$. Suppose $0<\gamma<1$ and $\epsilon_n \rightarrow 0$ as $n \rightarrow \infty$. When $\epsilon_n \leq \mathcal{D}_3$, we have
\begin{align}
P\left\{\sup_{x \in M} |\mathbb{E}K_n(x)-K_n(x)| \leq \mathcal{D}_1 \gamma \right\} \geq 1-8(2n)^{2p^2+2p}\exp\left\{-\mathcal{D}_2n \epsilon_n^{d}\frac{ \gamma^2}{N(\gamma)^2}\right\}\,,
\end{align}
where $\mathcal{D}_1$ and $\mathcal{D}_2$ depend on $p$, $d$, $P_{Max}$, $K_{sup}$, and the second fundamental form of $\iota(M)$, and $\mathcal{D}_3$ depends on $\rho$, $P_{Max}$ and the second fundamental form of $\iota(M)$. 
\end{corollary}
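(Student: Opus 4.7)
The plan is to mirror the proof of Theorem \ref{EK_n-K_n} but to dispense entirely with the tail analysis. Since $K$ vanishes outside $[0,\rho]$, the ``slow-decay'' contribution responsible for the $\gamma^{1-d/\alpha}$ rate in the general statement is simply absent; this upgrades the rate to $\gamma$, removes the $\epsilon_n^{d^2/\alpha}$ factor in the exponent, and removes the $\gamma^{1/(\alpha-d)}$ restriction on $\epsilon_n$.

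Concretely, with $\mathrm{supp}(K)\subset[0,\rho]$ only samples $x_i$ satisfying $\|\iota(x_i)-\iota(x)\|_{\mathbb{R}^p}\le \rho\epsilon_n$ contribute to $K_n(x)$. Using the bi-Lipschitz comparison between the ambient Euclidean distance and a local chart at $x$ (encoded by the constant $D_{lip}$ in \eqref{map J_x}), the partition domain for both $\mathbb{E}K_n(x)$ and $K_n(x)$ can be taken as a \emph{fixed} box of side $O(D_{lip}\rho)$ rather than the $\gamma^{-1/\alpha}$-scaled box appearing in Definition \ref{Definition:Partition number}, so $\mathcal{D}_3$ depends only on $\rho$, $P_{Max}$ and the second fundamental form. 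Next, Riemann integrability of $K(\|\textbf{v}\|_{\mathbb{R}^d})$ on this fixed box gives a partition into $N(\gamma)$ cubes $\{\mathcal{Q}_i\}$ with $\sum_i(M_i-m_i)\mathrm{Vol}(\mathcal{Q}_i)<\gamma^2$; replace $K$ by the step function $K^*$ constant on each $\mathcal{Q}_i$. The resulting deterministic error in both $K_n$ and $\mathbb{E}K_n$ is at most $C\,P_{Max}\gamma^2\le C\,P_{Max}\gamma$, since $\gamma<1$ and no tail integral enters. For the random part, $K_n^*(x)$ becomes a weighted sum of $N(\gamma)$ empirical frequencies of preimages of the $\mathcal{Q}_i$ under the chart map; Bernstein's inequality applied cube-by-cube followed by a union bound over the $N(\gamma)$ cubes produces the exponential factor $\exp\{-c\,n\epsilon_n^d\gamma^2/N(\gamma)^2\}$. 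Finally a $\tfrac{1}{n}$-net of $\iota(M)$ in $\mathbb{R}^p$, with the product covering yielding the $(2n)^{2p^2+2p}$ prefactor exactly as in Theorem \ref{EK_n-K_n}, converts the pointwise bound into a uniform bound over $x\in M$ via a Lipschitz transport step from net points to all of $M$.

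The main obstacle, as in Theorem \ref{EK_n-K_n}, is to perform the $\mathbb{R}^p$-net/union bound in a manner compatible with the intrinsic integration on $M$ while keeping the partition $\{\mathcal{Q}_i\}$ uniform in $x$; this uniformity is what makes $N(\gamma)$ a global object. Compact support of $K$ together with compactness of $M$ deliver this uniformity automatically here, but care is still needed to track the bi-Lipschitz chart constants through the deterministic/stochastic decomposition so that $\mathcal{D}_1,\mathcal{D}_2$ depend only on $p,d,P_{Max},K_{sup}$ and the second fundamental form, while $\rho$ is absorbed into $\mathcal{D}_3$.
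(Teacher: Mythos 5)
Your overall intuition (drop the tail, keep the local analysis) is sound, and it is indeed what happens when one lets $\alpha\to\infty$; but the paper's proof of this corollary is not a re-derivation at all — it is a one-liner, substituting $\alpha\to\infty$ into the already-proved bound of Theorem~\ref{EK_n-K_n} so that $\gamma^{1-d/\alpha}\to\gamma$, $\epsilon_n^{d+d^2/\alpha}\to\epsilon_n^d$, $\gamma^{2-d/\alpha}\to\gamma^2$, and the $\gamma^{1/(\alpha-d)}$ restriction on $\epsilon_n$ degenerates to $\epsilon_n\leq\mathcal{D}_3$. Re-deriving from scratch is unnecessary, and your re-derivation as written contains a genuine gap in the uniformity step.

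Specifically, the paper does \emph{not} obtain uniformity in $x$ by a $\tfrac1n$-net of $\iota(M)$ followed by ``Lipschitz transport,'' and it does \emph{not} obtain the prefactor $(2n)^{2p^2+2p}$ by ``product covering'' of such a net. That prefactor is the growth function $G(H_1,2n)$ of the VC class $H_1$ of half-open rectangles in $\mathbb{R}^p$, which enters via Lemma~\ref{rectangle covering lemma}. The mechanism is: for each fixed $x$, the step-function approximation reduces $|\mathbb{E}K_n(x)-K_n(x)|$ to a deterministic $O(\gamma)$ error plus a multiple of $\sup_{A\in\mathcal{R}_{r}(\iota(M))}|\mu_n(A)-\mu(A)|$, a \emph{single} random quantity that does not depend on $x$; Lemma~\ref{rectangle covering lemma} (a Vapnik–Chervonenkis inequality over \emph{all} rectangles intersected with $\iota(M)$) bounds this supremum once and for all, so uniformity over $x$ is automatic. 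No union bound over $N(\gamma)$ cubes, no net, no transport from net points. Your Bernstein-cube-by-cube plus $\tfrac1n$-net strategy would fail precisely because $K$ is only Riemann integrable, not Lipschitz: $K_n(x)$ can jump discontinuously as $x$ moves a small distance, so there is no ``Lipschitz transport step'' to move the pointwise bound from a net point to a nearby $x$. Evading this obstruction by working uniformly over the class of regular sets (rectangles on the manifold) rather than uniformly over a net of $x$ is the central technical point of the paper's variance analysis, and your proposal substitutes a weaker, non-functioning device for it.

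The parts of your plan that are correct and do match the paper are: compact support fixes the partition domain to $[-D_{lip}\rho,D_{lip}\rho]^d$ and removes the tail integral entirely, so $\mathcal{D}_3$ depends only on $\rho$, $P_{Max}$, and the second fundamental form, the rate upgrades from $\gamma^{1-d/\alpha}$ to $\gamma$, and the exponent improves to $n\epsilon_n^d\gamma^2/N(\gamma)^2$. Those observations are exactly what the $\alpha\to\infty$ passage in the paper delivers with no extra work.
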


The proof of the corollary follows directly by choosing $\gamma<1$ and taking $\alpha \rightarrow \infty$ in Theorem \ref{EK_n-K_n}.  
We explain why the convergence rate is slower when $K(t)$ does not have compact support. As we show in Lemma \ref{rectangle covering lemma}, we have a good control of the variance over any region centered at $x$ that is not too large on the manifold $M$, and this control is uniform. To control the variance outside the region, we have to make sure that the {\em tail} is small enough; that is, for $\eta>0$, $\int_{M \setminus B_\eta(x)} \frac{1}{\epsilon_n^d}K\left(\frac{\|\iota(y)-\iota(x)\|_{\mathbb{R}^p}}{\epsilon_n}\right) P(y)dV(y) $ is small enough when $\epsilon_n$ is small, where $B_\eta(x)$ is a geodesic ball of radius $\eta$ at $x \in M$. When $M$ is an Euclidean space, due to the polynomial decay assumption, the larger $\|y-x\|$ is, the smaller the kernel value $K(\|y-x\|)$ is. So, for $\eta>0$, we can choose $\epsilon_n>0$ sufficiently small so that $\int_{\mathbb{R}^d \setminus B^{\mathbb{R}^d}_{\eta}(x)} \frac{1}{\epsilon_n^d}|K(\frac{\|y-x\|_{\mathbb{R}^d}}{\epsilon_n})| P(y) dy$ is small. 
However, in general, due to the geometry of the manifold, it is not guaranteed that the larger the geodesic distance between $y$ and $x$ is, the smaller the kernel value $K(\|\iota(y)-\iota(x)\|)$. Indeed, two far away points in the sense of geodesic distance might have short Euclidean distance in the ambient space. 
Thus, the only way to make sure that $\int_{M \setminus B_\eta(x)} \frac{1}{\epsilon_n^d}K\left(\frac{\|\iota(y)-\iota(x)\|_{\mathbb{R}^p}}{\epsilon_n}\right) P(u) dV(y) $ is small is to choose $\epsilon_n$ {\em further smaller} so that $ \frac{1}{\epsilon_n^d}K\left(\frac{\|\iota(y)-\iota(x)\|_{\mathbb{R}^p}}{\epsilon_n}\right) $ is small outside $B_\eta(x)$. In other words, the slower convergence rate in the non compact kernel case is a result of the geometry of the manifold.

The following Corollary is a special case when the kernel $K(t)$ is a step function. The proof is in  Appendix \ref{Appendix A}.

\begin{corollary}\label{0-1 kernel}
Suppose $K(t)=\sum_{j=1}^J c_j \bigchi_{[a_j, b_j]}(t)$, where $c_j \leq K_{\sup}$ and $0 \leq a_j \leq b_j \leq \rho$. Suppose $\|P\|_\infty = P_{Max}$ for $P_{Max}>0$. Let $0<\gamma<1$ and $\epsilon_n \rightarrow 0$ as $n \rightarrow \infty$.  If $\epsilon_n \leq \mathcal{D}_3$, then
\begin{align}
P\left\{\sup_{x \in M} |\mathbb{E}K_n(x)-K_n(x)| \leq \mathcal{D}_1  \gamma\right\} \geq 1-8(2n)^{p+2}\exp\left\{-\mathcal{D}_2  n \epsilon_n^d \gamma^2\right\}\,,
\end{align}
where $\mathcal{D}_1$ depends on $d$, $P_{Max}$, $J$, $K_{\sup}$ and the second fundamental form of $\iota(M)$,  and $\mathcal{D}_2$ depends on $d$, $\rho$, $P_{Max}$ and the second fundamental form of $\iota(M)$ and $\mathcal{D}_3$ depends on $\rho$, $P_{Max}$ and the second fundamental form of $\iota(M)$. 

Hence, if $\epsilon_n \rightarrow 0$ as $n \rightarrow \infty$ and $\epsilon_n \leq \mathcal{D}_3$, then with probability greater then $1-\frac{1}{n^2}$, we have 
\begin{align}
\sup_{x \in M} |\mathbb{E}K_n(x)-K_n(x)| \leq \mathcal{D}_4 \sqrt{\frac{\log n}{n \epsilon^d}},
\end{align}
where  $\mathcal{D}_4$ depends on $p$, $d$, $P_{Max}$, $\rho$, $J$, $K_{\sup}$ and the second fundamental form of $\iota(M)$
\end{corollary}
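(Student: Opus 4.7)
The plan is to exploit the finite structure of the step-function kernel to shortcut the general proof of Theorem \ref{EK_n-K_n}: since $K=\sum_{j=1}^J c_j \bigchi_{[a_j,b_j]}$, the function $K\left(\frac{\|\iota(y)-\iota(x)\|_{\mathbb{R}^p}}{\epsilon_n}\right)$ is itself a finite linear combination of indicators of ambient Euclidean annuli centered at $\iota(x)$. Consequently no Riemann approximation by step functions is required, and the partition number $N(\gamma)$ becomes trivially bounded independently of $\gamma$, which explains the disappearance of the $N(\gamma)^2$ factor from the exponent.

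First I would decompose
\begin{align*}
K_n(x)-\mathbb{E}K_n(x)=\sum_{j=1}^{J}\frac{c_j}{\epsilon_n^d}\left(\frac{1}{n}\sum_{i=1}^{n}\bigchi_{A_j(x)}(x_i)-\mathbb{E}[\bigchi_{A_j(x)}(x_1)]\right),
\end{align*}
where $A_j(x)=\{y\in M\,:\, a_j\epsilon_n\leq \|\iota(y)-\iota(x)\|_{\mathbb{R}^p}\leq b_j\epsilon_n\}$ is an annular slice of $M$ cut out by two ambient Euclidean balls. By the triangle inequality and a union bound over $j$, it suffices to control the supremum over $x\in M$ separately for each annulus, with the dependence on $J$ and $K_{\sup}$ absorbed into $\mathcal{D}_1$. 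Rewriting each annular indicator as $\bigchi_{B_{b_j\epsilon_n}(\iota(x))}-\bigchi_{B_{a_j\epsilon_n}(\iota(x))}$ further reduces the problem to empirical concentration of single-ball indicators.

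Next I would invoke that the family of closed Euclidean balls in $\mathbb{R}^p$ forms a VC class of dimension at most $p+2$ (via the lifting $u\mapsto (u,\|u\|^2)$), so by Sauer--Shelah the number of distinct traces of such balls on the sample $\{\iota(x_1),\ldots,\iota(x_n)\}$ is at most $(2n)^{p+2}$. This is the origin of the $(2n)^{p+2}$ factor in the stated bound, cleanly replacing the $(2n)^{2p^2+2p}$ term coming from the ambient rectangle covering used in Theorem \ref{EK_n-K_n}. For a fixed ball of radius $r\leq b_j\epsilon_n$, Bernstein's inequality applied to the Bernoulli indicators $\bigchi_{B}(\iota(x_i))$ yields exponential concentration with variance bounded by $P_{Max}\cdot Vol(\iota(M)\cap B_r(\iota(x)))\leq C\epsilon_n^d$; the volume estimate holds uniformly in $x\in M$ provided $\epsilon_n\leq \mathcal{D}_3$, with $C$ controlled by the second fundamental form of $\iota(M)$. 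Combining the per-ball Bernstein bound with the VC enumeration and the union bound over $j$ produces the asserted probability inequality.

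For the final quantitative consequence, set $\gamma=\mathcal{D}_4\sqrt{\log n/(n\epsilon_n^d)}$ and verify algebraically that $8(2n)^{p+2}\exp(-\mathcal{D}_2 n\epsilon_n^d\gamma^2)\leq n^{-2}$ as soon as $\mathcal{D}_4$ is taken sufficiently large in terms of $p$ and $\mathcal{D}_2$. The main obstacle is not conceptual but careful bookkeeping: one must verify that the ambient volume bound $Vol(\iota(M)\cap B_r(\iota(x)))\leq Cr^d$ holds uniformly in $x\in M$ under the smallness constraint on $\epsilon_n$, and track how the VC-type enumeration on balls takes over the role of the Riemann approximation in the general theorem without introducing spurious polynomial factors in $n$.
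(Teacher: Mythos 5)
Your proposal takes essentially the same route as the paper: decompose $K$ into differences of ball indicators $c_j(\bigchi_{[0,b_j]}-\bigchi_{[0,a_j]})$, bound $|\mathbb{E}K_n(x)-K_n(x)|$ by $\frac{2JK_{\sup}}{\epsilon_n^d}\sup_{A\in\mathcal{B}_{\epsilon_n\rho}(\iota(M))}|\mu_n(A)-\mu(A)|$, and control the supremum using the VC dimension $p+2$ of closed balls in $\mathbb{R}^p$ together with Sauer's lemma, which is exactly the paper's Lemma \ref{ball covering lemma} and is the source of the $(2n)^{p+2}$ factor and the disappearance of $N(\gamma)^2$. Just be aware that the phrase ``per-ball Bernstein bound with VC enumeration and the union bound'' is shorthand for the symmetrization (ghost-sample) argument encapsulated in Lemma \ref{ball covering lemma}; one cannot literally union-bound over the sample-dependent traces without that step.
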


Since $K(t)$ is a step function, we do not need to approximate it anymore. Hence, in contrast to Corollary \ref{compact support variance}, there is no partition number and step function approximation involved and the convergence rate can be further improved.

\subsection{\textnormal{\textbf{Bias analysis}}}

To get the bias analysis with a convergence rate, we need to further assume the H\"older continuity condition on the probability density function $P$. In fact, if we only need to show that $\|\mathbb{E}K_n(x)-P(x)\|_{L^\infty(M)} \rightarrow 0$ as $\epsilon_n \rightarrow 0$, then it is sufficient to assume $P$ is continuous. However, if we want to control the convergence rate of  $\|\mathbb{E}K_n(x)-P(x)\|_{L^\infty(M)}$, we need to know how $P(x) \rightarrow P(y)$ when $x \rightarrow y$ for $x, y \in M$. 

\begin{Assumption}\label{holder assumption of P}
Suppose the density function $P$ satisfies $\|P\|_\infty = P_{Max}$ for some $P_{Max}>0$ on $M$.  Moreover, $P$ is H\"older continuous so that 
\begin{align}
|P(x)-P(y)| \leq C_P d(x,y)^\kappa\,,
\end{align} 
where $d(x,y)$ is $x,y$ is the geodesic distance between $x$ and $y$ on $M$, $0<\kappa \leq 1$ and $C_P>0$. 
\end{Assumption}

The main theorem in the bias analysis is discussed when $K(t)$ is compactly supported or not. The proof of the theorem is in Appendix \ref{Appendix B}.

\begin{theorem}\label{bias analysis main theorem}
\begin{enumerate}
\item Under Assumptions \ref{assumptions on kernel 1} and \ref{holder assumption of P}, when $0<\gamma<1$ and $\epsilon \leq \omega_1 \gamma^{\frac{2\alpha}{(\alpha-d)}}$, we have
\begin{align}
\sup_{x \in M}\left|\int_{M}\frac{1}{\epsilon^d} K\left(\frac{\|\iota(y)-\iota(x)\|_{\mathbb{R}^p}}{\epsilon}\right)P(y)dV(y)-P(x)\right|\leq \omega_2 \gamma^\kappa\,,
\end{align}
where $\omega_1$ depends $\rho$, $\alpha$ and $d$ and $\omega_2$ depends on $\rho$, $\alpha$, $d$, $\kappa$, $C_P$, $P_{Max}$, $K_{sup}$, the curvature of $M$ and the second fundamental form of $\iota(M)$.

\item Under Assumptions \ref{assumptions on kernel 1} and \ref{holder assumption of P}, we further assume that $K(t)$ is compactly support on $[0, \rho]$, where $\rho>0$. When $0<\gamma<1$ and $\epsilon \leq \omega_1 \gamma^{2}$, we have
\begin{align}
\sup_{x \in M}\left|\int_{M}\frac{1}{\epsilon^d} K\left(\frac{\|\iota(y)-\iota(x)\|_{\mathbb{R}^p}}{\epsilon}\right)P(y)dV(y)-P(x)\right|\leq \omega_2 \gamma^\kappa\,,
\end{align}
where $\omega_1$   depends on $\rho$ and $\omega_2$  depends on $\rho$, $d$, $\kappa$, $C_P$, $P_{Max}$, $K_{sup}$, the curvature of $M$ and the second fundamental form of $\iota(M)$.
\end{enumerate}
\end{theorem}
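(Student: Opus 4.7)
The plan is to reduce both statements to a model integral on $\mathbb{R}^d$ via normal coordinates at each $x \in M$, exploit the approximate-identity normalization $\int_{\mathbb{R}^d} K(\|u\|) du = 1$ together with the H\"older regularity of $P$, and absorb the remaining geometric distortions into error terms controlled by the injectivity radius, the volume Jacobian, and the second fundamental form of $\iota(M)$.

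First I would fix a uniform constant $r_0>0$ below the injectivity radius of $M$ (uniform by compactness) and, for each $x$, use $\exp_x\colon B^{\mathbb{R}^d}_{r_0}(0)\subset T_xM\to M$ to pull back the integral. Standard computations give $dV(\exp_x v)=J_x(v)\,dv$ with $J_x(v)=1+O(\|v\|^2)$ (from the curvature of $M$) and
$$\|\iota(\exp_x v)-\iota(x)\|_{\mathbb{R}^p}=\|v\|(1+E_x(v)),\qquad |E_x(v)|\le C\|v\|^2,$$
where $C$ depends on $\Second$, uniformly in $x$. Rescaling $v=\epsilon u$ turns the inner piece of the integral into
$$\int_{B^{\mathbb{R}^d}_{r_0/\epsilon}(0)} K\bigl(\|u\|(1+\epsilon^2\tilde e_x(u))\bigr)\,P(\exp_x\epsilon u)\,J_x(\epsilon u)\,du,\qquad |\tilde e_x(u)|\le C\|u\|^2,$$
which I would compare against $P(x)=P(x)\int_{\mathbb{R}^d}K(\|u\|)\,du$.

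Next I would decompose the error into three pieces: (a) a \emph{density/volume error} $\int K(\|u\|)[P(\exp_x\epsilon u)J_x(\epsilon u)-P(x)]\,du$, estimated via $|P(\exp_x\epsilon u)-P(x)|\le C_P(\epsilon\|u\|)^\kappa$ and $|J_x(\epsilon u)-1|\le C\epsilon^2\|u\|^2$; (b) a \emph{kernel argument error} $\int[K(\|u\|(1+\epsilon^2\tilde e_x(u)))-K(\|u\|)]P(\exp_x\epsilon u)J_x(\epsilon u)\,du$; and (c) a \emph{tail error}, consisting of the outer manifold integral over $M\setminus B_{r_0}(x)$ plus the remainder $\int_{\mathbb{R}^d\setminus B_{r_0/\epsilon}(0)}K(\|u\|)\,du$. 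For part (2), compact support of $K$ kills the tail and the outer manifold contribution once $\rho\epsilon<r_0/2$, leaving only (a) and (b) of order $O(\epsilon^\kappa+\epsilon^2)$, so $\epsilon\le \omega_1\gamma^2$ suffices. For part (1), the decay $|K(t)|\le t^{-\alpha}$ combined with compactness of $\iota(M)$ (which yields a uniform lower bound $\|\iota(y)-\iota(x)\|_{\mathbb{R}^p}\ge c\,r_0$ whenever $d(x,y)\ge r_0$, via positivity of the reach) gives tails of size $O(\epsilon^{\alpha-d})$, and balancing $\epsilon^\kappa$, $\epsilon^2$, $\epsilon^{\alpha-d}$ against the target $\gamma^\kappa$ dictates the exponent $2\alpha/(\alpha-d)$.

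The main obstacle is piece (b): since $K$ is only Riemann integrable, the pointwise difference $K(\|u\|(1+\epsilon^2\tilde e_x(u)))-K(\|u\|)$ can be as large as $2K_{\sup}$ at discontinuities and cannot be bounded naively. I plan to handle this by passing to polar coordinates $u=r\theta$ and making the monotone radial substitution $s=r(1+\epsilon^2\tilde e_x(r\theta))$, whose Jacobian is $1+O(\epsilon^2 s^2)$ uniformly for $\epsilon$ small; the perturbed kernel integral then coincides with the unperturbed one times this Jacobian, and Riemann integrability (and the decay/compact support) of $K(\|\cdot\|_{\mathbb{R}^d})$ makes the resulting error $O(\epsilon^2)$. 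A secondary technical task is to ensure that $r_0$, the curvature bound, the reach, and the norm of $\Second$ are all uniform in $x\in M$, which follows from smoothness and compactness of $\iota(M)$.
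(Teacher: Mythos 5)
Your plan follows the same route as the paper's proof: the radial substitution $s = r(1+\epsilon^2\tilde e_x(r\theta))$ you propose in polar coordinates is exactly (a rescaled version of) the map $\phi_x(\textbf{v}) = \frac{\|\iota\circ\exp_x(\textbf{v})-\iota(x)\|_{\mathbb{R}^p}}{\|\textbf{v}\|_{\mathbb{R}^d}}\textbf{v}$ that the paper introduces and analyzes in Lemmas \ref{property of phi x} and \ref{properties of the map phi x}, and your three-way split (density/volume error, kernel-argument error, tail) is the same decomposition the paper performs after pulling back to normal coordinates. The structural difference is scale management: the paper introduces an intermediate scale $\delta$ with $\epsilon \ll \delta$, $\delta<\gamma$, and $\delta$ small enough for the geometric expansions to be valid, restricts the local analysis to the Euclidean tube $M_{\delta/2}(x)$, and bounds $|PV_x - P(x)|$ and $||Det(D\phi_x^{-1})|-1|$ \emph{uniformly} on that small ball (by $C_2\delta^\kappa$ and $C_1\delta^2$ respectively); the tail outside the tube is then handled via the polynomial decay $|K(t/\epsilon)|\le\epsilon^{(\alpha+d)/2}$ for $t>\delta/2$, and this is precisely where the exponent $\frac{2\alpha}{\alpha-d}$ arises. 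You instead keep the cutoff at a fixed geometric scale $r_0$ and invoke pointwise estimates $|P(\exp_x\epsilon u)-P(x)|\le C_P(\epsilon\|u\|)^\kappa$ and $|J_x(\epsilon u)-1|\le C\epsilon^2\|u\|^2$ over all of $B_{r_0/\epsilon}(0)$.

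That is where a genuine gap appears in part (1). Your pointwise bounds force you through moment integrals $\int_{B_{r_0/\epsilon}}|K(\|u\|)|\,\|u\|^\kappa\,du$ and $\int_{B_{r_0/\epsilon}}|K(\|u\|)|\,\|u\|^2\,du$, but Assumption \ref{assumptions on kernel 1} only gives $\alpha>d$, not $\alpha>d+\kappa$ or $\alpha>d+2$, so on $\mathbb{R}^d$ these moments may be infinite; restricted to $B_{r_0/\epsilon}$ they grow like $\epsilon^{-(d+\kappa-\alpha)}$ and $\epsilon^{-(d+2-\alpha)}$ when $\alpha<d+\kappa$ or $\alpha<d+2$. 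In that regime piece (b) is not $O(\epsilon^2)$ as you claim but only $O(\epsilon^{\alpha-d})$, and piece (a) similarly degrades from $O(\epsilon^\kappa)$ to $O(\epsilon^{\alpha-d})$. Relatedly, the Jacobian error $O(\epsilon^2 s^2)$ is $O(1)$, not small, near $s\sim r_0/\epsilon$; it is only saved by the decay of $K$ there, which is exactly the moment/tail tradeoff you have not made explicit. A secondary omission is that after the radial substitution the arguments of $P$ and $J_x$ also shift, producing an $O(\epsilon^{3\kappa})$ term from H\"older continuity (this is item (4) of Lemma \ref{properties of the map phi x}) which can dominate your $O(\epsilon^2)$ for small $\kappa$. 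None of this is fatal — the extra $\epsilon^{\alpha-d}$ and $\epsilon^{3\kappa}$ contributions still fall below $\gamma^\kappa$ under $\epsilon\le\omega_1\gamma^{2\alpha/(\alpha-d)}$ — but you must either track these moment-integral growth rates explicitly, or adopt the paper's intermediate-scale $\delta$ (under which all local errors are uniformly $O(\delta^\kappa)+O(\delta^2)$ and no moment integrals appear, which is why the paper's argument is cleaner). For part (2) with compactly supported $K$ the moment integrals are over a fixed compact set and your argument goes through as written.
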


\subsection{\textnormal{\textbf{Put everything together}}}

We concatenate the variance and the bias analysis to derive the main result. 
In the following theorem, we discuss three cases corresponding to those three cases in the variance analysis. 
The proof is in Appendix \ref{Appendix B}.

\begin{theorem}\label{density estimation}
\begin{enumerate}
\item Under Assumptions \ref{assumptions on kernel 1} and \ref{holder assumption of P}, assume $0<\gamma<\min\{\rho^{-\alpha},1\}$ and $\epsilon_n \rightarrow 0$ as $n \rightarrow \infty$. When $\epsilon_n \leq \Omega_3 \gamma^{\frac{2\alpha}{\alpha-d}}$, we have 
\begin{align}
&P\left\{\sup_{x \in M}|K_n(x)-P(x)| \leq \Omega_1 (\gamma^{1-\frac{d}{\alpha}}+\gamma^\kappa)\right\} \nonumber \\
\geq \,&1-8(2n)^{2p^2+2p}\exp\left\{-\Omega_2 n \epsilon_n^{d+\frac{d^2}{\alpha}}\left(\frac{\gamma^{2-\frac{d}{\alpha}}}{N(\gamma)^2}\right)\right\}\,,
\end{align}
where $\Omega_1$ depends on $p$, $\rho$, $d$, $\alpha$, $\kappa$, $C_P$, $P_{Max}$, $K_{sup}$, the curvature of $M$ and the second fundamental form of $\iota(M)$. $\Omega_2$  depends on $p$, $d$, $\alpha$, $P_{Max}$, $K_{sup}$ and the second fundamental form of $\iota(M)$. $\Omega_3$  depends on $p$, $\rho$, $\alpha$, $d$, $P_{Max}$ and the second fundamental form of $\iota(M)$. 

\item Under Assumptions \ref{assumptions on kernel 1} and \ref{holder assumption of P}, we further assume that $K(t)$ is compactly supported on $[0, \rho]$. Suppose $0<\gamma<1$ and $\epsilon_n \rightarrow 0$ as $n \rightarrow \infty$.  If $\epsilon_n \leq \Omega_3\gamma^{2}$, then
\begin{align}
P\left\{\sup_{x \in M}|K_n(x)-P(x)| \leq \Omega_1 \gamma^\kappa \right\} \geq 1-8(2n)^{2p^2+2p}\exp\left\{-\Omega_2\frac{n \epsilon_n^{d} \gamma^2}{N(\gamma)^2}\right\}\,,
\end{align}
where $\Omega_1$  depends on $p$, $\rho$, $d$, $\kappa$, $C_P$, $P_{Max}$, $K_{sup}$, the curvature of $M$ and the second fundamental form of $\iota(M)$. $\Omega_2$  depends on $p$, $d$, $P_{Max}$, $K_{sup}$, and the second fundamental form of $\iota(M)$. $\Omega_3$  depends on $\rho$,  $P_{Max}$ and the second fundamental form of $\iota(M)$. 
\item
Suppose $K(t)=\sum_{j=1}^J c_j \bigchi_{[a_j, b_j]}(t)$, where $c_j \leq K_{\sup}$ and $0 \leq a_j \leq b_j \leq \rho$. Moreover, $K(t)$ satiesfies (iv) in Assumption \ref{assumptions on kernel 1} . Suppose $0<\gamma<1$ and $\epsilon_n \rightarrow 0$, as $n \rightarrow \infty$. Under Assumption \ref{holder assumption of P}, if $\epsilon_n \leq \Omega_3\gamma^{2}$, then
\begin{align}
P\left\{\sup_{x \in M}\left|K_n(x)-P(x)\right| \leq \Omega_1 \gamma^\kappa \right\} \geq 1-8(2n)^{p+2}\exp\left\{-\Omega_2  n \epsilon_n^d \gamma^2\right\}\,,
\end{align}
where $\Omega_1$ depends on $\rho$, $d$, $J$, $\kappa$, $C_P$, $P_{Max}$, $K_{\sup}$, the curvature of $M$ and the second fundamental form of $\iota(M)$. $\Omega_2$ which depends on $d$, $\rho$, $P_{Max}$ and the second fundamental form of $\iota(M)$. $\Omega_3$ which depends on $\rho$,  $P_{Max}$ and the second fundamental form of $\iota(M)$. 

Hence, if $\epsilon_n \rightarrow 0$ as $n \rightarrow \infty$ and $\epsilon_n \leq \Omega_3$, then with probability greater then $1-\frac{1}{n^2}$, we have 
\begin{align}
\sup_{x \in M} |K_n(x)-P(x)| \leq \Omega_4 (\frac{\log n}{n \epsilon^d})^{\frac{\kappa}{2}},
\end{align}
where  $\Omega_4$ depends on $p$, $\rho$, $d$, $J$, $\kappa$, $C_P$, $P_{Max}$, $K_{\sup}$, the curvature of $M$ and the second fundamental form of $\iota(M)$.
\end{enumerate}
\end{theorem}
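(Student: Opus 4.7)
The plan is to prove all three parts of Theorem \ref{density estimation} by a single template: apply the triangle inequality
\begin{align*}
|K_n(x)-P(x)| \,\leq\, |K_n(x)-\mathbb{E}K_n(x)| + |\mathbb{E}K_n(x)-P(x)|
\end{align*}
uniformly in $x\in M$, then feed the first term into the appropriate variance bound (Theorem \ref{EK_n-K_n}, Corollary \ref{compact support variance}, or Corollary \ref{0-1 kernel}) and the second term into the matching case of the bias bound (Theorem \ref{bias analysis main theorem}). The only real work is to reconcile the constraints these two inputs impose on $\epsilon_n$ in terms of $\gamma$, and to absorb the resulting constants into $\Omega_1,\Omega_2,\Omega_3$.

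For part (1), the variance statement requires $\epsilon_n \leq \mathcal{D}_3\gamma^{1/(\alpha-d)}$ and delivers an $L^\infty$ fluctuation bound of order $\gamma^{1-d/\alpha}$, while the bias statement requires the strictly stronger $\epsilon \leq \omega_1\gamma^{2\alpha/(\alpha-d)}$ and yields a deterministic bound of order $\gamma^\kappa$. I would set $\Omega_3 := \min\{\mathcal{D}_3,\omega_1\}$ and demand $\epsilon_n\leq \Omega_3\gamma^{2\alpha/(\alpha-d)}$, which automatically satisfies the variance side because $\gamma<1$ and $2\alpha/(\alpha-d) > 1/(\alpha-d)$. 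Summing the two bounds gives $\Omega_1(\gamma^{1-d/\alpha}+\gamma^\kappa)$ with $\Omega_1 := \mathcal{D}_1 + \omega_2$, and the probability statement is inherited unchanged from \eqref{Bound variance most general isotropic one} with $\Omega_2 := \mathcal{D}_2$. Parts (2) and (3) follow the same template, replacing Theorem \ref{EK_n-K_n} by Corollary \ref{compact support variance} or Corollary \ref{0-1 kernel} respectively and using part 2 of Theorem \ref{bias analysis main theorem}. In these cases the variance is of order $\gamma$, which is dominated by $\gamma^\kappa$ since $0<\gamma<1$ and $0<\kappa\leq 1$, so the combined bound collapses to $\Omega_1\gamma^\kappa$.

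For the final high-probability rate in part (3), I would choose $\gamma=\gamma_n$ so that the failure probability becomes $\leq n^{-2}$. Solving
\begin{align*}
8(2n)^{p+2}\exp\{-\Omega_2\, n\epsilon_n^d\gamma_n^{2}\} \,\leq\, n^{-2}
\end{align*}
leads to the choice $\gamma_n = C\sqrt{\log n/(n\epsilon_n^d)}$ for a constant $C$ depending on $p$ and $\Omega_2$; after verifying $\gamma_n<1$ and $\epsilon_n\leq \Omega_3\gamma_n^{2}$ (both of which follow for $n$ large enough under the standing assumption $n\epsilon_n^d/\log n \to \infty$ implicit in the setup), substitution into $\Omega_1\gamma_n^\kappa$ produces the stated $(\log n/(n\epsilon^d))^{\kappa/2}$ rate.

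The only mildly subtle point I anticipate is the bookkeeping of conditions on $\epsilon_n$: the variance lemma is stated for $\epsilon_n\leq \mathcal{D}_3$ (no $\gamma$ dependence in parts 2 and 3) while the bias lemma demands $\epsilon\leq \omega_1\gamma^2$, so one must pick the smaller constraint as $\Omega_3\gamma^2$. The rest is a mechanical relabelling of constants, and no new geometric or probabilistic tool beyond what was already established for Theorems \ref{EK_n-K_n} and \ref{bias analysis main theorem} is needed.
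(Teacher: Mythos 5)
Your proposal follows the paper's proof exactly: decompose via the triangle inequality, feed the variance term into Theorem \ref{EK_n-K_n}, Corollary \ref{compact support variance}, or Corollary \ref{0-1 kernel}, and the bias term into the matching case of Theorem \ref{bias analysis main theorem}, then take $\Omega_1=\mathcal{D}_1+\omega_2$, $\Omega_2=\mathcal{D}_2$, $\Omega_3=\min\{\omega_1,\mathcal{D}_3\}$ and use $\gamma<1$ to reconcile the two $\epsilon_n$ constraints and to absorb $\gamma$ into $\gamma^\kappa$ in parts (2) and (3). One small caution on the final rate in part (3): your claim that $\epsilon_n\leq\Omega_3\gamma_n^2$ with $\gamma_n\asymp\sqrt{\log n/(n\epsilon_n^d)}$ ``follows from $n\epsilon_n^d/\log n\to\infty$'' is not right, since that inequality amounts to $n\epsilon_n^{d+1}\lesssim\log n$, which neither implies nor is implied by $n\epsilon_n^d/\log n\to\infty$; the paper sidesteps this by coupling $\epsilon_n=\Omega_3\gamma^2$ directly, which is the cleaner way to state the final claim.
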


\subsection{Riemann integrability of an isotropic kernel on a manifold}\label{Relationship between intrinsic and extrinsic}

In this section, {we discuss the delicate difference when the kernel is defined on the ambient Euclidean space and on the manifold, particularly when the kernel is irregular. The interaction between intrinsic and extrinsic geometry plays a role.}
We show a necessary and sufficient condition for an isotropic kernel to be Riemann integrable on a manifold when it is Riemann integrable in the ambient space. We will provide an {explicit example of a kernel which} is Riemann integrable in the ambient space but not integrable on the manifold. {In the example}, although the kernel is not Riemann integrable on the manifold, by our main theorem, such kernel can still be used for density estimation on the manifold and the convergence rate can still be estimated. Our theorem in this section {suggests} that the results in the Euclidean space may not be easily generalized to the manifold case when the extrinsic properties of the manifold are involved.

For any function $f$ which is Riemann integrable on any closed ball in $\mathbb{R}^p$, it is not necessary that $f$ is a Riemann integrable function over any embedded submanifold in $\mathbb{R}^p$. A trivial example is as follows.

\begin{example}
$f(x,y)$ is defined on $\mathbb{R}^2$. $f(x,y)=1$ when $(x,y) \in \mathbb{Q}\cap (0,1) \times \{0\}$. And $f(x,y)=0$ otherwise. Then $f(x,y)$ is Riemann integrable over any  ball in $\mathbb{R}^p$. But, if $(0,1) \times \{0\}$ is the embedded submanifold, then $f(x,y)$ is not Riemann integrable over the submanifold.
\end{example}

For the theoretical purpose, one may ask that whether $K(\frac{\|\iota(y)-\iota(x)\|_{\mathbb{R}^p}}{\epsilon})$ is a Riemann integrable function on the manifold $M$. 
The next theorem provides the necessary and sufficient condition such that $K(\frac{\|\iota(y)-\iota(x)\|_{\mathbb{R}^p}}{\epsilon})$ is Riemann integrable on the manifold $M$ for any $\epsilon$. 

\begin{theorem}\label{Riemann integrable main theorem}
Suppose $M$ is a $d$-dimensional compact smooth manifold without boundary isometrically embedded in $\mathbb{R}^p$ through $\iota$. Fix $x \in M$. Let $D_x(\textbf{u})=\|\textbf{u}-\iota(x)\|_{\mathbb{R}^p}$. The set of critical points of $D_x(\textbf{u})$ on $\iota(M)$ is Jordan measurable if and only if for any $\epsilon>0$, $K(\frac{\|\iota(y)-\iota(x)\|_{\mathbb{R}^p}}{\epsilon})$ is a Riemann integrable function of $y$ on the manifold for all bounded kernel $K(t):\mathbb{R}_{\geq0}\to \mathbb{R}$ that is Riemann integrable on $[0,a]$ for all $a>0$. 
\end{theorem}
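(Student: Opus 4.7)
My plan is to apply Lebesgue's criterion for Riemann integrability (a bounded function on a Jordan measurable set is Riemann integrable iff its discontinuity set has Lebesgue measure zero) in local coordinate charts of $M$, combined with Sard's theorem for the smooth map $D_x:\iota(M)\setminus\{\iota(x)\}\to \mathbb{R}$. Write $g(y):=K(\|\iota(y)-\iota(x)\|_{\mathbb{R}^p}/\epsilon)$ and $A:=D_x(C)\subset \mathbb{R}_{\geq 0}$; by Sard, $A$ has Lebesgue measure zero, and since $C$ is compact and $D_x$ continuous, $A$ is closed, hence nowhere dense. Two observations about $C$ will be central. First, $C$ is closed in $\iota(M)$, as the zero set of the continuous intrinsic differential of $D_x$ together with the single minimum $\iota(x)$. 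Second, $\operatorname{int}(C)$ equals the set where $D_x$ is locally constant: a neighborhood of critical points forces the intrinsic differential to vanish and therefore $D_x$ to be constant on each connected component. Consequently, at any $y_0\in M\setminus\operatorname{int}(C)$, the image under $D_x$ of every chart neighborhood of $y_0$ is a nondegenerate interval around $D_x(\iota(y_0))$.

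For sufficiency, assume $\partial C = C\setminus\operatorname{int}(C)$ has measure zero. Decompose $M = \operatorname{int}(C) \sqcup \partial C \sqcup (M\setminus C)$ and bound $\operatorname{Disc}(g)$ on each piece. On $\operatorname{int}(C)$, $g$ is locally constant and hence continuous. On $M\setminus C$, $D_x$ is a smooth submersion, so in local coordinates with $D_x$ as the first coordinate the preimage of the measure-zero discontinuity set of $K(\cdot/\epsilon)$ is a product of a measure-zero set in $\mathbb{R}$ with a $(d-1)$-dimensional Euclidean factor, hence of measure zero. The remaining $\partial C$ has measure zero by hypothesis, so $\operatorname{Disc}(g)$ has measure zero on $M$, and Lebesgue's criterion in local charts delivers Riemann integrability.

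For necessity, suppose $C$ is not Jordan measurable so $\partial C$ has positive measure, and take the characteristic kernel $K:=\chi_A$. Because $A$ is closed and measure zero (hence nowhere dense), $\chi_A$ is discontinuous exactly on $A$, so it is Riemann integrable on each $[0,a]$. I claim $\partial C\subseteq\operatorname{Disc}(g)$, which then contradicts the assumed Riemann integrability of $g$ on $M$. For $y_0\in\partial C$, $g(y_0)=\chi_A(D_x(\iota(y_0)))=1$ since $y_0\in C$; and because $y_0\notin\operatorname{int}(C)$, the second observation supplies a nondegenerate interval $I_n$ of values of $D_x$ in each shrinking chart neighborhood of $y_0$, and $A$ having measure zero makes $I_n\setminus A$ nonempty, yielding $y_n\to y_0$ with $g(y_n)=0$ and thus discontinuity at $y_0$.

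The main obstacle is this final step of the necessity argument: ensuring that each $y_0\in\partial C$ truly is a discontinuity of $g = \chi_A\circ D_x$. The subtlety is that $\partial C$ can contain badly behaved critical points, and one needs the two preparatory observations in tandem to guarantee that $D_x$ takes values outside the measure-zero set $A$ arbitrarily close to $y_0$. Once that is established, the kernel $\chi_A$ is tailored so that $g$ equals $1$ exactly on the preimage of the critical values, which is what enables the characteristic kernel to detect the pathological boundary of $C$ and force the contradiction.
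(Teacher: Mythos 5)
Your argument is correct, and both directions parallel the paper's structure, but each is streamlined in a way worth flagging. For sufficiency, the paper does not invoke the submersion normal form: it constructs a specific diffeomorphism $\psi$ (a rotation composed with a radial rescaling, via its Lemma D.1) chosen so that $\|\textbf{u}\|_{\mathbb{R}^p}=\|\psi(\textbf{u})\|_{\mathbb{R}^d}$, and then appeals to the earlier observation that $K(\|\cdot\|_{\mathbb{R}^d})$ is Riemann integrable on rectangles in $\mathbb{R}^d$; it then runs an explicit covering/content-zero argument. Your version, which straightens $D_x$ out with the submersion theorem so that $g$ depends only on the first coordinate and then applies Fubini plus Lebesgue's criterion, reaches the same conclusion more directly and without the linear-algebra lemma. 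For necessity, both you and the paper use the characteristic kernel $K=\chi_{A}$ with $A=D_x(C)$ the set of critical values, and both reduce to showing $\partial C$ is contained in a measure-zero set. The paper proves $\partial\mathcal{C}(x)\subset\partial D_x^{-1}(\mathcal{V}(x))$ by contradiction via its "constant lemma" (Lemma D.2: a smooth function on a connected open set taking values in the critical-value set is constant, itself a Sard-plus-FTC argument). You instead show $\partial C\subset\operatorname{Disc}(g)$ directly from the contrapositive of your second observation: off $\operatorname{int}(C)$, $D_x$ is not locally constant, so every neighborhood has a nondegenerate interval of $D_x$-values, which cannot lie in the measure-zero set $A$. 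Since $\operatorname{Disc}(\chi_{D_x^{-1}(A)})=\partial D_x^{-1}(A)$, the two inclusions are equivalent, so you have genuinely repackaged the paper's constant lemma into a one-line observation rather than side-stepped any content. Your proof is correct, and the cost is only that you use the rank theorem instead of the paper's self-contained elementary diffeomorphism; the gain is economy.
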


The proof of Theorem \ref{Riemann integrable main theorem} is in Appendix \ref{Appendix D}. When $M$ is an analytic manifold, the set of critical points of $D_x(\textbf{u})$ on $\iota(M)$ has measure $0$. Because the set of critical points of $D_x(\textbf{u})$ on $\iota(M)$ is a closed subset of $\iota(M)$ containing $\iota(x)$, it contains all its boundary points. Hence, the set of critical points of $D_x(\textbf{u})$ on $\iota(M)$ is Jordan measurable and we have the following corollary.

\begin{corollary}
Suppose $M$ is a $d$-dimensional compact {\em analytic} manifold without boundary isometrically embedded in $\mathbb{R}^p$ through $\iota$. Take a bounded kernel $K(t):\mathbb{R}_{\geq0}\to \mathbb{R}$ that is Riemann integrable on $[0,a]$ for all $a>0$.  Fix $x \in M$. For any $\epsilon>0$, $K(\frac{\|\iota(y)-\iota(x)\|_{\mathbb{R}^p}}{\epsilon})$ is a Riemann integrable function of $y$ on the manifold.
\end{corollary}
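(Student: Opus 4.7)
The plan is to obtain this corollary as an immediate consequence of Theorem \ref{Riemann integrable main theorem}: under the analyticity hypothesis all I need to verify is that the critical set
\[
C_x = \{\,y \in M : y \text{ is a critical point of } D_x \circ \iota\,\}
\]
is Jordan measurable in $\iota(M)$. I would split this into (a) showing $C_x$ has Lebesgue measure zero and (b) upgrading to Jordan measurability via closedness.

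For (a) I would work in analytic local coordinates on $M$. Since $\iota$ is an analytic isometric embedding, the squared distance function $F(y) := \|\iota(y)-\iota(x)\|_{\mathbb{R}^p}^2$ is real analytic on $M$, and away from $y=x$ a critical point of $D_x \circ \iota$ is precisely a critical point of $F$. In any analytic chart the critical set of $F$ is cut out by the simultaneous vanishing of its $d$ analytic partial derivatives, so it is a real analytic subvariety. Because $F(x)=0$ while $F$ is strictly positive on a punctured neighborhood of $x$, $F$ is not constant on the connected component of $M$ containing $x$; the standard fact that a proper real analytic subvariety of a connected analytic manifold has Lebesgue measure zero then yields the claim on that component. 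Finite covering by analytic charts, using compactness of $M$, assembles these local statements into the global measure-zero conclusion.

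For (b) I would invoke the one-sentence observation highlighted in the paragraph preceding the corollary: a point $y\in \iota(M)$ is critical for $D_x$ iff $y-\iota(x)\in (T_y\iota(M))^\perp$, which is a closed condition on $y$. Hence $C_x$ is closed in $\iota(M)$ and therefore compact, so its topological boundary satisfies $\partial C_x \subseteq C_x$ and inherits Lebesgue measure zero from step (a). A compact set of Lebesgue measure zero in a $d$-manifold has Jordan content zero, which is exactly the criterion for $C_x$ to be Jordan measurable. Theorem \ref{Riemann integrable main theorem} now delivers the Riemann integrability of the map $y \mapsto K(\|\iota(y)-\iota(x)\|_{\mathbb{R}^p}/\epsilon)$ on $M$.

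The main subtlety I anticipate is in (a), specifically the possibility that $M$ has connected components disjoint from $x$ on which $F$ is constant, i.e.\ components lying entirely on a sphere around $\iota(x)$; handling these requires either a connectedness assumption on $M$ (commonly implicit in the manifold-learning setting) or a slightly more refined real analytic stratification argument to ensure that any such component contributes only a lower-dimensional, and hence measure-zero, piece to $C_x$. Aside from this point, the remainder of the proof is essentially bookkeeping on top of Theorem \ref{Riemann integrable main theorem}.
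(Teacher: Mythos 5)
Your argument follows exactly the same route as the paper's own one-paragraph justification preceding the corollary: establish that the critical set $\mathcal{C}(x)$ of $D_x$ on $\iota(M)$ has Lebesgue measure zero via analyticity, observe that $\mathcal{C}(x)$ is closed so $\partial\mathcal{C}(x)\subseteq\mathcal{C}(x)$, conclude Jordan measurability of $\mathcal{C}(x)$, and invoke Theorem \ref{Riemann integrable main theorem}. You supply more detail than the paper at the measure-zero step (passing to the analytic function $F=D_x^2$, cutting out the critical locus by vanishing of the $d$ analytic partials, appealing to the fact that a proper analytic subvariety of a connected analytic manifold is null), and you are right that this is where the real content lies.

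You also put your finger on a genuine subtlety that the paper states without justification: if $M$ is disconnected and some component $N$ lies entirely on a sphere about $\iota(x)$, then $F$ is constant on $N$, so \emph{all} of $N$ is critical and $\mathcal{C}(x)$ is not Lebesgue-null — contradicting the paper's blanket assertion. However, your proposed repair is off. You suggest a ``more refined real analytic stratification argument to ensure that any such component contributes only a lower-dimensional, and hence measure-zero, piece to $C_x$,'' but this cannot succeed: such a component contributes itself entirely, a full $d$-dimensional piece. The correct fix is simpler and bypasses the measure-zero claim for $\mathcal{C}(x)$ altogether. Since $N$ is a connected component it is open in $\iota(M)$, so $N\subseteq\operatorname{int}\mathcal{C}(x)$ and $N$ contributes nothing to $\partial\mathcal{C}(x)$. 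On every other component $F$ is nonconstant, so your analytic-subvariety argument shows $\mathcal{C}(x)$ is null there. Hence $\partial\mathcal{C}(x)$ is a compact Lebesgue-null set regardless, which gives Jordan measurability; only the intermediate claim that $\mathcal{C}(x)$ itself is null needs to be weakened. With that correction your proof is complete and coincides in substance with the paper's.
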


Next, we construct explicitly an embedded manifold $\iota(M)$ so that for some $\iota(x)$ on $\iota(M)$, the set of critical points of $D_x(\textbf{u})$ on $\iota(M)$ is not Jordan measurable. Moreover, we construct a function $K(t)$ satisfies Assumption \ref{assumptions on kernel 1}, but $K(\frac{\|\iota(y)-\iota(x)\|_{\mathbb{R}^p}}{\epsilon})$  is not Riemann integrable on the manifold for infinity many choices of $\epsilon$.

\begin{example} 
We construct a fat Cantor set $C$ in $[0, \frac{\pi}{2}]$ with non-zero measure. Then there is a non-negative smooth function $f(\theta)$ that only vanishes on $C$.  $f(\theta)$ can be constructed in the following way. The complement of $C$ in $[0, \frac{\pi}{2}]$ is the union of countable open intervals. Then on each of the open interval, $f(\theta)$ is equal to an everywhere positive bump function subject to  the open interval and vanishes on the boundary of the open interval. Moreover, $f(\theta)$ is $0$ on the fat cantor set. We construct  the curve $\gamma(\theta) \subset \mathbb{R}^2$ for $\theta \in  [0, 2\pi]$ with the following conditions. 
\begin{enumerate}
\item $\gamma(\theta)$ is smooth with $\gamma(\pi)=(0,0)$.
\item $\gamma(\theta)=((f(\theta)+1)cos(\theta), (f(\theta)+1)sin(\theta))$, for $\theta \in [0, \frac{\pi}{2}]$.  
\item $\gamma(\theta)$ for $\theta \in (\frac{\pi}{2}, 2\pi)$ is contained in the open unit disc centered at $(0,0)$.
\end{enumerate}
Note that the fat Cantor set $C$ is embedded into the closed curve as a fat Cantor set. In other words, let $C'=\gamma(C)$. Then, the unit circle in $\mathbb{R}^2$ is tangent to $\gamma(\theta)$ along the fat cantor set $C'$.  Hence, they are the critical points of the function $D_x(\textbf{u})=\|\textbf{u}\|_{\mathbb{R}^2}$. The other critical points on $\gamma(\theta)$ for $\theta \in [0, \frac{\pi}{2}]$ are the image of the critical points of $f(\theta)$ in the open intervals under $\gamma$. Thus, they are countable. Therefore, the set of critical points of $D_x(\textbf{u})=\|\textbf{u}\|_{\mathbb{R}^2}$ is not Jordan measurable.
\begin{align}
K(t)=\left\{
\begin{array}{lll} 
\frac{1}{3}& \mbox{if $t \in [0, 1)\cup (1,\frac{3}{2}]$};\\ \frac{1}{t^2} & \mbox{if $t$ is a positive integer};\\ 
0 & \mbox{everywhere else}. \end{array} \right.
\end{align}
Note that $K(t)$ satisfies Assumption \ref{assumptions on kernel 1} and $K(t)$ is not continuous on positive integers and $\frac{3}{2}$. Fix $x=0\in \mathbb{R}^2$. We can easily see that $K(\frac{\|y-0\|_{\mathbb{R}^2}}{\epsilon})=K(\frac{\|y\|_{\mathbb{R}^2}}{\epsilon})$ is not a Riemann integrable function  on $\gamma(\theta)$, for any $\epsilon=\frac{1}{k}$ where $k$ can be any positive integer.   In other word, given the above manifold and kernel, there are arbitrarily small bad choices of $\epsilon$ so that the kernel fails to be Riemann integrable. 
\end{example}

 \subsection{A comparison between Riemann integrable kernels and kernels in the VC class}

We recall the VC class of kernels subject to a set $A$ in $\mathbb{R}^p$, and refer readers with interest to \cite{shorack2009empirical} and \cite{pollard2012convergence} for more general definition. Suppose $K(\textbf{u})$ is a bounded function on $\mathbb{R}^p$ and $K(\textbf{u}) \in L^1 (\mathbb{R}^p)$. Note that $K(\textbf{u}) $ is also in  $ L^2 (\mathbb{R}^p)$.   For any $A\subset\mathbb{R}^p$, we consider the {\em space of kernels} over $A$, denoted as $\mathcal{F}(A)$, by 
\begin{align}
\mathcal{F}(A)=\{K(x - \cdot ), \, x\in A \}.
\end{align}
Suppose $\mathcal{P}$ is any probability measure defined on the $\sigma$-algebra of the Borel sets in $\mathbb{R}^p$. Then the $L^2(\mathcal{P})$ metric over $\mathcal{F}(A)$ is defined as 
\begin{align}\label{general L2 metric}
d_{L^2(\mathcal{P})}(K(x - \cdot ), K(y - \cdot ))=(\int_{\mathbb{R}^p} (K(x - z)-K(y -z))^2 d \mathcal{P}(z) ) ^{\frac{1}{2}}.
\end{align}
Let $N_{cov}(\epsilon, \mathcal{F}(A), d_{L^2(\mathcal{P})})$ be the $\epsilon$-covering number of $\mathcal{F}(A)$ with respect to the metric $d_{L^2(\mathcal{P})}$. Then $\mathcal{F}(A)$ is a VC class, whenever there exist constants $C>0$ and $b>0$ such that for all $0<\epsilon<1$,
\begin{align}
\sup_{\mathcal{P}} N_{cov}(\epsilon, \mathcal{F}(A), d_{L^2(\mathcal{P})}) \leq C \epsilon^{-b},
\end{align}
where the supremum is taken over all the probability measure $\mathcal{P}$ defined on the $\sigma$ algebra of the Borel sets in $\mathbb{R}^p$.
The constants $C$ and $b$ are called the VC characteristics. In \cite{shorack2009empirical}, \cite{pollard2012convergence}, \cite{nolan1987u}, \cite{van2000applications}, the authors discuss several sufficient conditions for $\mathcal{F}(A)$ to be a VC class. For example, if $g_1$ is a bounded real function with bounded  variation, $g_2(\textbf{u})$ is a polynomial on $\mathbb{R}^p$ and $K(\textbf{u})=g_1(g_2(\textbf{u}))$, then $\mathcal{F}(A)$ is a VC class.

Next, we show an example of kernel which is Riemann integrable but not in VC class. 
\begin{example} \label{not VC class}
Let $K(t)=\sin(\exp(\exp (\frac{1}{|t|})))$ for $|t| \leq 1$ and $K(t)=0$ for $|t|>1$. $K(t)$ is discontinuous at $t=-1, 0, 1$. Hence, $K(t)$ is Riemann integrable. It is also trivial to modify $K(t)$ so it is discontinuous only at  $t=0$. Suppose $A=[0,1] \subset \mathbb{R}$, then $\mathcal{F}(A)$ is not a VC class. The proof is in Appendix \ref{proof of not VC class}. 
\end{example}

\begin{example} \label{not RI class}
There are kernels in the VC class that are not Riemann integrable. For example, suppose $K_1$ is the characteristic function on the irrational numbers in $[0,1]$. $K_1$ is clearly in the VC class but not Riemann integrable. 
To further explore $K_1$, suppose $K_2$ is the characteristic function on $[0,1]$. Clearly, $K_2$ differs from $K_1$ by a measure $0$ set, and $K_1$ and $K_2$ have the same $\epsilon$ covering number of any closed interval $A$ on $\mathbb{R}$.  However,  it can be shown that $K_1$ and $K_2$ have the same $L^\infty$ convergence rate when they are applied for KDE. In other words, the KDE behavior of the non-Riemann integrable kernel $K_1$ can be studied through the Riemann integrable kernel $K_2$. 
\end{example}

Before closing this subsection, we have a comparison of our results with those shown in \cite{gine2002rates}. The authors in \cite{gine2002rates} proved the following variance analysis result for the density estimation on the Euclidean space $\mathbb{R}^d$ by using the kernel in the VC class.
\begin{theorem}\label{Gine Guillou}[Gin\'e and Guillou]
Let $P$ be a bounded probability density function on $\mathbb{R}^d$. Let $\textbf{v}_1, \cdots, \textbf{v}_n$ be a sequence of i.i.d samples from $\mathbb{R}^d$ based on $P$. Suppose $K(\textbf{v})$ is in the VC class of $\mathbb{R}^d$ with the VC characteristics $C$ and $b$. Suppose
\begin{align}\label{condition 1 in Gine}
 \|P\|_{\infty}  \int_{\mathbb{R}^d} |K(\textbf{v})|^2d\textbf{v} \leq D.
\end{align}
For $\textbf{v} \in \mathbb{R}^d$, define the kernel density estimator at $\textbf{v}$ to be
\begin{align}
K_{n,\epsilon}(\textbf{v})=\frac{1}{n \epsilon^d}\sum_{i=1}^n K\left(\frac{\textbf{v}_i-\textbf{v}}{\epsilon}\right)\,. 
\end{align}
Suppose $C_1$ and $C_2$ are constants depending on the VC characteristics. For any $c_1>C_1$ and $0<\gamma<\frac{c_1D}{\|K\|_\infty}$, there is a $n_0$ depending on $\gamma$, $D$, $\|K\|_\infty$ and the VC characteristics, such that if $n>n_0$, then
\begin{align}\label{GiniMainresult}
P\left\{\sup_{x \in \mathbb{R}^d} |\mathbb{E}K_n(\textbf{v})-K_n(\textbf{v})| \geq 2 \gamma\right\} \leq C_2 \exp\left\{-\frac{1}{D}\frac{\log(1+\frac{4c_1}{C_2})}{c_1C_2}  n \epsilon^d \gamma^2\right\}\,,
\end{align}
\end{theorem}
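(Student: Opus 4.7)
The plan is to realize $\sup_{\textbf{v}}|\mathbb{E}K_n(\textbf{v})-K_n(\textbf{v})|$ as an empirical process indexed by a translation family of kernels and then apply Talagrand's concentration inequality together with VC-type covering number bounds. Concretely, write
\begin{align}
n\epsilon^d\bigl(K_n(\textbf{v})-\mathbb{E}K_n(\textbf{v})\bigr)=\sum_{i=1}^n\bigl(f_{\textbf{v}}(\textbf{v}_i)-\mathbb{E}f_{\textbf{v}}(\textbf{v}_i)\bigr),\qquad f_{\textbf{v}}(\textbf{w}):=K\!\left(\frac{\textbf{w}-\textbf{v}}{1}\right)\!\bigg|_{\textbf{w}\mapsto\textbf{w}/\epsilon}\,,
\end{align}
so that the quantity of interest is $Z:=\sup_{f\in\mathcal F_\epsilon}\bigl|\sum_i(f(\textbf{v}_i)-\mathbb{E}f(\textbf{v}_i))\bigr|$, where $\mathcal F_\epsilon=\{K((\cdot-\textbf{v})/\epsilon):\textbf{v}\in\mathbb R^d\}$. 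The envelope is $U=\|K\|_\infty$, and hypothesis \eqref{condition 1 in Gine} together with the change of variables $\textbf{w}=\textbf{v}_i-\epsilon\textbf{u}$ gives a uniform variance bound $\sigma^2:=\sup_{f\in\mathcal F_\epsilon}\mathbb{E}f^2\leq \|P\|_\infty\int K^2\,\epsilon^d\leq D\epsilon^d$.

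Next, I would verify that $\mathcal F_\epsilon$ inherits the VC property of $\mathcal F(A)$ with the same characteristics (rescaling $\textbf{v}\mapsto\textbf{v}/\epsilon$ does not change covering numbers with respect to the supremum over probability measures, since the supremum absorbs the induced pushforward measure). Hence there exist $C,b$ independent of $\epsilon$ such that
\begin{align}
\sup_{\mathcal Q}N_{cov}\bigl(u\|K\|_\infty,\mathcal F_\epsilon,d_{L^2(\mathcal Q)}\bigr)\leq C u^{-b},\quad 0<u<1.
\end{align}
A Dudley-type chaining bound (see Einmahl--Mason or Giné--Guillou) then yields an estimate of the form $\mathbb{E}Z\leq C_1\sqrt{n\sigma^2\log(U/\sigma)}$, provided $n\sigma^2\geq U^2$, i.e., provided $n\epsilon^d$ is not too small, which is where the threshold $n_0$ will enter.

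Once the expectation is controlled, I would apply Talagrand's inequality for empirical processes in its Bousquet form:
\begin{align}
P\bigl\{Z\geq \mathbb{E}Z+t\bigr\}\leq \exp\!\left(-\frac{t^2}{2(n\sigma^2+2U\mathbb{E}Z)+(2/3)Ut}\right).
\end{align}
Rescaling by $n\epsilon^d$ so that $Z/(n\epsilon^d)$ is what appears in \eqref{GiniMainresult}, choose $t=n\epsilon^d\gamma$ and absorb $\mathbb{E}Z$ into the factor ``$2\gamma$'' on the left-hand side (this is where the coefficient $2$ in ``$2\gamma$'' comes from, and where the restriction $\gamma<c_1D/\|K\|_\infty$ ensures $\mathbb{E}Z\leq n\epsilon^d\gamma$). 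In the resulting exponent, $n\sigma^2\leq nD\epsilon^d$ dominates $U\mathbb{E}Z$ and $Ut$ in the regime forced by the hypothesis on $\gamma$ and the choice of $n_0$, producing the exponential rate $-c\, n\epsilon^d\gamma^2/D$; tracking the universal constants that come from the chaining step and Talagrand's inequality gives the explicit factor $\log(1+4c_1/C_2)/(c_1C_2)$.

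The main obstacle is the step from a VC class to a quantitatively sharp bound on $\mathbb{E}Z$: controlling the Dudley integral requires verifying that the square-integrable envelope condition holds uniformly over the rescaled class and balancing the ``variance regime'' versus the ``envelope regime'' in Talagrand's inequality so that the variance term is the only one that survives. This balancing is exactly what the lower bound $n>n_0(\gamma,D,\|K\|_\infty,C,b)$ buys, and writing down a clean $n_0$ that keeps the constants in \eqref{GiniMainresult} independent of $P$ and $\epsilon$ is the technically delicate bookkeeping that replaces genuine new ideas.
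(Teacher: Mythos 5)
This theorem is not proved in the paper — it is quoted verbatim from Gin\'e and Guillou \cite{gine2002rates} as a benchmark for comparison with Corollary \ref{compact support variance}, so there is no in-paper proof to match your sketch against. Your outline (empirical process over the rescaled translation class $\mathcal{F}_\epsilon$, VC covering numbers preserved under rescaling, Dudley chaining for $\mathbb{E}Z$, then Bousquet's version of Talagrand) is indeed the route Gin\'e and Guillou take in the cited source, and at the sketch level it is sound. One bookkeeping slip in the final step: the upper bound $\gamma<c_1D/\|K\|_\infty$ is what keeps the $Ut$ and $U\,\mathbb{E}Z$ terms in Talagrand's denominator subordinate to $n\sigma^2\le nD\epsilon^d$ (since $\|K\|_\infty\cdot n\epsilon^d\gamma\lesssim nD\epsilon^d$ is exactly $\gamma\lesssim D/\|K\|_\infty$), whereas the inequality $\mathbb{E}Z\le n\epsilon^d\gamma$ that lets you absorb the mean into the factor ``$2\gamma$'' is what the separate hypothesis $n>n_0(\gamma,D,\|K\|_\infty,C,b)$ buys, not the bound on $\gamma$.
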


It is reasonable to compare Theorem \ref{Gine Guillou} with  Corollary \ref{compact support variance}. In Corollary \ref{compact support variance}, we prove that if $\epsilon_n \leq \mathcal{D}_3$, then
\begin{align}\label{OurCorollary1Statement}
P\left\{\sup_{x \in M} |\mathbb{E}K_n(x)-K_n(x)| \geq \mathcal{D}_1 \gamma \right\} \leq 8(2n)^{2p}\exp\left\{-\mathcal{D}_2n \epsilon_n^{d}\frac{ \gamma^2}{N(\gamma)^2}\right\}\,,
\end{align}
which has an extra $\log(n)$ term compared with \eqref{GiniMainresult}.
The term $\frac{\log(1+\frac{4c_1}{C_2})}{c_1C_2}$ in \eqref{GiniMainresult} and the term $\frac{1}{N(\gamma)^2}$ in \eqref{OurCorollary1Statement} both characterize the regularity of the kernel. Besides this difference, the convergence rate in Theorem \ref{Gine Guillou} and Corollary \ref{compact support variance} are the same.  However, in the case when the kernel is not in the VC class,  $\frac{\log(1+\frac{4c_1}{C_2})}{c_1C_2}$ does not exist.  
{
It would be an interesting future direction to generalize the $L^\infty$ convergence result with a kernel in the VC class and provide the rate in the manifold setup. 

}

{
\section{Kernel density estimation with non-translational invariant Riemann integrable kernels}\label{Section:KDE3} 

We switch our discussion to more general Riemann integrable kernels that might be non-translational invariant. As we discussed in the introduction, this is the case that we may encounter when we analyze modern machine learning algorithms. Note that when the kernel function is defined on the Euclidean space, the kernel's property might be different when it is restricted on the manifold, as is shown in Section \ref{Relationship between intrinsic and extrinsic}. In this case, we thus need to further know the kernel's behavior on the manifold so that we can guarantee if the density estimator works.
In the field of manifold learning, sometimes the kernel is implicitly defined on the manifold via the algorithm. In this case, we could focus on the kernel analysis. 

We focus on a special non-translational invariant kernel that is inspired by the LLE algorithm. 
Suppose a manifold learning algorithm implicitly provides us with kernel via a chart of $\iota(M)$, where the kernel might be irregular. We can use this kernel to construct a kernel density estimator. 
We need the following two assumptions to describe the general kernel function.  

\begin{Assumption}\label{assumptions on non isotropic kernel}
\begin{enumerate}[(i)]
\item
$K(\textbf{v}):\mathbb{R}^d \rightarrow \mathbb{R}$ is a bounded function such that $\sup|K(\textbf{v})| = K_{\sup}$ for some $K_{\sup}>0$.
\item
$K$ is Riemann integrable on $\mathbb{R}^d$ with the support contained in the cube $[-R,R]^d$.
\item
$\int_{ \mathbb{R}^d} K(\textbf{v})d\textbf{v}=1$.
\end{enumerate}
\end{Assumption}

Next, suppose the chart satisfies the following assumption.  Recall that with a chart, for each point $x\in M$, there is a local diffeormorphism from $M$ to some Euclidean space.
\begin{Assumption}\label{assumptions on charts of M}
Suppose there are constants $r$, $D_1 \geq 1$, and $D_2\geq0$ such that for any $x \in M$ there is a diffeomorphism $\Phi_x$ satisfying the following conditions: 
\begin{enumerate}[(i)]
\item
$\Phi_x(\textbf{v}): B^{\mathbb{R}^d}_r(0) \rightarrow \iota(M)$ with $\Phi_x(0)=\iota(x)$.  $B^{\mathbb{R}^d}_r(0)$ is the open Euclidean ball of radius $r$ centered at $0$.
\item
For $\textbf{v}_1, \textbf{v}_2 \in B^{\mathbb{R}^d}_r(0)$, we have  $\frac{1}{D_1} \|\textbf{v}_1- \textbf{v}_2\|_{\mathbb{R}^d} \leq d_{\iota(M)} (\Phi_x(\textbf{v}_1),\Phi_x(\textbf{v}_2))  \leq  D_1 \|\textbf{v}_1- \textbf{v}_2\|_{\mathbb{R}^d} $, where $d_{\iota(M)}$ is the geodesic distance on $\iota(M)$. 
\item
Suppose the volume form of $\iota(M)$ in the chart $\Phi_x$ can be expressed as $dV=U_x(\textbf{v}) d \textbf{v}$ for $\textbf{v} \in B^{\mathbb{R}^d}_r(0)$.  Then, $|\frac{U_x(\textbf{v})}{U_x(0)}-1| \leq D_2 \|\textbf{v}\|_{\mathbb{R}^d}$.
\end{enumerate}
\end{Assumption}
Note that (ii) in the above assumption implies that $|U_x(\textbf{v})|\leq D^d_1$ for any $\textbf{v} \in B^{\mathbb{R}^d}_r(0)$. Hence, (iii) is equivalent to the fact that we have a $C^1$ bound of the volume form at $0$. 
Now, we define the kernel density estimator. For $x \in M$, define the kernel density estimator at $x$ with the bandwidth $\epsilon_n>0$ as
\begin{align}
K_{n}(x)=\frac{1}{n \epsilon_n^d U_x(0)}\sum_{i=1}^n K_{\epsilon_n}(x,x_i)\,,
\end{align}
where 
\[
K_{\epsilon_n}(x,x_i):=K\left(\frac{\Phi^{-1}_x(\iota(x_i))}{\epsilon_n}\right)
\] 
if $\iota(x_i) \in \Phi_x(B^{\mathbb{R}^d}_r(0))$ and $K_{\epsilon_n}(x,x_i)=0$ otherwise. Note that the kernel function $K$ is defined on $\mathbb{R}^d$, which can be understood as the tangent space. Also note that the kernel is not translational invariant since in general $K_{\epsilon_n}(x,x_i)$ cannot be written as $k(x-x_i)$ for some function $k$ defined on $\mathbb{R}^p$. In other words, at different points, we use different weights to estimate the density function. Clearly, this is a special non-translational invariant kernel. In general, even if we have the chart, we still need to estimate $U_x(0)$. In the special case when $\Phi_x (\textbf{v})= \iota \circ \exp_x(\textbf{v})$ where $\exp_x$ is the exponential map at $x$, an expansion of $U_x(\textbf{v})$ in terms of $\textbf{v}$ is shown in Lemma \ref{volume form in normal coordinate} in Appendix. Especially,  $U_x(0)=1$ in this case.

Now we claim the strong uniform consistency of this kernel density estimator. Define the partition number $N(\gamma)$ of $K(\textbf{v})$ over $[-R, R]^d$ in the same way as the partition number for an isotropic kernel in \eqref{equation def partition number}. We have the following result.
\begin{theorem}\label{Theorem nontranslational invariant Riemann}
Suppose Assumptions  \ref{holder assumption of P}, \ref{assumptions on non isotropic kernel}, and \ref{assumptions on charts of M} hold. Take $0<\gamma<1$ and assume $\epsilon_n \rightarrow 0$ as $n \rightarrow \infty$.  If $\epsilon_n \leq \Omega_3\gamma^{2}$, then
\begin{align}
P\left\{\sup_{x \in M}|K_n(x)-P(x)| \leq \Omega_1 \gamma^\kappa \right\} \geq 1-8(2n)^{2p^2+2p}\exp\left\{-\Omega_2\frac{n \epsilon_n^{d} \gamma^2}{N(\gamma)^2}\right\}\,,
\end{align}
where $\Omega_1$  depends on $p$, $R$, $d$, $\kappa$, $C_P$, $P_{Max}$, $K_{sup}$, $D_1$, $D_2$, and the second fundamental form of $\iota(M)$. $\Omega_2$  depends on $p$, $d$, $P_{Max}$, $K_{sup}$, $D_1$, and the second fundamental form of $\iota(M)$. $\Omega_3$  depends on $r$, $R$, $D_1$, $P_{Max}$, and the second fundamental form of $\iota(M)$. 
\end{theorem}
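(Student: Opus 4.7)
The proof follows the same bias–variance splitting that was carried out for the isotropic compact support case in Section \ref{Section:KDE2}, with the chart $\Phi_x$ playing the role that the ambient Euclidean distance played there. For the bias, I would change variables $\textbf{v}=\Phi_x^{-1}(\iota(y))$ in the integral defining $\mathbb{E}K_n(x)$, then rescale $\textbf{w}=\textbf{v}/\epsilon_n$, and use that $\mathrm{supp}(K)\subset [-R,R]^d$ lies inside the chart once $\epsilon_n R<r$. This yields
\begin{equation}
\mathbb{E}K_n(x)-P(x)=\int_{[-R,R]^d}K(\textbf{w})\left[P(\Phi_x(\epsilon_n\textbf{w}))\frac{U_x(\epsilon_n\textbf{w})}{U_x(0)}-P(x)\right]d\textbf{w}.
\end{equation}
The bi-Lipschitz property in part (ii) of Assumption \ref{assumptions on charts of M} gives $d(\Phi_x(\epsilon_n\textbf{w}),x)\leq D_1\epsilon_n R$, Assumption \ref{holder assumption of P} then gives $|P(\Phi_x(\epsilon_n\textbf{w}))-P(x)|\leq C_P(D_1R)^\kappa\epsilon_n^\kappa$, and part (iii) of Assumption \ref{assumptions on charts of M} gives $|U_x(\epsilon_n\textbf{w})/U_x(0)-1|\leq D_2R\epsilon_n$. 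Putting these three bounds together, $\sup_{x\in M}|\mathbb{E}K_n(x)-P(x)|$ is controlled by $C\epsilon_n^\kappa$ for an explicit constant, and combined with $\epsilon_n\leq\Omega_3\gamma^2$ this gives a bias bound of order $\gamma^{2\kappa}\leq\gamma^\kappa$.

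For the variance I would follow the step function approximation argument used to prove Corollary \ref{compact support variance}. Using the partition number $N(\gamma)$, partition $[-R,R]^d$ into cubes $\{Q_i\}_{i=1}^{N(\gamma)}$ with $\sum_i(M_i(K)-m_i(K))\mathrm{Vol}(Q_i)<\gamma^2$, and set $K^*=\sum_i m_i(K)\bigchi_{Q_i}$ and $K^{**}=\sum_iM_i(K)\bigchi_{Q_i}$ so that $|K-K^*|\leq K^{**}-K^*$ and $\int|K-K^*|<\gamma^2$. Inserting $K^*$ into $K_n$ produces a linear combination over $i$ of the empirical counts $\frac{1}{n}\sum_j\bigchi_{\Phi_x(\epsilon_n Q_i)}(\iota(x_j))$, each of which is the empirical measure of a chart image cube on $M$. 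The bi-Lipschitz property forces $\Phi_x(\epsilon_n Q_i)$ to lie inside a Euclidean ball of radius $\lesssim D_1R\epsilon_n$ around $\iota(x)$ in $\mathbb{R}^p$, so each such set can be enclosed in a fixed-size ambient cube, and a rectangle-covering lemma analogous to the one used in Section \ref{Section:KDE2} provides a uniform-in-$x$ Bernstein bound on the concentration of each such empirical count. A union bound over the $N(\gamma)$ partition cubes and over the $(2n)^{2p^2+2p}$ ambient rectangles used to enumerate sample positions in $\iota(M)$ yields
\begin{equation}
P\{\sup_{x\in M}|K_n^*(x)-\mathbb{E}K_n^*(x)|>C\gamma\}\leq 8(2n)^{2p^2+2p}\exp\{-c\,n\epsilon_n^d\gamma^2/N(\gamma)^2\}.
\end{equation}
The remainder $K-K^*$ is handled by applying the same device to the nonnegative step function $K^{**}-K^*$, whose Lebesgue integral is less than $\gamma^2$; this absorbs the remainder into the same error level up to changing constants. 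Concatenation with the bias bound then yields the stated probability estimate.

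The principal obstacle is that the family of sets $\{\Phi_x(\epsilon_n Q_i)\}_{x\in M,\,i\leq N(\gamma)}$ whose empirical measures must be uniformly controlled depends on the point $x$ through a different chart at each $x$, so it is not immediately a VC class in a usable sense. The resolution relies critically on Assumption \ref{assumptions on charts of M} supplying bi-Lipschitz constants $D_1$ that are independent of $x$: this uniformity guarantees that each $\Phi_x(\epsilon_n Q_i)$ is contained in a Euclidean cube of controlled, $x$-independent side length in $\mathbb{R}^p$, which reduces the concentration question to a problem about rectangles in ambient space, a class of fixed VC dimension $2p$. This structural feature is what allows the same $(2n)^{2p^2+2p}$ combinatorial factor and the same $N(\gamma)^{-2}$ loss from step function approximation as in the isotropic compact support case, despite the kernel now being non-translational invariant and $x$-dependent.
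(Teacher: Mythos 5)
Your bias argument is essentially the paper's: change of variable via $\Phi_x$, rescale, and use Assumption \ref{assumptions on charts of M}(ii)--(iii) together with the H\"older condition. That part is sound.

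The variance argument, however, has a genuine gap at exactly the point you flag as the ``principal obstacle,'' and your proposed resolution does not close it. You partition $[-R,R]^d$ into cubes $Q_i$, approximate $K$ by $K^*=\sum_i m_i(K)\bigchi_{Q_i}$, and reduce to controlling $\sup_x|\mu_n(\Phi_x(\epsilon_n Q_i))-\mu(\Phi_x(\epsilon_n Q_i))|$. You then claim that because each $\Phi_x(\epsilon_n Q_i)$ is \emph{contained} in an ambient rectangle of controlled size, ``the concentration question reduces to a problem about rectangles.'' Containment gives no such reduction: if $A\subset R$, a bound on $|\mu_n(R)-\mu(R)|$ gives no control of $|\mu_n(A)-\mu(A)|$ — the empirical overshoot can concentrate entirely on $A$. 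Lemma \ref{rectangle covering lemma} controls empirical deviations uniformly over sets of the form $R\cap\iota(M)$, and $\Phi_x(\epsilon_n Q_i)$ is not such a set; it is the image of a cube under an arbitrary (bi-Lipschitz, but otherwise unrestricted) chart map, and the family of these images over all $x$ need not have any tractable VC or covering structure. So the Bernstein/union-bound step does not go through as written.

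What the paper's sketch (and Lemma \ref{approximation to K}) actually does is subtly but crucially different: one does not approximate $K$ on $[-R,R]^d$ directly. Instead, one introduces the projection chart $I_x$ (projection onto the tangent space after moving $\iota(x)$ to the origin), writes
\[
K\!\left(\frac{\Phi_x^{-1}(\textbf{u})}{\epsilon_n}\right)=K\!\left(\Psi_{x,\epsilon_n}\!\left(\frac{I_x(\textbf{u})}{\epsilon_n}\right)\right),
\qquad
\Psi_{x,\epsilon_n}(\textbf{v}):=\frac{1}{\epsilon_n}\Phi_x^{-1}\!\big(I_x^{-1}(\epsilon_n\textbf{v})\big),
\]
and approximates the \emph{composite} $K\circ\Psi_{x,\epsilon_n}$ by a step function on cubes in the $I_x$-domain, exactly as in the proof of Lemma \ref{approximation to K} with $\Psi_{x,\epsilon_n}$ in the role of $J_{x,\epsilon_n}$. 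Because $I_x$ is a coordinate projection, the level sets of this step function pull back through $I_x^{-1}$ to genuine rectangle intersections $R_i\cap\iota(M)$, which Lemma \ref{rectangle covering lemma} does control uniformly in $x$. The bi-Lipschitz constant $D_1$ enters by controlling how much $\Psi_{x,\epsilon_n}$ distorts the partition, contributing the $(\ceil{D_{lip}}+1)^d$-type factors and keeping the partition number $N(\gamma)$ usable. You should rewrite the variance step along these lines; without it the $(2n)^{2p^2+2p}$ factor and the $N(\gamma)^{-2}$ loss cannot be justified.
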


Thanks to the chart, the proof of the above theorem is a direct extension of the variance and the bias analysis for the isotropic kernel case shown in Section \ref{Section:KDE2}. Since the proof is similar, we omit details but only indicate key steps. Specifically, by (ii)  in Assumption \ref{assumptions on charts of M} and the argument in Lemma \ref{approximation to K} in Appendix, we can find an approximation of $K(\frac{\Phi^{-1}_x(\iota(y))}{\epsilon_n})$ by a simple function. The rest of the proof in the variance analysis is same as the proof of Theorem \ref{EK_n-K_n}. The bias analysis follows the same argument as in the proof of Theorem \ref{bias analysis main theorem} by applying the change of variable formula in the chart $\Phi_x$.

The kernel function associated with LLE \cite{wu2018think} that motivates the above result deserves a discussion. 
Take $\rho\in\mathbb{R}$ as the regularization order in the LLE algorithm \cite[See (2.9) and (3.5)]{wu2018think} and $\epsilon>0$ as a bandwidth. The kernel associated with {LLE} is  
\begin{equation}
K_{\texttt{LLE}}(x,y)=[1- \mathbf{T}_{\iota(x)}^\top(\iota(y)-\iota(x))]\chi_{B_{\epsilon}^{\mathbb{R}^p}(\iota(x)) \cap \iota(M)}(\iota(y)),
\end{equation}
where $x,y\in M$, 
\begin{equation}
\mathbf{T}_{\iota(x)}:= \mathcal{I}_{\epsilon^{d+\rho}}(C_{x})\big[\mathbb{E}(X-\iota(x))\chi_{B_{\epsilon}^{\mathbb{R}^p}(x)}\big]    \in \mathbb{R}^p\,,\label{Definition:Tx:ContinuousCase}
\end{equation}
\[
C_x:=\mathbb{E}[(X-\iota(x))(X-\iota(x))^{\top}\chi_{B_{\epsilon}^{\mathbb{R}^p}(\iota(x))}(X)]\,
\]
and
$\mathcal{I}_{\epsilon^{d+\rho}}(C_{x})$ is the regularized pseudoinverse of $C_x$ with the regularization $\epsilon^{d+\rho}$.   
Here, $C_x$ is understood as the {\em local covariance matrix}, which depends on the embedding and hence the extrinsic geometry of the manifold. This quantify is commonly considered in the local principal component analysis. Moreover, $\mathbf{T}_{\iota(x)}$ depends on the manifold curvature, mainly the extrinsic curvature information.

Note that the LLE algorithm is based on the barycentric coordinate, and its relationship with KDE by using $K_{\texttt{LLE}}$ is not obvious by reading the algorithm. In \cite[Sections 2 and 3]{wu2018think}, the barycentric coordinate is carefully rewritten so that locally points are represented as coordinates associated with a section of the associated frame bundle. Such conversion naturally leads to the KDE analysis. The density estimation by using $K_{\texttt{LLE}}$ appears naturally in the algorithm as is shown in \cite[(3.15)]{wu2018think}.
Clearly, in general the kernel function $K_{\texttt{LLE}}$ is non-translational invariant, and it is not designed by the user but emerges implicitly in the algorithm. Moreover, when the scaling parameter $\epsilon$ changes and the density function changes, the kernel function itself changes correspondingly. Next, we consider a special case.

\begin{example}
Suppose $M=S^{p-1}$, the canonical $(p-1)$-dim sphere, and it is isometrically embedded in $\mathbb{R}^p$ through $\iota$ with the center at the origin. To simplify the discussion, we also assume the data is uniformly sampled. By a direct calculation in \cite[Appendix G.1]{wu2018think} and the cosine law, we can describe $K_{\texttt{LLE}}$ as follows. For $\textbf{v} \in \mathbb{R}^{p-1} $, let 
\begin{align}
K_{\epsilon}(\textbf{v}) = [1-a(\epsilon, \rho)+a(\epsilon, \rho)\cos(\|\textbf{v}\|_{\mathbb{R}^{p-1}})]\chi_{B_{r(\epsilon)}^{\mathbb{R}^{p-1}}}(\textbf{v}), 
\end{align}
where $r(\epsilon)=\arccos(1-\frac{\epsilon^2}{2})$ and $a>0$ is a function depending on $\epsilon$ and $\rho$.  Then, $K_{\texttt{LLE}}(x,y)=K_{\epsilon}(\exp^{-1}_x(y))$ for $y$ in the geodesic ball of radius $r(\epsilon)$ around $x$ and $K_{\texttt{LLE}}(x,y)=0$ otherwise. 
Based on the above discussion, we see that $K_{\texttt{LLE}}(x,y)$ is a kernel depending on the intrinsic structure (the normal coordinates) of the manifold. Due to the symmetry of $S^{p-1}$, $K_{\texttt{LLE}}(x,y)$ can be further simplified. In this special case, $\|\exp^{-1}_x(y)\|_{\mathbb{R}^{p-1}}=\arccos(1-\frac{\|\iota(x)-\iota(y)\|_{\mathbb{R}^p}^2}{2})$. Hence, if we take
\begin{align}
\tilde{K}_{\epsilon}(t) = \left[1-\frac{a(\epsilon, \rho)\epsilon^2}{2} t^2\right]\chi_{[0,\epsilon]}(t), 
\end{align}
$K_{\texttt{LLE}}(x,y)=\tilde{K}_{\epsilon}(\frac{\|\iota(x)-\iota(y)\|_{\mathbb{R}^p}}{\epsilon})$.
When $\rho \leq 4$, there is a constant $C$ such that $0<a(\epsilon, \rho)\epsilon^2<C$ for all $\epsilon$ small enough. Therefore, if $\rho \leq 4$ and $\epsilon$ is small enough, $K_{\epsilon}(\|\textbf{v}\|_{\mathbb{R}^p})$ has a uniform $C^1$ bound except for $\textbf{v}$ on the boundary of $B_{\epsilon}^{\mathbb{R}^p}(0)$ and $K_{\epsilon}(\|\textbf{v}\|_{\mathbb{R}^p})$ has a $L^\infty$ bound for $\textbf{v}$ on the boundary of $B_{\epsilon}^{\mathbb{R}^p}(0)$. The volume of the boundary of $B_{\epsilon}^{\mathbb{R}^p}(0)$ becomes smaller when $\epsilon$ is smaller. Hence, there is a uniform partition so that \eqref{equation def partition number} holds for all $K_{\epsilon}(\|\textbf{v}\|_{\mathbb{R}^p})$ with $\epsilon$ small enough. We conclude that we have an upper bound of the partition numbers for all  $K_{\epsilon}(\|\textbf{v}\|_{\mathbb{R}^p})$ that only depends on $\gamma$ and is independent of $\epsilon$. Hence, the $L^\infty$ convergence and its rate of the density estimator $\frac{p-1}{|S^{p-2}| n \epsilon_n^{p-1}} \sum_{i=1}^n K_{\texttt{LLE}}(x, x_i)$ for $\rho \leq 4$ are the same as those shown in Part 2, Theorem \ref{density estimation} and can be proved by the same method.
\end{example}

 In this simple case, due to the symmetry, the kernel is isotropic. In general, the LLE kernel is far from being so nice, and its strong uniform consistency cannot be inferred by  Theorem \ref{density estimation} or Theorem \ref{Theorem nontranslational invariant Riemann}. While the LLE kernel is in general not too irregular, the main challenge is that the kernel, and hence the partition number, depends on $\epsilon$.
In general, the convergence analysis should depend on the algorithm, where the kernel might be not only non-translational invariant, but also dependent on $\epsilon$ and irregular. We will report such analyses in our future research.
}

\section{Kernel density estimation with Lebesgue integrable kernels}\label{Section:KDE4}

We consider the following assumption on the Lebesgue integrable kernel.

\begin{Assumption}\label{assumptions on kernel}
$K_{\epsilon}:\iota(M) \times \iota(M) \rightarrow \mathbb{R}_{\geq 0}$, where $\epsilon>0$, is a sequence of kernel functions so that the following conditions are satisfied for all $\epsilon$:
\begin{enumerate}[(i)]
\item There exists a constant $K_{\sup}>0$, such that $0 \leq K_\epsilon(\iota(x),\iota(y)) \leq \frac{K_{\sup}}{\epsilon^\alpha}$, for all $x,y \in M$ and $\alpha \geq d$.

\item $K_{\epsilon}$ is a Lebesgue measurable function on $\iota(M) \times \iota(M)$.

\item $K_{\epsilon}(\iota(x),\iota(y))=0$ if $\|\iota(x)-\iota(y)\|_{\mathbb{R}^p}>\epsilon$.

\item $\int_{M} K_{\epsilon}(\iota(x),\iota(y))dV(y)=1$, where $dV$ is the volume form on $M$.
\end{enumerate}
\end{Assumption} 

\begin{remark}
For the kernel $K(t)$ satisfies Assumption \ref{assumptions on kernel 1}, if we apply the same notation as in Assumption  \ref{assumptions on kernel}, we have $K_{\epsilon}(\iota(x), \iota(y))=\frac{1}{\epsilon^d}K\left(\frac{\|\iota(y)-\iota(x)\|_{\mathbb{R}^p}}{\epsilon}\right)$. Hence, for the isotropic kernel,  $\epsilon^d K_{\epsilon}(\iota(x), \iota(y)) \leq K_{\sup}$  for all $\epsilon$. In contrast, if $K_{\epsilon}$ is a kernel defined as in  Assumption  \ref{assumptions on kernel}, then $\epsilon^d K_{\epsilon}(\iota(x), \iota(y)) \leq \frac{K_{\sup}}{\epsilon^{\alpha-d}} $. In other words, Assumption  \ref{assumptions on kernel} is more general than Assumption \ref{assumptions on kernel 1} in the sense that we allow the kernel to blow up as $\epsilon \rightarrow 0$.
\end{remark}

Let $x_1, \cdots, x_n$ be a sequence of i.i.d samples from $M$ based on the probability density function $P$ on $M$.  For $x \in M$, we define the kernel density estimator at $x$ to be
\begin{align}
K_{n,\epsilon}(x)=\frac{1}{n}\sum_{i=1}^n K_{\epsilon}(\iota(x),\iota(x_i))\,.
\end{align}
Clearly, we have
\begin{align}
\mathbb{E}K_{\epsilon}(x)=\int_{M}K_{\epsilon}(\iota(x),\iota(y))P(y)dV(y).
\end{align}
Then we have the following result for the variance analysis. 
\begin{proposition}\label{variance L1 case}
Suppose $P$ is measurable on $M$ with $\|P\|_\infty = P_{max}$ for some $P_{max}>0$. Under Assumption \ref{assumptions on kernel}, if $\epsilon \rightarrow 0$ and  $\frac{\log n}{n \epsilon^{2\alpha-d}} \rightarrow 0$ as $n \rightarrow \infty$, then, a.s.
\begin{align}
\int_{M} \left|\frac{1}{n}\sum_{i=1}^n K_{\epsilon}(\iota(x),\iota(x_i))-\mathbb{E}K_{\epsilon}(x)\right| dV(x) \rightarrow 0. 
\end{align}
\end{proposition}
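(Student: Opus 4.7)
The plan is to set $I_n := \int_M |K_{n,\epsilon}(x) - \mathbb{E}K_\epsilon(x)|\, dV(x)$ and show $\mathbb{E} I_n \to 0$ and $|I_n - \mathbb{E} I_n| \to 0$ a.s.\ separately, so that $I_n \to 0$ a.s.\ by the triangle inequality. The first part is a deterministic variance estimate; the second follows from McDiarmid's bounded-differences inequality together with Borel--Cantelli.

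For the mean, Jensen's inequality applied pointwise in $x$ gives $\mathbb{E}|K_{n,\epsilon}(x) - \mathbb{E}K_\epsilon(x)| \leq \sqrt{\mathrm{Var}(K_{n,\epsilon}(x))}$. The i.i.d.\ structure together with Assumption~\ref{assumptions on kernel}(i), (iv) and $\|P\|_\infty = P_{max}$ yield
\[
\mathrm{Var}(K_{n,\epsilon}(x)) = \frac{1}{n}\mathrm{Var}(K_\epsilon(\iota(x),x_1)) \leq \frac{1}{n}\cdot\frac{K_{\sup}}{\epsilon^\alpha}\,\mathbb{E}K_\epsilon(\iota(x),x_1) \leq \frac{K_{\sup} P_{max}}{n\epsilon^\alpha},
\]
using $\mathbb{E}K_\epsilon(\iota(x),x_1) \leq P_{max}\int_M K_\epsilon(\iota(x),\iota(y))\,dV(y) = P_{max}$. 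Integrating in $x$ gives $\mathbb{E} I_n \leq \mathrm{Vol}(M)\sqrt{K_{\sup}P_{max}/(n\epsilon^\alpha)}$, which tends to $0$ because $\alpha\geq d$ and eventually $\epsilon<1$, so $n\epsilon^\alpha \geq n\epsilon^{2\alpha-d}\to \infty$ under the hypothesis.

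For the concentration, I view $I_n$ as a symmetric function of $(x_1,\ldots,x_n)$ and estimate its bounded-difference constants. Replacing $x_i$ by $x_i'$ and using $K_\epsilon\geq 0$ with the triangle inequality gives a change of at most
\[
\frac{1}{n}\int_M \bigl[K_\epsilon(\iota(x),\iota(x_i)) + K_\epsilon(\iota(x),\iota(x_i'))\bigr]\,dV(x) \leq \frac{2}{n}\sup_{y\in M}\int_M K_\epsilon(\iota(x),\iota(y))\,dV(x).
\]
The integrand is bounded by $K_{\sup}/\epsilon^\alpha$ and supported in $\{x:\|\iota(x)-\iota(y)\|_{\mathbb{R}^p}\leq\epsilon\}$, which has intrinsic volume at most $C_M\epsilon^d$ on the compact embedded $M$ by the standard extrinsic/intrinsic comparison, with $C_M$ depending on the second fundamental form of $\iota(M)$. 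Hence $c_i \leq 2C_MK_{\sup}\epsilon^{d-\alpha}/n$, and McDiarmid's inequality produces
\[
P\{|I_n - \mathbb{E} I_n| > \delta\} \leq 2\exp\!\Bigl(-\frac{n\delta^2}{2C_M^2 K_{\sup}^2\, \epsilon^{2(d-\alpha)}}\Bigr) = 2\exp\bigl(-c\delta^2\, n\epsilon^{2(\alpha-d)}\bigr).
\]
Because $d>0$ and $\epsilon<1$ eventually imply $\epsilon^{2(\alpha-d)}\geq \epsilon^{2\alpha-d}$, the hypothesis $\log n/(n\epsilon^{2\alpha-d})\to 0$ forces the exponent to dominate $2\log n$ for large $n$; the right-hand side is summable for every $\delta>0$, and Borel--Cantelli yields $|I_n-\mathbb{E}I_n|\to 0$ a.s.

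The step I expect to be the main obstacle is the bounded-difference estimate, because the normalization in Assumption~\ref{assumptions on kernel}(iv) is in the \emph{second} argument of $K_\epsilon$ while the McDiarmid constant forces an integral in the \emph{first} argument. Without symmetry of $K_\epsilon$ one cannot invoke $\int K_\epsilon\,dV=1$, so one must combine the pointwise $L^\infty$ bound with the extrinsic compact support and a uniform volume estimate for $\{x\in M:\|\iota(x)-\iota(y)\|_{\mathbb{R}^p}\leq\epsilon\}$. This is the only place where the extrinsic embedding and the second fundamental form genuinely enter, and where the proof truly uses the manifold setup rather than a purely Euclidean estimate.
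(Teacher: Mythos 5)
Your proof is correct, and it takes a genuinely different route from the paper's. The paper follows Devroye's classical $L^1$ scheme: it uses Bernstein's inequality to establish (i) pointwise a.s.\ convergence $K_{n,\epsilon}(x)\to\mathbb{E}K_\epsilon(x)$ for each fixed $x$ and (ii) a.s.\ convergence of the total masses $\int_M K_{n,\epsilon}\,dV\to\int_M\mathbb{E}K_\epsilon\,dV$, and then concludes via Scheff\'e's lemma, which is precisely where non-negativity of $K_\epsilon$ enters. Your argument instead bounds $\mathbb{E}I_n$ directly via the pointwise Cauchy--Schwarz/variance estimate (using (i) and (iv) of Assumption~\ref{assumptions on kernel} and $\|P\|_\infty\le P_{max}$), and separately establishes concentration of $I_n$ around $\mathbb{E}I_n$ via McDiarmid and Borel--Cantelli; non-negativity enters your argument in the bounded-difference estimate, where the triangle inequality $|K_\epsilon(\iota(x),\iota(x_i))-K_\epsilon(\iota(x),\iota(x_i'))|\le K_\epsilon(\iota(x),\iota(x_i))+K_\epsilon(\iota(x),\iota(x_i'))$ requires $K_\epsilon\geq0$. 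The obstacle you flagged --- that the normalization in Assumption~\ref{assumptions on kernel}(iv) integrates over the second argument while your McDiarmid constant integrates over the first, so one must fall back to the $L^\infty$ bound together with $\mathrm{Vol}\{x\in M:\|\iota(x)-\iota(y)\|_{\mathbb{R}^p}\le\epsilon\}\le C_M\epsilon^d$ --- is resolved correctly; the paper uses the same extrinsic volume estimate in other parts of Appendix~C. Your route buys a slightly cleaner logical structure: the paper's appeal to Scheff\'e is delicate because both $K_{n,\epsilon}$ and its comparison function $\mathbb{E}K_{\epsilon_n}$ depend on $n$, and also requires a Fubini argument to upgrade ``a.s.\ for each fixed $x$'' into ``a.s., for a.e.~$x$''; your argument sidesteps both of these subtleties by controlling $I_n$ and $\mathbb{E}I_n$ directly, and as a byproduct gives explicit concentration of the $L^1$ error around its mean. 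The paper's route, on the other hand, stays closer to the existing Euclidean-space literature and makes the role of non-negativity (via Scheff\'e) more visibly structural. Both arrive at the same rate restriction $\log n/(n\epsilon^{2\alpha-d})\to0$; your exponent $n\epsilon^{2(\alpha-d)}$ in the McDiarmid tail is in fact no smaller than $n\epsilon^{2\alpha-d}$ since $d\geq1$ and $\epsilon<1$ eventually, so the hypothesis suffices.
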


The proof of the proposition is in Appendix \ref{Appendix C}, which generalizes the method developed in \cite{devroye1979l1}. Note that since we apply Scheffe's Lemma, we require the kernel $K_{\epsilon}$ to be non-negative. The bias analysis includes the following two cases. The proof of the proposition is in Appendix \ref{Appendix C}.

\begin{proposition}\label{bias L1 case}
\begin{enumerate}
\item
 Under Assumption \ref{assumptions on kernel}, suppose $P$ is continuous on $M$ with $\|P\|_\infty = P_{max}$  for some $P_{max}>0$,  then $\mathbb{E}K_{\epsilon}(x) \rightarrow P(x)$ for all $x \in M$ as $\epsilon \rightarrow 0$.
\item
 Under Assumption \ref{assumptions on kernel}, suppose $\alpha=d$ and $P$ is measurable on $M$ with $\|P\|_\infty = P_{max}$ for some $P_{max}>0$,  then $\mathbb{E}K_{\epsilon}(x) \rightarrow P(x)$ for  almost every $x \in M$ as $\epsilon \rightarrow 0$.
\end{enumerate}
\end{proposition}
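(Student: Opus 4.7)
\medskip
\noindent\textbf{Proof proposal for Proposition \ref{bias L1 case}.}

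The overall plan is to exploit the two defining features of the kernel: the normalization $\int_M K_{\epsilon}(\iota(x),\iota(y))\,dV(y)=1$ and the support condition $K_\epsilon(\iota(x),\iota(y))=0$ whenever $\|\iota(x)-\iota(y)\|_{\mathbb{R}^p}>\epsilon$. Using the normalization, I would first rewrite
\[
\mathbb{E}K_\epsilon(x)-P(x)=\int_M K_\epsilon(\iota(x),\iota(y))\bigl[P(y)-P(x)\bigr]\,dV(y),
\]
so that convergence reduces to estimating an average of the oscillation of $P$ near $x$. A preliminary ingredient I would record for both parts is the standard bi-Lipschitz comparison between the ambient Euclidean metric and the intrinsic geodesic metric: since $\iota(M)$ is a smooth compact submanifold of $\mathbb{R}^p$, there exists $C_M\geq 1$ and $\epsilon_0>0$ with $\|\iota(x)-\iota(y)\|_{\mathbb{R}^p}\leq d(x,y)\leq C_M\|\iota(x)-\iota(y)\|_{\mathbb{R}^p}$ whenever $\|\iota(x)-\iota(y)\|_{\mathbb{R}^p}\leq \epsilon_0$, so the support of $K_\epsilon(\iota(x),\cdot)$ is contained in the geodesic ball $B_{C_M\epsilon}(x)$ for $\epsilon\leq \epsilon_0$.

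For part (1), my plan is to use uniform continuity of $P$ on compact $M$. Given $\eta>0$, choose $\delta>0$ so that $d(x,y)<\delta$ implies $|P(x)-P(y)|<\eta$, and then restrict to $\epsilon<\min\{\epsilon_0,\delta/C_M\}$. On the support of $K_\epsilon(\iota(x),\cdot)$ we then have $|P(y)-P(x)|<\eta$, and using nonnegativity of $K_\epsilon$ together with the normalization gives
\[
|\mathbb{E}K_\epsilon(x)-P(x)|\leq \eta\int_M K_\epsilon(\iota(x),\iota(y))\,dV(y)=\eta,
\]
which is the pointwise (in fact uniform) convergence statement.

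For part (2), I would replace continuity by the Lebesgue differentiation theorem. Bound the kernel by $K_\epsilon\leq K_{\sup}/\epsilon^d$ (since $\alpha=d$), so that
\[
|\mathbb{E}K_\epsilon(x)-P(x)|\leq \frac{K_{\sup}}{\epsilon^d}\int_{B_{C_M\epsilon}(x)}|P(y)-P(x)|\,dV(y).
\]
On a smooth $d$-dimensional manifold the volume comparison $\mathrm{vol}(B_r(x))\sim c_d r^d$ holds uniformly for small $r$, so the right-hand side is comparable to the averaged oscillation $\frac{1}{\mathrm{vol}(B_{C_M\epsilon}(x))}\int_{B_{C_M\epsilon}(x)}|P(y)-P(x)|\,dV(y)$, up to a multiplicative constant depending only on $K_{\sup}$, $C_M$, and the geometry of $\iota(M)$. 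Since $P\in L^\infty(M)\subset L^1(M)$, the Lebesgue differentiation theorem on the Riemannian manifold (or equivalently, the classical result pulled through a finite atlas with bounded Jacobians) says this average tends to $0$ at every Lebesgue point of $P$, i.e., for almost every $x\in M$, concluding the proof.

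The main obstacle I anticipate is the technical but routine step of transferring the Lebesgue differentiation theorem and the volume comparison from Euclidean space to the submanifold $\iota(M)$ in a way that is uniform in $x$ and independent of the geometry of the support of $K_\epsilon(\iota(x),\cdot)$, which is an ambient Euclidean ball intersected with $\iota(M)$ rather than a clean geodesic ball. This is handled by the bi-Lipschitz comparison above, which lets me sandwich the ambient support inside a geodesic ball of radius $C_M\epsilon$ and invoke the intrinsic differentiation theorem, at the cost of replacing the sharp constant $1$ by a harmless geometric constant.
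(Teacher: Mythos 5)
Your proposal is correct and follows essentially the same route as the paper: rewrite $\mathbb{E}K_\epsilon(x)-P(x)$ as an average of the oscillation of $P$ via the normalization $\int_M K_\epsilon\,dV=1$, use the support condition plus a bi-Lipschitz comparison of ambient and geodesic metrics to localize to a small geodesic ball, and then apply uniform continuity for part (1) and the Lebesgue differentiation theorem for part (2) under the bound $K_\epsilon\leq K_{\sup}/\epsilon^d$ together with the volume growth $\mathrm{vol}(B_r(x))\asymp r^d$. The only cosmetic difference is that the paper pulls the differentiation step back to $\mathbb{R}^d$ explicitly via a finite cover of exponential charts and averages over cubes, whereas you state the intrinsic manifold version directly (or note it can be obtained through a finite atlas), which is an equivalent formulation.
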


Since we have
\begin{align}
&\int_{M}\left |\frac{1}{n}\sum_{i=1}^n K_{\epsilon}(\iota(x),\iota(x_i))-P(x)\right| dV(x)\nonumber \\
\leq &\,\int_{M} \left|\frac{1}{n}\sum_{i=1}^n K_{\epsilon}(\iota(x),\iota(x_i))-\mathbb{E}K_{\epsilon}(x)\right| dV(x) + \int_{M} |\mathbb{E}K_{\epsilon}(x)-P(x)| dV(x)\,, \nonumber
\end{align}
we can concatenate the previous two propositions and conclude the convergence of $K_{n,\epsilon}$ to $P$ in the $L^1$ sense.

\begin{theorem}\label{KDE L1 convergence}
Under Assumption \ref{assumptions on kernel}, suppose $\epsilon \rightarrow 0$ and $\frac{\log n}{n \epsilon^{2\alpha-d}} \rightarrow 0$ as $n \rightarrow \infty$. Then, a.s.
\begin{align}
\int_{M}|K_{n,\epsilon}(x)-P(x)|dV(x) \rightarrow 0
\end{align}
whenever one of the following conditions holds.
\begin{enumerate}
\item
$P$ is continuous on $M$ with $\|P\|_\infty = P_{max}$ for some $P_{max}>0$.
\item
$\alpha=d$ and $P$ is measurable on $M$ with $\|P\|_\infty = P_{max}$ for some $P_{max}>0$.
\end{enumerate}
\end{theorem}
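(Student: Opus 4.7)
The plan is to exploit the triangle inequality already displayed just before the theorem and reduce the claim to the two propositions that have just been proved. Indeed, that inequality bounds $\int_M |K_{n,\epsilon}(x)-P(x)|\,dV(x)$ by the sum of a variance term $\int_M |K_{n,\epsilon}(x)-\mathbb{E}K_\epsilon(x)|\,dV(x)$ and a bias term $\int_M |\mathbb{E}K_\epsilon(x)-P(x)|\,dV(x)$, so it suffices to show that each term tends to $0$ almost surely (the bias term is deterministic).

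For the variance term, the hypotheses $\epsilon\to 0$ and $\log n/(n\epsilon^{2\alpha-d})\to 0$ are exactly those of Proposition \ref{variance L1 case}, and $P$ is bounded by assumption in both cases (1) and (2). Proposition \ref{variance L1 case} therefore delivers almost sure convergence of the variance term to $0$ without further work.

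For the bias term I would argue by dominated convergence. Under either hypothesis, Proposition \ref{bias L1 case} gives pointwise convergence $\mathbb{E}K_\epsilon(x)\to P(x)$ for all $x\in M$ (case 1) or for almost every $x\in M$ (case 2). To upgrade this to $L^1(M,dV)$ convergence, note that by Assumption \ref{assumptions on kernel}(iv) and the nonnegativity of $K_\epsilon$,
\begin{equation*}
|\mathbb{E}K_\epsilon(x)| \;\leq\; \int_M K_\epsilon(\iota(x),\iota(y))P(y)\,dV(y) \;\leq\; P_{\max}\int_M K_\epsilon(\iota(x),\iota(y))\,dV(y) \;=\; P_{\max}.
\end{equation*}
Hence $|\mathbb{E}K_\epsilon(x)-P(x)|\leq 2P_{\max}$, which is integrable on the compact manifold $M$ since $\mathrm{Vol}(M)<\infty$. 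The dominated convergence theorem then yields $\int_M |\mathbb{E}K_\epsilon(x)-P(x)|\,dV(x)\to 0$.

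This really is a concatenation, so there is no substantial obstacle; the only subtlety worth flagging is the step just above, where pointwise (a.e.) convergence in Proposition \ref{bias L1 case} must be promoted to $L^1$ convergence. One might be tempted to invoke Scheff\'e's lemma here, but since $K_\epsilon$ is not assumed symmetric, Assumption \ref{assumptions on kernel}(iv) does not immediately guarantee $\int_M \mathbb{E}K_\epsilon\,dV=1$, so the dominated convergence argument via the uniform bound $P_{\max}$ and compactness of $M$ is the cleaner route.
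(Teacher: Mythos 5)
Your proof is correct and follows essentially the same route as the paper: the same triangle inequality decomposition, Proposition \ref{variance L1 case} for the variance term, and Proposition \ref{bias L1 case} combined with the uniform bound $|\mathbb{E}K_\epsilon(x)|\leq P_{\max}$ and dominated convergence on the compact manifold $M$ for the bias term (the paper carries out this last step inside the proof of Proposition \ref{bias L1 case}). Your remark that Scheff\'e's lemma does not directly apply because Assumption \ref{assumptions on kernel}(iv) normalizes $K_\epsilon(\iota(x),\cdot)$ in the second variable rather than $\mathbb{E}K_\epsilon$ in $x$ is an accurate and worthwhile observation.
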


In a specific case, by using the same argument as in Theorem \ref{KDE L1 convergence}, we can prove the convergence of the density estimation in $L^1$ sense for an isotropic nonnegative Lebesgue integrable kernel.

\begin{theorem}
Suppose $K(t):\mathbb{R}_{\geq 0} \rightarrow \mathbb{R}_{\geq 0}$ is a non-negative Lebesgue integrable function satisfying conditions (i), (iii) and (iv) in Assumption \ref{assumptions on kernel 1}, and $P$ is measurable on $M$ with $\|P\|_\infty = P_{max}$ for some $P_{max}>0$.  Suppose $\epsilon \rightarrow 0$ and $\frac{\log n}{n \epsilon^{d}} \rightarrow 0$ as $n \rightarrow \infty$. Then, for the kernel defined in \eqref{Definition Kn isotropic}, we have a.s.
\begin{align}
\int_{M}|K_{n}(x)-P(x)|dV(x) \rightarrow 0
\end{align}
\end{theorem}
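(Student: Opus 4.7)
The plan is to follow the two-step decomposition used in Theorem \ref{KDE L1 convergence}:
\[
\int_M |K_n(x) - P(x)|\,dV(x) \le \int_M |K_n(x) - \mathbb{E}K_n(x)|\,dV(x) + \int_M |\mathbb{E}K_n(x) - P(x)|\,dV(x).
\]
Writing the isotropic kernel in the form $K_\epsilon(\iota(x),\iota(y)) := \frac{1}{\epsilon^d}K(\|\iota(y)-\iota(x)\|_{\mathbb{R}^p}/\epsilon)$, it satisfies conditions (i), (ii), (iv) of Assumption \ref{assumptions on kernel} with effective parameter $\alpha=d$, so the rate hypothesis $\log n/(n\epsilon^d)\to 0$ matches the hypothesis $\log n/(n\epsilon^{2\alpha-d})\to 0$ of Proposition \ref{variance L1 case}. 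The real difference from Theorem \ref{KDE L1 convergence} is that our kernel fails the compact support condition (iii) of Assumption \ref{assumptions on kernel}, having instead the polynomial decay (iii) of Assumption \ref{assumptions on kernel 1}. I will compensate by truncation.

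Fix $R>\rho$, split $K = K^{(1)}_R + K^{(2)}_R$ with $K^{(1)}_R := K\cdot\chi_{[0,R]}$, and set $c_R := \int_{\|v\|\le R} K(\|v\|)\,dv$. Define $K_n^{(j)}(x) := \frac{1}{n\epsilon^d}\sum_{i=1}^n K^{(j)}_R(\|\iota(x_i)-\iota(x)\|/\epsilon)$ for $j=1,2$. The normalized bulk kernel $c_R^{-1}K^{(1)}_{R,\epsilon}$ has support in $\{\|\iota(x)-\iota(y)\|\le R\epsilon\}$ and satisfies all of Assumption \ref{assumptions on kernel} (with bandwidth $R\epsilon$), so Proposition \ref{variance L1 case} gives $\int_M |K_n^{(1)}-\mathbb{E}K_n^{(1)}|\,dV \to 0$ a.s. For the tail, non-negativity yields $|K_n^{(2)}-\mathbb{E}K_n^{(2)}|\le K_n^{(2)}+\mathbb{E}K_n^{(2)}$, and a Fubini argument combined with a uniform comparison between $dV$ and the Lebesgue measure in normal coordinates (valid for $\epsilon$ small via the bounded second fundamental form) bounds, uniformly in $x$,
\[
\int_M \frac{1}{\epsilon^d}K^{(2)}_R\!\left(\frac{\|\iota(y)-\iota(x)\|}{\epsilon}\right)dV(y) \le C\!\int_{\|v\|>R} K(\|v\|)\,dv = O(R^{d-\alpha}).
\]
Hence $\int_M K_n^{(2)}\,dV$ and $\int_M \mathbb{E}K_n^{(2)}\,dV$ are both bounded by $C'P_{\max} R^{d-\alpha}$, which vanishes as $R\to\infty$.

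The bias is handled symmetrically: apply Proposition \ref{bias L1 case} (case 2, with effective $\alpha=d$) to the normalized bulk $c_R^{-1}K^{(1)}_{R,\epsilon}$ to conclude $\int_M |\mathbb{E}(c_R^{-1}K_n^{(1)}) - P|\,dV \to 0$, then absorb the tail and the $|1-c_R|$ error into an $O(R^{d-\alpha})$ correction. To conclude, given any $\eta>0$, first pick $R=R(\eta)$ large enough that all tail contributions in both variance and bias are below $\eta/2$; then for all sufficiently large $n$ the bulk contributions are below $\eta/2$ a.s. Letting $\eta \downarrow 0$ along a countable sequence, the full $L^1$ convergence follows almost surely.

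The main obstacle is controlling the variance of the non-compactly-supported tail $K_n^{(2)}$, since Proposition \ref{variance L1 case} is not directly applicable. The point that saves the argument is that the polynomial decay exponent $\alpha>d$ makes the Euclidean tail mass summable, with a bound that is uniform in $\epsilon$ and in $x$. This lets us choose $R\to\infty$ \emph{before} letting $n\to\infty$, avoiding a delicate joint diagonal argument. The secondary technical ingredient, namely the uniform-in-$x$ comparison between the Riemannian volume form on $\iota(M)$ and Lebesgue measure on the tangent plane in small geodesic balls, is standard given the smoothness and compactness of $\iota(M)$ and is already implicit in the proofs of the earlier propositions.
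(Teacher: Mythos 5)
The truncation strategy is sensible and is probably what the paper means by ``the same argument as in Theorem \ref{KDE L1 convergence}'': split $K=K^{(1)}_R+K^{(2)}_R$, feed the compactly supported bulk into the variance/bias propositions, and control the tail deterministically. The order of limits ($R$ first, then $n$) and the a.s.\ bookkeeping along a countable sequence of $\eta$'s are also fine.

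There is, however, a concrete gap in how you invoke Propositions \ref{variance L1 case} and \ref{bias L1 case}. You assert that the isotropic kernel $K_\epsilon(\iota(x),\iota(y))=\frac{1}{\epsilon^d}K(\|\iota(y)-\iota(x)\|/\epsilon)$ satisfies condition (iv) of Assumption \ref{assumptions on kernel}, and likewise that the normalized bulk $c_R^{-1}K^{(1)}_{R,\epsilon}$ satisfies ``all of Assumption \ref{assumptions on kernel}.'' Neither statement is correct. Condition (iv) of Assumption \ref{assumptions on kernel} demands
\[
\int_M K_\epsilon(\iota(x),\iota(y))\,dV(y)=1 \quad\text{for every }x\in M,
\]
i.e., exact normalization against the volume form of $M$. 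What you actually have from Assumption \ref{assumptions on kernel 1}(iv) is the \emph{Euclidean} normalization $\int_{\mathbb{R}^d}K(\|\textbf{v}\|)\,d\textbf{v}=1$. Transferred to $M$, the manifold integral picks up curvature and second-fundamental-form corrections (Lemmas \ref{volume form in normal coordinate}, \ref{geodesic and euclidean distance}), and for the truncated bulk it also differs from $c_R$ by an $O(R^{d-\alpha})+O(\epsilon^{\alpha-d})$ error. So neither proposition applies as a black box, and the proof as written does not close. The fix is not hard but must be stated: the proofs of Propositions \ref{variance L1 case} and \ref{bias L1 case} use (iv) only to obtain $\mathbb{E}[F]\le P_{\max}$, $\mathbb{E}[G]\le P_{\max}\mathrm{Vol}(M)$, and the leading term $\int_M K_\epsilon\,dV\cdot P(x)$ in the bias identity; a uniform bound $\int_M K_\epsilon(\iota(x),\iota(y))\,dV(y)\le C$ together with uniform convergence of this integral to $1$ (or to $c_R$ for the bulk) as $\epsilon\to 0$ is enough, and both hold here. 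Alternatively, renormalize the bulk per $x$ by $\hat c(x,\epsilon):=\int_M c_R^{-1}K^{(1)}_{R,\epsilon}(\iota(x),\iota(y))\,dV(y)$ so that Assumption \ref{assumptions on kernel}(iv) holds exactly, and absorb $\sup_x|\hat c(x,\epsilon)-1|$ into the error budget. Either route repairs the argument; without one of them you are applying the propositions outside their stated hypotheses.

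A secondary, minor imprecision: your uniform tail bound $\int_M \frac{1}{\epsilon^d}K^{(2)}_R(\|\iota(y)-\iota(x)\|/\epsilon)\,dV(y)=O(R^{d-\alpha})$ is only valid on the portion of $M$ with $\|\iota(y)-\iota(x)\|\le\lambda$ for a fixed chart radius $\lambda$; the far-field part contributes an additional $O(\epsilon^{\alpha-d})$ using the polynomial decay and the compactness of $M$. This extra term vanishes as $\epsilon\to 0$, so it does not affect the order of limits, but it should appear in the bound.
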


\section*{Acknowledgements}
The authors would like to thank Professor Tyrus Berry and Professor Yen-Chi Chen about the helpful suggestions of the literatures. 
\

\bibliographystyle{plain}
\bibliography{bib}

\clearpage

\appendix

\section{Proof of  Theorem \ref{EK_n-K_n} and Corollary \ref{0-1 kernel}}\label{Appendix A}

\subsection{\textbf{Variance analysis over the regular subsets of the manifold}}\label{variance on regular subsets}
{We first define the rectangles and the balls as follows.}
\begin{definition}\label{def of rectangle, ball and cube}
We define a rectangle $R$ in $\mathbb{R}^p$ as any set that is isometric to $(a_1, b_1] \times \cdots \times (a_p, b_p]$ in $\mathbb{R}^p$, i.e. $R$ is equal to $(a_1, b_1] \times \cdots \times (a_p, b_p]$ up to a rotation.  Denote $B^{\mathbb{R}^p}$ to be a closed ball in $\mathbb{R}^p$. Unless necessary, we do not specify the centers of the rectangles and the balls. 
We define a half open cube centered at the origin of side length $2a$ in $\mathbb{R}^p$ as
\begin{align}
Q_a=(-a,a]^p.
\end{align}
For $\Omega \subset \mathbb{R}^p$, we define the following sets related to rectangles:
\begin{align}
\mathcal{R}_r(\Omega)=\{R \cap \Omega |\,R \cap \Omega \not= \emptyset , diam(R) \leq r\}\,
\end{align}
and the following sets related to balls:
\begin{align}
\mathcal{B}_r(\Omega)=\left\{B^{\mathbb{R}^p} \cap \Omega \Big|\,B^{\mathbb{R}^p} \cap \Omega \not= \emptyset , \mbox{radius of } B^{\mathbb{R}^p} \leq r\right\}\,.
\end{align}
\end{definition}

{Based on the above definition, the collection of the regular subsets that we discuss are the intersections of rectangles and balls in $\mathbb{R}^p$ with $\iota(M)$, namely, $\mathcal{R}_r(\iota(M))$ and $\mathcal{B}_r(\iota(M))$.}

Suppose the points $\{x_i\}_{i=1}^n$ are i.i.d. samples from the random variables with the density function $P$ supported on the manifold $M$. Let $\iota^{-1}$ be the inverse of $\iota$. Then $P\circ \iota^{-1}$ is the corresponding probability density function on $\iota(M)$. And $\iota_*dV$ is the volume density associated with the metric on $\iota(M)$. Then  $\iota(x_1), \cdots, \iota(x_n)$ can be regarded as a sequence of i.i.d samples from $\iota(M)$ based on the density function $P\circ \iota^{-1}$. We denote the empirical measure associated with the measure $PdV$ as
\begin{equation}
P_n:=\frac{1}{n}\sum_{i=1}^n\delta_{x_i}\,,
\end{equation}
where $\delta_{x_i}$ is the delta measure supported on $x_i$. Similarly, we denote the empirical measure associated with the measure $P \circ \iota^{-1}\iota_*dV$:
\begin{equation}
(P\circ \iota^{-1})_n:=\frac{1}{n}\sum_{i=1}^n\delta_{\iota(x_i)}\,.
\end{equation}
For any Lebesgue measurable subset $A$ of $\iota(M)$, denote 
\begin{equation}
\mu(A):=\int_A P \circ \iota^{-1}\iota_*dV\mbox{ and }\mu_n(A):=\int_A(P\circ \iota^{-1})_n\,.
\end{equation}

Recall the following definitions of the VC dimension and the growth function \cite{vapnik2015uniform}. 
\begin{definition} 
Let $H$ be a family of subsets of $\mathbb{R}^p$. For any finite subset $C \subset \mathbb{R}^p$, the intersection  $H\cap C=\{h \cap C , h\in H\}$ is a family of subset of $C$. Obviously $|H\cap C|\leq 2^{|C|}$. We say that $C$ is shattered by $H$ if $|H\cap C|= 2^{|C|}$, i.e. $H\cap C$ contains all subsets of $C$. The VC dimension $H$ is the largest cardinality of $C$ that can be shattered by $H$.

Suppose  $C=\{\textbf{u}_1 ,\cdots, \textbf{u}_n\} \subset \mathbb{R}^p$. Then, $|H\cap C|\leq 2^n$. Hence, we define the growth function as follows,
\begin{align}
G(H,n)=\max_{C} |H\cap C|,
\end{align}
where $\max$ is taken over all possible $C$, i.e. all possible sets of $n$ points in $\mathbb{R}^p$.

\end{definition}

The Sauer's lemma \cite{sauer1972density} relates the growth function and the VC dimension of $H$.
\begin{lemma}[Sauer] \label{Sauer's lemma}
\begin{align}
G(H,n) \leq (\frac{n e}{d_{VC}})^{d_{VC}},
\end{align}
where $d_{VC}$ is the VC dimension of $H$.
\end{lemma}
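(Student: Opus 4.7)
The plan is to establish the standard Sauer--Shelah bound in the sharper binomial form $G(H,n)\leq \sum_{i=0}^{d_{VC}}\binom{n}{i}$, and then derive the stated inequality $(ne/d_{VC})^{d_{VC}}$ from it by an elementary estimate. Throughout, let $d:=d_{VC}$.

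First I would reduce the problem to a finite ground set. Fix any $C=\{\textbf{u}_1,\ldots,\textbf{u}_n\}\subset\mathbb{R}^p$ realizing the maximum $|H\cap C|=G(H,n)$. Writing $\mathcal{H}:=H\cap C$, the claim to prove becomes the purely combinatorial statement that any family $\mathcal{H}\subset 2^C$ with VC dimension at most $d$ satisfies $|\mathcal{H}|\leq \sum_{i=0}^{d}\binom{n}{i}$. Here the VC dimension of $\mathcal{H}$ is at most that of $H$ because any subset of $C$ shattered by $\mathcal{H}$ is automatically shattered by $H$.

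Next I would prove this combinatorial inequality by double induction on $n$ and $d$, with the base cases $n=0$ (trivial, the bound is $1$) and $d=0$ (then $\mathcal{H}$ cannot distinguish even a single point, so $|\mathcal{H}|\leq 1=\binom{n}{0}$). For the inductive step, split $C=C'\cup\{\textbf{u}_n\}$ with $C':=\{\textbf{u}_1,\ldots,\textbf{u}_{n-1}\}$ and introduce two auxiliary families on $C'$:
\begin{align*}
\mathcal{H}_1 &:= \{h\cap C' : h\in\mathcal{H}\},\\
\mathcal{H}_2 &:= \{h\cap C' : h\in\mathcal{H},\ h\cup\{\textbf{u}_n\}\in\mathcal{H},\ h\setminus\{\textbf{u}_n\}\in\mathcal{H}\}.
\end{align*}
A bookkeeping argument then gives $|\mathcal{H}|=|\mathcal{H}_1|+|\mathcal{H}_2|$, since each element of $\mathcal{H}_1$ is counted once or twice in $\mathcal{H}$ depending on whether both $h$ and $h\triangle\{\textbf{u}_n\}$ lie in $\mathcal{H}$, and $\mathcal{H}_2$ records exactly the extra element in the doubly-counted case. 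The key observation is that the VC dimension of $\mathcal{H}_1$ on $C'$ is at most $d$, while the VC dimension of $\mathcal{H}_2$ on $C'$ is at most $d-1$: indeed, if $\mathcal{H}_2$ shattered some $S\subseteq C'$, then by definition of $\mathcal{H}_2$ the family $\mathcal{H}$ would shatter $S\cup\{\textbf{u}_n\}$, contradicting $d_{VC}(\mathcal{H})\leq d$. Applying the induction hypothesis and Pascal's identity,
\begin{align*}
|\mathcal{H}|\leq \sum_{i=0}^{d}\binom{n-1}{i}+\sum_{i=0}^{d-1}\binom{n-1}{i}=\sum_{i=0}^{d}\binom{n}{i}.
\end{align*}

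Finally, I would deduce the stated exponential bound. Assuming $1\leq d\leq n$, multiply and divide each term by $(d/n)^i$ and extend the sum:
\begin{align*}
\sum_{i=0}^{d}\binom{n}{i}\leq \Bigl(\frac{n}{d}\Bigr)^{d}\sum_{i=0}^{d}\binom{n}{i}\Bigl(\frac{d}{n}\Bigr)^{i}\leq \Bigl(\frac{n}{d}\Bigr)^{d}\sum_{i=0}^{n}\binom{n}{i}\Bigl(\frac{d}{n}\Bigr)^{i}=\Bigl(\frac{n}{d}\Bigr)^{d}\Bigl(1+\frac{d}{n}\Bigr)^{n}\leq \Bigl(\frac{ne}{d}\Bigr)^{d},
\end{align*}
where the first inequality uses $(d/n)^i\leq 1$ for $i\leq d$ (assuming $d\leq n$), and the last step uses $(1+x)\leq e^x$. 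The boundary cases (e.g.\ $d=0$ or $n<d$) are checked separately. The main obstacle in the argument is the inductive bookkeeping step, where one must correctly identify that the ``duplicated'' traces form a family whose VC dimension drops by one; once this observation is in place, the rest is a straightforward combinatorial calculation.
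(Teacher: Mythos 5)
The paper states Sauer's lemma as a known result, citing \cite{sauer1972density}, and does not prove it, so there is no in-paper argument to compare against. Your proof is the standard Sauer--Shelah argument: the trace decomposition $\mathcal{H}_1$ (all traces on $C'$) and $\mathcal{H}_2$ (traces realized both with and without $\textbf{u}_n$), the observation that $|\mathcal{H}|=|\mathcal{H}_1|+|\mathcal{H}_2|$ while $\mathcal{H}_2$ has VC dimension at most $d-1$, double induction with Pascal's identity to reach $\sum_{i=0}^{d}\binom{n}{i}$, and the usual $(d/n)^i$ weighting with $(1+x)\leq e^x$ to reach $(ne/d)^d$. The argument is correct. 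One caveat worth sharpening: for $n<d_{VC}$ the bound $\bigl(ne/d_{VC}\bigr)^{d_{VC}}$ actually fails (e.g.\ $n=1$, $d_{VC}=2$: $G(H,1)=2>(e/2)^2\approx1.85$), so this is not merely a ``boundary case to check separately'' but a genuine implicit hypothesis $n\geq d_{VC}$ in the lemma as stated; this is harmless in the paper's usage, where $n$ is the sample size and $d_{VC}$ is a fixed constant determined by $p$.
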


Suppose $H_1$ is the set of all $p$ dimensional rectangles in $\mathbb{R}^p$ defined as in Definition \ref{def of rectangle, ball and cube}, then $G(H_1, 2n)  \leq  (2n)^{2p^2+2p}$, \cite{cover1965geometrical}. Let $H_2$ be the set of all closed balls in $\mathbb{R}^p$.  It is also known that the VC dimension of $H_2$ is $p+2$. Hence, by Sauer's lemma, $G(H_2, 2n) \leq  (\frac{2n e}{p+2})^{p+2} \leq  (2n)^{p+2}$. By using the same argument of in Lemma 1 and Lemma 2 in \cite{devroye1980strong}, we have the following Lemma: 
\begin{lemma}\label{rectangle covering lemma}
Let $H_1$ be the set of all $p$ dimensional rectangles in $\mathbb{R}^p$ defined as in Definition \ref{def of rectangle, ball and cube}. Suppose we can find $r>0$  so that  $\sup_{A \in \mathcal{R}_{2r}(\iota(M))} \mu(A) \leq  b \leq \frac{1}{4}$. For any $\delta>0$ , if $n \geq \max\big(\frac{1}{b},\frac{8b}{\delta^2}\big)$, then we have
\begin{align}
P\left\{\sup_{A \in \mathcal{R}_r(\iota(M))} |\mu_n(A)-\mu(A)|\geq \delta \right\} & \leq 4 G(H_1, 2n) \exp\left\{-\frac{n\delta^2}{64b+4\delta}\right\}+8n\exp\left\{-\frac{nb}{10}\right\}\, \\
&\leq 4 (2n)^{2p^2+2p} \exp\left\{-\frac{n\delta^2}{64b+4\delta}\right\}+8n\exp\left\{-\frac{nb}{10}\right\} \, .
\end{align}
\end{lemma}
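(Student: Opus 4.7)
My plan is to follow the classical Vapnik--Chervonenkis style argument, as carried out in Lemmas~1 and~2 of \cite{devroye1980strong}, and to observe that passing from $\mathbb{R}^p$ to the embedded manifold $\iota(M)$ changes nothing essential. The key reason is that the collection of candidate sets in \eqref{} is $\{R \cap \iota(M) : R \in H_1,\, \mathrm{diam}(R) \leq 2r\}$, and the trace of $H_1$ on any finite point set $C \subset \mathbb{R}^p$ has cardinality at most $G(H_1,|C|) \leq (2n)^{2p^2+2p}$ irrespective of whether those points lie on $\iota(M)$ or not. Since $\mu$ and $\mu_n$ are simply probability measures on $\mathbb{R}^p$ supported on $\iota(M)$, the Bernstein/Hoeffding input to the argument depends only on $\mu(A) \leq b$, which is exactly our hypothesis.

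Concretely I would proceed in four steps. First, symmetrization: introduce an independent ghost sample $x_1',\ldots,x_n'$ with empirical measure $\mu_n'$, and use Chebyshev (via $\mathrm{Var}(\mu_n(A)) \leq \mu(A)/n \leq b/n$) together with the hypothesis $n \geq 8b/\delta^2$ to get
\[
P\bigl\{\sup_{A}|\mu_n(A)-\mu(A)|\geq \delta\bigr\}
 \leq 2\,P\bigl\{\sup_{A}|\mu_n(A)-\mu_n'(A)|\geq \delta/2\bigr\}.
\]
Second, conditional on the pooled sample $\mathcal{X}_{2n}=\{x_i,x_i'\}_{i=1}^n$, introduce independent Rademacher signs $\sigma_i$ and use the standard swapping identity to rewrite $\mu_n(A)-\mu_n'(A)$ as $\tfrac{1}{n}\sum_i \sigma_i(\mathbf{1}_{x_i\in A}-\mathbf{1}_{x_i'\in A})$. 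Third, apply a union bound: the number of distinct traces $A\cap \mathcal{X}_{2n}$ is at most $G(H_1,2n) \leq (2n)^{2p^2+2p}$, and for each fixed trace the conditional Rademacher average is controlled by Bernstein's inequality to give
\[
\exp\Bigl\{-\frac{n(\delta/2)^2}{2(\mu_n(A)+\mu_n'(A)) + 2\delta}\Bigr\}.
\]

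Fourth comes the truncation step, which is where the two terms in the bound arise and is the place that requires most care. We cannot control $\mu_n(A)+\mu_n'(A)$ pathwise by $2b$; instead, we split into the ``typical'' event $\{\mu_n(A)+\mu_n'(A) \leq 16b\}$ and its complement. On the typical event the Bernstein bound above becomes $\exp\{-n\delta^2/(64b+4\delta)\}$, which after the union bound over at most $G(H_1,2n)$ traces and the factor of $4$ picked up through symmetrization/randomization gives the first term $4(2n)^{2p^2+2p}\exp\{-n\delta^2/(64b+4\delta)\}$. On the atypical event, for each fixed $A \in \mathcal{R}_{2r}(\iota(M))$ a Chernoff/Bernstein estimate for a sum of $\{0,1\}$ variables with mean at most $b$ shows $P\{\mu_n(A) \geq 8b\} \leq \exp\{-nb/10\}$ (using $n \geq 1/b$), and here we again union-bound over traces on the single sample (at most $G(H_1,n) \leq n^{2p^2+2p}$, absorbed into the crude $8n \cdot (\ldots)$ in the stated form after combining both samples).

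The main obstacle is bookkeeping the constants in the truncation/Bernstein step so that the numerator in the exponent is $n\delta^2$ and the denominator is exactly $64b+4\delta$; this requires carefully tracking how the thresholds $16b$ for $\mu_n(A)+\mu_n'(A)$ and $8b$ for $\mu_n(A)$ propagate through the Bernstein estimates. Beyond that, no manifold geometry is actually used here, which is the point: the lemma concerns uniform deviation over an ambient-space class of sets, and the embedded nature of $\iota(M)$ enters only through the smallness parameter $b$, which will later be supplied by the volume comparison between Euclidean and intrinsic balls.
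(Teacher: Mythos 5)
Your top-level plan matches the paper exactly: the paper's entire ``proof'' is the sentence ``By using the same argument of in Lemma 1 and Lemma 2 in \cite{devroye1980strong}, we have the following Lemma,'' supported by a remark that those arguments ``do not rely on the structure of $\iota(M)$ at all,'' and your observation that $\mathcal{R}_r(\iota(M))$ is the trace of an ambient VC class with $\mu(A)\le b$ the only quantitative input is precisely that remark. Your sketch of the machinery for the first exponential term --- symmetrization via Chebyshev, conditional Rademacher randomization, union bound over traces with $G(H_1,2n)\le(2n)^{2p^2+2p}$, Bernstein on the typical event --- is the right skeleton.

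The one step that would fail as written is your account of the second term $8n\exp\{-nb/10\}$. You attribute it to another ``union-bound over traces on the single sample (at most $G(H_1,n)\le n^{2p^2+2p}$, absorbed into the crude $8n$),'' but $n^{2p^2+2p}$ is not $O(n)$ and cannot be absorbed into $8n$; if you ran that union bound you would not land on the stated constants. In the Devroye--Wagner argument this term does not arise from a trace union bound at all. It comes from a union bound over the (at most $2n$) pooled sample points, combined with a binomial/Chernoff tail estimate on the number of other sample points falling in a rectangle of diameter at most $2r$ anchored at each such point --- any $A\in\mathcal{R}_r$ with a sample point in it sits inside such an anchored rectangle, which is exactly why the hypothesis controls $\sup_{A\in\mathcal{R}_{2r}}\mu(A)\le b$ with $2r$ while the conclusion is for $\mathcal{R}_r$. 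That anchoring argument gives a linear-in-$n$ factor, and using $n\ge 1/b$ turns the binomial tail into $\exp\{-nb/10\}$. Replace your step 4 for the atypical event with that point-anchored estimate and the proof goes through.
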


We can use the same method to prove the following lemma.

\begin{lemma}\label{ball covering lemma}
 Let $H_2$ be the set of all closed balls in $\mathbb{R}^p$. Suppose we can find $r$  so that  $\sup_{A \in \mathcal{B}_{2r}(\iota(M))} \mu(A) \leq  b \leq \frac{1}{4}$. For any $\delta>0$ , if $n \geq \max\big(\frac{1}{b},\frac{8b}{\delta^2}\big)$, then 
\begin{align}
P\left\{\sup_{A \in  \mathcal{B}_r(\iota(M))} |\mu_n(A)-\mu(A)|\geq \delta \right\} & \leq 4G(H_2 ,2n) \exp\left\{-\frac{n\delta^2}{64b+4\delta}\right\}+8n\exp\left\{-\frac{nb}{10}\right\}\, \\
& \leq 4 (2n)^{p+2} \exp\left\{-\frac{n\delta^2}{64b+4\delta}\right\}+8n\exp\left\{-\frac{nb}{10}\right\}.
\end{align}
\end{lemma}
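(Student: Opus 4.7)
The plan is to follow the Vapnik--Chervonenkis scheme used in the proof of Lemma \ref{rectangle covering lemma}, since the only structural change is that the family of events is now generated by closed balls rather than rectangles. In particular, the argument has three pillars: a symmetrization inequality that converts the deviation between $\mu_n$ and $\mu$ into a deviation between two empirical measures on a ghost sample, a combinatorial bound via the growth function that makes the supremum over the infinite family $\mathcal{B}_r(\iota(M))$ effectively finite after conditioning on $2n$ sample points, and a Bernstein-type concentration estimate applied to each of the (at most $G(H_2,2n)$) resulting events.

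First I would set up symmetrization. Let $\iota(x_1'),\dots,\iota(x_n')$ be an independent ghost sample with empirical measure $\mu_n'$, and show that for $\delta>0$ and $n$ large enough (so that $nb \ge 1$ and $n\delta^2 \ge 8b$),
\begin{equation*}
P\Bigl\{\sup_{A\in\mathcal{B}_r(\iota(M))}|\mu_n(A)-\mu(A)|\ge\delta\Bigr\}\;\le\;2\,P\Bigl\{\sup_{A\in\mathcal{B}_r(\iota(M))}|\mu_n(A)-\mu_n'(A)|\ge\delta/2\Bigr\},
\end{equation*}
together with a control of the bad event where $\mu_n'(A)$ is far from $\mu(A)$; this is the standard step that will produce the additive $8n\exp(-nb/10)$ correction once we use the Chernoff bound for a binomial with mean at most $nb$ and the union bound over at most $8n$ problematic sets per ball (mirroring how the term appears in Lemma \ref{rectangle covering lemma}).

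Next, I would condition on the pooled sample $\{\iota(x_1),\dots,\iota(x_n),\iota(x_1'),\dots,\iota(x_n')\}$, which has cardinality $2n$. The trace of the family $H_2$ on these $2n$ points has at most $G(H_2,2n)$ distinct members, and by Sauer's lemma (Lemma \ref{Sauer's lemma}) applied to the known VC dimension $p+2$ of closed Euclidean balls,
\begin{equation*}
G(H_2,2n)\;\le\;\Bigl(\tfrac{2ne}{p+2}\Bigr)^{p+2}\;\le\;(2n)^{p+2}.
\end{equation*}
For each representative set $A$ in the trace, the difference $\mu_n(A)-\mu_n'(A)$, conditional on the pooled sample, is a signed sum obtained by a random partition, so by Hoeffding's inequality on the permutation distribution (or equivalently the Rademacher-type Bernstein bound using that each term is bounded in $[-1/n,1/n]$ and the conditional variance is at most $\mu(A)\le b$) one obtains the exponential factor $\exp\{-n\delta^2/(64b+4\delta)\}$.

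Finally, a union bound across the $G(H_2,2n)$ representatives, combined with the doubling factor $2$ from symmetrization and the separate handling of sets with low empirical measure, assembles into the claimed inequality
\begin{equation*}
4\,G(H_2,2n)\exp\bigl\{-n\delta^2/(64b+4\delta)\bigr\}+8n\exp\{-nb/10\},
\end{equation*}
and the second displayed inequality is immediate from the Sauer bound on $G(H_2,2n)$. The only genuinely new ingredient compared with Lemma \ref{rectangle covering lemma} is the VC-dimension input for balls, and the main technical care is bookkeeping in the symmetrization step to ensure that the constants $64$, $4$, $8$, and $10$ come out exactly as in the rectangle argument; since every estimate there depends on the family $H$ only through the growth function and not through any geometric peculiarity of rectangles, the transfer to $H_2$ is mechanical. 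I expect no essential obstacle beyond verifying that $\sup_{A\in\mathcal{B}_{2r}(\iota(M))}\mu(A)\le b$ gives the required variance control for all balls of radius at most $r$ arising in the symmetrized sup, which is ensured by the hypothesis with the factor $2r$.
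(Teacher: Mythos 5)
Your proposal is correct and follows essentially the same route as the paper: the paper's own proof of Lemma~\ref{ball covering lemma} is simply ``use the same method'' as Lemma~\ref{rectangle covering lemma}, i.e.\ the Devroye--Wagner symmetrization/VC argument, combined with the fact that the VC dimension of closed balls in $\mathbb{R}^p$ is $p+2$ and Sauer's lemma gives $G(H_2,2n)\leq(2n)^{p+2}$. You correctly identify that the argument depends on the set family only through its growth function, so swapping rectangles for balls only changes the combinatorial input, which is exactly the point the paper makes.
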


\begin{remark}
The arguments in Lemma 1 and Lemma 2 in \cite{devroye1980strong} do not rely on the structure of $\iota(M)$ at all. In fact, the same conclusions hold for $\sup_{A \in \mathcal{R}_r(\Omega)} |\mu_n(A)-\mu(A)|$ or $\sup_{A \in  \mathcal{B}_r(\Omega)} |\mu_n(A)-\mu(A)|$ for any set $\Omega \subset \mathbb{R}^p$, as long as the sets in $\mathcal{R}_r(\Omega)$ or $\mathcal{B}_r(\Omega)$ are measurable.
\end{remark}

\subsection{\textbf{Local approximation to the Riemann integrable kernel}}

For any $x\in M$, we introduce the following coordinates for $\iota(M)$ which represents $\iota(M)$ locally as the graph of some function. Generally speaking, there are several canonical ways to build up a coordinate on a manifold. For example, there are normal coordinates and harmonic coordinates. {However, we introduce the following one since it is closely related to the construction of the regular sets in the previous subsection and is useful in the local approximation to the restriction of the kernel function on the manifold.}

Let $e_1, \cdots, e_p$ be the standard orthonormal basis of $\mathbb{R}^p$. Without loss of generality, we assume that $\iota$ is an isometric embedding so that $\iota(x)=0 \in \mathbb{R}^p$ and $\iota_*T_x M$ is the subspace $\mathbb{R}^d \subset \mathbb{R}^p$ generated by $e_1, \cdots, e_d$.  Let $\textbf{u}=(u_1, \cdots, u_p) \in \mathbb{R}^p$, we define the projection: $I_x :\mathbb{R}^p \rightarrow \mathbb{R}^d $,
\begin{align}
I_x(\textbf{u})=(u_1, \cdots, u_d).
\end{align}
If $\lambda_1$ is small enough, then $I_x$ is a diffeomorphism from $ [-\lambda_1,\lambda_1]^p \cap \iota(M)$ onto $ [-\lambda_1,\lambda_1]^d\subset \mathbb{R}^d$. Let $I_x^{-1}$ be the inverse map from $ [-\lambda_1,\lambda_1]^d$ to $ [-\lambda_1,\lambda_1]^p  \cap \iota(M)$, then for $\textbf{v}\in \mathbb{R}^d$, we have
\begin{align}
I_x^{-1}(\textbf{v})=(\textbf{v},g_1(\textbf{v}),\cdots, g_{p-d}(\textbf{v}))\,,
\end{align}
where for $k=1, \cdots, p-d$, $g_k$ is a smooth function of $\textbf{v}$, and $g_k(\textbf{v})=\sum_{i,j}a^k_{ij}(x)v_iv_j+O(\|\textbf{v}\|^3_{\mathbb{R}^d})$.
Note that $I_x^{-1}$ is bi-Lipschitz.  Since $M$ is compact, we can choose $\lambda_1$ and $C_{lip}$ that satisfy the following conditions:
\begin{enumerate}
\item
$I_x^{-1}$ is a diffeomorphism from $[-\lambda_1, \lambda_1]^d$ to $ [-\lambda_1,\lambda_1]^p  \cap \iota(M)$, for all $x \in M$.
\item
For any $x \in M$, $I_x^{-1}$ is bi-Lipschitz with a bi-Lipschitz constant bounded by $C_{lip} \geq 1$ so that for any $\textbf{v}_1,\textbf{v}_2 \in [-\lambda_1, \lambda_1]^d$,
\begin{align}
\|\textbf{v}_1-\textbf{v}_2\|_{\mathbb{R}^d} \leq \|I_x^{-1}(\textbf{v}_1)-I_x^{-1}(\textbf{v}_2)\|_{\mathbb{R}^p}  \leq d_M(I_x^{-1}(\textbf{v}_1),I_x^{-1}(\textbf{v}_2)) \leq C_{lip}  \|\textbf{v}_1-\textbf{v}_2\|_{\mathbb{R}^d},
\end{align}
where $d_M(\cdot,\cdot)$ denotes the geodesic distance on $\iota(M)$.
\end{enumerate}
Note that $C_{lip}$ depends on $a^k_{ij}(x)$, and hence depends on the second fundamental form of $\iota(M)$ in $\mathbb{R}^p$.

Next, take $\lambda \leq \lambda_1$ and define a map $J_x$ from $[-\lambda, \lambda]^d$ to $\mathbb{R}^d$ by
\begin{align}\label{map J_x}
J_x(\textbf{v})=\sqrt{\|\textbf{v}\|^2_{\mathbb{R}^d}+g^2_1(\textbf{v})+\cdots+g^2_{p-d}(\textbf{v})}\frac{\textbf{v}}{\|\textbf{v}\|_{\mathbb{R}^d}}\,.
\end{align} 
For a fixed direction $\frac{\textbf{v}}{\|\textbf{v}\|_{\mathbb{R}^d}}$, denote $t:=\|\textbf{v}\|_{\mathbb{R}^d}$ so that $\textbf{v}=\frac{\textbf{v}}{\|\textbf{v}\|_{\mathbb{R}^d}} t $. Since $g_k(\textbf{v})=\sum_{i,j}a^k_{ij}(x)v_iv_j+O(\|\textbf{v}\|^3_{\mathbb{R}^d})$ for $1\leq k \leq p-d$, we have  
\begin{align}
g^2_1(\textbf{v})+\cdots+g^2_{p-d}(\textbf{v})=\left[A\left(x,\frac{\textbf{v}}{\|\textbf{v}\|_{\mathbb{R}^d}} \right)+O(t)\right]^2 t^4
\end{align}
for a smooth function $A:M\times S^{d-1}\to \mathbb{R}$.
Taking the derivative with respect to $t$, we can show that if $\lambda$ is small enough, then $g^2_1(\textbf{v})+\cdots+g^2_{p-d}(\textbf{v})$ is a non decreasing function of $t$. Hence, $J_x(\textbf{v})$ is bijective on $[-\lambda, \lambda]^d$. Since $M$ is compact, we can choose $\lambda$ and $D_{lip}$ that satisfy the following conditions:
\begin{enumerate}
\item
$\lambda \leq \lambda_1$.
\item
$J_x$ is a homeomorphism from $[-\lambda, \lambda]^d$ onto its image for all $x \in M$ and $J_x$ is smooth except at $0$.
\item
For any $x \in M$, $J_x$ is bi-Lipschitz with a bi-Lipschitz constant bounded by $D_{lip} \geq 1$ so that for any $\textbf{v}_1,\textbf{v}_2 \in [-\lambda, \lambda]^d$,
\begin{align}\label{definition of D_lip}
 \frac{1}{D_{lip}}\|\textbf{v}_1-\textbf{v}_2\|_{\mathbb{R}^d} \leq  \|J_x(\textbf{v}_1)-J_x(\textbf{v}_2)\|_{\mathbb{R}^d}  \leq D_{lip}  \|\textbf{v}_1-\textbf{v}_2\|_{\mathbb{R}^d} .
\end{align}
\end{enumerate}

{Note that $D_{lip}$ depends on $a^k_{ij}(x)$.  In other words, $D_{lip}$ depends on the bound of the second fundamental form of $\iota(M)$ in $\mathbb{R}^p$. Hence, when the manifold is compact, the choice of $D_{lip}$ is independent of $x$.  Moreover, as a consequence of \eqref{map J_x}, we have $[-\lambda, \lambda]^d \subset J_x([-\lambda, \lambda]^d) \subset [-D_{lip}\lambda, D_{lip}\lambda]^d$.

By using the maps $J_x$ and $I_x$ constructed above, we have the following local approximation result of the kernel function. }

\begin{lemma}\label{approximation to K}
{For any  $x \in M$, we rotate and translate $\iota(M)$ so that $\iota(x)=0 \in \mathbb{R}^p$ and $\iota_*T_x M$ is the subpace $\mathbb{R}^d \subset \mathbb{R}^p$ generated by $e_1, \cdots, e_d$. Let $\lambda$ and $D_{lip}$ be the constants described as in the construction of the map $J_x$ in \eqref{map J_x}  and \eqref{definition of D_lip}.} Suppose $K(t):\mathbb{R}_{\geq 0} \rightarrow \mathbb{R}$ is a bounded Riemann integrable function. For any $\eta>0$, choose $\epsilon_n$ so that $ \epsilon_n \eta <\lambda$. We partite $[-D_{lip}\eta, D_{lip}\eta]^d$ uniformly into $\mathcal{N}(\eta,\gamma)$ cubes $\{\mathcal{Q}_i\}_{i=1}^{\mathcal{N}(\eta,\gamma)}$, where each $\mathcal{Q}_i$ is of the form $[v_1,v_1+a] \times \cdots \times [v_d,v_d+a]$ for $a>0$ so that 
\begin{align}
\sum_{i =1}^{\mathcal{N}(\eta,\gamma)}(M_i (K)-m_i (K))Vol(\mathcal{Q}_i) <\gamma^2\,,
\end{align} 
where $M_i (K)$ and $m_i (K)$ are  the supremum and infimum of $K(\|\textbf{v}\|_{\mathbb{R}^d})$ over the cube $\mathcal{Q}_i$  respectively.

For $\textbf{u} \in Q_{\epsilon_n\eta} \cap \iota(M)$, there is an approximation of $K\left(\frac{\|\textbf{u}\|_{\mathbb{R}^p}}{\epsilon_n}\right)$ by a function $K^*_{\epsilon_n}(\textbf{u})=\sum_{i=1}^{N_1} a_i \bigchi_{A_i}(\textbf{u})$ with the following properties:
\begin{enumerate}
\item
$0 \leq |a_1|, \cdots, |a_{N_1}| \leq K_{sup}$ and $N_1 \leq \mathcal{N}(\eta,\gamma)$. 
\item
$\{A_i\}$ are disjoint sets and $A_i=R_i \cap \iota(M)$, where $\{R_i\}$ are  rectangles in $Q_{\epsilon_n \eta}$ defined as in Definition \ref{def of rectangle, ball and cube}.
\item For $\textbf{u} \in Q_{\epsilon_n\eta} \cap \iota(M)$, we have
\[
\left|K^*_{\epsilon_n}(\textbf{u})-K\left(\frac{\|\textbf{u}\|_{\mathbb{R}^p}}{\epsilon_n}\right)\right|<(\ceil{D_{lip}}+1)^d \gamma,
\]  
except on a set $D \subset  Q_{\epsilon_n\eta} \cap \iota(M)$.

\item 
$D \subset \sqcup_{i=1}^{N_2} A'_i $, $\{A'_i\}$ are disjoint sets and $A'_i=R'_i \cap \iota(M)$  and $\{R'_i\}$ are  rectangles in $Q_{\epsilon_n \eta}$ defined as in Definition \ref{def of rectangle, ball and cube}.
\item
$N_2 \leq \mathcal{N}(\eta,\gamma)$ and $\sum_{i=1}^{N_2} Vol(A'_i) <  [C_{lip}\epsilon_n (\ceil{D_{lip}}+1)] ^d \gamma$.
\end{enumerate}
\end{lemma}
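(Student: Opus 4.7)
The plan is to transfer the approximation problem from the manifold to the Euclidean space $\mathbb{R}^d$, exploiting the key identity $\|\textbf{u}\|_{\mathbb{R}^p} = \|J_x(I_x(\textbf{u}))\|_{\mathbb{R}^d}$ guaranteed by the construction of $J_x$ in \eqref{map J_x}. This converts the task of approximating $K(\|\textbf{u}\|_{\mathbb{R}^p}/\epsilon_n)$ on a piece of $\iota(M)$ into approximating $K(\|\textbf{w}\|_{\mathbb{R}^d})$ on a piece of $\mathbb{R}^d$: setting $\textbf{w} := J_x(\textbf{v})/\epsilon_n$ for $\textbf{v} = I_x(\textbf{u}) \in (-\epsilon_n\eta, \epsilon_n\eta]^d$, the $D_{lip}$-Lipschitz bound on $J_x$ puts $\textbf{w}$ in $[-D_{lip}\eta, D_{lip}\eta]^d$, which is precisely the cube where the given partition $\{\mathcal{Q}_i\}$ and the Riemann integrability condition $\sum_i (M_i - m_i)\mathrm{Vol}(\mathcal{Q}_i) < \gamma^2$ apply.

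I would first construct the step function in $\mathbb{R}^d$: on each cube $\mathcal{Q}_i$ (of side length $s$, with $\mathcal{N}(\eta,\gamma)$ cubes in total), choose a representative value $a_i := m_i(K)$ of $K(\|\textbf{w}\|_{\mathbb{R}^d})$. Classify $\mathcal{Q}_i$ as \emph{good} when $M_i - m_i < \gamma$ and as \emph{bad} otherwise; the Riemann integrability condition then forces $\sum_{\text{bad}} \mathrm{Vol}(\mathcal{Q}_i) \le \gamma$.

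Next I would pull the approximation back to $\iota(M)$ through the rectangle structure demanded by the lemma. Introduce a uniform grid on the $I_x$-domain $(-\epsilon_n\eta, \epsilon_n\eta]^d$ by axis-aligned subcubes $\{\mathcal{P}_k\}$ of side $\epsilon_n s$, so that the number of cubes satisfies $N_1 \le (2\eta/s)^d \le \mathcal{N}(\eta,\gamma)$. For each $\mathcal{P}_k$, set $R_k := \mathcal{P}_k \times [-\epsilon_n\eta,\epsilon_n\eta]^{p-d}$, so $A_k := R_k \cap \iota(M) = I_x^{-1}(\mathcal{P}_k)$; and assign $a_k^* := a_{i(k)}$, where $\mathcal{Q}_{i(k)}$ is the cube containing $J_x(\textbf{v}_k)/\epsilon_n$ for the center $\textbf{v}_k$ of $\mathcal{P}_k$. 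For any $\textbf{u} \in A_k$ with $\textbf{v} := I_x(\textbf{u})$, the Lipschitz bound \eqref{definition of D_lip} gives $\|J_x(\textbf{v})/\epsilon_n - J_x(\textbf{v}_k)/\epsilon_n\|_{\mathbb{R}^d} \le D_{lip} s \sqrt{d}/2$, placing both images in at most $(\lceil D_{lip}\rceil + 1)^d$ adjacent cubes $\mathcal{Q}_{i'}$ of the grid; when all of these are good, summing the oscillation bounds yields $|K(\|\textbf{u}\|_{\mathbb{R}^p}/\epsilon_n) - a_k^*| < (\lceil D_{lip}\rceil + 1)^d \gamma$.

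Finally I would let the bad set $D$ be the union of those $A_k$ whose associated window $J_x(\mathcal{P}_k)/\epsilon_n$ meets at least one bad cube, and set $A'_i = A_k$ and $R'_i = R_k$ for those indices. Each bad cube $\mathcal{Q}_i$ can be hit by the windows of only a bounded number of neighboring $\mathcal{P}_k$ (controlled by $\lceil D_{lip}\rceil + 1$ in each coordinate, by the Lipschitz bound), so the total Euclidean volume of offending $\mathcal{P}_k$ is at most $[\epsilon_n(\lceil D_{lip}\rceil + 1)]^d \sum_{\text{bad}} \mathrm{Vol}(\mathcal{Q}_i) \le [\epsilon_n(\lceil D_{lip}\rceil + 1)]^d \gamma$; pushing this forward through $I_x^{-1}$ (which inflates volumes by at most $C_{lip}^d$) gives the claimed $\sum_i \mathrm{Vol}(A'_i) < [C_{lip}\epsilon_n(\lceil D_{lip}\rceil + 1)]^d \gamma$ with $N_2 \le \mathcal{N}(\eta,\gamma)$. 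The principal technical obstacle is the precise combinatorial counting that delivers the factor $(\lceil D_{lip}\rceil + 1)^d$ both in the approximation error on the good set and in the bad-set volume bound; this requires carefully tracking how a single $\mathcal{P}_k$-window can distribute across adjacent $\mathcal{Q}_i$-cubes under the $D_{lip}$-bi-Lipschitz distortion of $J_x$, and aligning the two grids so that no extra multiplicative factors beyond $\lceil D_{lip}\rceil + 1$ arise.
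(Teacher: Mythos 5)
Your proposal takes essentially the same route as the paper's own proof. You use the same key ingredients: the identity $\|\textbf{u}\|_{\mathbb{R}^p}=\|J_x(I_x(\textbf{u}))\|_{\mathbb{R}^d}$ to transfer the approximation to $\mathbb{R}^d$, the given Riemann partition $\{\mathcal{Q}_i\}$ of $[-D_{lip}\eta,D_{lip}\eta]^d$ split into good/bad cubes, a matching domain grid (the paper works with $J_{x,\epsilon_n}(\textbf{v})=J_x(\epsilon_n\textbf{v})/\epsilon_n$ on $(-\eta,\eta]^d$, you work with $J_x/\epsilon_n$ on $(-\epsilon_n\eta,\epsilon_n\eta]^d$ — just a rescaling of the same picture), the $D_{lip}$-Lipschitz bound to count how many target cubes the image of a domain cube meets, and pushforward through $I_x^{-1}$ (inflating volume by $C_{lip}^d$) to produce the rectangles $R_i$, $R_i'$. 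The only formulation differences are cosmetic: the paper assigns to each good domain cube $L_i$ the value $m_i(K,J)=\inf_{L_i}K(\|J_{x,\epsilon_n}(\cdot)\|)$ and bounds oscillation via path-connectedness of $J_{x,\epsilon_n}(L_i)$ plus a triangle-inequality chain through at most $(\lceil D_{lip}\rceil+1)^d$ good cubes, whereas you assign $m_{i(k)}(K)$ from the target cube containing the image of the center and chain similarly; and the paper identifies the bad domain cubes by covering the preimage $J_{x,\epsilon_n}^{-1}(\mathcal{S})$ while you identify them by a forward check on the windows $J_x(\mathcal{P}_k)/\epsilon_n$ — these produce the same set. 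You correctly flag the combinatorial counting step as the delicate point; to nail the stated constant $(\lceil D_{lip}\rceil+1)^d$ you should, as the paper does, invoke that the image of a connected domain cube is connected so that the oscillation over it can be chained through shared faces of the closed cubes $\mathcal{Q}_i$, giving a factor equal to the number of cubes in the cover rather than twice that.
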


\begin{proof}
For $\textbf{v} \in (-\eta,\eta]^d$, let
\begin{align}
J_{x,\epsilon_n}(\textbf{v}):=\frac{1}{\epsilon_n}J_x(\epsilon_n\textbf{v}).
\end{align}
Note that $J_{x,\epsilon_n}$ is well defined by the choice of $\eta$ and $\epsilon_n$.
A key observation is that for $\textbf{u} \in  Q_{\epsilon_n\eta} \cap \iota(M)$, 
\begin{align}
K\left(\frac{\|\textbf{u}\|_{\mathbb{R}^p}}{\epsilon_n}\right)=K\left(\left\|J_{x,\epsilon_n}\left(\frac{I_x(\textbf{u})}{\epsilon_n}\right)\right\|_{\mathbb{R}^d}\right)\,.
\end{align}
Moreover, both $J_{x,\epsilon_n}$ and $I_x$ are  bi-Lipschitz homeomorphism. Since $J_{x,\epsilon_n}\left(\frac{I_x(\textbf{u})}{\epsilon_n}\right)$ is smooth except at one point, we are able to apply the change of variables formula later.

Let us start with approximating $K(\|J_{x,\epsilon_n}(\textbf{v})\|_{\mathbb{R}^d})$ for $\textbf{v} \in (-\eta, \eta]^d$. 
The construction is based on a development of the proof of the well known statement ``a function is Riemann integrable if and only if the set where the function is discontinuous has measure 0'', while taking the map $J_{x,\epsilon_n}$ into account. 
Based on the scaling argument and the fact that  $J_{x}$ is bi-Lipschitz homeomorphism with a bi-Lipschitz constant bounded by $D_{lip} \geq 1$, we have the follwing observations about $J_{x,\epsilon_n}$,
\begin{enumerate}
\item
$J_{x,\epsilon_n}$ is a bi-Lipschitz homeomorphism with the same bi-Lipschitz constant  $D_{lip}$ as $J_x$.
\item
$[-\eta, \eta]^d \subset J_{x,\epsilon_n}([-\eta, \eta]^d) \subset [-D_{lip}\eta, D_{lip}\eta]^d$.
\end{enumerate}
Note that $K(\|\textbf{v}\|_{\mathbb{R}^d})$ is integrable on $[-D_{lip}\eta, D_{lip}\eta]^d$. We partite $[-D_{lip}\eta, D_{lip}\eta]^d$ uniformly into $\mathcal{N}(\eta,\gamma)$ disjoint cubes $\{\mathcal{Q}_i\}_{i=1}^{\mathcal{N}(\eta,\gamma)}$, where each $\mathcal{Q}_i$ is of the form $[v_1,v_1+a] \times \cdots \times [v_d,v_d+a]$ for $a>0$ so that 
\begin{align}
\sum_{i =1}^{\mathcal{N}(\eta,\gamma)}(M_i (K)-m_i (K))Vol(\mathcal{Q}_i) <\gamma^2\,,
\end{align} 
where $M_i (K)$ and $m_i (K)$ are  the supremum and infimum of $K(\|\textbf{v}\|_{\mathbb{R}^d})$ over the cube $\mathcal{Q}_i$  respectively. Let $S_1$ denote the set of cubes $\mathcal{Q}_i$  where $M_i (K)-m_i (K) <\gamma$ and let $S_2$ denote the set of cubes $\mathcal{Q}_j$  where  where $M_j (K)-m_j (K) \geq \gamma$. Clearly, $\max\{|S_1|, |S_2|\}\leq \mathcal{N}(\eta,\gamma)$ and
\begin{align}
\sum_{\mathcal{Q}_i \in S_1 }(M_i (K)-m_i (K))Vol(\mathcal{Q}_i) + \sum_{\mathcal{Q}_j \in S_2 }(M_j (K)-m_j (K))Vol(\mathcal{Q}_j)<\gamma^2.
\end{align} 
Hence, $ \sum_{\mathcal{Q}_j \in S_2} Vol(\mathcal{Q}_j) <\gamma$. 

Denote $S_3:=\{\mathcal{Q}_j \in S_2 |\, \mathcal{Q}_j \cap J_{x,\epsilon_n}([-\eta, \eta]^d) \not= \emptyset\}$. Denote $\mathcal{S}:=\cup_{\mathcal{Q}_j \in S_3 }\mathcal{Q}_j \cap J_{x,\epsilon_n}((-\eta, \eta]^d)$. Since $diam(\mathcal{Q}_j \cap J_{x,\epsilon_n}((-\eta, \eta]^d)) \leq diam(\mathcal{Q}_j)$, we have $diam\big( J^{-1}_{x,\epsilon_n}(\mathcal{Q}_j \cap J_{x,\epsilon_n}((-\eta, \eta]^d))\big) \leq D_{lip} diam (\mathcal{Q}_j)$. Note that the diameters of the above sets are measured by the canonical metric of $\mathbb{R}^d$.  Therefore, each $J^{-1}_{x,\epsilon_n}(\mathcal{Q}_j \cap J_{x,\epsilon_n}([-\eta, \eta]^d))\big)$ can be covered by no more than $(\ceil{D_{lip}}+1)^d$ cubes $\mathcal{Q}_j$.

Let $S_4$ denote the set of cubes that cover $ J^{-1}_{x,\epsilon_n}(\mathcal{S})$. Then $|S_4| \leq (\ceil{D_{lip}}+1)^d |S_3| \leq (\ceil{D_{lip}}+1)^d |S_2|$, and 
\begin{align}
 \sum_{\mathcal{Q}_j \in S_4} Vol(\mathcal{Q}_j) < (\ceil{D_{lip}}+1)^d \gamma.
\end{align}

Let $\mathcal{L}:=\{L_i:=\mathcal{Q}_i \cap [-\eta, \eta]^d\}$. Then, each $L_i$ is a closed rectangle in $\mathbb{R}^d$. Set $\mathcal{L}=\mathcal{L}_1 \cup \mathcal{L}_2$, where $ \mathcal{L}_2=\{L_i=\mathcal{Q}_i \cap [-\eta, \eta]^d|\, \mathcal{Q}_i \in S_4\}$ and $\mathcal{L}_1:=\mathcal{L}\backslash \mathcal{L}_2$. By construction, the union of $L_i\in \mathcal{L}_2$ covers $ J^{-1}_{x,\epsilon_n}(\mathcal{S})$. Moreover,
\begin{align}
 \sum_{L_i \in \mathcal{L}_2} Vol(L_i) \leq  \sum_{\mathcal{Q}_j \in S_4} Vol(\mathcal{Q}_j) < (\ceil{D_{lip}}+1)^d \gamma.
\end{align}
For each $L_i \in \mathcal{L}_1$, $ J_{x,\epsilon_n}(L_i) \subset \mathcal{S}^c \cap J_{x,\epsilon_n}([-\eta, \eta]^d)$. Hence $J_{x,\epsilon_n}(L_i)$ can be covered by $\mathcal{Q}_j$ in $S_1$, and hence $diam(J_{x,\epsilon_n}(L_i))  \leq D_{lip} diam (L_i)  \leq D_{lip} diam (\mathcal{Q}_j)$. Again, the diameters of the above sets are measured by the canonical metric of $\mathbb{R}^d$. Therefore, $J_{x,\epsilon_n}(L_i)$ can be covered by no more than $(\ceil{D_{lip}}+1)^d$ cubes in $S_1$.  Denote $M_i(K,J)$ and $m_i(K,J)$ be the supremum and infimum of  $K(\|J_{x,\epsilon_n}(\textbf{v})\|_{\mathbb{R}^d})$ over $L_i \in \mathcal{L}_1$  respectively. Since $ J_{x,\epsilon_n}(L_i)$ is a path connected set, by the triangle inequality, we have
\begin{align}
M_i(K,J)-m_i(K,J)< (\ceil{D_{lip}}+1)^d \gamma,
\end{align}
for $L_i \in \mathcal{L}_1$. 

Note that each $L_i$ is of the form $[v^i_1,v_1+a^i_1] \times \cdots \times [v^i_d,v^i_d+a^i_d]$. Define $\tilde{L}_i=(v^i_1,v_1+a^i_1] \times \cdots \times (v^i_d,v^i_d+a^i_d]$. For $\textbf{v} \in (-\eta, \eta]^d$, define
\begin{align}
K^*_{J_{x,\epsilon_n}}(\textbf{v})=\sum m_i(K,J)  \bigchi_{\tilde{L}_i}(\textbf{v}),
\end{align}
where the summation is over all the $\tilde{L}_i$ such that $L_i \in \mathcal{L}_1$ and $\tilde{L}_i \cap (-\eta, \eta]^d \not=\emptyset$.
Then, $|K^*_{J_{x,\epsilon_n}}(\textbf{v})-K(\|J_{x,\epsilon_n}(\textbf{v})\|_{\mathbb{R}^d})|< (\ceil{D_{lip}}+1)^d \gamma$ on $(-\eta,\eta]^d$, except on the set 
$\cup_{L_i \in  \mathcal{L}_2} \tilde{L}_i$. And $ \sum_{L_i \in \mathcal{L}_2} Vol(\tilde{L}_i) < (\ceil{D_{lip}}+1)^d \gamma$.

We define 
\begin{align}
K^*_{\epsilon_n}(\textbf{u})=K^*_{J_{x,\epsilon_n}}\left(\frac{I_x(\textbf{u})}{\epsilon_n}\right)\,,
\end{align}
where $\textbf{u} \in Q_{\epsilon_n\eta} \cap \iota(M)$. If we use a scaling argument, then we can show that $K^*_{\epsilon_n}(\textbf{u})=\sum_{i=1}^{N_1} a_i \bigchi_{A_i}(\textbf{u})$ and it satisfies the following properties:

\begin{enumerate}
\item
$0 \leq |a_1|, \cdots, |a_{N_1}| \leq K_{sup}$ and $N_1 \leq \mathcal{N}(\eta,\gamma)$. 
\item
$\{A_i\}$ are disjoint sets and $A_i=R_i \cap \iota(M)$, where $\{R_i\}$ are  rectangles in $Q_{\epsilon_n \eta}$ defined as in Definition \ref{def of rectangle, ball and cube}.
\item For $\textbf{u} \in Q_{\epsilon_n\eta} \cap \iota(M)$, we have
\[
\left|K^*_{\epsilon_n}(\textbf{u})-K\left(\frac{\|\textbf{u}\|_{\mathbb{R}^p}}{\epsilon_n}\right)\right|<(\ceil{D_{lip}}+1)^d \gamma,
\]  
except on a set $D \subset  Q_{\epsilon_n\eta} \cap \iota(M)$.
\item 
$D \subset \sqcup_{i=1}^{N_2} A'_i $, $\{A'_i\}$ are disjoint sets and $A'_i=R'_i \cap \iota(M)$  and $\{R'_i\}$ are  rectangles in $Q_{\epsilon_n \eta}$ defined as in Definition \ref{def of rectangle, ball and cube}.
\item
$N_2 \leq \mathcal{N}(\eta,\gamma)$ and $\sum_{i=1}^{N_2} Vol(A'_i) <  [C_{lip}\epsilon_n (\ceil{D_{lip}}+1)] ^d \gamma$.
\end{enumerate}
Note that we also use the fact that $I_x$ is $C_{lip}$ bi-Lipschitz to prove (5).
\end{proof}

\subsection{\textbf{Proof of Theorem \ref{EK_n-K_n}}}

We fix $x \in M$ and show the conclusion holds for any $x$. Without loss of generality, we assume that $\iota(M)$ is rotated and translated, so that $\iota(x)=0 \in \mathbb{R}^p$ and $\iota_*T_x M$ is the subpace $\mathbb{R}^d \subset \mathbb{R}^p$ generated by $e_1, \cdots, e_d$.

\underline{\textbf{Choice of $\lambda$}}

{Choose $\lambda$ as described in the previous construction of $J_x$ \eqref{map J_x}.} Moreover, it also satisfies the following bound
\begin{align}\label{relation between P and lambda}
P_{Max}[2C_{lip}\lambda]^d \leq \frac{1}{4}\,.
\end{align}

\underline{\textbf{Choice of $\eta$ based on $\gamma$}}

Suppose $0<\gamma<\min\{\rho^{-\alpha},1\}$.  We  choose $\eta=\gamma^{-\frac{1}{\alpha}}\geq \rho$. If $K(t)$ has a compact support on $[0, \rho]$, we simply require $\eta = \rho$. Let $T(t)=\frac{1}{t^\alpha}$ for $t>0$. Then, by the assumption we immediately have the following statements:
\begin{enumerate}
\item $|K(t)| \leq  \gamma$ whenever $t \geq \eta$.
\item 
\begin{align}\label{property of T(t)}
\int_{\big(B^{\mathbb{R}^d}_{\eta}(0)\big)^c}T(\|\textbf{v}\|_{\mathbb{R}^d})d\textbf{v} \leq \frac{|S^{d-1}|}{\alpha-d}\gamma^{1-\frac{d}{\alpha}},
\end{align}
where $B^{\mathbb{R}^d}_{\eta}(0)$ is ball of radius $\eta$ {and $|S^{d-1}|$ is the $d-1$ volume of the sphere $S^{d-1}$.}
\end{enumerate}

\underline{\textbf{Choice of $\epsilon_n$ based on $\gamma$}}

We choose $\epsilon_n$ small enough, so that  
\begin{align}\label{relation between gamma and epsilon}
8 \sqrt{p} \epsilon_n^{1-\frac{d}{\alpha}} \gamma^{-\frac{1}{\alpha}} <\lambda. 
\end{align}
Note that this automatically implies $\epsilon_n \eta \leq \lambda$. If $K(t)$ has compact support on $[0, \rho]$, we simply require $\epsilon_n \eta =\epsilon_n \rho \leq \lambda$.
In other words, by letting $\mathcal{D}_3= \left(\frac{\lambda}{8 \sqrt{p}}\right)^{\frac{\alpha}{\alpha-d}}$, we have
\begin{align}
\epsilon_n \leq \mathcal{D}_3 \gamma^{\frac{1}{\alpha-d}}.
\end{align}
Similarly, we take $\mathcal{D}_3=\frac{\lambda}{\rho}$ when $K(t)$ has compact support, and hence $\epsilon_n \leq \mathcal{D}_3$.

\
We start to analyze the term $|\mathbb{E}K_n(x)-K_n(x)|$ for the previously fixed $x$. Note that we will use the fact that $\iota(x)=0$.
\begin{align} 
& |\mathbb{E}K_n(x)-K_n(x)|  \label{inside and outside the cube} \\
=& \left|\int_M\frac{1}{\epsilon_n^d} K(\frac{\|\iota(y)-\iota(x)\|_{\mathbb{R}^p}}{\epsilon_n})P(y)dV(y)-\int_M \frac{1}{\epsilon_n^d}K(\frac{\|\iota(y)-\iota(x)\|_{\mathbb{R}^p}}{\epsilon_n})dP_n(y)\right| \nonumber \\
=& \left|\int_{\iota(M)} \frac{1}{\epsilon_n^d} K(\frac{\|\textbf{u}\|_{\mathbb{R}^p}}{\epsilon_n})P\circ \iota^{-1}(\textbf{u})\iota_*dV(\textbf{u})-\int_{\iota(M)} \frac{1}{\epsilon_n^d} K(\frac{\|\textbf{u}\|_{\mathbb{R}^p}}{\epsilon_n})d(P\circ \iota^{-1})_n(\textbf{u})\right| \nonumber\\
\leq & \left|\int_{Q_{\epsilon_n\eta} \cap \iota(M)}\frac{1}{\epsilon_n^d} K(\frac{\|\textbf{u}\|_{\mathbb{R}^p}}{\epsilon_n})P\circ \iota^{-1}(\textbf{u})\iota_*dV(\textbf{u})-\int_{Q_{\epsilon_n\eta} \cap \iota(M)}\frac{1}{\epsilon_n^d} K(\frac{\|\textbf{u}\|_{\mathbb{R}^p}}{\epsilon_n})d(P\circ \iota^{-1})_n(\textbf{u})\right| \nonumber\\
&+\left|\int_{(Q_{\epsilon_n\eta})^c \cap \iota(M)}\frac{1}{\epsilon_n^d} K(\frac{\|\textbf{u}\|_{\mathbb{R}^p}}{\epsilon_n})P\circ \iota^{-1}(\textbf{u})\iota_*dV(\textbf{u})-\int_{(Q_{\epsilon_n\eta})^c \cap \iota(M)} \frac{1}{\epsilon_n^d}K(\frac{\|\textbf{u}\|_{\mathbb{R}^p}}{\epsilon_n})d(P\circ \iota^{-1})_n(\textbf{u})\right|\,. \nonumber
\end{align}

{In other words, in order to bound the variance $|\mathbb{E}K_n(x)-K_n(x)|$, we need to bound two parts, the local part
$$\left|\int_{Q_{\epsilon_n\eta} \cap \iota(M)}\frac{1}{\epsilon_n^d} K(\frac{\|\textbf{u}\|_{\mathbb{R}^p}}{\epsilon_n})P\circ \iota^{-1}(\textbf{u})\iota_*dV(\textbf{u})-\int_{Q_{\epsilon_n\eta} \cap \iota(M)}\frac{1}{\epsilon_n^d} K(\frac{\|\textbf{u}\|_{\mathbb{R}^p}}{\epsilon_n})d(P\circ \iota^{-1})_n(\textbf{u})\right|,$$
and the tail part
$$\left|\int_{(Q_{\epsilon_n\eta})^c \cap \iota(M)}\frac{1}{\epsilon_n^d} K(\frac{\|\textbf{u}\|_{\mathbb{R}^p}}{\epsilon_n})P\circ \iota^{-1}(\textbf{u})\iota_*dV(\textbf{u})-\int_{(Q_{\epsilon_n\eta})^c \cap \iota(M)} \frac{1}{\epsilon_n^d}K(\frac{\|\textbf{u}\|_{\mathbb{R}^p}}{\epsilon_n})d(P\circ \iota^{-1})_n(\textbf{u})\right|.$$

We summarize the the bound of the local part in the following lemma. 

\begin{lemma}[Bound of the local part in \eqref{inside and outside the cube}]\label{bound of the local part}
\begin{align}
&  \left|\int_{Q_{\eta} \cap \iota(M)} K(\frac{\|\textbf{u}\|_{\mathbb{R}^p}}{\epsilon_n})P\circ \iota^{-1}(\textbf{u})\iota_*dV(\textbf{u})-\int_{Q_{\eta} \cap \iota(M)} K(\frac{\|\textbf{u}\|_{\mathbb{R}^p}}{\epsilon_n})d(P\circ \iota^{-1})_n(\textbf{u})\right| \\
\leq & \mathcal{C}_1  \gamma^{1-\frac{d}{\alpha}}+\frac{3N(\gamma)+(\ceil{D_{lip}}+1)^d \gamma}{\epsilon_n^d} K_{\sup} \sup_{A \in \mathcal{R}_{2\sqrt{p}\epsilon_n \eta}(\iota(M))} |\mu_n(A)-\mu(A)|,\nonumber 
\end{align}
where $\mathcal{C}_1= 2^{d+2} C_{lip}^d (\ceil{D_{lip}}+1)^d P_{Max}(K_{sup}+1)$.
\end{lemma}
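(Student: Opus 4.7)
The plan is to approximate the kernel on the local cube $Q_{\epsilon_n\eta}\cap\iota(M)$ by a simple function built from rectangle-manifold intersections, and then transfer the variance bound to the uniform rectangle control of Lemma \ref{rectangle covering lemma}. Concretely, I would invoke Lemma \ref{approximation to K} with the already-chosen $\gamma$ and $\eta=\gamma^{-1/\alpha}$ to produce $K^*_{\epsilon_n}(\mathbf{u})=\sum_{i=1}^{N_1}a_i\chi_{A_i}(\mathbf{u})$ with $|a_i|\le K_{\sup}$, $N_1\le N(\gamma)$, each $A_i=R_i\cap\iota(M)$ with $R_i\subset Q_{\epsilon_n\eta}$, and a ``bad'' set $D\subset\bigsqcup_{i=1}^{N_2}A'_i$ of the same rectangle-manifold type with $N_2\le N(\gamma)$ and $\sum_i\mathrm{Vol}(A'_i)\le [C_{lip}\epsilon_n(\lceil D_{lip}\rceil+1)]^d\gamma$. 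Since every $R_i,R'_i$ lies in $Q_{\epsilon_n\eta}$, its diameter is at most $2\sqrt{p}\,\epsilon_n\eta$, so all atoms belong to $\mathcal{R}_{2\sqrt{p}\epsilon_n\eta}(\iota(M))$.

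A triangle inequality then splits the local part as
\begin{align*}
\Bigl|\textstyle\int_{Q_{\epsilon_n\eta}\cap\iota(M)}K\,d\mu-\int_{Q_{\epsilon_n\eta}\cap\iota(M)}K\,d\mu_n\Bigr|\le A_1+A_2+A_3,
\end{align*}
where $A_1=\bigl|\int K^*_{\epsilon_n}\,d\mu-\int K^*_{\epsilon_n}\,d\mu_n\bigr|$, $A_2=\int|K-K^*_{\epsilon_n}|\,d\mu$, and $A_3=\int|K-K^*_{\epsilon_n}|\,d\mu_n$. Since $K^*_{\epsilon_n}=\sum_{i\le N_1}a_i\chi_{A_i}$ with $A_i\in\mathcal{R}_{2\sqrt{p}\epsilon_n\eta}(\iota(M))$, one has $A_1\le K_{\sup}N(\gamma)\sup_{A\in\mathcal{R}_{2\sqrt{p}\epsilon_n\eta}(\iota(M))}|\mu_n(A)-\mu(A)|$. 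For $A_2$ I split the integral into the good region, where $|K-K^*_{\epsilon_n}|<(\lceil D_{lip}\rceil+1)^d\gamma$, and the bad part $D$. Using the $C_{lip}$-bi-Lipschitz property of $I_x$, $\mathrm{Vol}(Q_{\epsilon_n\eta}\cap\iota(M))\le(2C_{lip}\epsilon_n\eta)^d$, so the good contribution is at most $(\lceil D_{lip}\rceil+1)^d(2C_{lip})^d P_{Max}\gamma\,\epsilon_n^d\eta^d$, which after dividing by the $\epsilon_n^d$ sitting in $K_n$ yields order $\gamma\,\eta^d=\gamma^{1-d/\alpha}$; the bad contribution is at most $2K_{\sup}P_{Max}[C_{lip}\epsilon_n(\lceil D_{lip}\rceil+1)]^d\gamma$, of order $\gamma\le\gamma^{1-d/\alpha}$.

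For $A_3$ I run the same good/bad decomposition but replace $\mu$ by $\mu_n$: on the good region I bound $\mu_n(Q_{\epsilon_n\eta}\cap\iota(M))\le\mu(Q_{\epsilon_n\eta}\cap\iota(M))+\sup|\mu_n-\mu|$, exploiting that $Q_{\epsilon_n\eta}\cap\iota(M)$ itself belongs to $\mathcal{R}_{2\sqrt{p}\epsilon_n\eta}(\iota(M))$; this reproduces a deterministic $\gamma^{1-d/\alpha}$ piece together with an extra stochastic term of size $(\lceil D_{lip}\rceil+1)^d\gamma\,K_{\sup}\epsilon_n^{-d}\sup|\mu_n-\mu|$. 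On the bad region I apply $\mu_n(A'_i)\le\mu(A'_i)+\sup|\mu_n-\mu|$ to each of the $N_2\le N(\gamma)$ pieces, contributing a further $2K_{\sup}N(\gamma)\epsilon_n^{-d}\sup|\mu_n-\mu|$ on top of a deterministic piece already absorbed above. Combining the three contributions yields precisely $\mathcal{C}_1\gamma^{1-d/\alpha}+\epsilon_n^{-d}\bigl[3N(\gamma)+(\lceil D_{lip}\rceil+1)^d\gamma\bigr]K_{\sup}\sup_{A\in\mathcal{R}_{2\sqrt{p}\epsilon_n\eta}(\iota(M))}|\mu_n(A)-\mu(A)|$ with $\mathcal{C}_1=2^{d+2}C_{lip}^d(\lceil D_{lip}\rceil+1)^dP_{Max}(K_{\sup}+1)$. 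The main bookkeeping obstacle is ensuring that every rectangle appearing on the $\mu_n$-side has diameter at most $2\sqrt{p}\epsilon_n\eta$, so that a single supremum governs all the stochastic terms; this is secured by item (4) of Lemma \ref{approximation to K}, which places every $R'_i$ inside $Q_{\epsilon_n\eta}$.
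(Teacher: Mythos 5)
Your proposal reproduces the paper's argument step by step: invoke Lemma \ref{approximation to K} to replace $K$ on $Q_{\epsilon_n\eta}\cap\iota(M)$ by a simple function over rectangle--manifold intersections, decompose via the triangle inequality into the approximation error against $\mu$, the fluctuation of the simple function, and the approximation error against $\mu_n$, and then control the $\mu_n$-side by the substitution $\mu_n\le\mu+\sup|\mu_n-\mu|$ on both the good region and the bad set, noting that every atom (including $Q_{\epsilon_n\eta}\cap\iota(M)$ itself) lies in $\mathcal{R}_{2\sqrt{p}\epsilon_n\eta}(\iota(M))$. This is exactly the route the paper takes, including the same bookkeeping that yields $\mathcal{C}_1$ and the $3N(\gamma)+(\lceil D_{lip}\rceil+1)^d\gamma$ coefficient on the stochastic term.
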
}

\begin{proof}
To bound the local part,  we use $K^*_{\epsilon_n}(\textbf{u})$ in the Lemma \ref{approximation to K} to approximate $K(\frac{\|\textbf{u}\|_{\mathbb{R}^p}}{\epsilon_n})$. Specifically, we find the smallest integer $N(\gamma)$ so that we partitie $[-D_{lip}\gamma^{-\frac{1}{\alpha}}, D_{lip}\gamma^{-\frac{1}{\alpha}}]^d$ uniformly into $N(\gamma)$ cubes $\{\mathcal{Q}_i\}_{i=1}^{N(\gamma)}$, where each $\mathcal{Q}_i$ is of the form $[v_1,v_1+a] \times \cdots \times [v_d,v_d+a]$ for $a>0$ and
\begin{align}
\sum_{i =1}^{N(\gamma)}(M_i (K)-m_i (K))Vol(\mathcal{Q}_i) <\gamma^2\,,
\end{align} 
where $M_i (K)$ and $m_i (K)$ are  the supremum and infimum of $K(\|\textbf{v}\|_{\mathbb{R}^d})$ over the cube $\mathcal{Q}_i$  respectively. By Lemma \ref{approximation to K}, for $\textbf{u} \in Q_{\epsilon_n\eta} \cap \iota(M)$, there is an approximation of $K\left(\frac{\|\textbf{u}\|_{\mathbb{R}^p}}{\epsilon_n}\right)$ by a function $K^*_{\epsilon_n}(\textbf{u})=\sum_{i=1}^{N_1} a_i \bigchi_{A_i}(\textbf{u})$ with the following properties:
\begin{enumerate}
\item
$0 \leq |a_1|, \cdots, |a_{N_1}| \leq K_{sup}$ and $N_1 \leq N(\gamma)$. 
\item
$\{A_i\}$ are disjoint sets and $A_i=R_i \cap \iota(M)$, where $\{R_i\}$ are  rectangles in $Q_{\epsilon_n \eta}$ defined as in Definition \ref{def of rectangle, ball and cube}.
\item For $\textbf{u} \in Q_{\epsilon_n\eta} \cap \iota(M)$, we have
\[
\left|K^*_{\epsilon_n}(\textbf{u})-K\left(\frac{\|\textbf{u}\|_{\mathbb{R}^p}}{\epsilon_n}\right)\right|<(\ceil{D_{lip}}+1)^d \gamma,
\]  
except on a set $D \subset  Q_{\epsilon_n\eta} \cap \iota(M)$.
\item 
$D \subset \sqcup_{i=1}^{N_2} A'_i $, $\{A'_i\}$ are disjoint sets and $A'_i=R'_i \cap \iota(M)$  and $\{R'_i\}$ are  rectangles in $Q_{\epsilon_n \eta}$ defined as in Definition \ref{def of rectangle, ball and cube}.
\item
$N_2 \leq N(\gamma)$ and $\sum_{i=1}^{N_2} Vol(A'_i) <  [C_{lip}\epsilon_n (\ceil{D_{lip}}+1)] ^d \gamma$.
\end{enumerate}

By using $K^*_{\epsilon_n}(\textbf{u})$,  the local part can be further decomposed into the following three parts:
\begin{align}
& \left|\int_{Q_{\epsilon_n\eta} \cap \iota(M)} \frac{1}{\epsilon_n^d} K(\frac{\|\textbf{u}\|_{\mathbb{R}^p}}{\epsilon_n})P\circ \iota^{-1}(\textbf{u})\iota_*dV(\textbf{u})-\int_{Q_{\epsilon_n\eta} \cap \iota(M)} \frac{1}{\epsilon_n^d} K(\frac{\|\textbf{u}\|_{\mathbb{R}^p}}{\epsilon_n})d(P\circ \iota^{-1})_n(\textbf{u})\right|\nonumber \\
\leq &\left|  \int_{Q_{\epsilon_n\eta} \cap \iota(M)}\bigg[\frac{1}{\epsilon_n^d}K(\frac{\|\textbf{u}\|_{\mathbb{R}^p}}{\epsilon_n})-\frac{1}{\epsilon_n^d}K^*_{\epsilon_n}(\textbf{u})\bigg]P\circ \iota^{-1}(\textbf{u})\iota_*dV(\textbf{u})\right|\label{Riemann approx step} \\
&+\left|\int_{Q_{\epsilon_n\eta} \cap \iota(M)}\frac{1}{\epsilon_n^d} K^*_{\epsilon_n}(\textbf{u}) P\circ \iota^{-1}(\textbf{u})\iota_*dV(\textbf{u}) -\int_{Q_{\epsilon_n\eta} \cap \iota(M)}\frac{1}{\epsilon_n^d}K^*_{\epsilon_n}(\textbf{u})d(P\circ \iota^{-1})_n(\textbf{u})\right| \nonumber \\
&+\left| \int_{Q_{\epsilon_n\eta} \cap \iota(M)}\frac{1}{\epsilon_n^d}\bigg[ K(\frac{\|\textbf{u}\|_{\mathbb{R}^p}}{\epsilon_n})-K^*_{\epsilon_n}(\textbf{u})\bigg]d(P\circ \iota^{-1})_n(\textbf{u})\right|\,.\nonumber 
\end{align}
We bound the first  term  in \eqref{Riemann approx step}: 
\begin{align}\label{Riemann approx step 1}
&\left|\int_{Q_{\epsilon_n \eta} \cap \iota(M)}\frac{1}{\epsilon_n^d} \bigg[ K(\frac{\|\textbf{u}\|_{\mathbb{R}^p}}{\epsilon_n})-K^*_{\epsilon_n}(\textbf{u})\bigg] P\circ \iota^{-1}(\textbf{u})\iota_*dV(\textbf{u}) \right|\\
\leq & \int_{Q_{\epsilon_n \eta} \cap \iota(M) \setminus D}\frac{1}{\epsilon_n^d} \left| K(\frac{\|\textbf{u}\|_{\mathbb{R}^p}}{\epsilon_n})-K^*_{\epsilon_n}(\textbf{u})\right| P\circ \iota^{-1}(\textbf{u})\iota_*dV(\textbf{u}) + \int_{D}\frac{1}{\epsilon_n^d} \left| K(\frac{\|\textbf{u}\|_{\mathbb{R}^p}}{\epsilon_n})-K^*_{\epsilon_n}(\textbf{u}) \right| P\circ \iota^{-1}(\textbf{u})\iota_*dV(\textbf{u})  \nonumber  \\
\leq &  Vol(Q_{\epsilon_n \eta} \cap \iota(M)) \frac{1}{\epsilon_n^d} (\ceil{D_{lip}}+1)^d \gamma P_{Max}+Vol(D) \frac{1}{\epsilon_n^d}  2K_{sup} P_{Max} \nonumber \\
\leq & C_{lip}^d(2\eta)^d  (\ceil{D_{lip}}+1)^d \gamma P_{Max}+ 2 C_{lip}^d (\ceil{D_{lip}}+1)^d \gamma K_{sup} P_{Max} \nonumber \\
\leq &   2^{d+1} C_{lip}^d (\ceil{D_{lip}}+1)^d P_{Max}(K_{sup}+1) \gamma^{1-\frac{d}{\alpha}}. \nonumber
\end{align} 
Note that in the second last step of the above equation, we use the bi-Lipschitz property of $I_x^{-1}$ to estimate the volume. Hence, $Vol(Q_{\epsilon_n \eta} \cap \iota(M) ) \leq C_{lip}^d (2\epsilon_n\eta)^d$ and $Vol(D) \leq \sum_{i=1}^{N_2} Vol(A'_i) <  [C_{lip}\epsilon_n (\ceil{D_{lip}}+1)] ^d \gamma$ . 
For the second term in \eqref{Riemann approx step},
\begin{align}\label{Riemann approx step 2}
& \left|\int_{Q_{\epsilon_n\eta} \cap \iota(M)}\frac{1}{\epsilon_n^d} K^*_{\epsilon_n}(\textbf{u}) P\circ \iota^{-1}(\textbf{u})\iota_*dV(\textbf{u}) -\int_{Q_{\epsilon_n\eta} \cap \iota(M)}\frac{1}{\epsilon_n^d}K^*_{\epsilon_n}(\textbf{u})d(P\circ \iota^{-1})_n(\textbf{u})\right| \\
\leq &\frac{1}{\epsilon_n^d} \left|\sum_{i=1}^{N_1}a_i(\mu_n(A_i)-\mu(A_i))\right|
\leq  \frac{N (\gamma)}{\epsilon_n^d}  K_{sup} \sup_{A \in \mathcal{R}_{2\sqrt{p}\epsilon_n \eta}(\iota(M))} |\mu_n(A)-\mu(A)|.  \nonumber
\end{align}
Note that $A_i= R_i \cap \iota(M)$, where $R_i$ is a rectangle in $ Q_{\epsilon_n\eta}$ and $diam(R_i) \leq diam( Q_{\epsilon_n\eta})=2\sqrt{p}\epsilon_n\eta$. Therefore, the supremum in the last step is taken over $A \in \mathcal{R}_{2\sqrt{p}\epsilon_n \eta}(\iota(M))$.

For the last term in \eqref{Riemann approx step}, again, we use the bi-Lipschitz property of $I_x^{-1}$ to estimate the volume. So, we have
\begin{align}\label{Riemann approx step 3}
& \left| \int_{Q_{\epsilon_n\eta} \cap \iota(M)}\frac{1}{\epsilon_n^d}\bigg[ K(\frac{\|\textbf{u}\|_{\mathbb{R}^p}}{\epsilon_n})-K^*_{\epsilon_n}(\textbf{u})\bigg]d(P\circ \iota^{-1})_n(\textbf{u})\right| \\
\leq &  \int_{Q_{\epsilon_n \eta} \cap \iota(M) \setminus D}\frac{1}{\epsilon_n^d} \Big| K(\frac{\|\textbf{u}\|_{\mathbb{R}^p}}{\epsilon_n})-K^*_{\epsilon_n}(\textbf{u})\Big| d(P\circ \iota^{-1})_n(\textbf{u}) + \int_{D}\frac{1}{\epsilon_n^d} \Big| K(\frac{\|\textbf{u}\|_{\mathbb{R}^p}}{\epsilon_n})-K^*_{\epsilon_n}(\textbf{u}) \Big| d(P\circ \iota^{-1})_n(\textbf{u})  \nonumber  \\
\leq & \frac{1}{\epsilon_n^d} (\ceil{D_{lip}}+1)^d \gamma \mu_n(Q_{\epsilon_n \eta} \cap \iota(M)) +2 K_{Sup}\frac{1}{\epsilon_n^d} \mu_n(D)  \nonumber \\
\leq &   \frac{1}{\epsilon_n^d}  (\ceil{D_{lip}}+1)^d \gamma| \mu_n(Q_{\epsilon_n \eta} \cap \iota(M))-\mu(Q_{\epsilon_n \eta} \cap \iota(M)) |+\frac{1}{\epsilon_n^d}  (\ceil{D_{lip}}+1)^d \gamma \mu(Q_{\epsilon_n \eta} \cap \iota(M)) \nonumber \\
& +2 K_{Sup}\frac{1}{\epsilon_n^d}| \mu_n(D)-\mu(D)|+2 K_{Sup}\frac{1}{\epsilon_n^d}\mu(D)  \nonumber \\
\leq & \frac{  (\ceil{D_{lip}}+1)^d \gamma}{\epsilon_n^d}\sup_{A \in \mathcal{R}_{2\sqrt{p}\epsilon_n \eta}(\iota(M))} |\mu_n(A)-\mu(A)|+\frac{1}{\epsilon_n^d}  (\ceil{D_{lip}}+1)^d \gamma P_{Max} C_{lip}^d (2\epsilon_n \eta)^d\nonumber \\
&+ \frac{2}{\epsilon_n^d} K_{\sup}|\sum_{i=1}^{N_2}\mu_n(A'_i)-\sum_{i=1}^{N_2}\mu(A'_i)|+2K_{\sup}P_{Max} [C_{lip} (\ceil{D_{lip}}+1)] ^d \gamma \nonumber \\
\leq & \frac{  (\ceil{D_{lip}}+1)^d \gamma}{\epsilon_n^d}\sup_{A \in \mathcal{R}_{2\sqrt{p}\epsilon_n \eta}(\iota(M))} |\mu_n(A)-\mu(A)|+ [2C_{lip}(\ceil{D_{lip}}+1)]^d P_{Max}  \gamma^{1-\frac{d}{\alpha}}\nonumber \\
&+ \frac{2N(\gamma)}{\epsilon_n^d} K_{\sup}\sup_{A \in \mathcal{R}_{2\sqrt{p}\epsilon_n \eta}(\iota(M))} |\mu_n(A)-\mu(A)|+2K_{\sup}P_{Max} [C_{lip} (\ceil{D_{lip}}+1)] ^d \gamma. \nonumber
\end{align}

If we plug the bounds \eqref{Riemann approx step 1}, \eqref{Riemann approx step 2}, and \eqref{Riemann approx step 3} into \eqref{Riemann approx step}, we have
\begin{align}\label{inside the cube}
&  \left|\int_{Q_{\eta} \cap \iota(M)} K(\frac{\|\textbf{u}\|_{\mathbb{R}^p}}{\epsilon_n})P\circ \iota^{-1}(\textbf{u})\iota_*dV(\textbf{u})-\int_{Q_{\eta} \cap \iota(M)} K(\frac{\|\textbf{u}\|_{\mathbb{R}^p}}{\epsilon_n})d(P\circ \iota^{-1})_n(\textbf{u})\right| \\
\leq & \Big( 2^{d+1} C_{lip}^d (\ceil{D_{lip}}+1)^d P_{Max}(K_{sup}+1) + [2C_{lip}(\ceil{D_{lip}}+1)]^d P_{Max}  +2K_{\sup}P_{Max} [C_{lip} (\ceil{D_{lip}}+1)] ^d \Big) \gamma^{1-\frac{d}{\alpha}} \nonumber \\
& +\frac{3N(\gamma)+(\ceil{D_{lip}}+1)^d \gamma}{\epsilon_n^d} K_{\sup}\sup_{A \in \mathcal{R}_{2\sqrt{p}\epsilon_n \eta}(\iota(M))} |\mu_n(A)-\mu(A)|  \nonumber \\
\leq & \mathcal{C}_1  \gamma^{1-\frac{d}{\alpha}}+\frac{3N(\gamma)+(\ceil{D_{lip}}+1)^d \gamma}{\epsilon_n^d} K_{\sup} \sup_{A \in \mathcal{R}_{2\sqrt{p}\epsilon_n \eta}(\iota(M))} |\mu_n(A)-\mu(A)|,\nonumber 
\end{align}
where $\mathcal{C}_1= 2^{d+2} C_{lip}^d (\ceil{D_{lip}}+1)^d P_{Max}(K_{sup}+1)$.
\end{proof}

{Next, we bound the second part, the tail part, in \eqref{inside and outside the cube}.  We summarize the result as the following lemma.

\begin{lemma}[Bound of the tail part in \eqref{inside and outside the cube}]\label{bound of the tail part}
Let $q$ be an integer so that $\frac{1}{\epsilon_n^d} < q <\frac{2}{\epsilon_n^d}$. Let  $\eta_i=[(1-\frac{i}{q})\gamma]^{-\frac{1}{\alpha}}$. 
\begin{align}
& \left|\int_{(Q_{\epsilon_n\eta})^c \cap \iota(M)}\frac{1}{\epsilon_n^d} K(\frac{\|\textbf{u}\|_{\mathbb{R}^p}}{\epsilon_n})P\circ \iota^{-1}(\textbf{u})\iota_*dV(\textbf{u})-\int_{(Q_{\epsilon_n\eta})^c \cap \iota(M)} \frac{1}{\epsilon_n^d}K(\frac{\|\textbf{u}\|_{\mathbb{R}^p}}{\epsilon_n})d(P\circ \iota^{-1})_n(\textbf{u})\right|\nonumber \\
\leq & \mathcal{C}_2 \gamma^{1-\frac{d}{\alpha}}+\frac{2\gamma}{\epsilon_n^d} \sup_{A \in \mathcal{R}_{2\sqrt{p}\epsilon_n \eta_{q-1}}(\iota(M))} |\mu_n(A)-\mu(A)|\,,
\end{align}
where $\mathcal{C}_2:= 2p^{\frac{\alpha}{2}} P_{Max}  (C_{lip} D_{lip})^d \frac{|S^{d-1}|}{\alpha-d}+4$.
\end{lemma}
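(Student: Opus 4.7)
The plan is to split $\bigl|\int \frac{K}{\epsilon_n^d}d(\mu_n-\mu)\bigr|$ into $\bigl|\int \frac{K}{\epsilon_n^d}d\mu\bigr|$ and $\bigl|\int \frac{K}{\epsilon_n^d}d\mu_n\bigr|$, and exploit the polynomial decay $|K(t)|\leq t^{-\alpha}$, which is available on $(Q_{\epsilon_n\eta})^c$ since $\|\textbf{u}\|_{\mathbb{R}^p}>\epsilon_n\eta$ and $\eta=\gamma^{-1/\alpha}\geq\rho$. The partition of the tail into annuli whose radii are tuned so that $|K|$ drops by exactly $\gamma/q$ from one annulus to the next is the organizing idea.

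First I would bound $\bigl|\int \frac{K}{\epsilon_n^d}d\mu\bigr|$. On $[-\lambda,\lambda]^p\cap\iota(M)$ I change variables $\textbf{u}=I_x^{-1}(\textbf{v})$ and then $\textbf{w}=J_x(\textbf{v})$, whose Jacobians are jointly controlled by $(C_{lip}D_{lip})^d$; the key identity $\|\textbf{u}\|_{\mathbb{R}^p}=\|\textbf{w}\|_{\mathbb{R}^d}$ built into \eqref{map J_x} together with the rescaling $\textbf{w}=\epsilon_n\textbf{z}$ reduces the integral to the radial tail $\int_{\|\textbf{z}\|_{\mathbb{R}^d}>\eta}\|\textbf{z}\|_{\mathbb{R}^d}^{-\alpha}d\textbf{z}\leq\frac{|S^{d-1}|}{\alpha-d}\gamma^{1-d/\alpha}$ from \eqref{property of T(t)}. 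On $\iota(M)\setminus[-\lambda,\lambda]^p$ one has $\|\textbf{u}\|_{\mathbb{R}^p}>\lambda$, so the contribution is at most $\frac{\epsilon_n^{\alpha-d}}{\lambda^\alpha}P_{Max}\cdot \mathrm{Vol}(M)$, which under $\epsilon_n\leq\mathcal{D}_3\gamma^{1/(\alpha-d)}$ and $\gamma<1$ is itself $O(\gamma^{1-d/\alpha})$.

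Next I partition $(Q_{\epsilon_n\eta})^c$ into annuli $\Omega_i:=Q_{\epsilon_n\eta_{i+1}}\setminus Q_{\epsilon_n\eta_i}$ for $i=0,\dots,q-1$ (with $\Omega_{q-1}=(Q_{\epsilon_n\eta_{q-1}})^c$ since $\eta_q=\infty$). On $\Omega_i$ the decay gives $|K(\|\textbf{u}\|_{\mathbb{R}^p}/\epsilon_n)|\leq\eta_i^{-\alpha}=k_i$ with $k_i:=(1-i/q)\gamma$. Writing $\mu_n=\mu+(\mu_n-\mu)$, the $\mu_n$-part is bounded by $\sum_i\frac{k_i}{\epsilon_n^d}\mu(\Omega_i)+\bigl|\sum_i\frac{k_i}{\epsilon_n^d}(\mu_n-\mu)(\Omega_i)\bigr|$. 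The deterministic piece is $O(\gamma^{1-d/\alpha})$: the reverse estimate $(\|\textbf{u}\|_{\mathbb{R}^p}/\epsilon_n)^{-\alpha}\geq p^{-\alpha/2}k_{i+1}$ on $\Omega_i$ turns $\sum_i k_{i+1}\mu(\Omega_i)$ into $p^{\alpha/2}\epsilon_n^d$ times the integral bounded above, and the off-by-one defect $k_i-k_{i+1}=\gamma/q$ contributes at most $(\gamma/q)\cdot\mu(\text{tail})\leq\gamma$, absorbed since $q>\epsilon_n^{-d}$ and $\gamma\leq\gamma^{1-d/\alpha}$.

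For the signed fluctuation sum I apply Abel summation with $F_i:=(\mu_n-\mu)(Q_{\epsilon_n\eta_i}\cap\iota(M))$:
\[
\sum_{i=0}^{q-1}k_i(F_{i+1}-F_i)=k_{q-1}F_q-k_0F_0+\frac{\gamma}{q}\sum_{i=1}^{q-1}F_i.
\]
The endpoint $F_q$ vanishes because $Q_{\epsilon_n\eta_q}=\mathbb{R}^p$ contains $\iota(M)$ and $\mu_n,\mu$ are both probability measures, and each $Q_{\epsilon_n\eta_i}$ is a rectangle of diameter at most $2\sqrt{p}\,\epsilon_n\eta_{q-1}$, so $|F_i|\leq\sup_{A\in\mathcal{R}_{2\sqrt{p}\epsilon_n\eta_{q-1}}(\iota(M))}|\mu_n(A)-\mu(A)|$. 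With $k_0=\gamma$ and $\gamma(q-1)/q\leq\gamma$, the fluctuation is at most $\frac{2\gamma}{\epsilon_n^d}\sup_A|\mu_n(A)-\mu(A)|$, matching the stated form. The main technical delicacy is precisely this Abel step: it is the vanishing of the boundary term $F_q$ (ensured by the compactness of $M$ and the fact that both $\mu_n$ and $\mu$ are probability measures) that saves a factor of $q\sim\epsilon_n^{-d}$ over what a naive annulus-by-annulus estimate would give, producing the sharp prefactor $2\gamma/\epsilon_n^d$ rather than a useless $2\gamma q/\epsilon_n^d\sim 2\gamma/\epsilon_n^{2d}$.
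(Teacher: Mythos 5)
Your proof is correct and follows essentially the same route as the paper's: partition the tail into concentric half-open cubes $Q_{\epsilon_n\eta_i}$ where the decay bound drops in steps of $\gamma/q$, control the deterministic piece by the $J_x\circ I_x$ change of variables plus the radial tail estimate, and use summation-by-parts so that the fluctuation sum has coefficients $\gamma/q$ rather than $\gamma$, yielding the prefactor $2\gamma/\epsilon_n^d$. (The paper packages the step-function levels as $\tilde T^*$ with majorant $\tilde T(\textbf{u})=\max_j|u_j|^{-\alpha}$, and writes $\int\tilde T^*\,d\mu=\frac{\gamma}{q\epsilon_n^d}\sum_{i}\mu(Q_{\epsilon_n\eta_{q-i}}\setminus Q_{\epsilon_n\eta})$ instead of spelling out Abel's identity, but it is the same telescoping.) One small remark on your closing commentary: the factor-$q$ saving comes from the telescoping coefficients $k_{i-1}-k_i=\gamma/q$, not really from $F_q=0$ --- the term $k_{q-1}F_q$ already carries the small coefficient $\gamma/q$; the role of $F_q=0$ is rather that it lets you include the unbounded terminal annulus $\Omega_{q-1}$ without ever needing a covering bound for a set that is not in $\mathcal{R}_{2\sqrt{p}\epsilon_n\eta_{q-1}}(\iota(M))$, which is the reason the paper instead truncates $\tilde T^*$ to vanish on $(Q_{\eta_{q-1}})^c$.
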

Note that when $K(t)$ has a compact support on $[0, \rho]$, our choice $\eta = \rho$ implies that the tail part vanishes.}

\begin{proof}
Recall that $T(t)=\frac{1}{t^\alpha}$. For $\textbf{u}=(u_1,\cdots,u_p)$, let $\tilde{u}=\max\{|u_1|,\cdots,|u_p|\}$, define $\tilde{T}(\textbf{u})=\frac{1}{\tilde{u}^\alpha}$. Since  $p^{-\frac{1}{2}}\|\textbf{u}\|_{\mathbb{R}^p} \leq \tilde{u} \leq \|\textbf{u}\|_{\mathbb{R}^p}$, we have
\begin{align}
|K(\|\textbf{u}\|_{\mathbb{R}^p}) |\leq T(\|\textbf{u}\|_{\mathbb{R}^p}) \leq \tilde{T}(\textbf{u}) \leq p^{\frac{\alpha}{2}}T(\|\textbf{u}\|_{\mathbb{R}^p}),
\end{align}
for all $\|\textbf{u}\|_{\mathbb{R}^p} \geq \eta \geq \rho$.
Thus, 
the second part in \eqref{inside and outside the cube} is
\begin{align}
& \left|\int_{(Q_{\epsilon_n\eta})^c \cap \iota(M)}\frac{1}{\epsilon_n^d} K(\frac{\|\textbf{u}\|_{\mathbb{R}^p}}{\epsilon_n})P\circ \iota^{-1}(\textbf{u})\iota_*dV(\textbf{u})-\int_{(Q_{\epsilon_n\eta})^c \cap \iota(M)} \frac{1}{\epsilon_n^d}K(\frac{\|\textbf{u}\|_{\mathbb{R}^p}}{\epsilon_n})d(P\circ \iota^{-1})_n(\textbf{u})\right|  \nonumber \\
\leq & \int_{(Q_{\epsilon_n\eta})^c \cap \iota(M)}\frac{1}{\epsilon_n^d} | K(\frac{\|\textbf{u}\|_{\mathbb{R}^p}}{\epsilon_n}) |P\circ \iota^{-1}(\textbf{u})\iota_*dV(\textbf{u})+ \int_{(Q_{\epsilon_n\eta})^c \cap \iota(M)} \frac{1}{\epsilon_n^d}|K(\frac{\|\textbf{u}\|_{\mathbb{R}^p}}{\epsilon_n})| d(P\circ \iota^{-1})_n(\textbf{u})   \nonumber \\
\leq & \int_{(Q_{\epsilon_n\eta})^c \cap \iota(M)}\frac{1}{\epsilon_n^d} \tilde{T}(\frac{\textbf{u}}{\epsilon_n})P\circ \iota^{-1}(\textbf{u})\iota_*dV(\textbf{u})+\int_{(Q_{\epsilon_n\eta})^c \cap \iota(M)} \frac{1}{\epsilon_n^d}\tilde{T}(\frac{\textbf{u}}{\epsilon_n})d(P\circ \iota^{-1})_n(\textbf{u}) \nonumber \\
\leq & 2\int_{(Q_{\epsilon_n\eta})^c \cap \iota(M)}\frac{1}{\epsilon_n^d} \tilde{T}(\frac{\textbf{u}}{\epsilon_n})P\circ \iota^{-1}(\textbf{u})\iota_*dV(\textbf{u}) \label{Q complement} \\
& +\left|\int_{(Q_{\epsilon_n\eta})^c \cap \iota(M)}\frac{1}{\epsilon_n^d} \tilde{T}(\frac{\textbf{u}}{\epsilon_n})P\circ \iota^{-1}(\textbf{u})\iota_*dV(\textbf{u})-\int_{(Q_{\epsilon_n\eta})^c \cap \iota(M)} \frac{1}{\epsilon_n^d}\tilde{T}(\frac{\textbf{u}}{\epsilon_n})d(P\circ \iota^{-1})_n(\textbf{u})\right| \,.\nonumber
\end{align}
The first term in \eqref{Q complement} can be bounded by:
\begin{align} 
& \int_{(Q_{\epsilon_n\eta})^c \cap \iota(M)}\frac{1}{\epsilon_n^d} \tilde{T}(\frac{\textbf{u}}{\epsilon_n}) P\circ \iota^{-1}(\textbf{u})\iota_*dV(\textbf{u}) \nonumber \\
\leq  &  p^{\frac{\alpha}{2}} \int_{\big([-\lambda,\lambda]^p \setminus (Q_{\epsilon_n\eta}) \big) \cap \iota(M)}\frac{1}{\epsilon_n^d} T(\frac{\|\textbf{u}\|_{\mathbb{R}^p}}{\epsilon_n})P\circ \iota^{-1}(\textbf{u})\iota_*dV(\textbf{u})\nonumber \\
&\,+  p^{\frac{\alpha}{2}} \int_{\big([-\lambda,\lambda]^p\big)^c \cap \iota(M)}\frac{1}{\epsilon_n^d} T(\frac{\|\textbf{u}\|_{\mathbb{R}^p}}{\epsilon_n})P\circ \iota^{-1}(\textbf{u})\iota_*dV(\textbf{u}) .\label{Q complement 1}
\end{align}
For $\textbf{v} \in J_x \circ I_x \bigg( \big([-\lambda,\lambda]^p \setminus (Q_{\epsilon_n\eta}) \big) \cap \iota(M)\bigg)$,  there is a unique $\textbf{u} \in \big([-\lambda,\lambda]^p \setminus (Q_{\epsilon_n\eta}) \big) \cap \iota(M)$ such that $\textbf{v}=J_x \circ I_x (\textbf{u})$. Moreover, $\|\textbf{u}\|_{\mathbb{R}^p}=\|\textbf{v}\|_{\mathbb{R}^d}$. Since $\|\textbf{u}\|_{\mathbb{R}^p} \geq \epsilon_n \eta$, we have $\|\textbf{v}\|_{\mathbb{R}^d} \geq \epsilon_n \eta$. Hence, we conclude that $J_x \circ I_x \bigg( \big([-\lambda,\lambda]^p \setminus (Q_{\epsilon_n\eta}) \big) \cap \iota(M)\bigg) \subset \big(B^{\mathbb{R}^d}_{ \epsilon_n \eta}(0)\big)^c$.

Now, the first term in \eqref{Q complement 1} can be bounded as: 
\begin{align}
& \int_{\big([-\lambda,\lambda]^p \setminus (Q_{\epsilon_n\eta}) \big) \cap \iota(M)}\frac{1}{\epsilon_n^d} T(\frac{\|\textbf{u}\|_{\mathbb{R}^p}}{\epsilon_n})P\circ \iota^{-1}(\textbf{u})\iota_*dV(\textbf{u}) \\
= & \int_{J_x \circ I_x \bigg( \big([-\lambda,\lambda]^p \setminus (Q_{\epsilon_n\eta}) \big) \cap \iota(M)\bigg)}\frac{1}{\epsilon_n^d} T(\frac{\|I_x^{-1} \circ J^{-1}_x(\textbf{v})\|_{\mathbb{R}^p}}{\epsilon_n})P\circ \iota^{-1}(I_x^{-1} \circ J^{-1}_x(\textbf{v}))\iota_*dV(I_x^{-1} \circ J^{-1}_x(\textbf{v})) \nonumber \\
\leq & P_{Max} \int_{J_x \circ I_x \bigg( \big([-\lambda,\lambda]^p \setminus (Q_{\epsilon_n\eta}) \big) \cap \iota(M)\bigg)}\frac{1}{\epsilon_n^d} T(\frac{\|\textbf{v}\|_{\mathbb{R}^d}}{\epsilon_n})\iota_*dV(I_x^{-1} \circ J^{-1}_x(\textbf{v}))  \nonumber \\
\leq & P_{Max}(C_{lip} D_{lip})^d \int_{\big(B^{\mathbb{R}^d}_{\epsilon_n \eta}(0)\big)^c}\frac{1}{\epsilon_n^d} T(\frac{\|\textbf{v}\|_{\mathbb{R}^d}}{\epsilon_n})d\textbf{v} \nonumber \\
=&  P_{Max} (C_{lip} D_{lip})^d \int_{\big(B^{\mathbb{R}^d}_{\eta}(0)\big)^c} T(\|\textbf{v}\|_{\mathbb{R}^d})d\textbf{v} \leq  P_{Max} (C_{lip} D_{lip})^d \frac{|S^{d-1}|}{\alpha-d}\gamma^{1-\frac{d}{\alpha}} \,.\nonumber 
\end{align}
Note that we use the fact that $I_x^{-1} \circ J^{-1}_x$ is a bi-Lipschitz homeomorphism in the second last step, and we use the property \eqref{property of T(t)} about $\eta$ in the last step.
Then we bound the second term in \eqref{Q complement 1}. In $\big([-\lambda,\lambda]^p\big)^c \cap \iota(M)$, we have $\frac{1}{\epsilon_n^d} T(\frac{\|\textbf{u}\|_{\mathbb{R}^p}}{\epsilon_n}) \leq \frac{\epsilon_n^{\alpha-d}}{\lambda^\alpha}$, therefore,
\begin{align}
\int_{\big([-\lambda,\lambda]^p\big)^c \cap \iota(M)}\frac{1}{\epsilon_n^d} T(\frac{\|\textbf{u}\|_{\mathbb{R}^p}}{\epsilon_n})P\circ \iota^{-1}(\textbf{u})\iota_*dV(\textbf{u}) \leq \frac{\epsilon_n^{\alpha-d}}{\lambda^\alpha} < \frac{1}{(16p)^{\frac{\alpha}{2}}}\gamma,
\end{align}
where we use the relation \eqref{relation between gamma and epsilon} in the last step.

We sum above two terms to bound the first term in \eqref{Q complement} by
\begin{align}\label{Q complement 2}
& 2\int_{(Q_{\epsilon_n\eta})^c \cap \iota(M)}\frac{1}{\epsilon_n^d} \tilde{T}(\frac{\textbf{u}}{\epsilon_n}) P\circ \iota^{-1}(\textbf{u})\iota_*dV(\textbf{u}) \\
\leq & 2p^{\frac{\alpha}{2}} P_{Max}  (C_{lip} D_{lip})^d \frac{|S^{d-1}|}{\alpha-d}\gamma^{1-\frac{d}{\alpha}} + \frac{2}{4^\alpha}\gamma. \nonumber
\end{align}

To bound the second term in \eqref{Q complement}, we are going to approximate $\tilde{T}(\textbf{u})$ over $(Q_{\eta})^c$ by a step function $\tilde{T}^*(\textbf{u})$. Let $q$ be an integer so that $\frac{1}{\epsilon_n^d} < q <\frac{2}{\epsilon_n^d}$. Let 
\begin{align}\label{def of eta i}
\eta_i=[(1-\frac{i}{q})\gamma]^{-\frac{1}{\alpha}}, 
\end{align}
for $i=0, \dots, q-1$. Note that $\eta=\eta_0$. Define 
\begin{align}
\left\{
\begin{array}{ll}
\tilde{T}^*(\textbf{u})= (1-\frac{i}{q}) \gamma  \quad\quad &\mbox{if $\textbf{u} \in Q_{\eta_i} \setminus  Q_{\eta_{i-1}}$ for $i=1, \cdots, q-1$,} \\
\tilde{T}^*(\textbf{u})=0 & \mbox{if $\textbf{u} \in (Q_{\eta_{q-1}})^c $.}
\end{array}
\right.
\end{align}
By the definition, $\tilde{T}(\textbf{u})=\frac{1}{t^\alpha}$ if and only if $\textbf{u}$ is on the boundary of the cube $Q_{2t}$. Hence, if $\textbf{u} \in (Q_{\eta})^c$, $0 \leq \tilde{T}(\textbf{u})-\tilde{T}^*(\textbf{u}) \leq \frac{\gamma}{q}$. The second term in \eqref{Q complement} can be bounded by:
\begin{align} 
& \left|\int_{(Q_{\epsilon_n\eta})^c \cap \iota(M)}\frac{1}{\epsilon_n^d} \tilde{T}(\frac{\textbf{u}}{\epsilon_n})P\circ \iota^{-1}(\textbf{u})\iota_*dV(\textbf{u})-\int_{(Q_{\epsilon_n\eta})^c \cap \iota(M)} \frac{1}{\epsilon_n^d}\tilde{T}(\frac{\textbf{u}}{\epsilon_n})d(P\circ \iota^{-1})_n(\textbf{u})\right| \nonumber\\
\leq & \int_{(Q_{\epsilon_n\eta})^c \cap \iota(M)}\frac{1}{\epsilon_n^d} |\tilde{T}(\frac{\textbf{u}}{\epsilon_n})-\tilde{T}^*(\frac{\textbf{u}}{\epsilon_n})|P\circ \iota^{-1}(\textbf{u})\iota_*dV(\textbf{u}) \label{Q complement 3} \\
&+ \left|\int_{(Q_{\epsilon_n\eta})^c \cap \iota(M)}\frac{1}{\epsilon_n^d} \tilde{T}^*(\frac{\textbf{u}}{\epsilon_n})P\circ \iota^{-1}(\textbf{u})\iota_*dV(\textbf{u})-\int_{(Q_{\epsilon_n\eta})^c \cap \iota(M)} \frac{1}{\epsilon_n^d}\tilde{T}^*(\frac{\textbf{u}}{\epsilon_n})d(P\circ \iota^{-1})_n(\textbf{u})\right| \nonumber \\
&+ \int_{(Q_{\epsilon_n\eta})^c \cap \iota(M)}\frac{1}{\epsilon_n^d} \left|\tilde{T}(\frac{\textbf{u}}{\epsilon_n})-\tilde{T}^*(\frac{\textbf{u}}{\epsilon_n})\right|d(P\circ \iota^{-1})_n(\textbf{u}) \nonumber \\
\leq & \frac{2}{\epsilon^d} \frac{\gamma}{q}+ \left|\int_{(Q_{\epsilon_n\eta})^c \cap \iota(M)}\frac{1}{\epsilon_n^d} \tilde{T}^*(\frac{\textbf{u}}{\epsilon_n})P\circ \iota^{-1}(\textbf{u})\iota_*dV(\textbf{u})-\int_{(Q_{\epsilon_n\eta})^c \cap \iota(M)} \frac{1}{\epsilon_n^d}\tilde{T}^*(\frac{\textbf{u}}{\epsilon_n})d(P\circ \iota^{-1})_n(\textbf{u})\right|\nonumber \\
\leq & 2\gamma+\left |\int_{(Q_{\epsilon_n\eta})^c \cap \iota(M)}\frac{1}{\epsilon_n^d} \tilde{T}^*(\frac{\textbf{u}}{\epsilon_n})P\circ \iota^{-1}(\textbf{u})\iota_*dV(\textbf{u})-\int_{(Q_{\epsilon_n\eta})^c \cap \iota(M)} \frac{1}{\epsilon_n^d}\tilde{T}^*(\frac{\textbf{u}}{\epsilon_n})d(P\circ \iota^{-1})_n(\textbf{u})\right|\,.\nonumber 
\end{align}

Note that  $\tilde{T}^*(\textbf{u})=0$ outside $Q_{\epsilon_n \eta_{q-1}}$. Moreover, the difference between $\tilde{T}^*(\textbf{u})$ for $\textbf{u}$ in $Q_{\eta_{i+1}} \setminus  Q_{\eta_i}$ and $Q_{\eta_i} \setminus  Q_{\eta_{i-1}}$ is  $\frac{\gamma}{q}$. Hence, we have
\begin{align}
& \int_{(Q_{\epsilon_n\eta})^c \cap \iota(M)}\frac{1}{\epsilon_n^d} \tilde{T}^*(\frac{\textbf{u}}{\epsilon_n})P\circ \iota^{-1}(\textbf{u})\iota_*dV(\textbf{u})\nonumber \\
=& \int_{( Q_{\epsilon_n \eta_{q-1}} \setminus Q_{\epsilon_n\eta} ) \cap \iota(M)}\frac{1}{\epsilon_n^d} \tilde{T}^*(\frac{\textbf{u}}{\epsilon_n})P\circ \iota^{-1}(\textbf{u})\iota_*dV(\textbf{u}) \nonumber \\
=& \frac{\gamma}{q \epsilon_n^d} \sum_{i=1}^{q-1}  \mu \big( (Q_{\epsilon_n \eta_{q-i}} \setminus Q_{\epsilon_n\eta}) \cap \iota(M) \big)\,.\nonumber
\end{align}
Similarly,
\begin{align}
\int_{(Q_{\epsilon_n\eta})^c \cap \iota(M)} \frac{1}{\epsilon_n^d}\tilde{T}^*(\frac{\textbf{u}}{\epsilon_n})d(P\circ \iota^{-1})_n(\textbf{u}) = \frac{\gamma}{q\epsilon_n^d} \sum_{i=1}^{q-1}  \mu_n \big( (Q_{\epsilon_n \eta_{q-i}} \setminus Q_{\epsilon_n\eta}) \cap \iota(M) \big).
\end{align}
Combining the above two equations,
\begin{align}
&\left |\int_{(Q_{\epsilon_n\eta})^c \cap \iota(M)}\frac{1}{\epsilon_n^d} \tilde{T}^*(\frac{\textbf{u}}{\epsilon_n})P\circ \iota^{-1}(\textbf{u})\iota_*dV(\textbf{u})-\int_{(Q_{\epsilon_n\eta})^c \cap \iota(M)} \frac{1}{\epsilon_n^d}\tilde{T}^*(\frac{\textbf{u}}{\epsilon_n})d(P\circ \iota^{-1})_n(\textbf{u})\right|\nonumber \\
\leq & \frac{\gamma}{q \epsilon_n^d} | \sum_{i=1}^{q-1}\Big( \mu \big( (Q_{\epsilon_n \eta_{q-i}} \setminus Q_{\epsilon_n\eta}) \cap \iota(M) \big) -  \mu_n \big( (Q_{\epsilon_n \eta_{q-i}} \setminus Q_{\epsilon_n\eta}) \cap \iota(M) \big) \Big)  |\nonumber \\
\leq & \frac{\gamma}{q \epsilon_n^d} \sum_{i=1}^{q-1}|\mu(Q_{\epsilon_n \eta_{q-i}}\cap \iota(M))-\mu( Q_{\epsilon_n\eta} \cap \iota(M)) -\mu_n(Q_{\epsilon_n \eta_{q-i}}\cap \iota(M))+\mu_n( Q_{\epsilon_n\eta} \cap \iota(M))| \nonumber \\
\leq & \frac{\gamma}{q \epsilon_n^d} \sum_{i=1}^{q-1} \Big( |\mu(Q_{\epsilon_n \eta_{q-i}}\cap \iota(M)) -\mu_n(Q_{\epsilon_n \eta_{q-i}}\cap \iota(M))|+|\mu( Q_{\epsilon_n\eta} \cap \iota(M))- \mu_n( Q_{\epsilon_n\eta} \cap \iota(M))| \Big) \nonumber \\
\leq & \frac{2\gamma}{\epsilon_n^d} \sup_{A \in \mathcal{R}_{2\sqrt{p}\epsilon_n \eta_{q-1}}(\iota(M))} |\mu_n(A)-\mu(A)| . \nonumber
\end{align}

Therefore, \eqref{Q complement 3} which is also the second term in \eqref{Q complement} can be bounded by:
\begin{align}
& \left|\int_{(Q_{\epsilon_n\eta})^c \cap \iota(M)}\frac{1}{\epsilon_n^d} \tilde{T}(\frac{\textbf{u}}{\epsilon_n})P\circ \iota^{-1}(\textbf{u})\iota_*dV(\textbf{u})-\int_{(Q_{\epsilon_n\eta})^c \cap \iota(M)} \frac{1}{\epsilon_n^d}\tilde{T}(\frac{\textbf{u}}{\epsilon_n})d(P\circ \iota^{-1})_n(\textbf{u})\right| \nonumber\\
\leq & 2\gamma+\frac{2\gamma}{\epsilon_n^d} \sup_{A \in \mathcal{R}_{2\sqrt{p}\epsilon_n \eta_{q-1}}(\iota(M))} |\mu_n(A)-\mu(A)| \,.\nonumber
\end{align}
The above equation together with \eqref{Q complement 2} helps us bound \eqref{Q complement} which also bounds the second part in \eqref{inside and outside the cube}:
\begin{align}\label{outside the cube}
& \left|\int_{(Q_{\epsilon_n\eta})^c \cap \iota(M)}\frac{1}{\epsilon_n^d} K(\frac{\|\textbf{u}\|_{\mathbb{R}^p}}{\epsilon_n})P\circ \iota^{-1}(\textbf{u})\iota_*dV(\textbf{u})-\int_{(Q_{\epsilon_n\eta})^c \cap \iota(M)} \frac{1}{\epsilon_n^d}K(\frac{\|\textbf{u}\|_{\mathbb{R}^p}}{\epsilon_n})d(P\circ \iota^{-1})_n(\textbf{u})\right|\nonumber \\
\leq & 2p^{\frac{\alpha}{2}} P_{Max}  (C_{lip} D_{lip})^d \frac{|S^{d-1}|}{\alpha-d}\gamma^{1-\frac{d}{\alpha}} + \frac{2}{4^\alpha}\gamma+ 2\gamma+\frac{2\gamma}{\epsilon_n^d} \sup_{A \in \mathcal{R}_{2\sqrt{p}\epsilon_n \eta_{q-1}}(\iota(M))} |\mu_n(A)-\mu(A)| \nonumber \\
\leq & \mathcal{C}_2 \gamma^{1-\frac{d}{\alpha}}+\frac{2\gamma}{\epsilon_n^d} \sup_{A \in \mathcal{R}_{2\sqrt{p}\epsilon_n \eta_{q-1}}(\iota(M))} |\mu_n(A)-\mu(A)|\,,
\end{align}
where $\mathcal{C}_2:= 2p^{\frac{\alpha}{2}} P_{Max}  (C_{lip} D_{lip})^d \frac{|S^{d-1}|}{\alpha-d}+4$.
\end{proof}

\

\underline{\textbf{Combine Lemma \ref{bound of the local part} and Lemma \ref{bound of the tail part} to complete the proof}}

\

Finally, we plug the results of Lemma \ref{bound of the local part} and Lemma \ref{bound of the tail part} into \eqref{inside and outside the cube} and obtain
\begin{align}\label{not simplified variance term}
& |\mathbb{E}K_n(x)-K_n(x)|  \\
=& |\int_M\frac{1}{\epsilon_n^d} K(\frac{\|\iota(y)-\iota(x)\|_{\mathbb{R}^p}}{\epsilon_n})P(y)dV(y)-\int_M \frac{1}{\epsilon_n^d}K(\|\frac{\iota(y)-\iota(x)\|_{\mathbb{R}^p}}{\epsilon_n})dP_n(y)| \nonumber \\
\leq &(\mathcal{C}_1+\mathcal{C}_2) \gamma^{1-\frac{d}{\alpha}}+\frac{3N(\gamma)+(\ceil{D_{lip}}+1)^d \gamma}{\epsilon_n^d} K_{\sup} \sup_{A \in \mathcal{R}_{2\sqrt{p}\epsilon_n \eta}(\iota(M))} |\mu_n(A)-\mu(A)| \nonumber \\
& +\frac{2\gamma}{\epsilon_n^d} \sup_{A \in \mathcal{R}_{2\sqrt{p}\epsilon_n \eta_{q-1}}(\iota(M))} |\mu_n(A)-\mu(A)| \nonumber 
\end{align}

{By the definition of $\eta_i$ in Lemma\ref{bound of the tail part}, $\eta=\eta_0$, so we have $\epsilon_n \eta=\epsilon_n \eta_0<\epsilon_n \eta_{q-1}$. Hence, we have $\mathcal{R}_{2\sqrt{p}\epsilon_n \eta}(\iota(M)) \subset \mathcal{R}_{2\sqrt{p}\epsilon_n \eta_{q-1}}(\iota(M))$. We conclude that
\begin{align}
\sup_{A \in \mathcal{R}_{2\sqrt{p}\epsilon_n \eta}(\iota(M))} |\mu_n(A)-\mu(A)| \leq \sup_{A \in \mathcal{R}_{2\sqrt{p}\epsilon_n \eta_{q-1}}(\iota(M))} |\mu_n(A)-\mu(A)|. 
\end{align}
Moreover, $\gamma <N(\gamma)$, when $\gamma<1$. So, if we define $\mathcal{C}_3:=(3+(\ceil{D_{lip}}+1)^d) K_{\sup}+2$, then \eqref{not simplified variance term} can be simplified as, 
\begin{align}
& |\mathbb{E}K_n(x)-K_n(x)|  \\
\leq &(\mathcal{C}_1+\mathcal{C}_2) \gamma^{1-\frac{d}{\alpha}}+\frac{\mathcal{C}_3 N(\gamma)}{\epsilon_n^d}\sup_{A \in \mathcal{R}_{2\sqrt{p}\epsilon_n \eta_{q-1}}(\iota(M))} |\mu_n(A)-\mu(A)|. \nonumber
\end{align} }

In order to bound $\sup_{A \in \mathcal{R}_{2\sqrt{p}\epsilon_n \eta_{q-1}}(\iota(M))} |\mu_n(A)-\mu(A)|$ by Lemma \ref{rectangle covering lemma}, we first estimate the upper bound of $\mu(A)$ for $A \in \mathcal{R}_{2\sqrt{p}\epsilon_n \eta_{q-1}}(\iota(M))$. {Since $\eta_{q-1}=(\frac{\gamma}{q})^{-\frac{1}{\alpha}}$, $\frac{1}{\epsilon_n^d} < q <\frac{2}{\epsilon_n^d}$, and $\alpha>d \geq 1$,} we have $4\sqrt{p}\epsilon_n \eta_{q-1} < 8\sqrt{p} \epsilon_n^{1-\frac{d}{\alpha}} \gamma^{-\frac{1}{\alpha}}$. Suppose $A= R\cap \iota(M)$, where $R$ is a rectangle in $\mathbb{R}^p$. Then, $diam(R) \leq  4\sqrt{p} \epsilon_n^{1-\frac{d}{\alpha}} \gamma^{-\frac{1}{\alpha}}$. Choose any $y \in A$.  We can rotate and translate $\iota(M)$ so that $\iota(y)=0 \in \mathbb{R}^p$ and $\iota_*T_y M$ is the subspace $\mathbb{R}^d \subset \mathbb{R}^p$ generated by $e_1, \cdots, e_d$. After the rotation and translation, $R \subset Q_{8 \sqrt{p} \epsilon_n^{1-\frac{d}{\alpha}} \gamma^{-\frac{1}{\alpha}}} \subset Q_{\lambda}$. 
Therefore, if we apply $I_y$ to $Q_{8 \sqrt{p} \epsilon_n^{1-\frac{d}{\alpha}} \gamma^{-\frac{1}{\alpha}}}\cap \iota(M)$, then 
\begin{align}
I_y(A) \subset I_y(Q_{8 \sqrt{p} \epsilon_n^{1-\frac{d}{\alpha}} \gamma^{-\frac{1}{\alpha}}} \cap \iota(M)) = [-8 \sqrt{p} \epsilon_n^{1-\frac{d}{\alpha}} \gamma^{-\frac{1}{\alpha}},8 \sqrt{p} \epsilon_n^{1-\frac{d}{\alpha}} \gamma^{-\frac{1}{\alpha}}]^d. 
\end{align}
We use the bi-Lipschitz property of $I_y$,  \eqref{relation between P and lambda}  and \eqref{relation between gamma and epsilon}, we have
\begin{align}
&\sup_{A \in \mathcal{R}_{4\sqrt{p}\epsilon_n \eta_{q-1}}(\iota(M))} \mu(A) \leq \mu(Q_{8 \sqrt{p} \epsilon_n^{1-\frac{d}{\alpha}} \gamma^{-\frac{1}{\alpha}}} \cap \iota(M)) \\
\leq & b=P_{Max}\big[16 C_{lip}\sqrt{p} \epsilon_n^{1-\frac{d}{\alpha}} \gamma^{-\frac{1}{\alpha}}\big] ^d  \leq P_{Max}[2C_{lip}\lambda]^d \leq \frac{1}{4} \nonumber
\end{align}
Let $\mathcal{C}_4:= P_{Max}\big[16 C_{lip}\sqrt{p}]^d$, then $b=\mathcal{C}_4[\epsilon_n^{1-\frac{d}{\alpha}} \gamma^{-\frac{1}{\alpha}}\big] ^d $. 

If $\delta<b$,  then $\frac{1}{b}<\frac{8b}{\delta^2}$. Lemma \ref{rectangle covering lemma} when it is applied to $A \in \mathcal{R}_{2\sqrt{p}\epsilon_n \eta_{q-1}}(\iota(M))$, can be simplified to the following statement. If $n \geq \frac{8b}{\delta^2}$, then
\begin{align}
P\left\{\sup_{A \in \mathcal{R}_{2\sqrt{p}\epsilon_n \eta_{q-1}}(\iota(M))} |\mu_n(A)-\mu(A)|\geq \delta \right\}  \leq 8(2n)^{2p^2+2p}e^{-\frac{n\delta^2}{68b}} 
\end{align}
Let $\delta= \mathcal{C}_4\frac{\epsilon_n^d}{N(\gamma)} \gamma^{1-\frac{d}{\alpha}}$, then a straightforward calculation shows that $\delta<b$ and we have
\begin{align}
P\left\{\sup_{A \in \mathcal{R}_{2\sqrt{p}\epsilon_n \eta_{q-1}}(\iota(M))} |\mu_n(A)-\mu(A)|\leq \delta \right\}  
\geq 1-8(2n)^{2p^2+2p}e^{-\frac{\mathcal{C}_4}{68} \big(n \epsilon_n^{d+\frac{d^2}{\alpha}}\big)\big(\frac{\gamma^{2-\frac{d}{\alpha}}}{N(\gamma)^2}\big)}. 
\end{align}
In conclusion, suppose $0<\gamma<\min\{\rho^{-\alpha},1\}$.  Let $\epsilon_n \leq \mathcal{D}_3 \gamma^{\frac{1}{\alpha-d}}$. Let $\mathcal{D}_1=\mathcal{C}_1+\mathcal{C}_2+\mathcal{C}_3\mathcal{C}_4$,  and $\mathcal{D}_2=\frac{\mathcal{C}_4}{68}$, then 
\begin{align}
P\left\{\sup_{x \in M}|\mathbb{E}K_n(x)-K_n(x)| \leq \mathcal{D}_1 \gamma^{1-\frac{d}{\alpha}}\right\} \geq 1- 8(2n)^{2p^2+2p}e^{-\mathcal{D}_2 \big(n \epsilon_n^{d+\frac{d^2}{\alpha}}\big)\big(\frac{\gamma^{2-\frac{d}{\alpha}}}{N(\gamma)^2}\big)}.
\end{align}
$\mathcal{D}_1$ and $\mathcal{D}_2$ depend on $p$, $d$, $\alpha$, $P_{Max}$, $K_{sup}$, and the second fundamental form of $\iota(M)$. $\mathcal{D}_3$ depends on $p$, $d$, $\alpha$,$P_{Max}$ and the second fundamental form of $\iota(M)$. 

Note again when $K(t)$ has compact support on $[0, \rho]$, then we only require $\epsilon_n \leq \mathcal{D}_3$ where  $\mathcal{D}_3$ depends on $\rho$, $P_{Max}$ and the second fundamental form of $\iota(M)$.

\begin{remark}
Although we fix $x \in M$,  rotate and translate $\iota(M)$, so that $\iota(x)=0 \in \mathbb{R}^p$ and $\iota_*T_x M$ is the subspace $\mathbb{R}^d \subset \mathbb{R}^p$ generated by $e_1, \cdots, e_d$, our argument is not a pointwise argument. In fact, the argument in the above proof relies on the approximation of $K\left(\frac{\|\textbf{u}-\iota(x)\|_{\mathbb{R}^p}}{\epsilon_n}\right)$ by a simple function $K^*_{\epsilon_n}(\textbf{u}-\iota(x))$ in Lemma \ref{approximation to K}. $K^*_{\epsilon_n}(\textbf{u}-\iota(x))$ is constant on each $A_i$, where $A_i$ is the intersection between a rectangle $R_i$ defined as in Definition \ref{def of rectangle, ball and cube} with $\iota(M)$. Such approximation exists regardless of the rotation or translation of the manifold. The variance on each $A_i$ can always be controlled unformly by Lemma \ref{rectangle covering lemma}, so the argument works for all $x$.  If we rotate and translate the manifold, then $R_i$ is parallel to the axes of $\mathbb{R}^p$ and the proof of Lemma \ref{approximation to K} and the proof of the above theorem can be presented in a simpler way.
\end{remark}

\subsection{\textbf{Proof of Corollary \ref{0-1 kernel}}}
Let $K^+_{j}(t)= c_j \bigchi_{[0, b_j]}(t)$ and $K^-_{j}(t)= c_j \bigchi_{[0, a_j]}(t)$,where $0 \leq a_j \leq b_j \leq \rho$ then $$K(t)=\sum_{j=1}^J(K^+_{j}(t)-K^-_{j}(t)).$$

Let $\tilde{K}(t)= c \bigchi_{[0, b]}(t)$ , where $c \leq K_{\sup}$ and $b \leq \rho$.

For any $x \in M$, 
\begin{align}
& |\mathbb{E}K_n(x)-K_n(x)| \\
\leq  &  \sum_{j=1}^J|\mathbb{E}(K^+_j)_n(x)-(K^+_j)_n(x)|+\sum_{j=1}^J|\mathbb{E}(K^-_j)_n(x)-(K^-_j)_n(x)|.
\end{align}
Hence, it is sufficient to control $|\mathbb{E}\tilde{K}_n(x)-\tilde{K}_n(x)|$. 

Suppose $$\epsilon_n < \frac{\lambda}{8\rho},$$ 
and $\lambda$ is small enough so that 
$$P_{Max}[C_{lip}\lambda]^d \leq \frac{1}{4}.$$
\begin{align} 
& |\mathbb{E}\tilde{K}_n(x)-\tilde{K}_n(x)|  \\
=& \left|\int_M\frac{1}{\epsilon_n^d} \tilde{K} (\frac{\|\iota(y)-\iota(x)\|_{\mathbb{R}^p}}{\epsilon_n})P(y)dV(y)-\int_M \frac{1}{\epsilon_n^d}K(\frac{\|\iota(y)-\iota(x)\|_{\mathbb{R}^p}}{\epsilon_n})dP_n(y)\right| \nonumber \\
\leq & \frac{K_{\sup}}{\epsilon_n ^d}\sup_{A \in \mathcal{B}_{\epsilon_n \rho}(\iota(M))} |\mu_n(A)-\mu(A)|. \nonumber
\end{align}
Therefore, 
\begin{align}
|\mathbb{E}K_n(x)-K_n(x)| \leq  \frac{2J K_{\sup}}{\epsilon_n ^d}\sup_{A \in \mathcal{B}_{\epsilon_n \rho}(\iota(M))} |\mu_n(A)-\mu(A)|..
\end{align}
In order to bound $\sup_{A \in \mathcal{B}_{\epsilon_n \rho}(\iota(M))} |\mu_n(A)-\mu(A)|$ by Lemma \ref{ball covering lemma}, we first estimate the upper bound of $\mu(A)$ for $A \in  \mathcal{B}_{2\epsilon_n\rho}(\iota(M))$. Suppose $A= B^{\mathbb{R}^p}_{2\epsilon_n\rho}\cap \iota(M)$. Choose any $y \in A$.  We can rotate and translate $\iota(M)$ so that $\iota(y)=0 \in \mathbb{R}^p$ and $\iota_*T_y M$ is the subpace $\mathbb{R}^d \subset \mathbb{R}^p$ generated by $e_1, \cdots, e_d$. After the rotation and translation, since $\epsilon_n < \frac{\lambda}{8\rho}$, we have $B^{\mathbb{R}^p}_{2\epsilon_n\rho} \subset Q_{4 \epsilon_n\rho} \subset Q_{\lambda}$. 
Therefore, if we apply $I_y$ to $Q_{4 \epsilon_n\rho}\cap \iota(M)$, then 
\begin{align}
I_y(A) \subset I_y(Q_{4 \epsilon_n\rho}\cap \iota(M)) = [-4 \epsilon_n\rho,4 \epsilon_n\rho]^d. 
\end{align}
We use the bi-Lipschitz property of $I_y$,
\begin{align}
\sup_{A \in  \mathcal{B}_{2\epsilon_n\rho}(\iota(M))} \mu(A) 
\leq b:= P_{Max} \big[8 C_{lip} \rho \epsilon_n\big] ^d  \leq P_{Max}[C_{lip}\lambda]^d \leq \frac{1}{4}. \nonumber
\end{align}
Define $\mathcal{C}_2=P_{Max}(8 C_{lip}\rho )^d$. Then $b=\mathcal{C}_2 \epsilon_n^d$. 

If $\delta<b$,  then $\frac{1}{b}<\frac{8b}{\delta^2}$ and Lemma \ref{ball covering lemma} when it is applied to $A \in \mathcal{B}_{\epsilon_n\rho}(\iota(M))$ can be simplified to the following statement. If $n \geq \frac{8b}{\delta^2}$, then
\begin{align}
P\left\{\sup_{A \in \mathcal{B}_{\epsilon_n \rho}(\iota(M))} |\mu_n(A)-\mu(A)|\geq \delta \right\}  \leq 8(2n)^{p+2}e^{-\frac{n\delta^2}{68b}} 
\end{align}

Let $\delta= \mathcal{C}_2 \epsilon_n^d \gamma$, then $\delta<b$. Hence,
\begin{align}
P\left\{\sup_{A \in \mathcal{B}_{\epsilon_n \rho}(\iota(M))} |\mu_n(A)-\mu(A)|\geq \delta \right\}  \leq 8(2n)^{p+2}e^{-\frac{\mathcal{C}_2}{68} n  \epsilon_n^d \gamma^2}. 
\end{align}
Let $\mathcal{D}_1=2J K_{\sup}  \mathcal{C}_2 =2J K_{\sup} P_{Max}(8 C_{lip})^d$, $\mathcal{D}_2=\frac{\mathcal{C}_2}{68}=\frac{P_{Max}(8 C_{lip} \rho)^d}{68}$ and $\mathcal{D}_3=\min( \frac{\lambda}{8\rho},1)$, then 
\begin{align}
P\left\{\sup_{x \in M}|\mathbb{E}K_n(x)-K_n(x)| \leq \mathcal{D}_1 \gamma\right\} \geq 1-8(2n)^{p+2}e^{-\mathcal{D}_2 n \epsilon_n^d \gamma^2}.
\end{align}

Choose $\epsilon_n= \gamma^2$ and $8(2n)^{p+2}e^{-\mathcal{D}_2 n \epsilon_n^d \gamma^2}=\frac{1}{n^2}$, the last statement in the corollary follows.

\

\section{Proof of Theorem \ref{bias analysis main theorem} and Theorem \ref{density estimation}}\label{Appendix B}

Denote $B^{\mathbb{R}^d}_r(0)$ the open ball of radius $r$ at $0$ in $\mathbb{R}^d$. For $x\in M$, the exponential map at $x$ is denoted as $\exp_{x}(\textbf{v})$. Let $B_r(x) \subset M$ be the open geodesic ball of radius $r$ at $x$. Suppose $r$ is less than $inj(M)$, the injectivity radius of $M$. Then $\exp_{x}(\textbf{v})$ is a diffeomorphism from $B^{\mathbb{R}^d}_r(0)$ to $B_r(x)$, which is the normal coordinates of the manifold around $x$. Hence, for any function $f$ on M, 
\begin{align}
\int_{B_r(x)} f(y)dV(y)= \int_{B^{\mathbb{R}^d}_r(0)} f(\exp_x(\textbf{v})) V_x(\textbf{v})d\textbf{v},
\end{align}
where $V_x(\textbf{v})d\textbf{v}$ is the associated volume form in the normal coordinates. The following lemma is about the volume form. The proof of the lemma can be found in \cite{wu2018think}.

\begin{lemma} \label{volume form in normal coordinate}
Fix $x\in M$. If we use the Cartesian coordinate to parametrize $T_{x}M$, we can write the volume form as $dV=V_x(\textbf{v})d\textbf{v}$. The volume form has the following expansion
\begin{align}
dV=\bigg(&1-\frac{1}{6}\sum_{i,j=1}^d\texttt{Ric}_{x}(i,j) \textbf{v}_i\textbf{v}_j+O(\textbf{v}^3)\bigg) d\textbf{v},
\end{align}
where $\textbf{v}=\sum_{i=1}^d \textbf{v}_ie_i\in T_{x}M$, $\texttt{Ric}_{x}(i,j)=\texttt{Ric}_{x}(e_i,e_j)$.
\end{lemma}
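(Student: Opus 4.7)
The plan is to derive the claimed expansion from the standard Taylor expansion of the Riemannian metric in normal coordinates. Recall that in normal coordinates around $x$, the geodesics through $x$ are straight lines, so the Christoffel symbols vanish at $\textbf{v}=0$ and the metric expansion is classical:
\begin{equation*}
g_{ij}(\textbf{v}) \;=\; \delta_{ij} \;-\; \frac{1}{3}\sum_{k,l=1}^{d} R_{ikjl}(x)\, \textbf{v}_k \textbf{v}_l \;+\; O(\textbf{v}^3),
\end{equation*}
where $R_{ikjl}$ denotes the components of the Riemann curvature tensor of $M$ at $x$. I would first quote (or quickly derive via the Jacobi equation applied along radial geodesics) this expansion, which is standard and can be found in most Riemannian geometry textbooks.

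Next, since $V_x(\textbf{v}) = \sqrt{\det\bigl(g_{ij}(\textbf{v})\bigr)}$, I would apply the algebraic expansion $\det(I + A) = 1 + \operatorname{tr}(A) + O(\|A\|^2)$ to the matrix $A_{ij}(\textbf{v}) := -\frac{1}{3}\sum_{k,l} R_{ikjl}(x)\textbf{v}_k \textbf{v}_l$. Since $A_{ij}(\textbf{v}) = O(\textbf{v}^2)$, the second-order term $\operatorname{tr}(A^2)$ contributes only at order $\textbf{v}^4$ and is absorbed into the $O(\textbf{v}^3)$ remainder. Taking the trace $\sum_{i} R_{ikil}(x) = \texttt{Ric}_x(e_k,e_l)$ (this is precisely the definition of the Ricci tensor), we obtain
\begin{equation*}
\det\bigl(g_{ij}(\textbf{v})\bigr) \;=\; 1 \;-\; \frac{1}{3}\sum_{k,l=1}^{d} \texttt{Ric}_x(k,l)\, \textbf{v}_k \textbf{v}_l \;+\; O(\textbf{v}^3).
\end{equation*}

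Finally, I would apply $\sqrt{1+t} = 1 + \tfrac{1}{2}t + O(t^2)$ with $t = -\tfrac{1}{3}\sum_{k,l} \texttt{Ric}_x(k,l)\textbf{v}_k \textbf{v}_l + O(\textbf{v}^3)$. The leading contribution gives exactly the coefficient $-\tfrac{1}{6}$ in front of the Ricci quadratic form, while the $O(t^2) = O(\textbf{v}^4)$ contribution is again absorbed into $O(\textbf{v}^3)$. This yields
\begin{equation*}
V_x(\textbf{v}) \;=\; 1 \;-\; \frac{1}{6}\sum_{i,j=1}^{d} \texttt{Ric}_x(i,j)\, \textbf{v}_i \textbf{v}_j \;+\; O(\textbf{v}^3),
\end{equation*}
as claimed.

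This proof is essentially bookkeeping once the metric expansion is accepted, so the main (and only) obstacle is to justify the quoted expansion of $g_{ij}$. Since the result is standard and since the paper already cites \cite{wu2018think} for the proof, I would simply invoke that reference rather than re-derive the Jacobi-field computation, and then carry out the two algebraic expansions (determinant and square root) above.
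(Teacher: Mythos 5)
Your proof is correct and is essentially the standard textbook derivation of the volume-form expansion in normal coordinates. The paper itself does not supply a proof for Lemma \ref{volume form in normal coordinate}; it simply refers the reader to \cite{wu2018think}, so there is no internal argument to compare against. Your plan --- start from the Taylor expansion of $g_{ij}$ in normal coordinates (via the Jacobi equation or by citation), take $\det(I+A)=1+\operatorname{tr}A+O(\|A\|^2)$, trace the curvature tensor to produce $\texttt{Ric}$, and finish with $\sqrt{1+t}=1+\tfrac12 t+O(t^2)$ --- is exactly the accepted route and all the order-of-magnitude bookkeeping you do ($A=O(\textbf{v}^2)$, hence $\operatorname{tr}(A^2)=O(\textbf{v}^4)$ and $t^2=O(\textbf{v}^4)$ both absorbed into $O(\textbf{v}^3)$) is sound.

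One small caution, cosmetic rather than substantive: make sure the index convention you use for $R_{ikjl}$ in the metric expansion is the same convention under which $\sum_i R_{ikil}=\texttt{Ric}(e_k,e_l)$. Different references order the four indices of the curvature tensor differently, and a mismatch flips the sign of the quadratic term. The target coefficient $-\tfrac16\texttt{Ric}$ is the correct one (on a sphere, positive Ricci must shrink the volume density), so if your intermediate bookkeeping produces $+\tfrac16$, that is the signal of an inconsistent convention rather than a real error in the method.
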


The volume form of $V_x(\textbf{v})$ is smooth function on $B^{\mathbb{R}^d}_r(0) \subset T_xM$. If $r <inj(M)$, then $\sup_{\|\textbf{v}\|_{\mathbb{R}^d} <inj(M) }V_x(\textbf{v})$ can be bounded in terms of  the curvature of $M$ at $x$. Since the manifold is compact, we introduce the universal upper bound of the volume form in the normal coordinates.

\begin{definition}\label{max of volume for on inj ball}
For $\textbf{v} \in T_xM$, we have
\begin{align}
\sup_{x \in M} \sup_{\|\textbf{v}\|_{\mathbb{R}^d} <inj(M) }V_x(\textbf{v}) < V_{max}.
\end{align}
\end{definition}

The next lemma relates the geodesic distance between two points on the manifold and the Euclidean distance between the corresponding points on the embedded submanifold in $\mathbb{R}^p$.  The proof of the lemma can be found in \cite{wu2018think}.

\begin{lemma} \label{geodesic and euclidean distance}
Fix $x\in M$. If $\textbf{v} \in T_{x}M$, when $\|\iota \circ \exp_x(\textbf{v})-\iota(x)\|_{\mathbb{R}^p}$ is sufficiently small, we have
\begin{align*}
\frac{\|\iota \circ \exp_x(\textbf{v})-\iota(x)\|_{\mathbb{R}^p}}{\|\textbf{v}\|_{\mathbb{R}^d}} = &\,   1-\frac{1}{24} \|\Second_x(\theta,\theta)\|^2 \|\textbf{v}\|_{\mathbb{R}^d}^2+O(\|\textbf{v}\|_{\mathbb{R}^d}^3)\,,
\end{align*}
where $\theta=\frac{\textbf{v}}{\|\textbf{v}\|_{\mathbb{R}^d}}$. 
\end{lemma}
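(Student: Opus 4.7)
The plan is to reduce the statement to a one-variable Taylor expansion along the geodesic in the ambient space, and then identify the coefficients using the fact that $\iota$ is isometric together with the Gauss formula for the second fundamental form. Fix $x\in M$, write $t:=\|\textbf{v}\|_{\mathbb{R}^d}$ and $\theta:=\textbf{v}/\|\textbf{v}\|_{\mathbb{R}^d}\in T_xM$, and consider the smooth curve in the ambient space
\[
\gamma(t):=\iota\circ\exp_x(t\theta)\in\mathbb{R}^p,
\]
so that $\gamma(0)=\iota(x)$ and we must expand $\|\gamma(t)-\gamma(0)\|_{\mathbb{R}^p}/t$ to order $t^2$ as $t\to 0$.

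First I would Taylor expand
\[
\gamma(t)-\gamma(0)=t\gamma'(0)+\tfrac{t^2}{2}\gamma''(0)+\tfrac{t^3}{6}\gamma'''(0)+O(t^4)
\]
and identify the first two derivatives geometrically. Because $\iota$ is an isometric embedding and $\exp_x(t\theta)$ is a unit-speed geodesic, $\gamma$ is parametrized by arc length in $\mathbb{R}^p$, so $\|\gamma'(t)\|_{\mathbb{R}^p}=1$ for all $t$ small enough; in particular $\|\gamma'(0)\|=1$. For the second derivative, the Gauss formula applied to the ambient trivial connection decomposes $\gamma''(t)$ into a tangential piece equal to $\iota_*(\nabla_{\dot c}\dot c)$, where $c(t)=\exp_x(t\theta)$, and a normal piece equal to $\Second_x(\dot c,\dot c)$. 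Since $c$ is a geodesic, the tangential piece vanishes at $t=0$ and we obtain $\gamma''(0)=\Second_x(\theta,\theta)$, a vector normal to $\iota_*T_xM$.

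Next I would compute the squared length, using orthogonality to kill the cross term $\langle\gamma'(0),\gamma''(0)\rangle=0$:
\[
\|\gamma(t)-\gamma(0)\|^2
=t^2+\tfrac{t^4}{4}\|\Second_x(\theta,\theta)\|^2+\tfrac{t^4}{3}\langle\gamma'(0),\gamma'''(0)\rangle+O(t^5).
\]
The coefficient $\langle\gamma'(0),\gamma'''(0)\rangle$ is obtained by differentiating the identity $\|\gamma'(t)\|^2\equiv 1$ twice: the first differentiation gives $\langle\gamma',\gamma''\rangle\equiv 0$, and differentiating once more yields $\|\gamma''(t)\|^2+\langle\gamma'(t),\gamma'''(t)\rangle\equiv 0$, so at $t=0$ we get $\langle\gamma'(0),\gamma'''(0)\rangle=-\|\Second_x(\theta,\theta)\|^2$. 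Substituting,
\[
\|\gamma(t)-\gamma(0)\|^2
=t^2-\tfrac{t^4}{12}\|\Second_x(\theta,\theta)\|^2+O(t^5).
\]
Taking the square root and using $\sqrt{1+s}=1+s/2+O(s^2)$ gives
\[
\|\gamma(t)-\gamma(0)\|_{\mathbb{R}^p}
=t\Bigl(1-\tfrac{t^2}{24}\|\Second_x(\theta,\theta)\|^2+O(t^3)\Bigr),
\]
which is exactly the claimed identity after dividing by $t=\|\textbf{v}\|_{\mathbb{R}^d}$.

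The only real subtlety is the identification $\gamma''(0)=\Second_x(\theta,\theta)$, which is where the extrinsic geometry enters; everything else is bookkeeping with Taylor expansions and the unit-speed constraint. A minor point is that the error terms should be taken uniformly in the unit direction $\theta$ when $x$ varies in the compact manifold $M$, which follows from smoothness of $\iota$, $\exp$, and $\Second$ together with compactness, but since the lemma is stated pointwise in $x$ this is not strictly required here.
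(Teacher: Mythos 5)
Your proposal is correct, and it is essentially the standard argument behind this lemma: the paper itself defers the proof to \cite{wu2018think}, where the same expansion of $\iota\circ\exp_x(t\theta)$ is carried out, with $\gamma''(0)=\Second_x(\theta,\theta)$ coming from the Gauss formula along the geodesic and the third-order coefficient extracted from the unit-speed identity exactly as you do. Your bookkeeping (vanishing cross term, $\langle\gamma'(0),\gamma'''(0)\rangle=-\|\Second_x(\theta,\theta)\|^2$, and the square-root expansion giving the factor $\tfrac{1}{24}$) checks out.
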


\

For any $x \in M$, and $\textbf{v} \in T_xM$ and $0<\|\textbf{v}\|_{\mathbb{R}^d}<inj(M)$, we define a map $\phi_x$:
\begin{align}
\phi_x(\textbf{v})=\frac{\|\iota \circ \exp_x(\textbf{v})-\iota(x)\|_{\mathbb{R}^p}}{\|\textbf{v}\|_{\mathbb{R}^d}} \textbf{v}.
\end{align}

The following lemma describes an important property of $\phi_x$

\begin{lemma}\label{property of phi x}
When $\delta$ is small enough,  for all $x$, $\phi_x$ is a diffeomorphism on $B^{\mathbb{R}^d}_\delta (0) \setminus \{0\}$. We have
\begin{align}
\|\phi_x(\textbf{v})\|_{\mathbb{R}^d} \leq \|\textbf{v}\|_{\mathbb{R}^d},
\end{align}
and 
\begin{align}
|Det(D\phi_x^{-1}(\textbf{v}))|=1+O(\|\textbf{v}\|^2_{\mathbb{R}^d}).
\end{align}
The constant in $O(\|\textbf{v}\|^2_{\mathbb{R}^d})$ depends on the second fundamental form of $M$.
\end{lemma}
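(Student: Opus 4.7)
The plan is to exploit the radial structure of $\phi_x$: it maps each ray through the origin into itself, only rescaling the radial distance. Writing $\phi_x(\textbf{v})=h_x(\textbf{v})\textbf{v}$ with $h_x(\textbf{v}):=\|\iota\circ\exp_x(\textbf{v})-\iota(x)\|_{\mathbb{R}^p}/\|\textbf{v}\|_{\mathbb{R}^d}$, Lemma \ref{geodesic and euclidean distance} gives the expansion $h_x(\textbf{v})=1-\tfrac{1}{24}\|\Second_x(\theta,\theta)\|^2\|\textbf{v}\|_{\mathbb{R}^d}^2+O(\|\textbf{v}\|_{\mathbb{R}^d}^3)$, where $\theta=\textbf{v}/\|\textbf{v}\|_{\mathbb{R}^d}$ and, by compactness of $M$, the implicit constants are uniform in $x\in M$ and $\theta\in S^{d-1}$. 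Since $h_x(\textbf{v})\leq 1$ on a sufficiently small punctured ball (where the leading $O(r^2)$ correction absorbs the $O(r^3)$ remainder), the norm bound $\|\phi_x(\textbf{v})\|_{\mathbb{R}^d}\leq \|\textbf{v}\|_{\mathbb{R}^d}$ is immediate.

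For injectivity, set $g(t,\theta):=th_x(t\theta)=\|\iota\circ\exp_x(t\theta)-\iota(x)\|_{\mathbb{R}^p}$. Distinct rays stay in distinct rays under $\phi_x$, so it suffices to show $t\mapsto g(t,\theta)$ is strictly increasing on $(0,\delta]$ for each $\theta$. Differentiating the expansion yields $\partial_tg(t,\theta)=1-\tfrac{t^2}{8}\|\Second_x(\theta,\theta)\|^2+O(t^3)$, which is uniformly bounded below by $1/2$ on $(0,\delta]$ once $\delta$ is small enough; the boundedness of $\Second_x$ on compact $M$ again supplies uniformity in $x$ and $\theta$.

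For the Jacobian, work in polar coordinates $\textbf{v}=r\omega$ with $r=\|\textbf{v}\|_{\mathbb{R}^d}$ and $\omega\in S^{d-1}$. In these coordinates $\phi_x$ takes the form $(r,\omega)\mapsto (g(r,\omega),\omega)$, and the standard formula for radial maps (change of variables to Cartesian coordinates on both sides) gives
\begin{equation*}
\det D\phi_x(\textbf{v})=\left(\frac{g(r,\omega)}{r}\right)^{d-1}\frac{\partial g}{\partial r}(r,\omega).
\end{equation*}
Substituting $g/r=h_x(r\omega)=1-\tfrac{r^2}{24}\|\Second_x(\omega,\omega)\|^2+O(r^3)$ and $\partial_rg=1-\tfrac{r^2}{8}\|\Second_x(\omega,\omega)\|^2+O(r^3)$ and expanding the product, one obtains $\det D\phi_x(\textbf{v})=1+O(\|\textbf{v}\|_{\mathbb{R}^d}^2)$ with the constant depending only on a uniform bound for the second fundamental form. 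In particular $\det D\phi_x$ is bounded away from zero on the punctured $\delta$-ball, so combined with the injectivity above, $\phi_x$ is a diffeomorphism from $B^{\mathbb{R}^d}_\delta(0)\setminus\{0\}$ onto its image. The claim on $D\phi_x^{-1}$ then follows from $|\det D\phi_x^{-1}(\textbf{v})|=1/|\det D\phi_x(\phi_x^{-1}(\textbf{v}))|$ together with the already-established comparability of $\|\phi_x^{-1}(\textbf{v})\|_{\mathbb{R}^d}$ and $\|\textbf{v}\|_{\mathbb{R}^d}$.

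The main obstacle is simply the Jacobian computation: it is routine once one recognizes the polar-coordinate trick, but the bookkeeping of the error terms, and especially the uniformity of the constants in $x\in M$, must be handled carefully. No additional analysis is needed beyond the expansion already supplied by Lemma \ref{geodesic and euclidean distance} and the compactness of $M$, which bounds $\|\Second_x\|$ uniformly.
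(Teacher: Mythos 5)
Your Jacobian computation is a sound and welcome unpacking of what the paper dismisses as ``a straightforward calculation'': the radial form $\phi_x(\textbf{v})=g(r,\omega)\omega$, the identity $\det D\phi_x=(g/r)^{d-1}\partial_r g$, and the term-by-term expansion give precisely $\det D\phi_x=1+O(r^2)$ with a constant controlled by $\|\Second\|$, uniformly over $M$ by compactness. Your explicit injectivity step (monotonicity of $t\mapsto g(t,\theta)$ along rays) is actually \emph{more} careful than the paper, which only invokes the inverse function theorem and hence a priori only obtains a local diffeomorphism; your argument legitimately upgrades this to a global diffeomorphism on the punctured ball. The passage from $\det D\phi_x$ to $\det D\phi_x^{-1}$ via reciprocity and the comparability of $\|\phi_x^{-1}(\textbf{v})\|$ with $\|\textbf{v}\|$ is also fine.

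There is, however, one genuine flaw: your derivation of $\|\phi_x(\textbf{v})\|_{\mathbb{R}^d}\leq\|\textbf{v}\|_{\mathbb{R}^d}$ from the Taylor expansion does not work. You claim the $O(r^2)$ correction ``absorbs'' the $O(r^3)$ remainder, but this fails at any direction $\theta$ where $\Second_x(\theta,\theta)=0$: there the quadratic correction vanishes, the expansion reads $h_x=1+O(r^3)$, and the sign of the cubic term is not controlled by Lemma~\ref{geodesic and euclidean distance}, so you cannot conclude $h_x\leq 1$. The correct argument is both simpler and exact, and is the one the paper uses: in normal coordinates $\|\textbf{v}\|_{\mathbb{R}^d}$ is the geodesic distance from $x$ to $\exp_x(\textbf{v})$, while $\|\phi_x(\textbf{v})\|_{\mathbb{R}^d}=\|\iota\circ\exp_x(\textbf{v})-\iota(x)\|_{\mathbb{R}^p}$ is the Euclidean chord length in the ambient space, and the latter is always bounded above by the former because the geodesic is a competitor path in $\mathbb{R}^p$. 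Replacing your expansion-based argument for this one inequality with this elementary geometric fact removes the gap and leaves the rest of your proof intact.
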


\begin{proof}
By Lemma \ref{geodesic and euclidean distance} and a straight forward calculation, when $\textbf{v} \not =0 $, we have $Det(D\phi_x(\textbf{v}))=1+O(\|\textbf{v}\|^2_{\mathbb{R}^d})$. The constant in $O(\|\textbf{v}\|^2_{\mathbb{R}^d})$ depends on the second fundamental form of $M$. Since $M$ is compact, for all $x$, $Det(D\phi_x(\textbf{v}))>0$, if $\|\textbf{v}\|^2_{\mathbb{R}^d}<\delta$ and $\delta$ is small enough. The conclusion follows from the inverse function theorem. 

Since $\|\textbf{v}\|_{\mathbb{R}^d}$ is the geodesic distance between $\iota \circ \exp_x(\textbf{v})$ and $\iota(x)$ in $\iota(M)$, while $\|\phi_x(\textbf{v})\|_{\mathbb{R}^d}$ is equal to the Euclidean distance between $\iota \circ \exp_x(\textbf{v})$ and $\iota(x)$ in $\mathbb{R}^p$, $\|\phi_x(\textbf{v})\|_{\mathbb{R}^d} \leq \|\textbf{v}\|_{\mathbb{R}^d}$ follows.
\end{proof}

We discuss more properties of the map $\phi_x$ in the next lemma.

\begin{lemma}\label{properties of the map phi x}
If $\delta$ is small enough, then we have the following statements:
\begin{enumerate}
\item
Define 
\begin{align}
& M_{\delta/2}(x)=\{y \in M | \|\iota(y)-\iota(x)\|_{\mathbb{R}^p} \leq \frac{\delta}{2}\}, \\
& D_{\delta/2}(x)= \exp_x^{-1}(M_{\delta/2}(x)) \subset T_xM \approx \mathbb{R}^d.
\end{align}
Then,
\begin{align}
B^{\mathbb{R}^d}_{\delta/2}(0) \setminus \{0\} = \phi_x\big(D_{\delta/2}(x)\setminus \{0\} \big) \subset D_{\delta/2}(x) \subset B^{\mathbb{R}^d}_\delta(0).
\end{align}
\item
For all $x$, and all $0<\|\textbf{v}\|_{\mathbb{R}^d} <\delta$
\begin{align}
||Det(D\phi_x^{-1}(\textbf{v}))|-1| \leq C_1 \delta^2,
\end{align}
where $C_1$ depends on the second fundamental form of $\iota(M)$.
\item
Suppose $\|\textbf{v}\|_{\mathbb{R}^d} <\delta$. There is a constant $C_2$ depending on the H\"older constant $C_P$ of $P$, $P_{Max}$ and the curvature of $M$ such that for any $x$ and $y=\exp_x(\textbf{v})$, we have 
\begin{align} \label{holder of property of P exp}
|P(\exp_x(\textbf{v}))V_x(\textbf{v})-P(\exp_x(0))| \leq C_2 \delta^{\kappa},
\end{align}
where $\kappa$ is the H\"older exponent of $P$.
\item
Suppose $0<\|\textbf{v}\|_{\mathbb{R}^d} <\delta$. There is a constant $C_3$ depending on $C_P$, $\kappa$, $ P_{Max}$, the curvature of $M$ and  the second fundamental form of $\iota(M)$ such that
\begin{align}
\left|P(\exp_x(\phi_x^{-1}(\textbf{v})))V_x(\phi_x^{-1}(\textbf{v}))|Det(D\phi_x^{-1}(\textbf{v}))|-P(\exp_x(\textbf{v}))V_x(\textbf{v})\right| \leq C_3 \delta^{2\kappa},
\end{align}
\end{enumerate}
\end{lemma}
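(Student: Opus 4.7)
The four parts each combine the geometric lemmas stated just above (Lemmas \ref{volume form in normal coordinate}, \ref{geodesic and euclidean distance} and \ref{property of phi x}) with compactness of $M$ and the H\"older hypothesis on $P$. For (1), by Lemma \ref{geodesic and euclidean distance} and compactness I would first fix $\delta$ small enough that
\[
\tfrac{1}{2}\|\textbf{v}\|_{\mathbb{R}^d}\leq\|\iota\circ\exp_x(\textbf{v})-\iota(x)\|_{\mathbb{R}^p}\leq\|\textbf{v}\|_{\mathbb{R}^d}
\]
uniformly in $x$ for all $\|\textbf{v}\|_{\mathbb{R}^d}\leq\delta$. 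The lower bound yields $D_{\delta/2}(x)\subset B^{\mathbb{R}^d}_\delta(0)$, while the definition of $\phi_x$ gives $\|\phi_x(\textbf{v})\|_{\mathbb{R}^d}=\|\iota\circ\exp_x(\textbf{v})-\iota(x)\|_{\mathbb{R}^p}\leq\delta/2$ whenever $\textbf{v}\in D_{\delta/2}(x)$, so $\phi_x(D_{\delta/2}(x)\setminus\{0\})\subset B^{\mathbb{R}^d}_{\delta/2}(0)\setminus\{0\}$. For the reverse inclusion, along each radial ray $\{t\theta:t\in[0,\delta]\}$ the map $t\mapsto\|\iota\circ\exp_x(t\theta)-\iota(x)\|_{\mathbb{R}^p}$ is continuous, strictly increasing by the expansion in Lemma \ref{geodesic and euclidean distance}, and attains every value in $[0,\delta/2]$; combined with the injectivity of $\phi_x$ from Lemma \ref{property of phi x}, this forces equality.

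Part (2) is immediate from Lemma \ref{property of phi x}, which supplies $|Det(D\phi_x^{-1}(\textbf{v}))|=1+O(\|\textbf{v}\|^2_{\mathbb{R}^d})$ with the implicit constant controlled by the second fundamental form; compactness makes this uniform in $x$, giving $||Det(D\phi_x^{-1}(\textbf{v}))|-1|\leq C_1\delta^2$. For (3) I would split
\[
|P(\exp_x(\textbf{v}))V_x(\textbf{v})-P(x)|\leq|P(\exp_x(\textbf{v}))-P(x)|\,V_x(\textbf{v})+P(x)\,|V_x(\textbf{v})-1|,
\]
bounding the first summand by $C_P V_{max}\delta^\kappa$ using Assumption \ref{holder assumption of P} (noting that $\|\textbf{v}\|_{\mathbb{R}^d}$ is the geodesic distance $d(\exp_x(\textbf{v}),x)$) and Definition \ref{max of volume for on inj ball}, and the second by a curvature-dependent multiple of $\delta^2$ via Lemma \ref{volume form in normal coordinate}. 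Since $\kappa\leq 1$ and $\delta$ is small, $\delta^2\leq\delta^\kappa$, so the two terms combine into $C_2\delta^\kappa$.

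For (4), set $\textbf{w}=\phi_x^{-1}(\textbf{v})$ and decompose via the triangle inequality
\begin{align*}
&\big|P(\exp_x(\textbf{w}))V_x(\textbf{w})|Det(D\phi_x^{-1}(\textbf{v}))|-P(\exp_x(\textbf{v}))V_x(\textbf{v})\big|\\
&\quad\leq P_{Max}V_{max}\,\big||Det(D\phi_x^{-1}(\textbf{v}))|-1\big|+\big|P(\exp_x(\textbf{w}))V_x(\textbf{w})-P(\exp_x(\textbf{v}))V_x(\textbf{v})\big|.
\end{align*}
The first summand is $O(\delta^2)$ by part (2). For the second, $\textbf{w}$ and $\textbf{v}$ are parallel by the construction of $\phi_x$, and Lemma \ref{geodesic and euclidean distance} gives $\|\textbf{v}\|_{\mathbb{R}^d}=\|\textbf{w}\|_{\mathbb{R}^d}(1+O(\|\textbf{w}\|^2))$, so $\|\textbf{w}-\textbf{v}\|_{\mathbb{R}^d}=O(\delta^3)$; hence the geodesic distance between $\exp_x(\textbf{w})$ and $\exp_x(\textbf{v})$ is also $O(\delta^3)$, producing $|P(\exp_x(\textbf{w}))-P(\exp_x(\textbf{v}))|=O(\delta^{3\kappa})$ by H\"older continuity and $|V_x(\textbf{w})-V_x(\textbf{v})|=O(\delta^3)$ by smoothness of $V_x$. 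Assembling gives a bound of the form $O(\delta^2)+O(\delta^{3\kappa})$ which, since $\kappa\leq 1$ and $\delta$ is small, is controlled by $C_3\delta^{2\kappa}$. The only real bookkeeping obstacle is uniformity in $x\in M$, but this is immediate from compactness, which furnishes uniform bounds on the injectivity radius, the curvature and the second fundamental form; no step is conceptually deep.
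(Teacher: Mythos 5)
Your proof is correct and follows essentially the same route as the paper: part (1) via the radial monotonicity implicit in Lemma \ref{geodesic and euclidean distance} and the identity $\|\phi_x(\textbf{v})\|_{\mathbb{R}^d}=\|\iota\circ\exp_x(\textbf{v})-\iota(x)\|_{\mathbb{R}^p}$, part (2) directly from Lemma \ref{property of phi x}, and parts (3)--(4) by the same triangle-inequality splittings combined with the H\"older bound on $P$, Lemma \ref{volume form in normal coordinate} for $V_x$, and the estimate $\|\phi_x^{-1}(\textbf{v})-\textbf{v}\|_{\mathbb{R}^d}=O(\delta^3)$. The only cosmetic differences are that in (4) you peel off the Jacobian factor first (the paper does a three-term split) and you bound $|V_x(\phi_x^{-1}(\textbf{v}))-V_x(\textbf{v})|$ by $O(\delta^3)$ rather than the paper's cruder $O(\delta^2)$; both suffice for the stated $\delta^{2\kappa}$ rate.
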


\begin{proof}
(1) $B^{\mathbb{R}^d}_{\delta/2}(0) \setminus \{0\} = \phi_x\big(D_{\delta/2}(x)\setminus \{0\} \big)$  follows from the definition of $\phi_x$. $\phi_x\big(D_{\delta/2}(x)\setminus \{0\}\big) \subset D_{\delta/2}(x)$ follows from Lemma \ref{property of phi x}. Suppose $\|\textbf{v}\|_{\mathbb{R}^d}=\delta$. By Lemma \ref{geodesic and euclidean distance}, if $\delta$ is small enough, there is a constant $C$ depending on the second fundamental form of $M$ such that $\|\iota \circ \exp_x(\textbf{v})-\iota(x)\|_{\mathbb{R}^p} \geq \delta-C \delta^2>\frac{\delta}{2}$.  So, we show that the boundary of $ B^{\mathbb{R}^d}_\delta(0)$ is in the complement of $D_{\delta/2}(x)$. Hence, $D_{\delta/2}(x) \subset B^{\mathbb{R}^d}_\delta(0)$ holds whenever $\delta$ is small enough.

(2) By Lemma \ref{property of phi x}, for all $x$ and all $0<\|\textbf{v}\|_{\mathbb{R}^d} <\delta$, there is a constant $C_1$ depending on the second fundamental form of $M$ such that $||Det(D\phi_x^{-1}(\textbf{v}))|-1| \leq C_1 \delta^2$. 

(3) P is H\"older so $|P(\exp_x(\textbf{v}))-P(\exp_x(0))| \leq C_P d(x,y)^{\kappa}<C_P \delta^{\kappa}$. By Lemma \ref{volume form in normal coordinate}, we have $|V_x(\textbf{v})-1|\leq C \delta^2$, where $C$ 
is a constant depending on the curvature of $M$. Hence,
\begin{align}
& |P(\exp_x(\textbf{v}))V_x(\textbf{v})-P(\exp_x(0))| \\
\leq & |P(\exp_x(\textbf{v}))V_x(\textbf{v})-P(\exp_x(0))V_x(\textbf{v})|+|P(\exp_x(0))V_x(\textbf{v})-P(\exp_x(0))| \nonumber \\
\leq & V_{Max}|P(\exp_x(\textbf{v}))-P(\exp_x(0))|+P_{Max}|V_x(\textbf{v})-1| \nonumber \\
\leq & C_P  V_{Max} \delta^{\kappa}+ P_{Max} C \delta^2 \nonumber \\
\leq & C_2 \delta^{\kappa}. \nonumber
\end{align}
where $C_2$ depends on $C_P$, $P_{Max}$ and the curvature of $M$.

(4) Note that $\phi_x^{-1}(\textbf{v})$ is a positive scalar mutiple of $\textbf{v}$. Hence, by Lemma \ref{geodesic and euclidean distance},
\begin{align}
d(\exp_x(\phi_x^{-1}(\textbf{v})),\exp_x(\textbf{v}))=\|\phi_x^{-1}(\textbf{v})-\textbf{v}\|_{\mathbb{R}^d} \leq \tilde{C}_1 \|\textbf{v}\|^3_{\mathbb{R}^d} \leq \tilde{C}_1 \delta^3,
\end{align}
where $\tilde{C}_1$ depends on the second fundamental form of  $M$.
Hence,
\begin{align}
|P(\exp_x(\phi_x^{-1}(\textbf{v})))-P(\exp_x(\textbf{v}))| \leq C_P(\tilde{C}_1 \delta^3)^\kappa=C_P \tilde{C}_1^\kappa \delta^{3\kappa}
\end{align}
By Lemma \ref{geodesic and euclidean distance}, if $\delta$ is small enough
\begin{align}
|V_x(\phi_x^{-1}(\textbf{v}))-V_x(\textbf{v})|\leq  \tilde{C}_2 (\|\phi_x^{-1}(\textbf{v})\|^2_{\mathbb{R}^d}+\|\textbf{v}\|^2_{\mathbb{R}^d}) \leq 5\tilde{C}_2 \delta^2,
\end{align}
where $\tilde{C}_2$ depends on the curvature of  $M$.
From (2), $||Det(D\phi_x^{-1}(\textbf{v}))|-1| \leq C_1 \delta^2$ and $|Det(D\phi_x^{-1}(\textbf{v}))| \leq 2$. Hence,
\begin{align}
& \left|P(\exp_x(\phi_x^{-1}(\textbf{v})))V_x(\phi_x^{-1}(\textbf{v})|Det(D\phi_x^{-1}(\textbf{v}))|-P(\exp_x(\textbf{v}))V_x(\textbf{v})\right| \\
\leq &  |P(\exp_x(\phi_x^{-1}(\textbf{v})))-P(\exp_x(\textbf{v}))| \max_{\textbf{v}} V_x(\phi_x^{-1}(\textbf{v}) \max_{\textbf{v}}|Det(D\phi_x^{-1}(\textbf{v}))| \nonumber  \\
& +\max_{\textbf{v}}P(\exp_x(\textbf{v})) \max_{\textbf{v}} V_x(\phi_x^{-1}(\textbf{v}) ||Det(D\phi_x^{-1}(\textbf{v}))|-1| \nonumber \\
& +\max_{\textbf{v}}P(\exp_x(\textbf{v})) |V_x(\phi_x^{-1}(\textbf{v})-V_x(\textbf{v})|  \nonumber  \\
\leq & 2V_{Max}C_P \tilde{C}_1^\kappa \delta^{3\kappa}+ P_{Max}V_{Max}C_1 \delta^2 +5 P_{Max}\tilde{C}_2 \delta^2 \leq C_3 \delta^{2\kappa}, \nonumber
\end{align}
where $C_3$ depends on $C_P$, $\kappa$, $ P_{Max}$, the curvature of $M$ and  the second fundamental form of $\iota(M)$.
\end{proof}

\underline{\textbf{Proof of Theorem \ref{bias analysis main theorem}}}

Note that based on Assumption \ref{assumptions on kernel 1}, we have
\begin{align}
\tilde{C}_1:=|S^{d-1}|(K_{\sup}\rho^d+ \frac{1}{\alpha-d}\rho^{d-\alpha}) \geq   \int_{\mathbb{R}^d}| K(\|\textbf{v}\|_{\mathbb{R}^d}) | d\textbf{v} = \int_{\mathbb{R}^d} \frac{1}{\epsilon^d} |K(\frac{\|\textbf{v}\|_{\mathbb{R}^d}}{\epsilon})| d\textbf{v}.
\end{align}
Fix $x \in M$, define 
\begin{align}
& M_{\delta/2}(x)=\{y \in M | \|\iota(y)-\iota(x)\|_{\mathbb{R}^p} \leq \frac{\delta}{2}\}, \\
& D_{\delta/2}(x)= \exp_x^{-1}(M_{\delta/2}(x)) \subset T_xM \approx \mathbb{R}^d.
\end{align}

Given $0<\gamma<1$, we choose a $0<\delta<\gamma$ and $\delta$ is small enough so that (1) to (4) in Lemma \ref{properties of the map phi x} are satisfied.  Based on $\delta$, we choose $\epsilon$. We need to find the relation between $\epsilon$ and $\delta$ so that $|K(\frac{t}{\epsilon})| \leq \epsilon^{\frac{\alpha+d}{2}}$ for $t>\frac{\delta}{2}$.
For $t>\frac{\delta}{2}$, if 
\begin{align}
\epsilon<\frac{\delta}{2\rho}, 
\end{align}
then $\frac{t}{\epsilon}>\rho$. Hence,
\begin{align}\label{bound of K t for t greater than delta /2}
|K(\frac{t}{\epsilon})| < (\frac{\epsilon}{t})^\alpha <\epsilon^\alpha (\frac{\delta}{2})^{-\alpha},
\end{align}
where we use $t>\frac{\delta}{2}$ in the last step. Note that if 
\begin{align}
\epsilon<(\frac{\delta}{2})^{\frac{2}{1-d/\alpha}},
\end{align}
then $|K(\frac{t}{\epsilon})|<\epsilon^\alpha (\frac{\delta}{2})^{-\alpha}< \epsilon^{\frac{\alpha+d}{2}}$. 

In conclusion, if
\begin{align}\label{relation between epsilon and delta}
\epsilon <\min\{\frac{\delta}{2\rho}, (\frac{\delta}{2})^{\frac{2}{1-d/\alpha}}\}<\frac{1}{2\rho+1}(\frac{\delta}{2})^{\frac{2}{1-d/\alpha}}=\frac{1}{2\rho+1}(\frac{\delta}{2})^{\frac{2\alpha}{\alpha-d}},
\end{align}
then $|K(\frac{t}{\epsilon})| \leq \epsilon^{\frac{\alpha+d}{2}}$ for $t>\frac{\delta}{2}$. If we substitute $\delta<\gamma$ into \eqref{relation between epsilon and delta}, then
\begin{align}\label{relation between epsilon and gamma}
\epsilon <\frac{1}{2\rho+1}(\frac{\gamma}{2})^{\frac{2\alpha}{(\alpha-d)}}
\end{align}

Since $\epsilon<(\frac{\delta}{2})^{\frac{2}{1-d/\alpha}}$, we have $\frac{2\epsilon}{\delta}<(\frac{\delta}{2})^{\frac{\alpha+d}{\alpha-d}}<(\frac{\gamma}{2})^{\frac{\alpha+d}{\alpha-d}}$. Hence,
\begin{align}\label{integral of kernel outside delta/2  ball}
\int_{\mathbb{R}^d \setminus B^{\mathbb{R}^d}_{\delta/2}(0) } \frac{1}{\epsilon^d} |K(\frac{\|\textbf{v}\|_{\mathbb{R}^d}}{\epsilon}) |d\textbf{v} \leq  \int_{\mathbb{R}^d \setminus B^{\mathbb{R}^d}_{\delta/2}(0) }   \frac{1}{\epsilon^d}(\frac{\epsilon}{\|\textbf{v}\|_{\mathbb{R}^d}})^\alpha d\textbf{v} \leq  \frac{|S^{d-1}|}{\alpha-d}(\frac{2\epsilon}{\delta})^{\alpha-d} \leq \tilde{C}_2 \gamma^{\alpha+d},
\end{align}
where $\tilde{C}_2= \frac{|S^{d-1}|}{\alpha-d} (\frac{1}{2})^{\alpha+d}$. 

Now, we are ready to bound $|\int_{M}\frac{1}{\epsilon^d} K(\frac{\|\iota(y)-\iota(x)\|_{\mathbb{R}^p}}{\epsilon})P(y)dV(y)-P(x)|$. 

\begin{align}
&\left|\int_{M}\frac{1}{\epsilon^d} K(\frac{\|\iota(y)-\iota(x)\|_{\mathbb{R}^p}}{\epsilon})P(y)dV(y)-P(x)\right| \\
= & \left|\int_{M}\frac{1}{\epsilon^d} K(\frac{\|\iota(y)-\iota(x)\|_{\mathbb{R}^p}}{\epsilon})P(y)dV(y)-\int_{\mathbb{R}^d}\frac{1}{\epsilon^d} K(\frac{\|\textbf{v}\|_{\mathbb{R}^d}}{\epsilon})P(x)d\textbf{v}\right| \nonumber \\
\leq & \left|\int_{M_{\delta/2}(x)}\frac{1}{\epsilon^d} K(\frac{\|\iota(y)-\iota(x)\|_{\mathbb{R}^p}}{\epsilon})P(y)dV(y)-\int_{\mathbb{R}^d}\frac{1}{\epsilon^d} K(\frac{\|\textbf{v}\|_{\mathbb{R}^d}}{\epsilon})P(x)d\textbf{v}\right| \nonumber \\
& +\left|\int_{M \setminus M_{\delta/2}(x)}\frac{1}{\epsilon^d} K(\frac{\|\iota(y)-\iota(x)\|_{\mathbb{R}^p}}{\epsilon})P(y)dV(y)\right|\, .\nonumber
\end{align}
If $y \in M \setminus M_{\delta/2}(x)$, then $\|\iota(y)-\iota(x)\|_{\mathbb{R}^p}>\frac{\delta}{2}$. We have $| K(\frac{\|\iota(y)-\iota(x)\|_{\mathbb{R}^p}}{\epsilon}) |< \epsilon^{\frac{\alpha+d}{2}}$. Hence,
\begin{align}\label{inside M delta x}
& \left|\int_{M \setminus M_{\delta/2}(x)}\frac{1}{\epsilon^d} K(\frac{\|\iota(y)-\iota(x)\|_{\mathbb{R}^p}}{\epsilon})P(y)dV(y)\right| \leq  \epsilon^{\frac{\alpha-d}{2}} \left|\int_{M \setminus M_{\delta/2}(x)}P(y)dV(y)\right| \\
\leq & \epsilon^{\frac{\alpha-d}{2}}  <(\frac{1}{2\rho+1} )^{\frac{\alpha-d}{2}}(\frac{\gamma}{2})^\alpha , \nonumber
\end{align}
where we substitute \eqref{relation between epsilon and gamma} in the last step.

Next,
\begin{align}
& \left|\int_{M_{\delta/2}(x)}\frac{1}{\epsilon^d} K(\frac{\|\iota(y)-\iota(x)\|_{\mathbb{R}^p}}{\epsilon})P(y)dV(y)-\int_{\mathbb{R}^d}\frac{1}{\epsilon^d} K(\frac{\|\textbf{v}\|_{\mathbb{R}^d}}{\epsilon})P(x)d\textbf{v}\right| \nonumber \\
=& \left|\int_{D_{\delta/2}(x)}\frac{1}{\epsilon^d} K(\frac{\|\iota \circ \exp_x(\textbf{v})-\iota(x)\|_{\mathbb{R}^p}}{\epsilon})P(\exp_x(\textbf{v}))V_x(\textbf{v})d\textbf{v}-\int_{\mathbb{R}^d}\frac{1}{\epsilon^d} K(\frac{\|\textbf{v}\|_{\mathbb{R}^d}}{\epsilon})P(\exp_x(0))d\textbf{v}\right| \nonumber \\
\leq &  \left|\int_{D_{\delta/2}(x)}\frac{1}{\epsilon^d} \bigg(K(\frac{\|\iota \circ \exp_x(\textbf{v})-\iota(x)\|_{\mathbb{R}^p}}{\epsilon}) - K(\frac{\|\textbf{v}\|_{\mathbb{R}^d}}{\epsilon})\bigg)P(\exp_x(\textbf{v}))V_x(\textbf{v})d\textbf{v} \right| \nonumber \\
&\,+\left|\int_{D_{\delta/2}(x)}\frac{1}{\epsilon^d} K(\frac{\|\textbf{v}\|_{\mathbb{R}^d}}{\epsilon})\big(P(\exp_x(\textbf{v}))V_x(\textbf{v})-P(\exp_x(0))\big) d\textbf{v} \right|\nonumber\\
&\,+\left| \int_{\mathbb{R}^d \setminus D_{\delta/2}(x)} \frac{1}{\epsilon^d} K(\frac{\|\textbf{v}\|_{\mathbb{R}^d}}{\epsilon}) P(\exp_x(0))d\textbf{v}\right| \label{In side and outside D delta in TM}\,.
\end{align}
We bound \eqref{In side and outside D delta in TM} term by term. For the second term in \eqref{In side and outside D delta in TM}, by (1) in Lemma \ref{properties of the map phi x}, $ D_{\delta/2}(x) \subset B^{\mathbb{R}^d}_\delta(0)$,
\begin{align}
&\left|\int_{D_{\delta/2}(x)}\frac{1}{\epsilon^d} K(\frac{\|\textbf{v}\|_{\mathbb{R}^d}}{\epsilon})\big(P(\exp_x(\textbf{v}))V_x(\textbf{v})-P(\exp_x(0))\big) d\textbf{v} \right| \\
\leq & \sup_{\textbf{v} \in D_{\delta/2}(x)  }|P(\exp_x(\textbf{v}))V_x(\textbf{v})-P(\exp_x(0))|\int_{D_{\delta/2}(x)}\frac{1}{\epsilon^d} | K(\frac{\|\textbf{v}\|_{\mathbb{R}^d}}{\epsilon}) | d\textbf{v}  \nonumber\\ 
\leq & \sup_{\textbf{v} \in B^{\mathbb{R}^d}_\delta(0)}|P(\exp_x(\textbf{v}))V_x(\textbf{v})-P(\exp_x(0))|  \int_{\mathbb{R}^d} \frac{1}{\epsilon^d} |K(\frac{\|\textbf{v}\|_{\mathbb{R}^d}}{\epsilon})| d\textbf{v}\leq  \tilde{C}_1 C_2 \delta^{\kappa} \leq  \tilde{C}_1 C_2 \gamma^{\kappa}  \nonumber\,,
\end{align}
where we use \eqref{holder of property of P exp} in the second last step and $\delta \leq \gamma$ in the last step. 

We bound the third term in  \eqref{In side and outside D delta in TM}.  By (1) in Lemma \ref{properties of the map phi x}, $B^{\mathbb{R}^d}_{\delta/2}(0) \subset D_{\delta/2}(x)$. Hence, we have
\begin{align}
& \left| \int_{\mathbb{R}^d \setminus D_{\delta/2}(x)} \frac{1}{\epsilon^d} K(\frac{\|\textbf{v}\|_{\mathbb{R}^d}}{\epsilon}) P(\exp_x(0))d\textbf{v} \right| \leq \int_{\mathbb{R}^d \setminus D_{\delta/2}(x)} \frac{1}{\epsilon^d}| K(\frac{\|\textbf{v}\|_{\mathbb{R}^d}}{\epsilon}) |P(\exp_x(0))d\textbf{v} \\
\leq &\, \int_{\mathbb{R}^d \setminus B^{\mathbb{R}^d}_{\delta/2}(0) } \frac{1}{\epsilon^d}| K(\frac{\|\textbf{v}\|_{\mathbb{R}^d}}{\epsilon}) | P(\exp_x(0))d\textbf{v} \leq P_{Max}\tilde{C}_2 \gamma^{\alpha+d}\,,
\end{align}
where we use \eqref{integral of kernel outside delta/2  ball} in the last step.

At last, for the first term in  \eqref{In side and outside D delta in TM}, 
\begin{align}
 &\left|\int_{D_{\delta/2}(x)}\frac{1}{\epsilon^d} \bigg(K(\frac{\|\iota \circ \exp_x(\textbf{v})-\iota(x)\|_{\mathbb{R}^p}}{\epsilon}) - K(\frac{\|\textbf{v}\|_{\mathbb{R}^d}}{\epsilon})\bigg)P(\exp_x(\textbf{v}))V_x(\textbf{v})d\textbf{v}\right | \\
= &\left|\int_{D_{\delta/2}(x)}\frac{1}{\epsilon^d} K(\frac{\|\phi_x(\textbf{v})\|_{\mathbb{R}^d}}{\epsilon}) P(\exp_x(\textbf{v}))V_x(\textbf{v})d\textbf{v}-\int_{D_{\delta/2}(x)}\frac{1}{\epsilon^d}  K(\frac{\|\textbf{v}\|_{\mathbb{R}^d}}{\epsilon})P(\exp_x(\textbf{v}))V_x(\textbf{v})d\textbf{v} \right| \nonumber \\
=& \Bigg |\int_{\phi_x \big(D_{\delta/2}(x) \setminus \{0\}\big)}\frac{1}{\epsilon^d} K(\frac{\|\textbf{v}\|_{\mathbb{R}^d}}{\epsilon})P(\exp_x(\phi_x^{-1}(\textbf{v})))V_x(\phi_x^{-1}(\textbf{v}))|Det(D\phi_x^{-1}(\textbf{v}))| d\textbf{v} \nonumber \\
&-\int_{D_{\delta/2}(x)}\frac{1}{\epsilon^d}  K(\frac{\|\textbf{v}\|_{\mathbb{R}^d}}{\epsilon})P(\exp_x(\textbf{v}))V_x(\textbf{v})d\textbf{v} \Bigg| \nonumber \\
=& \Bigg |\int_{B^{\mathbb{R}^d}_{\delta/2}(0)\setminus \{0\}}\frac{1}{\epsilon^d} K(\frac{\|\textbf{v}\|_{\mathbb{R}^d}}{\epsilon})P(\exp_x(\phi_x^{-1}(\textbf{v})))V_x(\phi_x^{-1}(\textbf{v}))|Det(D\phi_x^{-1}(\textbf{v}))| d\textbf{v} \nonumber \\
&-\int_{D_{\delta/2}(x)}\frac{1}{\epsilon^d}  K(\frac{\|\textbf{v}\|_{\mathbb{R}^d}}{\epsilon})P(\exp_x(\textbf{v}))V_x(\textbf{v})d\textbf{v} \Bigg| 
\end{align}
By (1) in Lemma \ref{properties of the map phi x}, $B^{\mathbb{R}^d}_{\delta/2}(0)\setminus \{0\} \subset D_{\delta/2}(x) $, hence, we have
\begin{align}
& \Bigg |\int_{B^{\mathbb{R}^d}_{\delta/2}(0)\setminus \{0\}}\frac{1}{\epsilon^d} K(\frac{\|\textbf{v}\|_{\mathbb{R}^d}}{\epsilon})P(\exp_x(\phi_x^{-1}(\textbf{v})))V_x(\phi_x^{-1}(\textbf{v}))|Det(D\phi_x^{-1}(\textbf{v}))| d\textbf{v} \\
& -\int_{D_{\delta/2}(x)}\frac{1}{\epsilon^d}  K(\frac{\|\textbf{v}\|_{\mathbb{R}^d}}{\epsilon})P(\exp_x(\textbf{v}))V_x(\textbf{v})d\textbf{v} \Bigg| \nonumber\\
\leq & \int_{B^{\mathbb{R}^d}_{\delta/2}(0)\setminus \{0\}}\frac{1}{\epsilon^d}|K(\frac{\|\textbf{v}\|_{\mathbb{R}^d}}{\epsilon})| \Big|P(\exp_x(\phi_x^{-1}(\textbf{v})))V_x(\phi_x^{-1}(\textbf{v}))|Det(D\phi_x^{-1}(\textbf{v}))|-P(\exp_x(\textbf{v}))V_x(\textbf{v})\Big| d\textbf{v}  \nonumber \\
&+ \int_{D_{\delta/2}(x) \setminus B^{\mathbb{R}^d}_{\delta/2}(0)}\frac{1}{\epsilon^d} | K(\frac{\|\textbf{v}\|_{\mathbb{R}^d}}{\epsilon}) | P(\exp_x(\textbf{v}))V_x(\textbf{v}) d\textbf{v} \nonumber \\
\leq & \max_{\textbf{v} \in B^{\mathbb{R}^d}_{\delta/2}(0)\setminus \{0\}} \Big|P(\exp_x(\phi_x^{-1}(\textbf{v})))V_x(\phi_x^{-1}(\textbf{v}))|Det(D\phi_x^{-1}(\textbf{v}))|-P(\exp_x(\textbf{v}))V_x(\textbf{v})\Big|  \int_{B^{\mathbb{R}^d}_{\delta/2}(0)}\frac{1}{\epsilon^d}|K(\frac{\|\textbf{v}\|_{\mathbb{R}^d}}{\epsilon})|  d\textbf{v}  \nonumber \\
&+ \int_{D_{\delta/2}(x) \setminus B^{\mathbb{R}^d}_{\delta/2}(0)}\frac{1}{\epsilon^d} | K(\frac{\|\textbf{v}\|_{\mathbb{R}^d}}{\epsilon}) | P(\exp_x(\textbf{v}))V_x(\textbf{v}) d\textbf{v} \nonumber \\
\leq & \tilde{C}_1C_3 \delta^{2\kappa}+P_{Max}V_{Max}\tilde{C}_2 \gamma^{\alpha+d} \nonumber \\
\leq & \tilde{C}_1C_3 \gamma^{2\kappa}+P_{Max}V_{Max}\tilde{C}_2 \gamma^{\alpha+d}, \nonumber
\end{align}
where we use (4) in Lemma \ref{properties of the map phi x} and  \eqref{integral of kernel outside delta/2  ball} in the second last step.  

Therefore,  \eqref{In side and outside D delta in TM} can be bounded by
\begin{align}
\tilde{C}_1 C_2 \gamma^{\kappa} + P_{Max}\tilde{C}_2 \gamma^{\alpha+d}+\tilde{C}_1C_3 \gamma^{2\kappa}+P_{Max}V_{Max}\tilde{C}_2 \gamma^{\alpha+d} .
\end{align}
Now, if we sum up \eqref{inside M delta x}, we have
\begin{align}\label{bias analysis final step}
&\left|\int_{M}\frac{1}{\epsilon^d} K(\frac{\|\iota(y)-\iota(x)\|_{\mathbb{R}^p}}{\epsilon})P(y)dV(y)-P(x)\right| \\
\leq & (\frac{1}{2\rho+1} )^{\frac{\alpha-d}{2}}(\frac{\gamma}{2})^\alpha +\tilde{C}_1 C_2 \gamma^{\kappa} + P_{Max}\tilde{C}_2 \gamma^{\alpha+d}+\tilde{C}_1C_3 \gamma^{2\kappa}+P_{Max}V_{Max}\tilde{C}_2 \gamma^{\alpha+d}  \nonumber \\
\leq & \left[(\frac{1}{2\rho+1} )^{\frac{\alpha-d}{2}}(\frac{1}{2})^\alpha+\tilde{C}_1 C_2 + P_{Max}\tilde{C}_2 +\tilde{C}_1C_3 +P_{Max}V_{Max}\tilde{C}_2 \right] \gamma^{\kappa} \,,\nonumber
\end{align}
where we use $\kappa \leq 1 \leq d <\alpha$  in the last step. 

We set $\omega_1$ and $\omega_2$ to be the constants in \eqref{relation between epsilon and gamma} and \eqref{bias analysis final step} respectively. As a summary, if $0<\gamma<1$ and $\epsilon \leq \omega_1 \gamma^{\frac{2\alpha}{(\alpha-d)}}$, then
\begin{align}
\sup_{x \in M}\left|\int_{M}\frac{1}{\epsilon^d} K(\frac{\|\iota(y)-\iota(x)\|_{\mathbb{R}^p}}{\epsilon})P(y)dV(y)-P(x)\right|\leq \omega_2 \gamma^\kappa.
\end{align}
$\omega_1$ depends $\rho$, $\alpha$ and $d$ and $\omega_2$ depends on $\rho$, $\alpha$, $d$, $\kappa$, $C_P$, $P_{Max}$, $K_{sup}$, the curvature of $M$ and the second fundamental form of $\iota(M)$.

Suppose $K(t)$ has compact support on $[0, \rho]$, then we take $\alpha \rightarrow \infty$ in \eqref{relation between epsilon and gamma} and \eqref{bias analysis final step}.  If $0<\gamma<1$ and $\epsilon \leq \omega_1 \gamma^{2}$, then
\begin{align}
\sup_{x \in M}\left|\int_{M}\frac{1}{\epsilon^d} K(\frac{\|\iota(y)-\iota(x)\|_{\mathbb{R}^p}}{\epsilon})P(y)dV(y)-P(x)\right|\leq \omega_2 \gamma^\kappa.
\end{align}
$\omega_1$   depends on $\rho$. $\omega_2$  depends on $\rho$, $d$, $\kappa$, $C_P$, $P_{Max}$, $K_{sup}$, the curvature of $M$ and the second fundamental form of $\iota(M)$.

\underline{\textbf{Proof of Theorem \ref{density estimation}}}

The proof is a concatenation of the variance and bias analysis.

To derive (1) in Theorem \ref{density estimation}, we use Theorem \ref{EK_n-K_n} and (1) in Theorem \ref{bias analysis main theorem}. Note that $ \gamma^{\frac{2\alpha}{\alpha-d}}<\gamma^{\frac{1}{\alpha-d}}$, when $0<\gamma<1$. Therefore, we have the following conclusion. Suppose $0<\gamma<\min\{\rho^{-\alpha},1\}$ and $\epsilon_n \rightarrow 0$ as $n \rightarrow \infty$. If $\epsilon_n \leq \Omega_3 \gamma^{\frac{2\alpha}{\alpha-d}}$, then 
\begin{align}
P\{\sup_{x \in M}|K_n(x)-P(x)| \leq \Omega_1 (\gamma^{1-\frac{d}{\alpha}}+\gamma^\kappa)\} \geq 1-8(2n)^{2p^2+2p}e^{-\Omega_2 \bigg(n \epsilon_n^{d+\frac{d^2}{\alpha}}\bigg)\bigg(\frac{\gamma^{2-\frac{d}{\alpha}}}{N(\gamma)^2}\bigg)}.
\end{align}
$\Omega_1=\mathcal{D}_1+\omega_2$ which depends on $p$, $\rho$, $d$, $\alpha$, $\kappa$, $C_P$, $P_{Max}$, $K_{sup}$, the curvature of $M$ and the second fundamental form of $\iota(M)$. $\Omega_2=\mathcal{D}_2$ which depends on $p$, $d$, $\alpha$, $P_{Max}$, $K_{sup}$ and the second fundamental form of $\iota(M)$. $\Omega_3=\min\{\omega_1, \mathcal{D}_3\}$ which depends on $p$, $\rho$, $\alpha$, $d$, $P_{Max}$ and the second fundamental form of $\iota(M)$. 

To derive (2) in Theorem \ref{density estimation}, we use Corollary \ref{compact support variance} and (2) in Theorem \ref{bias analysis main theorem}. Note that $\gamma^{2}<1$ and $\gamma\leq \gamma^\kappa$ when $0<\gamma<1$. Therefore, we have the following conclusion. Suppose $K(t)$ has compact support on $[0, \rho]$. Suppose $0<\gamma<1$ and $\epsilon_n \rightarrow 0$, as $n \rightarrow \infty$.  If $\epsilon_n \leq \Omega_3\gamma^{2}$, then
\begin{align}
P\{\sup_{x \in M}|K_n(x)-P(x)| \leq \Omega_1 \gamma^\kappa \} \geq 1-8(2n)^{2p^2+2p}e^{-\Omega_2\frac{n \epsilon_n^{d} \gamma^2}{N(\gamma)^2}}.
\end{align}
$\Omega_1=\mathcal{D}_1+\omega_2$ which depends on $p$, $\rho$, $d$, $\kappa$, $C_P$, $P_{Max}$, $K_{sup}$, the curvature of $M$ and the second fundamental form of $\iota(M)$. $\Omega_2=\mathcal{D}_2$ which depends on $p$, $d$, $P_{Max}$, $K_{sup}$, and the second fundamental form of $\iota(M)$. $\Omega_3=\min\{\omega_1, \mathcal{D}_3\}$ which depends on $\rho$,  $P_{Max}$ and the second fundamental form of $\iota(M)$. 

To derive (3) in Theorem \ref{density estimation}, we use Proposition \ref{0-1 kernel} and (2) in Theorem \ref{bias analysis main theorem}. For $K(t)=\frac{d}{|S^{d-1}| \rho^d}\bigchi_{[0,\rho]}(\textbf{t})$, $K_n(x)=\frac{N_{\epsilon_n \rho}(x)}{n \epsilon_n^d}$.
Note that $\gamma^{2}<1$ and $\gamma\leq \gamma^\kappa$ when $0<\gamma<1$. Therefore, we have the following conclusion. Suppose $0<\gamma<1$ and $\epsilon_n \rightarrow 0$, as $n \rightarrow \infty$.  If $\epsilon_n \leq \Omega_3\gamma^{2}$, then
\begin{align}
P\{\sup_{x \in M}|\frac{N_{\epsilon_n \rho}(x)}{n \epsilon_n^d}-P(x)| \leq \Omega_1 \gamma^\kappa \} \geq 1-8(2n)^{p+2}e^{-\Omega_2  n \epsilon_n^d \gamma^2}.
\end{align}
$\Omega_1=\mathcal{D}_1+\omega_2$ which depends on $\rho$, $d$, $\kappa$, $C_P$, $P_{Max}$, $K_{\sup}$, the curvature of $M$ and the second fundamental form of $\iota(M)$. $\Omega_2=\mathcal{D}_2$ which depends on $d$, $\rho$, $P_{Max}$ and the second fundamental form of $\iota(M)$. $\Omega_3=\min\{\omega_1, \mathcal{D}_3\}$ which depends on $\rho$,  $P_{Max}$ and the second fundamental form of $\iota(M)$. 

Choose $\epsilon_n= \Omega_3 \gamma^2$ and $8(2n)^{p+2}e^{-\Omega_2 n \epsilon_n^d \gamma^2}=\frac{1}{n^2}$, the last statement in the theorem follows.
\

\section{Proof of Theorem \ref{Riemann integrable main theorem}}\label{Appendix D}

We are going to use the following linear algebra lemma:
\begin{lemma}\label{LA lemma 1}
Suppose $\{e_1, \cdots, e_p\}$ is the standard basis of $\mathbb{R}^p$. $V$ is a $d$ dimensional subspace of $\mathbb{R}^p$, such that $V$ is not perpendicular to $e_1$. Then there is a $p-d$ dimensional subspace of $\mathbb{R}^p$, $W$, generated by $\{e_{i_1}, \cdots, e_{i_{p-d}}\}$ such that $W$ is perpendicular to $e_1$ and $W \cap V=\{0\}$.
\end{lemma}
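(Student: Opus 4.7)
The plan is to build $W$ by first isolating the ``kernel part'' of $V$ inside $\mathrm{span}(e_2,\ldots,e_p)$ and then completing it to a basis of that hyperplane using coordinate vectors. Concretely, let $\pi_1:\mathbb{R}^p\to\mathrm{span}(e_1)$ be the orthogonal projection onto the first coordinate. The hypothesis that $V$ is not perpendicular to $e_1$ is exactly the statement that $\pi_1(V)\neq\{0\}$, hence $\pi_1(V)=\mathrm{span}(e_1)$, and by rank--nullity applied to $\pi_1|_V$ the subspace
\[
V_0 := V\cap\mathrm{span}(e_2,\ldots,e_p)=\ker(\pi_1|_V)
\]
has dimension $d-1$.

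Since $V_0$ is a $(d-1)$-dimensional subspace of the $(p-1)$-dimensional space $H:=\mathrm{span}(e_2,\ldots,e_p)$, the Steinitz exchange lemma lets me choose indices $i_1,\ldots,i_{p-d}\in\{2,\ldots,p\}$ so that $V_0\oplus\mathrm{span}(e_{i_1},\ldots,e_{i_{p-d}})=H$. I then define
\[
W:=\mathrm{span}(e_{i_1},\ldots,e_{i_{p-d}}).
\]
By construction $\dim W=p-d$, $W\subset H$, and hence $W\perp e_1$.

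It remains to check that $V\cap W=\{0\}$, which is the only step where the ``not perpendicular to $e_1$'' hypothesis gets cashed in. If $w\in V\cap W$, then $w\in W\subset H$ forces $\pi_1(w)=0$, so $w\in V\cap H=V_0$. Thus $w\in V_0\cap W$, but by the choice of the indices $V_0$ and $W$ are complementary summands inside $H$, so $V_0\cap W=\{0\}$ and $w=0$. I do not expect any real obstacle here: the only subtlety is that the basis extension must be performed \emph{within} $H$ rather than in all of $\mathbb{R}^p$ (which would be trivial but would allow the index $1$), and that is exactly what the hypothesis $\pi_1(V)\neq 0$ makes possible, since it guarantees $\dim V_0 = d-1$ so that only $p-d$ further coordinate vectors are needed to complete a basis of the $(p-1)$-dimensional hyperplane $H$.
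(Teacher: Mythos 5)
Your proof is correct and complete. The paper establishes the same lemma by explicit matrix manipulation: it writes a basis of $V$ as rows of a $d\times p$ matrix, row-reduces to the block form $\begin{bmatrix} 1 & v^\top \\ 0 & B \end{bmatrix}$, picks $d-1$ linearly independent columns of the $(d-1)\times(p-1)$ block $B$, defines $W$ to be spanned by the standard basis vectors indexed by the $p-d$ complementary columns, and verifies $V\cap W=\{0\}$ by a determinant computation showing $V+W=\mathbb{R}^p$. Your argument carries out the same construction at the level of subspaces rather than matrices: the row reduction is replaced by passing to $V_0=V\cap e_1^\perp$ and observing via rank--nullity that $\dim V_0=d-1$ (the rows of the paper's block $B$ span exactly $V_0$), and the ``pick independent columns, take complementary coordinate vectors, check a determinant'' step is replaced by a single appeal to the Steinitz exchange lemma inside the hyperplane $H=e_1^\perp$. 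In fact the two methods produce exactly the same family of admissible index sets (a column selection of $B$ is linearly independent precisely when the complementary $e_{i_k}$'s are a valid Steinitz completion of $V_0$ in $H$). What your version buys is brevity and transparency --- it isolates the single conceptual step, basis extension within a coordinate hyperplane, and makes plain where the hypothesis on $e_1$ is used; the paper's version avoids invoking any named theorem and is fully algorithmic via row reduction, which may be preferable if one wants an explicit procedure.
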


\begin{proof}
Suppose $v_1, v_2, \cdots, v_d$ forms a basis of $V$. We can write them as a $d \times p$ matrix.
\begin{align}
A_1=\begin{bmatrix}
v^\top_{1} \\
\cdots  \\
v^\top_{p-d}.
\end{bmatrix}
\end{align}
Since $V$ is not perpendicular to $e_1$, by using elementary row operations, we can turn $A_1$ into 
\begin{align}
A_2=\begin{bmatrix}
1 & v^\top \\
0 & B  \\
\end{bmatrix},
\end{align}
where $B$ is a $(d-1) \times (p-1)$ matrix. Since the row vectors of $A_2$ also form a basis of $V$, $rank(B)=d-1$. If we enumerate the column vectors of $B$ as $b_2, \cdots, b_{p}$, then we can find $d-1$ column vectors,  $b_{j_1}, \cdots, b_{j_{d-1}}$ of $B$ that are linearly independent. The rest of the column vectors of $B$ are $b_{i_1}, \cdots, b_{i_{p-d}}$. 

We claim that $W$ is generated by  $\{e_{i_1}, \cdots, e_{i_{p-d}}\}$. First, $e_1$ is not in $W$. Hence, $e_1$ is perpendicular to $W$.  To prove that $V \cap W=0$, we prove that $\{e_{i_1}, \cdots, e_{i_{p-d}}\}$ together with the row vectors of $A_2$ generate $\mathbb{R}^p$. In fact, let $B^*$ be $(d-1) \times (d-1)$ matrix whose columns are $b_{j_1}, \cdots, b_{j_{d-1}}$. Then, determinant of $B^*$ is not zero.  Then, by the straightforward calculation, the determinant of 
\begin{align}
\begin{bmatrix}
e^\top_{i_1} \\
\vdots, \\
e_{i_{p-d}} \\
A_2 
\end{bmatrix}
\end{align}
is equal to determinant of $B^*$. Hence, $V$ and $W$ together generate $\mathbb{R}^p$, which implies $V \cap W=0$.
\end{proof}

\begin{lemma}\label{constant lemma}
Let $U \subset M$ be a connected open set. $f$ is a smooth real function on $U$. If $f$ maps $U$ to the set of the critical values , then $f$ is constant on $U$.
\end{lemma}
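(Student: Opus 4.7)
The plan is to combine Sard's theorem with the connectedness of $U$. Since $f$ is smooth and $U$ is an open subset of the $d$-dimensional manifold $M$, the restriction $f\colon U \to \mathbb{R}$ is a smooth map from a $d$-dimensional manifold to $\mathbb{R}$. Smoothness is far more regularity than Sard's theorem requires (one only needs $C^d$ for maps from a $d$-manifold to $\mathbb{R}$), so Sard's theorem applies: the set of critical values of $f$ has Lebesgue measure zero in $\mathbb{R}$.

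By hypothesis, $f(U)$ is contained in this set of critical values, and therefore $f(U)$ has Lebesgue measure zero. On the other hand, $U$ is connected and $f$ is continuous, so the image $f(U)$ is a connected subset of $\mathbb{R}$, i.e., an interval (possibly degenerate). The only intervals of Lebesgue measure zero are singletons, so $f(U)$ consists of a single point, which is to say that $f$ is constant on $U$.

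There is essentially no obstacle in this argument, since it is a two-line invocation of Sard's theorem. The only point requiring any care is to fix the interpretation of ``the set of critical values'' of $f$ as the image under $f$ of $\{x \in U : df_x = 0\}$, which is the natural reading when $f$ is being analysed on $U$ and the only one under which the hypothesis is substantive; under this reading Sard's theorem applies verbatim and the proof goes through as described.
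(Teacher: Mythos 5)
Your proof is correct, and it takes a genuinely cleaner route than the paper's own argument. The paper localizes to a coordinate chart in $\mathbb{R}^d$, assumes for contradiction that $\nabla f \neq 0$ at some point, and then invokes the fundamental theorem of calculus along a line segment to produce an open interval inside $f(U)$, which contradicts Sard's theorem. Your approach dispenses with both the localization and the case split: you observe directly that $f(U)$ is a connected subset of $\mathbb{R}$, hence an interval, and by Sard's theorem it has Lebesgue measure zero, so it must be a singleton. This gets to the conclusion in one step rather than via contradiction, and it never needs to touch the derivative of $f$ explicitly. Your remark about the interpretation of ``the set of critical values'' is also on target --- the paper's own proof treats it as the critical values of $f$ on $U$, matching your reading --- and it is worth noting that the argument is in fact robust to the other plausible reading (critical values of some ambient function containing $f(U)$, as in the actual application within Theorem \ref{Riemann integrable main theorem}), since all that is used is that the containing set has measure zero.
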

\begin{proof}
The property is local, hence we can prove the theorem in a chart of $M$. If we write $f$ in the coordinates, then it suffices to prove the following: let $U \subset \mathbb{R}^d$ be a connected open set. $f$ is a smooth real function on $U$. If $f$ maps $U$ to the set of the critical values , then $f$ is constant on $U$.

If $(\nabla f)^{-1}(\mathbb{R}^d \setminus \{(0,\cdots, 0)\})$ is not empty, then it is an open subset of $U$. Then we can find an open ball in $U$ such that the gradient of $f$ in $U$ is not zero. By the fundamental theorem of calculus, if we integrate the gradient along a line in the open ball, then we know that the image of $f$ should at least contain an open interval. Hence the set of the critical values of $f$ contains an open interval. This contradicts to Sard's theorem. Therefore $(\nabla f)^{-1}(\mathbb{R}^d \setminus \{(0,\cdots, 0)\})$ is empty. Since $U$ is connected, $f$ is a constant on $U$.
\end{proof}

\begin{proof}[\textbf{Proof of Theorem \ref{Riemann integrable main theorem}}]
We use $\partial A$ to denote the topological boundary of a set $A$. Fix $x$, we denote the set of critical points of $D_x(\textbf{u})$ on $\iota(M)$ as $\mathcal{C}(x)$ and the set of the critical values of $D_x(\textbf{u})$ as $\mathcal{V}(x)$. 

Note that $D_x(\textbf{u})$ is smooth except at $\textbf{u}=\iota(x)$. Hence $\mathcal{C}(x)$ consists of $\iota(x)$ and the critical points of $D_x(\textbf{u})$ on $\iota(M) \setminus \iota(x)$. The set of critical points of $D_x(\textbf{u})$ on $\iota(M) \setminus \iota(x)$ is the preimage of $0$ of the derivative of $D_x(\textbf{u})$, therefore it is a closed subset of  $\iota(M) \setminus \iota(x)$. $\iota(M)$ is the one point compactification of $\iota(M) \setminus \iota(x)$, so $\mathcal{C}(x)$ is a closed subset of $\iota(M)$, and hence a compact subset of $\iota(M)$. Moreover, it is worth to mention that $\iota(x)$ is an isolated critical point of  $D_x(\textbf{u})$.

\
(1) Suppose that fix $x$, $\mathcal{C}(x)$ is Jordan measurable. Without loss of generality, we assume $\epsilon=1$, and $\iota(x)=0$. We need to prove $K(\|\textbf{u}\|_{\mathbb{R}^p})$ is Riemann integrable on $\iota(M)$ when $\partial \mathcal{C}(x)$ has measure $0$. We prove this in the following three steps.

Step 1. Suppose $\textbf{u}_0 \in \iota(M) \setminus\{0\}$ and it is not a critical point of $D_x(\textbf{u})=\|\textbf{u}\|_{\mathbb{R}^p}$ on $ \iota(M)$. Then we will show that we can construct a local diffeomorphism from an open set around $\textbf{u}_0$ in $\iota(M)$ to an open set in $\mathbb{R}^d$ whose boundary has measure $0$ by inverse function theorem.  

Let $a=\|\textbf{u}_0\|_{\mathbb{R}^p}$. Let $\psi_1(\textbf{u})$ be a rotation of $\mathbb{R}^p$ such that $\psi_1(\textbf{u}_0)=(a,0, \cdots, 0)$. By rotation invariance, $\textbf{u}_0$ is not a critical point of $D_x(\textbf{u})$ on $\iota(M)$ if and only if $(a,0, \cdots, 0)$ is not a critical point of $D_x(\textbf{u})$ on $\psi_1(\iota(M))$. Note that the tangent space $V$ of $\psi_1(\iota(M))$ at $(a,0,\cdots,0)$ is a $d$ dimensional subspace of $\mathbb{R}^p$. The gradient of $D_x(\textbf{u})$ at $(a,0, \cdots, 0)$ is in $e_1$ direction. Since $(a,0, \cdots, 0)$ is not a critical point of $D_x(\textbf{u})$ on $\psi_1(\iota(M))$, $V$ is not perpendicular to $e_1$.  By Lemma \ref{LA lemma 1}, we can find a subspace $W$ generated by $\{e_{i_1}, \cdots, e_{i_{p-d}}\}$ such that $W$ is perpendicular to $e_1$ and $W \cap V=\{0\}$. Let $W^\bot$ be the orthogonal complement of $W$, then $e_1$ is in $W^\bot$. For notation simplicity, we assume $W^\bot$ is generated by $\{e_1, \cdots, e_d\}$. Define 
\begin{align}
\psi_2(\textbf{u})=(\frac{\|\textbf{u}\|_{\mathbb{R}^p}u_1}{\sqrt{u_1^2+\cdots+u_d^2}}, \cdots, \frac{\|\textbf{u}\|_{\mathbb{R}^p}u_d}{\sqrt{u_1^2+\cdots+u_d^2}}).
\end{align}
We need to prove that $\psi_2$ is a local diffeomorphism of $\psi_1(\iota(M))$ around $(a,0,\cdots, 0)$.  Suppose $\tau(\textbf{v})$ is a chart of $\psi_1(\iota(M))$ around $(a,0,\cdots, 0)$ such that $\tau(0)=(a,0,\cdots, 0)$. Then  $[D\psi_2(\tau(0))]=[D\psi_2(a,0,\cdots,0)][D\tau(0)]$. Here $[D\tau(0)]$ is $p \times d$ matrix whose column vectors form a basis of $V$. A straight forward calculation shows that $[D\psi_2(a,0,\cdots,0)]$ is  $d \times p$ matrix whose row vectors are $\{e_1, \cdots, e_d\}$. If the null space of $[D\psi_2(\tau(0))]$ is not $0$, then there is a non zero vector $\textbf{v}$ in $V$ such that $\textbf{v}$ is perpendiuclar to $e_1, \cdots, e_d$. Since $W^\bot$ is generated by $\{e_1, \cdots, e_d\}$, so $\textbf{v}$ is in $W$, and this contradicts to $W \cap V=\{0\}$. Therefore, the null space of $[D\psi_2(\tau(0))]$ is $0$ and $Det ([D\psi_2(\tau(0))])$ is not $0$. By inverse function theorem, $\psi=\psi_2 \circ \psi_1$ is a diffeomorphsim from an open set $\mathcal{O}$ around $\textbf{u}_0$ to an open set in $\mathbb{R}^d$ whose boundary has measure $0$. Moreover, for any $\textbf{u}$, $\|\textbf{u}\|_{\mathbb{R}^p}=\|\psi(\textbf{u})\|_{\mathbb{R}^d}$. Hence, $K(\|\textbf{u}\|_{\mathbb{R}^p})=K(\|\psi(\textbf{u})\|_{\mathbb{R}^d})$.

Step 2.  $K(\|\textbf{u}\|_{\mathbb{R}^p})$ is continuous in the interior of $\mathcal{C}(x)$. In fact, for any open geodesic ball in the interior, $K(\|\textbf{u}\|_{\mathbb{R}^p})$ is a constant over the ball. Hence it is constant on each connected component of the interior of $\mathcal{C}(x)$.

Step 3. For any $\delta>0$, since $\iota(M)$ is compact and $\partial \mathcal{C}(x)$ has measure $0$, $\partial \mathcal{C}(x)$ has content $0$. Therefore, we can cover the boundary by finite many uniform open geodesic balls of radius $r(\delta)$ so that the total volume of the open balls of radius $2r(\delta)$ with the same centers is less than $\frac{\delta}{2}$. The union of these open balls of radius $2r(\delta)$ with the interior of  $\mathcal{C}(x)$ is an open set. Denote the complement of the union in $\iota(M)$ as  $M'$. $M'$ is a compact subset of $\iota(M)$. We can construct a cover of $M'$ by finitely many open sets $\{\mathcal{O}_i\}_{i=1}^N$ satsifying the following properties:
\begin{enumerate}
\item Each $\mathcal{O}_i$ is an open geodesic ball with radius less then $r(\delta)/2$.
\item There is a diffeomorphism  $\psi^i$ associated with $\mathcal{O}_i$ as constructed in Step 1. Each $\mathcal{O}_i$ is diffeomorphic through  
$\psi^i$ to an open set $\mathcal{O}^*_i$ in $\mathbb{R}^d$ whose boundary has measure $0$.
\end{enumerate}
Obviously, the function $K(\|\textbf{v}\|_{\mathbb{R}^d})$ is Riemann integrable on $\mathcal{O}^*_i$, in particular the set where $K(\|\textbf{v}\|_{\mathbb{R}^d})$ is discontinuous is a measure $0$ subset of the interior of $\mathcal{O}^*_i$. Note that by the construction of the open cover of $\partial \mathcal{C}(x)$, we can make sure that each $\mathcal{O}_i$ is at least $r(\delta)/2$ away from the closure of $ \mathcal{C}(x)$. Hence, there is a constant $C(r(\delta))$, such that $|Det(D\psi^i)|>C(r(\delta))>0$. Therefore if we cover the discontinuous set of $K(\|\textbf{v}\|_{\mathbb{R}^d})$ in $\mathcal{O}^*_i$ (including the boundary) by countable open balls with total volume less than $\frac{C(r(\delta))\delta}{2 N}$, then the discontinuous set of $K(\|\textbf{u}\|_{\mathbb{R}^p})=K(\|\psi^i(\textbf{u})\|_{\mathbb{R}^d})$ in $\mathcal{O}_i \cap M'$ can be covered by countable open sets with  total volume less than $\frac{\delta}{2N}$. So the discontinuous set of $K(\|\textbf{u}\|_{\mathbb{R}^p})$ in $M'$ can be covered by countable open sets with  total volume less than $\frac{\delta}{2}$ and  the discontinuous set of $K(\|\textbf{u}\|_{\mathbb{R}^p})$ in $\iota(M)$ can be covered by countable open sets with  total volume less than $\delta$.
Hence, $K(\|\textbf{u}\|_{\mathbb{R}^p})$ is Riemann integrable on $\iota(M)$ and the conclusion follows.

\
(2) If for any $\epsilon>0$, $K(\frac{\|\iota(y)-\iota(x)\|_{\mathbb{R}^p}}{\epsilon})$ is a Riemann integrable function of $y$ on the manifold for all $K(t)$ satisfies the conditions in Assumption \ref{assumptions on kernel 1}, then obviously $K(\|\textbf{u}-\iota(x)\|_{\mathbb{R}^p})$ is a Riemann integrable function of $\textbf{u}$ on $\iota(M)$  for all $K(t)$ satisfies the conditions in Assumption \ref{assumptions on kernel 1}.

Since $D_x(\textbf{u})$ is continuous, $\mathcal{V}(x)$ is compact. Moreover, $\mathcal{V}(x)$ is the union of $0$ and the critical values of $D_x(\textbf{u})$ on $\iota(M) \setminus \iota(x)$. Note that $D_x(\textbf{u})$ is smooth except at $\textbf{u}= \iota(x)$. By Sard's theorem, the set of critical values of $D_x(\textbf{u})$ on $\iota(M) \setminus \iota(x)$ has measure $0$. Hence $\mathcal{V}(x)$ is compact and has measure $0$. Define $K(t)=\bigchi_{\mathcal{V}(x)}(t)$. Since $\mathcal{V}(x)$ is Jordan measurable,  $K(t)$ is bounded and Riemann integrable. We have $K(\|\textbf{u}-\iota(x)\|_{\mathbb{R}^p})=\bigchi_{D^{-1}_x(\mathcal{V}(x))}(\textbf{u})$ is Riemann integrable on $\iota(M)$. Thus, we have $\partial D^{-1}_x(\mathcal{V}(x))$ has measure $0$.  

Next we show that $\partial \mathcal{C}(x) \subset \partial D^{-1}_x(\mathcal{V}(x))$. Since both $\mathcal{C}(x)$ and $D^{-1}_x(\mathcal{V}(x))$ are compact and $\mathcal{C}(x) \subset D^{-1}_x(\mathcal{V}(x))$, if there is a point $\textbf{u}_0$ on $\partial \mathcal{C}(x)$ but not on $\partial D^{-1}_x(\mathcal{V}(x))$, then $\textbf{u}_0$ is in the interior of $D^{-1}_x(\mathcal{V}(x))$. Hence, we can find an open geodesic ball $B \subset D^{-1}_x(\mathcal{V}(x))$ on $\iota(M)$ around $\textbf{u}_0$ so that $D_x$ is smooth on the open set $B \setminus \textbf{u}_0$. By Lemma \ref{constant lemma}, $D_x$ is constant on $B \setminus \textbf{u}_0$.  Hence $B \subset \mathcal{C}(x)$ and this contradicts to $\textbf{u}_0$ on $\partial \mathcal{C}(x)$. We prove the claim  $\partial \mathcal{C}(x) \subset \partial D^{-1}_x(\mathcal{V}(x))$. Since  $\partial D^{-1}_x(\mathcal{V}(x))$ has measure $0$, $\partial \mathcal{C}(x)$ has measure $0$.
\end{proof}

\

\section{Proof of the claim in Example \ref{not VC class}} \label{proof of not VC class}

Let $d_{L^2}$ be the metric on $\mathcal{F}(A)$ defined as in \eqref{general L2 metric}, where $\mathcal{P}$ is the uniform probability measure on $[0,1]$. Then, we have
\begin{align}
N_{pack}(\epsilon, \mathcal{F}(A), d_{L^2}) \leq N_{cov}(\epsilon, \mathcal{F}(A), d_{L^2}) \leq \sup_{\mathcal{P}} N_{cov}(\epsilon, \mathcal{F}(A), d_{L^2(\mathcal{P})}),
\end{align}
where $N_{pack}(\epsilon, \mathcal{F}(A), d_{L^2})$ is the packing number of $\mathcal{F}(A)$ by the $\epsilon$ ball with respect to the metric $d_{L^2}$.
Next, we find a lower bound for $N_{pack}(\epsilon, \mathcal{F}(A), d_{L^2})$. Let $0 \leq a, a+\epsilon, \leq 1$, we need to find a lower bound for $d_{L^2}(K(a+\epsilon - \cdot ), K(a - \cdot ))$. Note that by the Cauchy Schwarz inequality,
\begin{align}
d_{L^2}(K(a+\epsilon - \cdot ), K(a - \cdot ))=& (\int_{0}^1 |\sin(\exp(\exp(\frac{1}{|a+\epsilon-z|})))-\sin(\exp(\exp(\frac{1}{|a-z|})))|^2 dz) ^{\frac{1}{2}} \\
= & (\int_{0}^1 |\sin(\exp(\exp(\frac{1}{|a+\epsilon-z|})))-\sin(\exp(\exp(\frac{1}{|a-z|})))|^2 dz) ^{\frac{1}{2}} (\int_{0}^1 1^2 dz)^{\frac{1}{2}} \nonumber\\
\geq & \int_{0}^1 |\sin(\exp(\exp(\frac{1}{|a+\epsilon-z|})))-\sin(\exp(\exp(\frac{1}{|a-z|})))| dz. \nonumber
\end{align}
Hence, we need to find a lower bound for $\int_{0}^1 |\sin(\exp(\exp(\frac{1}{|a+\epsilon-z|})))-\sin(\exp(\exp(\frac{1}{|a-z|})))| dz$. For notation simplicity, we assume $a=0$ to find the lower bound. The method and the lower bound will be the same for a general $a$. Note that if $\epsilon<\frac{1}{10}$, then $0<-\frac{1}{\log \epsilon}<\frac{1}{2}$ and $0 <\epsilon< \epsilon-\frac{1}{\log \epsilon} <1$. Hence, for $\epsilon$ small enough, we have
\begin{align}
\int_{0}^1 |\sin(\exp(\exp(\frac{1}{|\epsilon-z|})))-\sin(\exp(\exp(\frac{1}{|z|})))|  dz \geq & \int_{\epsilon}^{\epsilon-\frac{1}{\log \epsilon}} |\sin(\exp(\exp(\frac{1}{|\epsilon-z|})))-\sin(\exp(\exp(\frac{1}{|z|})))| dz \\
=& \int_{\epsilon}^{\epsilon-\frac{1}{\log \epsilon}} |\sin(\exp(\exp(\frac{1}{z-\epsilon})))-\sin(\exp(\exp(\frac{1}{z})))|  dz. \nonumber
\end{align}
We compare the function $\sin(\exp(\exp(\frac{1}{z-\epsilon})))$ and $\sin(\exp(\exp(\frac{1}{z})))$.  Let 
$$a_i=\epsilon + \frac{1}{\log(\log((N+i)\pi))},$$ 
where $i$ is an integer (may be negative) and $N$ is the smallest  integer such that $\frac{1}{\log(\log(N\pi))} \leq  -\frac{1}{\log \epsilon}$. In other words,  $N-1 \leq \frac{1}{\pi}e^{\frac{1}{ \epsilon}} \leq N$. Let 
$$a^*_i=\epsilon + \frac{1}{\log((N+i)\pi+\frac{\pi}{2})}.$$
Note that if $z=a_i$, then $\exp(\exp(\frac{1}{z-\epsilon}))=(N+i) \pi$. Intuitively, $[a_{i+1},a_i]$ is a ``half period'' of $\sin(\exp(\exp(\frac{1}{z-\epsilon})))$. In other words, $\sin(\exp(\exp(\frac{1}{z-\epsilon})))$ is completely positive or negative on $(a_{i+1},a_i)$. Let 
$$b_j=\frac{1}{\log(\log((M_1+j)\pi))},$$ where $j \leq M_2$. $M_1$ is the smallest postive integer such that $\frac{1}{\log(\log(M_1\pi))} \leq \epsilon -\frac{1}{\log \epsilon}$. $M_2$ is the largest non negative integer such that $\frac{1}{\log(\log((M_1+M_2)\pi))} \geq \epsilon $. Hence, $M_1+M_2 \leq \frac{1}{\pi}\exp(\exp(\frac{1}{\epsilon})) \leq M_1+M_2+1$.  We have the following observations about $[a_{i+1},a_i]$ and $[b_{j+1},b_{j}]$.

\begin{lemma} \label{lemma on intervals}
If $\epsilon$ is small enough, then we have the following statements.
\begin{enumerate}
\item
$|a_i-a_{i+1}|>|a_{i+1}-a_{i+2}|$ and $|b_j-b_{j+1}|>|b_{j+1}-b_{j+2}|$.
\item
 $1<\frac{|a_{i}-a^*_{i}|}{|a^*_i-a_{i+1}|} <2$. 
\item
$\frac{1}{2} <\frac{|a_{i+6}-a_{i+7}|}{|a_i-a_{i+1}|}<1 $.
\end{enumerate}
\end{lemma}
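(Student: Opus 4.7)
All three statements can be reduced to the analysis of a single smooth function: let $f(s) := \frac{1}{\log(\log(\pi s))}$, defined for $s$ large enough that $\log(\log(\pi s)) > 0$. Observe that $a_i = \epsilon + f(N+i)$ and $b_j = f(M_1 + j)$, so each of the differences appearing in the lemma is a difference of values of $f$. A direct differentiation gives
\[
f'(s) = -\frac{1}{s\,\log(\pi s)\,[\log(\log(\pi s))]^2},
\]
from which one sees that $|f'|$ is strictly positive and strictly decreasing on the relevant range, since each factor in the denominator is positive and increasing. This monotonicity, together with the Mean Value Theorem, is the only tool the proof really needs.

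For statement (1), I would write $a_i - a_{i+1} = -f'(\xi_i)$ for some $\xi_i \in (N+i, N+i+1)$ and $a_{i+1} - a_{i+2} = -f'(\xi_{i+1})$ for some $\xi_{i+1} \in (N+i+1, N+i+2)$. Then $\xi_i < \xi_{i+1}$ together with the monotonicity of $|f'|$ immediately yields $|a_i - a_{i+1}| > |a_{i+1} - a_{i+2}|$. The argument for $b_j$ is identical, with $M_1 + j$ in place of $N + i$.

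For statement (2) (reading $a_i^*$ as the natural midpoint $\epsilon + \frac{1}{\log(\log((N+i)\pi + \pi/2))}$, which corresponds to $\exp(\exp(1/(z-\epsilon))) = (N+i)\pi + \tfrac{\pi}{2}$), I would apply the MVT on the two subintervals $((N+i)\pi, (N+i)\pi + \tfrac{\pi}{2})$ and $((N+i)\pi + \tfrac{\pi}{2}, (N+i+1)\pi)$ in the variable $s = (N+i)\pi$. This gives $|a_i - a_i^*| = \tfrac{\pi}{2}|f'(\eta_1)|$ and $|a_i^* - a_{i+1}| = \tfrac{\pi}{2}|f'(\eta_2)|$ for some $\eta_1 < \eta_2$. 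Monotonicity of $|f'|$ yields the lower bound $> 1$. For the upper bound $< 2$, I would expand the ratio $|f'(\eta_1)|/|f'(\eta_2)|$ as a product of three factors of the form $\eta_2/\eta_1$, $\log(\pi\eta_2)/\log(\pi\eta_1)$, and $([\log\log(\pi\eta_2)]/[\log\log(\pi\eta_1)])^2$, and observe that, since $\eta_2 - \eta_1 \leq \pi$ and $\eta_1 \geq N\pi \to \infty$ as $\epsilon \to 0$, each factor tends to $1$; in particular each is less than $2^{1/3}$ once $\epsilon$ is small enough, which is the quantitative content we need.

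Statement (3) is handled in the same spirit: the MVT rewrites the ratio as $|f'(\xi_{i+6})|/|f'(\xi_i)|$ with $\xi_{i+k} \in (N+i+k, N+i+k+1)$. Monotonicity gives the upper bound $< 1$, and the same three-factor expansion as above gives a lower bound approaching $1$, hence certainly $> 1/2$, for $\epsilon$ small enough. The main (though mild) obstacle is keeping the $\epsilon \to 0$ quantitative control uniform in $i$ within the admissible range: one has to check that $N$ dominates $i$ throughout the range of $i$ for which $a_i \in [\epsilon, \epsilon - 1/\log\epsilon]$. This follows from the definition of $N$, since $N \sim \tfrac{1}{\pi} e^{1/\epsilon}$ while the number of admissible $i$'s grows only polynomially in $1/\epsilon$, so $(N + i + 7)/(N+i) = 1 + o(1)$ uniformly, as needed.
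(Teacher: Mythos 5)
Your proposal is correct and takes essentially the same route as the paper: both rest on the monotonicity of the increments of $f(s)=\tfrac{1}{\log\log(\pi s)}$, which you make explicit by computing $f'$ and invoking the mean value theorem, while the paper simply asserts the corresponding first-difference quotients are monotone and tend to $1$. The only notable difference is in part (3): the paper bounds each consecutive ratio $|a_{i+1}-a_{i+2}|/|a_i-a_{i+1}|$ below by $(1/2)^{1/6}$ and telescopes six times, whereas you apply the MVT directly across the six-step gap and use the same factorization of $f'$; both arguments are valid and yield the stated bounds.
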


\begin{proof}
Note that $\frac{1}{\pi}e^{\frac{1}{ \epsilon}} \leq N$ implies that $N \rightarrow \infty$ as $\epsilon \rightarrow 0$.

(1) $a_i-a_{i+1}=\frac{1}{\log(\log((N+i)\pi))}-\frac{1}{\log(\log((N+i+1)\pi))}$. The result follows from $\frac{1}{\log(\log(\pi x))}-\frac{1}{\log(\log(\pi(x+1)))}$ is decreasing for $x>0$. Similarly, we have $|b_j-b_{j+1}|>|b_{j+1}-b_{j+2}|$.

(2) $a_{i}-a^*_{i}= \frac{1}{\log(\log((N+i)\pi))}-\frac{1}{\log(\log((N+i)\pi+\frac{\pi}{2}))}$ and $a^*_i-a_{i+1}=\frac{1}{\log(\log((N+i)\pi+\frac{\pi}{2}))}-\frac{1}{\log(\log((N+i+1)\pi))}$. Hence, 
\begin{align}
\frac{|a_{i}-a^*_{i}|}{|a^*_i-a_{i+1}|}=\frac{\frac{1}{\log(\log((N+i)\pi))}-\frac{1}{\log(\log((N+i+\frac{1}{2})\pi))}}{\frac{1}{\log(\log((N+i+\frac{1}{2})\pi))}-\frac{1}{\log(\log((N+i+1)\pi))}}.
\end{align}
The conclusion follows from   $\frac{\frac{1}{\log(\log(\pi x))}-\frac{1}{\log(\log(\pi(x+\frac{1}{2})))}}{\frac{1}{\log(\log(\pi(x+\frac{1}{2})))}-\frac{1}{\log(\log(\pi(x+1)))}}$ is decreasing to $1$ as $x \rightarrow \infty$.

(3) 
\begin{align}
\frac{|a_{i+1}-a_{i+2}|}{|a_i-a_{i+1}|}=\frac{\frac{1}{\log(\log((N+i+1)\pi))}-\frac{1}{\log(\log((N+i+2)\pi))}}{\frac{1}{\log(\log((N+i)\pi))}-\frac{1}{\log(\log((N+i+1)\pi))}}.
\end{align}
Note that $\frac{\frac{1}{\log(\log(\pi (x+1)))}-\frac{1}{\log(\log(\pi(x+2)))}}{\frac{1}{\log(\log(\pi x))}-\frac{1}{\log(\log(\pi(x+1)))}}$ is increasing to $1$ as $x \rightarrow \infty$. Hence, if $\epsilon$ is small enough, for all $i$, we have $(\frac{1}{2})^\frac{1}{6}<\frac{|a_{i+1}-a_{i+2}|}{|a_i-a_{i+1}|}<1$. The conclusion follows.
\end{proof}

Denote $[\tilde{a}_{k+1}, \tilde{a}_k]$ to be an interval $[a_{i+1},a_i]$ such that $\sin(\exp(\exp(\frac{1}{z-\epsilon})))$ and $\sin(\exp(\exp(\frac{1}{z})))$ have the opposite sign on $[a_{i+1},a_i]$. The existence of $[\tilde{a}_{k+1}, \tilde{a}_k]$ and the total length of $\cup_k [\tilde{a}_{k+1}, \tilde{a}_k]$ are proved in the following lemma.

\begin{lemma}\label{total length of good intervals} If $\epsilon$ is small enough,
\begin{align}
\sum_{k}|\tilde{a}_k-\tilde{a}_{k+1}| >-\frac{1}{20 \log \epsilon}.
\end{align}
\end{lemma}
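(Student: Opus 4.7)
The plan is to argue that, uniformly on $S := [\epsilon, \epsilon - 1/\log\epsilon]$, the $a$-intervals are dramatically smaller than the $b$-intervals, and that within each $b$-interval the ``good'' $a$-intervals form an alternating subfamily carrying at least a fixed fraction of the total length. This will yield a lower bound on $\sum_k|\tilde a_k - \tilde a_{k+1}|$ of order $|S|/3$, easily dominating the required $|S|/20 = -1/(20\log\epsilon)$.

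First I would quantify the scale separation. A mean value estimate applied to $x \mapsto 1/\log\log(\pi x)$ gives the $a$-interval length at $z$ of order $\pi(z-\epsilon)^2 \exp(-1/(z-\epsilon))/\exp(\exp(1/(z-\epsilon)))$ and the $b$-interval length at $z$ of order $\pi z^2 \exp(-1/z)/\exp(\exp(1/z))$. Since $z-\epsilon \leq z$ on $S$ we have $1/(z-\epsilon) \geq 1/z$, so the ratio $a$-size$/b$-size is bounded by $\exp(\exp(1/z) - \exp(1/(z-\epsilon)))$. Even at the least favorable point $z = \epsilon - 1/\log\epsilon$, where $z$ and $z-\epsilon$ are both of order $|\log\epsilon|^{-1}$, one computes $1/(z-\epsilon) - 1/z = \epsilon/(z(z-\epsilon)) \sim \epsilon(\log\epsilon)^2$, hence $\exp(1/(z-\epsilon)) - \exp(1/z) \sim (\log\epsilon)^2$, and the ratio is bounded by $\exp(-(\log\epsilon)^2)$, which decays rapidly to $0$. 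Consequently, for $\epsilon$ small every $b$-interval contained in $S$ contains a large number $L$ of complete $a$-intervals, and the two straddling pieces at its boundary have total length $o(|b_j - b_{j+1}|)$.

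Next, for a fixed $b$-interval $[b_{j+1}, b_j] \subset S$, let the $a$-intervals inside be $[a_{k+1}, a_k]$ for $k = i_0, \dots, i_0 + L - 1$ and set $\alpha_k := a_k - a_{k+1}$. By the definitions of $a_i$ and $b_j$, consecutive $a$-intervals alternate in the sign of $\sin(\exp(\exp(1/(z-\epsilon))))$, while $\sin(\exp(\exp(1/z)))$ has a constant sign throughout $[b_{j+1}, b_j]$; thus the good $a$-intervals form one of the two alternating subsets, with total length $G$, and the bad ones have total length $B$. Part (1) of Lemma \ref{lemma on intervals} gives $\alpha_{i_0} > \alpha_{i_0+1} > \cdots$, and combined with part (3) (via $\alpha_{k+1} > \alpha_{k+6} > \alpha_k/2$) yields $\alpha_{k+1}/\alpha_k > 1/2$. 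If the good indices correspond to the larger of each consecutive pair, then $G > B$; otherwise $G = \sum_m \alpha_{2m+1} > \tfrac{1}{2}\sum_m \alpha_{2m} = B/2$. Either way $G \geq T_j/3$, where $T_j := G + B$ is the total $a$-length inside $[b_{j+1}, b_j]$.

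Finally, the straddling estimate gives $T_j \geq (1 - o(1))|b_j - b_{j+1}|$, and the at-most-two $b$-intervals only partially contained in $S$ contribute negligibly, since the $b$-size is doubly exponentially small in $\epsilon$ at the endpoints of $S$. Summing over all $b$-intervals inside $S$ yields
\begin{equation*}
\sum_k|\tilde a_k - \tilde a_{k+1}| \;\geq\; \frac{1-o(1)}{3}\,|S| \;=\; \frac{1-o(1)}{3}\cdot\frac{-1}{\log\epsilon} \;>\; -\frac{1}{20\log\epsilon}
\end{equation*}
for all sufficiently small $\epsilon$. The main technical difficulty will lie in the first step: the quantitative scale separation near the upper end of $S$, where $z$ and $z-\epsilon$ are comparable. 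The crux is that even though the additive gap $1/(z-\epsilon) - 1/z = \epsilon/(z(z-\epsilon))$ is small, the iterated exponential $\exp(\exp(\cdot))$ amplifies it into the overwhelming multiplicative gap $\exp((\log\epsilon)^2)$ between the two oscillation frequencies, which is what ultimately makes the straddling correction vanishing and the averaging argument go through.
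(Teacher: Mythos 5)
Your proposal is correct and rests on the same hard analytical estimate as the paper's proof, but organizes the counting argument differently. Both approaches begin from the MVT/derivative formula for the $a$- and $b$-interval lengths and both hinge on the key inequality showing that the $a$-intervals are much shorter than the $b$-intervals (you bound the ratio by $\exp(\exp(1/z)-\exp(1/(z-\epsilon)))\le \exp(-(\log\epsilon)^2)$ at the right end of $S$; the paper proves essentially the same inequality in the form $g(-1/\log\epsilon)/g(\epsilon-1/\log\epsilon)\ge e^{\frac14\log^2\epsilon}>15$). Where you differ is what you do with it. The paper uses a chain of mean value theorems (introducing intermediate points $m$ and $m'$) to establish the precise qualitative fact that each $b$-interval contains at least five consecutive $a$-intervals, from which it deduces that one out of every five $a$-intervals is good and, via Lemma~\ref{lemma on intervals} (1)+(3), that these capture at least a $\tfrac1{10}$ fraction of the total $a$-length. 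You instead compute the ratio directly to conclude each $b$-interval contains a diverging number of $a$-intervals, then use the alternation of $\sin(\exp(\exp(1/(z-\epsilon))))$ within each $b$-interval together with $\alpha_{k+1}>\alpha_k/2$ to get a $\ge\tfrac13$ fraction (strictly, $\ge\tfrac14$ once you account for a possible unpaired bad interval when $L$ is odd, but any fixed constant beats $\tfrac1{20}$). Your variant is cleaner and yields a better constant, at the cost of leaving the uniform scale-separation step at a heuristic level — precisely the spot you flag as the ``main technical difficulty,'' and the spot where the paper invests most of its effort. You should also tighten the claim about the $b$-intervals at the ends of $S$: the $b$-intervals are all contained in $S$ by construction; the negligible loss comes from the uncovered fragments $(\epsilon,b_{M_2})$ and $(b_0,\epsilon-1/\log\epsilon)$, each of size at most one $b$-interval, and your doubly-exponential bound makes those vanishing.
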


\begin{proof}
Let $g(z)=\exp(\exp(\frac{1}{z}))$. Consider the interval $[b_{j+1},b_j]$. By the mean value theorem, there is a $b_{j+1} \leq m \leq b_j$ such that
\begin{align}
\frac{d g(m)}{dz}=\frac{g(b_{j+1})-g(b_j)}{b_{j+1}-b_j} = \frac{\pi}{b_{j+1}-b_j}.
\end{align}
Then, we find $a_i$ such that $a_{i+1}<m<a_i$. By the mean value theorem, there is a $m'$ with $\epsilon < a_{i+6} \leq m' \leq a_{i+1}<m$ such that
\begin{align}
\frac{d g(m'-\epsilon)}{dz}=\frac{g(a_{i+6}-\epsilon)-g(a_{i+1}-\epsilon)}{a_{i+6}-\epsilon-(a_{i+1}-\epsilon)}=\frac{5\pi}{a_{i+6}-a_{i+1}}.
\end{align}
Hence, 
\begin{align}
\frac{1}{5}\frac{\frac{d g(m'-\epsilon)}{dz}}{ \frac{dg(m)}{dz}} (a_{i+1}-a_{i+6})=b_{j}-b_{j+1}.
\end{align}
By (1) and (3) in Lemma \ref{lemma on intervals}, if  $\epsilon$ is small enough, $a_{i-5}-a_{i+6}<3(a_{i}-a_{i+6})$. Therefore,
\begin{align}
\frac{1}{15}\frac{\frac{d g(m'-\epsilon)}{dz}}{ \frac{dg(m)}{dz}} (a_{i-5}-a_{i+6})<b_{j}-b_{j+1}.
\end{align}
Observe that for $z>0$, we have $g'(z)<0$ and $g''(z)>0$. Since $m' \leq m$, we have
\begin{align}
\frac{1}{15}\frac{\frac{d g(m'-\epsilon)}{dz}}{ \frac{dg(m')}{dz}} (a_{i-5}-a_{i+6})<b_{j}-b_{j+1}.
\end{align}
Next we show that $\frac{\frac{d g(m'-\epsilon)}{dz}}{ \frac{dg(m')}{dz}}>15$, when $\epsilon$ is small enough. For $z>\epsilon$,
\begin{align}
\frac{\frac{d g(z-\epsilon)}{dz}}{\frac{dg(z)}{dz}}=\frac{g(z-\epsilon) e^{\frac{1}{z-\epsilon}}(z-\epsilon)^{-2}}{g(z) e^{\frac{1}{z}}(z)^{-2}}>\frac{g(z-\epsilon)}{g(z)}.
\end{align}
Note that by the quotient rule $\frac{d\frac{g(z-\epsilon)}{g(z)}}{dz}=\frac{g(z-\epsilon)(z^{-2}e^{\frac{1}{z}}-(z-\epsilon)^{-2}e^{\frac{1}{z-\epsilon}})}{g(z)}<0$. 

Hence,  For $\epsilon<z<\epsilon-\frac{1}{\log \epsilon}$,
\begin{align}
\frac{\frac{d g(z-\epsilon)}{dz}}{\frac{dg(z)}{dz}}>\frac{g(z-\epsilon)}{g(z)}>\frac{g(-\frac{1}{\log \epsilon})}{g(\epsilon-\frac{1}{\log \epsilon})}.
\end{align}
Note that
\begin{align}
\frac{1}{\epsilon-\frac{1}{\log \epsilon}} = -\log \epsilon ( \frac{1}{1-\epsilon\log\epsilon}). 
\end{align}
Since $\epsilon \log \epsilon<0$ and goes to $0$ as $\epsilon$ goes to $0$, if $\epsilon$ is small enough so that $\epsilon \log \epsilon >-1$, then
\begin{align}
\frac{1}{\epsilon-\frac{1}{\log \epsilon}}\leq - (\log\epsilon )(1+\frac{1}{2}\epsilon \log \epsilon)=- \log\epsilon- \frac{1}{2}\epsilon \log^2 \epsilon.
\end{align}
Hence,
\begin{align}
g(\epsilon-\frac{1}{\log \epsilon}) =\exp(\exp(\frac{1}{\epsilon-\frac{1}{\log \epsilon}}))\leq e^{\frac{1}{\epsilon}e^{-\frac{1}{2}\epsilon \log^2 \epsilon}},
\end{align}
At last, note that $1-e^{-\frac{1}{2}x}>\frac{1}{4}x$ when $x<3$. Hence, if $\epsilon$ is small enough, then $\epsilon \log^2 \epsilon<3$ and 
\begin{align}
\frac{g(-\frac{1}{\log \epsilon})}{g(\epsilon-\frac{1}{\log \epsilon})}\geq e^{\frac{1}{\epsilon}[1-e^{-\frac{1}{2}\epsilon \log^2 \epsilon}]} \geq e^{\frac{1}{4} \log^2 \epsilon}.
\end{align}
We require $\epsilon$ to be even smaller, we have
\begin{align}
\frac{g(-\frac{1}{\log \epsilon})}{g(\epsilon-\frac{1}{\log \epsilon})} \geq e^{\frac{1}{4} \log^2 \epsilon}>15.
\end{align}
Therefore,
\begin{align}
a_{i-5}-a_{i+6}<b_{j}-b_{j+1}.
\end{align}
Since $m \in [a_{i+1},a_i] \cap [b_{j+1},b_j] \not= \emptyset$, either $[a_{i+6},a_{i+1}]$ or $[a_i,a_{i-5}]$ is in $[b_{j+1},b_j]$. Hence, each $[b_{j+1},b_j]$ contains at least five consecutive intervals  $[a_{i+1},a_{i}]$, while $[\epsilon, b_{M_2}]$ contains infinitely many $[a_{i+1},a_{i}]$. For $\epsilon$ small enough, choose $L$ to be the largest postive integer such that $\epsilon- \frac{1}{2 \log \epsilon}<b_1<a_L$. Then, for any $i \geq L$, there is a there is a $[\tilde{a}_{k+1}, \tilde{a}_k]$ in any five consecutive intervals $ [a_{i+5},a_{i+4}],\cdots, [a_{i+1},a_{i}] $. Hence, by  (1) and (3) in Lemma \ref{lemma on intervals}
\begin{align}
\sum_{[\tilde{a}_{k+1}, \tilde{a}_k]=[a_{i+1},a_i], i \geq L }|\tilde{a}_k-\tilde{a}_{k+1}| \geq \frac{1}{10}  \sum_{i \geq L}|a_i-a_{i+1}| =\frac{1}{10}|a_L-\epsilon| \geq \frac{1}{10}|b_1-\epsilon| = -\frac{1}{20 \log \epsilon}.
\end{align}
Therefore, 
\begin{align}
\sum_{k}|\tilde{a}_k-\tilde{a}_{k+1}| \geq \sum_{[\tilde{a}_{k+1}, \tilde{a}_k]=[a_{i+1},a_i], i \geq L }|\tilde{a}_k-\tilde{a}_{k+1}|  \geq  -\frac{1}{20 \log \epsilon}.
\end{align}
\end{proof}

Next, we find the lower bound for $\int_{\tilde{a}_{k+1}}^{\tilde{a}_k}|\sin(\exp(\exp(\frac{1}{z-\epsilon})))| dz$. 

\begin{lemma}\label{lower bound on good interval}
$\int_{\tilde{a}_{k+1}}^{\tilde{a}_k}|\sin(\exp(\exp(\frac{1}{z-\epsilon})))| dz >\frac{1}{8}|\tilde{a}_k-\tilde{a}_{k+1}|$. 
\end{lemma}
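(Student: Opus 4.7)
My plan is to parametrize the interval $[\tilde{a}_{k+1},\tilde{a}_k]=[a_{i+1},a_i]$ (for the appropriate $i$) by the natural angle coming from the argument of $\sin$, and reduce the claim to a one-dimensional estimate about the $L^1$-norm of $\sin\theta$ against a slowly varying weight. Concretely, substitute $u=\exp(\exp(\tfrac{1}{z-\epsilon}))$, which sends $[a_{i+1},a_i]$ bijectively onto $[(N+i)\pi,(N+i+1)\pi]$, and write $u=(N+i)\pi+\theta$ with $\theta\in[0,\pi]$. Inverting gives $z=\epsilon+1/\log\log((N+i)\pi+\theta)$, so
\[
\left|\frac{dz}{d\theta}\right|=f(\theta):=\frac{1}{((N+i)\pi+\theta)\,\log((N+i)\pi+\theta)\,[\log\log((N+i)\pi+\theta)]^{2}}.
\]
Each of the three factors in the denominator is increasing in $\theta$, so $f$ is a positive decreasing function on $[0,\pi]$. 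The change of variables turns the two quantities to compare into
\[
\int_{a_{i+1}}^{a_i}\bigl|\sin(\exp(\exp(\tfrac{1}{z-\epsilon})))\bigr|\,dz=\int_{0}^{\pi}\sin\theta\,f(\theta)\,d\theta,\qquad a_i-a_{i+1}=\int_{0}^{\pi}f(\theta)\,d\theta.
\]

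Because $f$ is positive and decreasing, I obtain the clean envelope bounds
\[
\int_{0}^{\pi}\sin\theta\,f(\theta)\,d\theta\;\geq\;f(\pi)\int_{0}^{\pi}\sin\theta\,d\theta\;=\;2f(\pi),\qquad \int_{0}^{\pi}f(\theta)\,d\theta\;\leq\;\pi\,f(0),
\]
so the desired ratio is controlled by
\[
\frac{\int_{a_{i+1}}^{a_i}|\sin(\exp(\exp(\tfrac{1}{z-\epsilon})))|\,dz}{a_i-a_{i+1}}\;\geq\;\frac{2}{\pi}\cdot\frac{f(\pi)}{f(0)}.
\]

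It remains to show $f(\pi)/f(0)$ can be made arbitrarily close to $1$ when $\epsilon$ is small, uniformly in the admissible $i$. By the definition of $N$, for all $\epsilon$ small enough $X:=(N+i)\pi\geq N\pi\geq e^{1/\epsilon}$, which tends to $\infty$ as $\epsilon\to 0$ for every $i\geq 0$. Since
\[
\frac{f(0)}{f(\pi)}=\Bigl(1+\tfrac{\pi}{X}\Bigr)\cdot\frac{\log(X+\pi)}{\log X}\cdot\left[\frac{\log\log(X+\pi)}{\log\log X}\right]^{2},
\]
each factor tends to $1$ as $X\to\infty$, and this convergence is monotone in $X$; in particular, for $\epsilon$ sufficiently small one has $f(0)/f(\pi)\leq 16/\pi$ for every relevant $i$, which gives a ratio of at least $2/\pi\cdot\pi/16=1/8$. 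Since all inequalities used are strict for $\sin\not\equiv\text{const}$, this yields the desired strict bound $\int_{\tilde{a}_{k+1}}^{\tilde{a}_k}|\sin(\exp(\exp(\tfrac{1}{z-\epsilon})))|\,dz>\tfrac{1}{8}|\tilde{a}_k-\tilde{a}_{k+1}|$.

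The only real obstacle is the uniform-in-$i$ control of $f(0)/f(\pi)$. That step is completely elementary once one notices the lower bound $(N+i)\pi\geq e^{1/\epsilon}$ provided by the defining property of $N$; no further delicate analysis of the fat-Cantor-type construction is needed for this particular lemma, which is a one-dimensional computation living purely on a single ``half-period'' $[a_{i+1},a_i]$.
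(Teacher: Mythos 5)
Your proof is correct, and it takes a genuinely different route from the paper's. The paper argues from the second-derivative computation that $|\sin(\exp(\exp(\tfrac{1}{z-\epsilon})))|$ is \emph{concave} on the sub-interval $[a_{i+1},a^*_i]$ where it rises from $0$ to $1$, so it lies above the chord and contributes at least $\tfrac12|a^*_i-a_{i+1}|$, and then closes with the ratio estimate $|a^*_i-a_{i+1}|>\tfrac14|a_i-a_{i+1}|$ coming from Lemma \ref{lemma on intervals}~(2). You instead push everything through the substitution $u=\exp(\exp(\tfrac{1}{z-\epsilon}))$ and exploit that the Jacobian $f(\theta)$ is positive and monotone decreasing on the half-period $\theta\in[0,\pi]$, which immediately gives
\[
\frac{\int_0^\pi \sin\theta\, f(\theta)\,d\theta}{\int_0^\pi f(\theta)\,d\theta}\;\ge\;\frac{2}{\pi}\cdot\frac{f(\pi)}{f(0)},
\]
and then you control $f(0)/f(\pi)\to 1$ by the explicit formula together with $(N+i)\pi\ge N\pi\ge e^{1/\epsilon}\to\infty$ (which is uniform over the indices $i\ge 0$ that actually appear, since the domain of integration is $[\epsilon,\,\epsilon-1/\log\epsilon]$). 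Both routes need the same implicit smallness hypothesis on $\epsilon$ that already sits behind Lemma~\ref{lemma on intervals}; your version replaces the convexity/concavity analysis and Lemma \ref{lemma on intervals}~(2) with direct Jacobian asymptotics. One thing your approach buys is sharpness: it shows the ratio tends to $2/\pi$ as $\epsilon\to 0$, which is the largest possible constant, while the paper's $\tfrac12\cdot\tfrac14=\tfrac18$ is a crude but sufficient bound. The paper's approach, in turn, avoids computing the Jacobian explicitly and reuses the interval-length estimates it needed anyway for Lemma \ref{total length of good intervals}.
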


\begin{proof}
Let $g(z)=\exp(\exp(\frac{1}{z}))$, then the second derivative for $\sin(\exp(\exp(\frac{1}{z-\epsilon})))$ is 
\begin{align}
-\sin(g(z-\epsilon)) (g'(z-\epsilon))^2+\cos(g(z-\epsilon))g''(z-\epsilon).
\end{align}
Note that $g''(z-\epsilon)>0$ for $z >\epsilon$. Thus, if $\sin(\exp(\exp(\frac{1}{z-\epsilon})))$  is negative on $[a_{i+1},a_i]$, then $\cos(\exp(\exp(\frac{1}{z-\epsilon})))$ is positive on $[a_{i+1},a^*_i]$ and $\sin(\exp(\exp(\frac{1}{z-\epsilon})))$ is convex on $[a_{i+1},a^*_i]$. If $\sin(\exp(\exp(\frac{1}{z-\epsilon})))$  is positive on $[a_{i+1},a_i]$, then $\cos(\exp(\exp(\frac{1}{z-\epsilon})))$ is negative on $[a_{i+1},a^*_i]$ and $\sin(\exp(\exp(\frac{1}{z-\epsilon})))$ is concave on $[a_{i+1},a^*_i]$. By (2) in Lemma \ref{lemma on intervals},  $|a^*_i-a_{i+1}|>\frac{1}{4}|a_i-a_{i+1}|$. Therefore,
\begin{align}
\int_{\tilde{a}_{k+1}}^{\tilde{a}_k}|\sin(\exp(\exp(\frac{1}{z-\epsilon})))| dz >\frac{1}{2} \cdot \frac{1}{4}|\tilde{a}_k-\tilde{a}_{k+1}|=\frac{1}{8}|\tilde{a}_k-\tilde{a}_{k+1}|.
\end{align}
\end{proof}

Hence, 
\begin{align}
&\int_{\epsilon}^{\epsilon-\frac{1}{\log \epsilon}} |\sin(\exp(\exp(\frac{1}{z-\epsilon})))-\sin(\exp(\exp(\frac{1}{z})))|  dz  \\
 >& \int_{\cup_k [\tilde{a}_{k+1}, \tilde{a}_k]}|\sin(\exp(\exp(\frac{1}{z-\epsilon})))-\sin(\exp(\exp(\frac{1}{z})))|  dz  \\
 >& \int_{\cup_k[\tilde{a}_{k+1}, \tilde{a}_k]}|\sin(\exp(\exp(\frac{1}{z-\epsilon})))| dz  . \nonumber \\
 >& \frac{1}{8}\sum_{k}|\tilde{a}_k-\tilde{a}_{k+1}|>-\frac{1}{160 \log \epsilon}. \nonumber
\end{align}
Note that we apply Lemma \ref{lower bound on good interval} in the second last step and we apply Lemma \ref{total length of good intervals} in the last step.

Therefore, $d_{L^2}(K(a+\epsilon - \cdot ), K(a - \cdot )) >-\frac{1}{160 \log \epsilon}$. Equivalently, if $|x-y| > e^{-\frac{1}{160 \epsilon}}$, then $d_{L^2}(K(x - \cdot ), K(y - \cdot ))>\epsilon$. We conclude that $N_{pack}(\epsilon, \mathcal{F}(A), d_{L^2}) $ is greater than the packing number of $[0,1]$ by balls of radius $e^{-\frac{1}{160 \epsilon}}$. Hence, 
\begin{align}
\sup_{\mathcal{P}} N_{cov}(\epsilon, \mathcal{F}(A), d_{L^2(\mathcal{P})}) \geq N_{pack}(\epsilon, \mathcal{F}(A), d_{L^2}) \geq \frac{1}{2}e^{\frac{1}{160 \epsilon}}.
\end{align}
In other words, it is impossible that
\begin{align}
\sup_{\mathcal{P}} N_{cov}(\epsilon, \mathcal{F}(A), d_{L^2(\mathcal{P})}) \leq C \epsilon^{-b},
\end{align}
when $\epsilon$ is small enough.

\

\section{Proof of Propositions \ref{variance L1 case} and \ref{bias L1 case} }\label{Appendix C}
\begin{proof}[Proof of Proposition \ref{variance L1 case}]

We show that $\int_{M} |\frac{1}{n}\sum_{i=1}^n K_{\epsilon}(\iota(x),\iota(x_i))-\mathbb{E}K_{\epsilon}(x)| dV(x) \rightarrow 0$, a.s. as $n \rightarrow \infty$.

\underline{\textbf{Pointwise convergence of $\frac{1}{n}\sum_{i=1}^n K_{\epsilon}(\iota(x),\iota(x_i))$ to $\mathbb{E}K_{\epsilon}(x)$}}

\

Suppose $P$ is the density function of the random variable $X$ on $M$. Fix $x \in M$. Define a random variable $F:=K_{\epsilon}(\iota(x),\iota(X))$. Then, $F_i:=K_{\epsilon}(\iota(x),\iota(x_i))$ can be regarded as i.i.d samples from $F$. Note that if we use Lemma \ref{volume form in normal coordinate}, so that $Vol(B^{\mathbb{R}^p}_{\epsilon}(\iota(x))\cap \iota(M)) \leq C \epsilon^d$ for some constant $C$ depending on the manifold $M$, then, we have
\begin{align}
& b=\|K_{\epsilon}\|_{\infty} \leq   \frac{K_{\sup}}{\epsilon^\alpha}, \nonumber \\
& \mathbb{E}[F] \leq P_{max}, \nonumber \\
& \mathbb{E}[F^2] \leq \frac{K^2_{\sup}}{\epsilon^{2\alpha}}P_{max}Vol(B^{\mathbb{R}^p}_{\epsilon}(\iota(x))\cap \iota(M)) \leq \frac{C K^2_{\sup}P_{max}}{\epsilon^{2\alpha-d}}. \nonumber 
\end{align}
Thus,
\begin{align}
\sigma^2:=\text{Var}(F)  \leq \frac{C K^2_{\sup}P_{max}}{\epsilon^{2\alpha-d}}-P^2_{max} \leq \frac{C_1}{\epsilon^{2\alpha-d}},
\end{align} 
when $\epsilon$ is small enough. We apply Bernstein's inequality to provide a large deviation bound. Recall Bernstein's inequality
\begin{equation}
\Pr \left\{\left|\frac{1}{n}\sum_{i=1}^n F_i- \mathbb{E}[F]\right| > \beta\right\} \leq e^{-\frac{n\beta^2}{2\sigma^2 + \frac{2}{3}b\beta}}.
\end{equation}
For any $\beta<1$ and $\epsilon$ small, there is a constant $C_2$ such that $2\sigma^2 + \frac{2}{3}b\beta<\frac{1}{C_2\epsilon^{2\alpha-d}}$.
Hence,
\begin{align}
e^{-\frac{n\beta^2}{2\sigma^2 + \frac{2}{3}b\beta}}<e^{-C_2 n\beta^2\epsilon^{2\alpha-d}}.
\end{align}
Note that $e^{-C_2 n\beta^2\epsilon^{2\alpha-d}}<\frac{1}{n^2}$ if and only if $\beta^2>\frac{2\log n}{C_2 n \epsilon^{2\alpha-d}}$. Take $\beta \rightarrow 0$, we conclude that for the fixed $x$, if $\frac{\log n}{n \epsilon^{2\alpha-d}} \rightarrow 0$ as $n \rightarrow \infty$, then, a.s.
\begin{align}
\left|\frac{1}{n}\sum_{i=1}^n K_{\epsilon}(\iota(x),\iota(x_i))-\mathbb{E}K_{\epsilon}(x)\right| \rightarrow 0\,.
\end{align}

\underline{\textbf{Pointwise convergence of $\int_{M} |\frac{1}{n}\sum_{i=1}^n K_{\epsilon}(\iota(x),\iota(x_i))| dV(x)$ to $\int_{M} |\mathbb{E}K_{\epsilon}(x)|dV(x)$}}

Suppose $P$ is the density function of the random variable $X$ on $M$. We define a random variable $G=\int_{M} K_{\epsilon}(\iota(x),\iota(X)) dV(x)$. Then, $G(i)=\int_{M} K_{\epsilon}(\iota(x),\iota(x_i)) dV(x)$ can be regarded as i.i.d samples from $F$. Note that we use Lemma \ref{volume form in normal coordinate}, so that $Vol(B^{\mathbb{R}^p}_{\epsilon}(\iota(x))\cap \iota(M)) \leq C \epsilon^d$ for some constant $C$ depending on the manifold $M$. Hence, we have
\begin{align}
b=\|G\|_{\infty} \leq   \frac{C K_{\sup}}{\epsilon^{\alpha-d}}.
\end{align}
If we apply Fubini's theorem, we have
\begin{align}
\mathbb{E}[G] =& \int_{M} \int_{M} K_{\epsilon}(\iota(x),\iota(y)) dV(x) P(y) dV(y)  \\
=& \int_{M} \int_{M} K_{\epsilon}(\iota(x),\iota(y))  P(y) dV(y) dV(x) \nonumber \\
\leq & P_{max}  \int_{M} \int_{M} K_{\epsilon}(\iota(x),\iota(y)) dV(y) dV(x) \nonumber\\
= &   P_{max}  \int_{M} 1 dV(x) = P_{max} Vol(M). \nonumber
\end{align}
If we apply the Cauchy Schwarz inequality and the Fubini's Theorem,
\begin{align}
\mathbb{E}[G^2] =& \int_{M} |\int_{M} K_{\epsilon}(\iota(x),\iota(y)) dV(x)|^2 P(y) dV(y) \\
\leq & Vol(M) \int_{M}\int_{M} \Big[ K_{\epsilon}(\iota(x),\iota(y)) \big]^2 dV(x)P(y) dV(y) \nonumber \\
\leq & \frac{C K^2_{\sup}P_{max}(Vol(M))^2}{\epsilon^{2\alpha-d}}. \nonumber
\end{align}
Thus,
\begin{align}
\sigma^2:=\text{Var}(G)  \leq\frac{C K^2_{\sup}P_{max}(Vol(M))^2}{\epsilon^{2\alpha-d}}- (P_{max} Vol(M))^2 \leq \frac{C_3}{\epsilon^{2\alpha-d}},
\end{align} 
when $\epsilon$ is small enough. Note that
\begin{align}
\int_{M} \left|\frac{1}{n}\sum_{i=1}^n K_{\epsilon}(\iota(x),\iota(x_i))\right| dV(x) =\int_{M} \frac{1}{n}\sum_{i=1}^n K_{\epsilon}(\iota(x),\iota(x_i)) dV(x)=\frac{1}{n}\sum_{i=1}^n G(i),
\end{align}
and
\begin{align}
\int_{M} |\mathbb{E}K_{\epsilon}(x)|dV(x)=& \int_{M} \int_{M} K_{\epsilon}(\iota(x),\iota(y))  P(y) dV(y) dV(x)  \\
=& \int_{M} \int_{M} K_{\epsilon}(\iota(x),\iota(y)) dV(x) P(y) dV(y)  =\mathbb{E}[G] \nonumber
\end{align}
We apply Bernstein's inequality
\begin{align}
& \Pr \left\{\left|\int_{M} \Big|\frac{1}{n}\sum_{i=1}^n K_{\epsilon}(\iota(x),\iota(x_i))\Big| dV(x)- \int_{M} |\mathbb{E}K_{\epsilon}(x)|dV(x)\right| > \beta\right\} \\
 =& \Pr \left\{\left|\frac{1}{n}\sum_{i=1}^n G(i)- \mathbb{E}[G]\right| > \beta\right\} \leq e^{-\frac{n\beta^2}{2\sigma^2 + \frac{2}{3}b\beta}}. \nonumber
\end{align}
For any $\beta<1$ and $\epsilon$ small, there is a constant $C_4$ such that $2\sigma^2 + \frac{2}{3}b\beta<\frac{1}{C_4\epsilon^{2\alpha-d}}$.
Hence,
\begin{align}
e^{-\frac{n\beta^2}{2\sigma^2 + \frac{2}{3}b\beta}}<e^{-C_4 n\beta^2\epsilon^{2\alpha-d}}.
\end{align}
Note that $e^{-C_4 n\beta^2\epsilon^{2\alpha-d}}<\frac{1}{n^2}$ if and only if $\beta^2>\frac{2\log n}{C_4 n \epsilon^{2\alpha-d}}$. Take $\beta \rightarrow 0$, we have the following conclusion. 
If $\frac{\log n}{n \epsilon^{2\alpha-d}} \rightarrow 0$ as $n \rightarrow \infty$, then, a.s.
\begin{align}
\int_{M} \left|\frac{1}{n}\sum_{i=1}^n K_{\epsilon}(\iota(x),\iota(x_i))\right| dV(x) \rightarrow \int_{M} |\mathbb{E}K_{\epsilon}(x)|dV(x)\,.
\end{align}

By Scheffe's Lemma, if $\frac{\log n}{n \epsilon^{2\alpha-d}} \rightarrow 0$ as $n \rightarrow \infty$, then, a.s.
\begin{align}
\int_{M} \left|\frac{1}{n}\sum_{i=1}^n K_{\epsilon}(\iota(x),\iota(x_i))-\mathbb{E}K_{\epsilon}(x)\right| dV(x) \rightarrow 0. 
\end{align}

\end{proof}

\begin{proof}[Proof of Proposition \ref{bias L1 case}]

Next, we show that $\int_{M} |\mathbb{E}K_{\epsilon}(x)-P(x)| dV(x) \rightarrow 0$, as $\epsilon \rightarrow 0$. Note that
\begin{align}
|\mathbb{E}K_{\epsilon}(x)|=|\int_{M}K_{\epsilon}(\iota(x),\iota(y))P(y)dV(y)| = \int_{M}K_{\epsilon}(\iota(x),\iota(y))P(y)dV(y) \leq P_{max}.
\end{align}
Hence, if we can show that $\mathbb{E}K_{\epsilon}(x) \rightarrow P(x)$ for almost every $x$, then $\int_{M} |\mathbb{E}K_{\epsilon}(x)-P(x)| dV(x) \rightarrow 0$ follows from bounded convergence Theorem.

\underline{\textbf{$\mathbb{E}K_{\epsilon}(x) \rightarrow P(x)$, when $P$ is continuous}}
\begin{align}
|\mathbb{E}K_{\epsilon}(x)- P(x)| = & |\int_{M}K_{\epsilon}(\iota(x),\iota(y))P(y)dV(y)-\int_{M}K_{\epsilon}(\iota(x),\iota(y))dV(y)P(x)| \\
\leq &  \int_{M}K_{\epsilon}(\iota(x),\iota(y))|P(y)-P(x)|dV(y) \nonumber \\
\leq & \sup_{y: \iota(y) \in B^{\mathbb{R}^p}_{\epsilon}(\iota(x))} |P(y)-P(x)|. \nonumber
\end{align}
As $\epsilon \rightarrow 0$, we have $\iota(y) \rightarrow \iota(x)$ and $y \rightarrow x$. Hence, we have $ |P(y)-P(x)| \rightarrow 0$ and $ |\mathbb{E}K_{\epsilon}(x)- P(x)| \rightarrow 0$. 

\underline{\textbf{$\mathbb{E}K_{\epsilon}(x) \rightarrow P(x)$ for almost every $x$, when $\alpha=d$ and $P$ is bounded}}

For $\epsilon$ small enough, since $M$ is compact, we can cover $M$ by finite many open sets $\{\mathcal{O}_i, z_i\}$ with the following properties.
\begin{enumerate}
\item
$z_i \in \mathcal{O}_i \subset M$. 
\item
Let $inj(z_i)$ be the injectivity radius at $z_i$, then any for  $x \in \mathcal{O}_i$, we have $ B^{\mathbb{R}^p}_{\epsilon}(\iota(x)) \cap \iota(M) \subset \iota(B_{inj(z_i)/2}(z_i))$, where $B_r(z_i)$ is a geodesic ball of radius $r>0$.
\end{enumerate}
Since $\exp_{z_i}$ is a diffeomorphism, we have a bi-Lipschitz constant $r_i>1$ for $\exp_{z_i}$ on the closure of $B_{inj(z_i)/2}(z_i)$. Let $r=\max_i r_i$.

For any $x \in \mathcal{O}_i$,
\begin{align}
|\mathbb{E}K_{\epsilon}(x)- P(x)| = & \Big|\int_{M}K_{\epsilon}(\iota(x),\iota(y))P(y)dV(y)-\int_{M}K_{\epsilon}(\iota(x),\iota(y))dV(y)P(x)\Big| \\
\leq &  \int_{M}K_{\epsilon}(\iota(x),\iota(y))|P(y)-P(x)|dV(y). \nonumber \\
= &  \int_{B^{\mathbb{R}^p}_{\epsilon}(\iota(x)) \cap \iota(M) } K_{\epsilon}(\iota(x),\iota(y))|P(y)-P(x)|dV(y) \nonumber
\end{align}
Suppose $x=\exp_{z_i}(\textbf{v}_1)$ and $y=\exp_{z_i}(\textbf{v})$ for $\textbf{v}, \textbf{v}_1 \in T_{z_i}M \approx \mathbb{R}^d$. Denote 
\begin{align}
D(\textbf{v}_1)=(\iota \circ \exp_{z_i})^{-1} \big(B^{\mathbb{R}^p}_{\epsilon}(\iota(x)) \cap \iota(M) \big).
\end{align}
Note that we have $D(\textbf{v}_1) \subset B^{\mathbb{R}^d}_{inj(z_i)/2}(z_i)$. Suppose $V_{z_i}(\textbf{v})$ is the volume form of $\exp_{z_i}$ at $\textbf{v}$. Then, by Lemma \ref{volume form in normal coordinate} and Definition \ref{max of volume for on inj ball} for any $\textbf{v} \in  B^{\mathbb{R}^d}_{inj(z_i)/2}(z_i)$, $V_{z_i}(\textbf{v})< V_{max}$.

By Lemma \ref{geodesic and euclidean distance}, there is a constant $C_5$ such that the diameter of $B^{\mathbb{R}^p}_{\epsilon}(\iota(x)) \cap \iota(M)$ as a subset of $\iota(M)$ is bounded above by $C_5 \epsilon$, hence $D(\textbf{v}_1) \subset Q_{2r C_5 \epsilon}(\textbf{v}_1) \subset \mathbb{R}^d$. where $ Q_{2r C_5 \epsilon}(\textbf{v}_1)$ is an open cube of side length $2r C_5 \epsilon$ centered at $\textbf{v}_1$.

Hence, 
\begin{align}
& \int_{ B^{\mathbb{R}^p}_{\epsilon}(\iota(x)) \cap \iota(M) } K_{\epsilon}(\iota(x),\iota(y))|P(y)-P(x)|dV(y) \\
=& \int_{D(\textbf{v}_1)} K_{\epsilon}(\iota \circ \exp_{z_i}(\textbf{v}_1), \iota \circ \exp_{z_i}(\textbf{v}))|P( \exp_{z_i}(\textbf{v}))-P( \exp_{z_i}(\textbf{v}_1))|V_{z_i}(\textbf{v}) d\textbf{v}. \nonumber \\
\leq  &  V_{max} \int_{D(\textbf{v}_1)} K_{\epsilon}(\iota \circ \exp_{z_i}(\textbf{v}_1), \iota \circ \exp_{z_i}(\textbf{v}))|P( \exp_{z_i}(\textbf{v}))-P( \exp_{z_i}(\textbf{v}_1))|d\textbf{v} \nonumber \\
\leq &  V_{max}\frac{K_{\sup}}{\epsilon^{d}} \int_{Q_{2r C_5 \epsilon}(\textbf{v}_1)} |P( \exp_{z_i}(\textbf{v}))-P( \exp_{z_i}(\textbf{v}_1))| \bigchi_{D(\textbf{v}_1)}(\textbf{v} ) d\textbf{v} \nonumber \\
=& V_{max}\frac{K_{\sup}}{\epsilon^{d}} Vol(Q_{2r C_5 \epsilon}(\textbf{v}_1)) \frac{\int_{Q_{2r C_5 \epsilon}(\textbf{v}_1)} |P( \exp_{z_i}(\textbf{v}))-P( \exp_{z_i}(\textbf{v}_1))| \bigchi_{D(\textbf{v}_1)}(\textbf{v} ) d\textbf{v} }{Vol(Q_{2r C_5 \epsilon}(\textbf{v}_1))}\nonumber \\
=& V_{max} (2 C_5)^d  r^{d} K_{\sup} \frac{\int_{Q_{2r C_5 \epsilon}(\textbf{v}_1)} |P( \exp_{z_i}(\textbf{v}))-P( \exp_{z_i}(\textbf{v}_1))| \bigchi_{D(\textbf{v}_1)}(\textbf{v} ) d\textbf{v}}{Vol(Q_{2r C_5 \epsilon}(\textbf{v}_1))}  \nonumber \\
\leq &  V_{max}  (2 C_5)^d  r^{d} K_{\sup} \frac{\int_{Q_{2r C_5 \epsilon}(\textbf{v}_1)} |P( \exp_{z_i}(\textbf{v}))\bigchi_{D(\textbf{v}_1)}(\textbf{v} )-P( \exp_{z_i}(\textbf{v}_1))|  d\textbf{v}}{Vol(Q_{2r C_5 \epsilon}(\textbf{v}_1))}. \nonumber
\end{align}
By \cite{zygmund1989certain},  as $\epsilon \rightarrow 0$,
\begin{align}
 \frac{\int_{Q_{2r C_5 \epsilon}(\textbf{v}_1)} |P( \exp_{z_i}(\textbf{v}))\bigchi_{D(\textbf{v}_1)}(\textbf{v} )-P( \exp_{z_i}(\textbf{v}_1))|  d\textbf{v}}{Vol(Q_{2r C_5 \epsilon}(\textbf{v}_1))}\rightarrow 0,
\end{align}
for almost every $\textbf{v}_1$ in $\exp^{-1}(\mathcal{O}_i)$.  Hence, $\mathbb{E}K_{\epsilon}(x) \rightarrow P(x)$ for almost every $x \in \mathcal{O}_i$. There are finite many $\mathcal{O}_i$, thus we conclude that $\mathbb{E}K_{\epsilon}(x) \rightarrow P(x)$ for almost every $x$.
\end{proof}

\end{document}